\DeclareMathAlphabet{\mathcal}{OMS}{zplm}{m}{n}
\newcommand{\Ima}{\mathrm{Im}}
\newcommand{\tmod}{\mathrm{mod}}
\newcommand{\x}{\mathbf{x}}
\newcommand{\bd}{\mathbf{d}}
\newcommand{\p}{\mathbf{p}}
\newcommand{\y}{\mathbf{y}}
\newcommand{\bb}{\mathbf{b}}
\newcommand{\z}{\mathbf{z}}
\newcommand{\ba}{\mathbf{a}}
\newcommand{\bfb}{\mathbf{b}}
\newcommand{\R}{\mathbb{R}}
\newcommand{\T}{\mathbb{T}}
\newcommand{\bC}{\mathbb{C}}
\newcommand{\Q}{\mathbb{Q}}
\newcommand{\N}{\mathbb{N}}
\newcommand{\D}{\mathbb{D}}
\newcommand{\Z}{\mathbb{Z}}
\newcommand{\cB}{\mathcal{B}}
\newcommand{\cM}{\mathcal{M}}
\newcommand{\cH}{\mathcal{H}}
\newcommand{\cL}{\mathcal{L}}
\newcommand{\cE}{\mathcal{E}}
\newcommand{\cF}{\mathcal{F}}
\newcommand{\cK}{\mathcal{K}}
\newcommand{\cG}{\mathcal{G}}
\newcommand{\cC}{\mathcal{C}}
\newcommand{\cO}{\mathcal{O}}
\newcommand{\cT}{\mathcal{T}}
\newcommand{\cJ}{\mathcal{J}}
\newcommand{\cI}{\mathcal{I}}
\newcommand{\cY}{\mathcal{Y}}
\newcommand{\cP}{\mathcal{P}}
\newcommand{\cS}{\mathcal{S}}
\newcommand{\cA}{\mathcal{A}}
\newcommand{\cX}{\mathcal{X}}
\newcommand{\td}{\mathrm{d}}
\newcommand{\ubar}[1]{\text{\underbar{$#1$}}}
\newcommand\barbelow[1]{\stackunder[1.6pt]{$#1$}{\rule{1.2ex}{.1ex}}}
\newcommand{\dimH}{\dim_{\mathrm{H}}}
\newcommand{\dist}{\mathrm{dist}}
\newcommand{\norm}[1]{{\left\Vert #1
		\right\Vert}}
\newcommand{\linte}[1]{{\left\lfloor #1
		\right\rfloor}}
\newcommand{\one}{\mathbbm{1}}
\newcommand{\spt}{\mathrm{spt}}
\newcommand{\rmi}{\mathrm{i}}
\theoremstyle{plain}
\newtheorem{theorem}{Theorem}
\newtheorem{lemma}{Lemma}
\newtheorem{proposition}{Proposition}
\newtheorem{corollary}{Corollary}
\newtheorem*{claim*}{Claim}
\theoremstyle{definition}
\newtheorem{definition}{Definition}
\newtheorem{example}{Example}
\newtheorem*{examples*}{Examples}
\newtheorem*{example*}{Example}
\newtheorem{question}{Question}
\newtheorem*{notations*}{Notations}
\newtheorem*{notation*}{Notation}
\theoremstyle{remark}
\newtheorem{remark}{{Remark}}
\numberwithin{equation}{section}
\numberwithin{question}{section}
\numberwithin{theorem}{section}
\numberwithin{thm}{section}
\numberwithin{lemma}{section}
\numberwithin{proposition}{section}
\numberwithin{cor}{section}
\numberwithin{corollary}{section}
\numberwithin{claim}{section}
\numberwithin{definition}{section}
\numberwithin{example}{section}
\numberwithin{remark}{section}
\numberwithin{notations}{section}
\numberwithin{notation}{section}
\numberwithin{claim}{section}
\numberwithin{problem}{section}
\numberwithin{figure}{section}
\newcommand{\delete}[1]{{\red{\sout{#1}}}}
\renewcommand{\emph}[1]{{\it#1}}
\newcommand{\red}[1]{\textcolor[rgb]{1.00,0.00,0.00}{#1}}
\newcommand{\sv}[1]{{\color{blue}   {#1}}}
\newcommand{\slv}[1]{{\color{teal}  {#1}}}
\newcommand{\junjie}[1]{{\color{teal}  {#1}}}
\newtheorem{thmA}{{\rm \textbf{Theorem~A}}  \!\!\!\!\!\!}
\newtheorem{claimf}{{\rm \textbf{Claim~F}}  \!\!\!\!\!\!}
\title{Exponential mixing  for  Gibbs measures on self-conformal sets and  applications}
\author[J. Huang]{Junjie Huang}
\address[J. Huang]{School of Mathematics, South China University of Technology, Wushan Road 381, Tianhe District, Guangzhou, China}
\email{h1135778478@163.com}
\author[B. Li]{Bing Li}
\address[B. Li]{School of Mathematics, South China University of Technology,  Wushan Road 381, Tianhe District, Guangzhou, China}
\email{scbingli@scut.edu.cn}
\author[S. Velani]{Sanju Velani}
\address[S. Velani]{Department of Mathematics,
University of York, Heslington, York, YO10
5DD, England.}
\email{sanju.velani@york.ac.uk}
\begin{document}
 \subjclass[2020]{Primary: 37A25, 28D05; Secondary: 28A78}
 \keywords{exponentially mixing, Ruelle operator,  Gibbs measures, self-conformal sets}
\begin{abstract}
     In this paper, we show that  Gibbs measures on self-conformal sets generated by  a $C^{1+\alpha}$ conformal IFS on $\R^d$ satisfying the OSC are exponentially mixing. We exploit this to obtain essentially sharp asymptotic counting statements for the  recurrent  and  the  shrinking target subsets  associated with any such set. In particular, we provide explicit examples of dynamical systems for which the  recurrent sets  exhibit (unexpected) behaviour that is not present in the shrinking target setup.
In the process of establishing our exponential mixing result we extend Mattila’s rigidity theorem for self-similar sets to self-conformal sets without any separation condition and for arbitrary Gibbs measures.

 \end{abstract}
	\maketitle
	\section{Introduction}
\subsection{Background and motivation }   \label{bandi}
 Let $(X,\cB,\mu)$ be a probability space and let $T:X\to X$ be a measurable map. We say that $(X,\cB,\mu,T)$ is a \emph{measure-preserving system} if $\mu$ is $T$-invariant in the sense that $\mu(T^{-1}E)=\mu(E)$ for all $E\in\cB$. Within the area of ergodic theory and dynamical systems,  it is widely recognized  that ``mixing'' in its various forms  is a highly sought-after trait for a system to exhibit.  In particular, the ``rate'' of mixing is closely linked to fundamental properties of the system; such as ergodicity. To some extent, an exponential rate  represents the most desirable form of mixing and this exponential form of mixing together with its applications will be the main focus of this paper.  We start by defining the weaker notion  of $\Sigma$-mixing introduced in \cite{li2023} and consider some of its consequences.
 \begin{definition}\label{defmix}
     Let $(X,\cB,\mu, T)$ be a measure-preserving system. Let $\cC$ be a collection of measurable subsets of $X$. For any integer $n\geq1$, define
     \begin{eqnarray*}\label{phin}
         \phi(n):=\sup\left\{\Big|\frac{\mu(E\cap T^{-n}F)}{\mu(F)}-\mu(E)\Big|:E\in\cC,~F\in\cB,~\mu(F)>0\right\}.
     \end{eqnarray*}
     We say that $\mu$ is \emph{$\Sigma$-mixing (short for summable-mixing) with respect to $(T,\cC)$} if the sum $\sum_{n=1}^{\infty}\phi(n)$ converges. In particular, we say that $\mu$ is \emph{exponentially mixing with respect to $(T,\cC)$} if there exists $\gamma\in (0,1)$ such that $\phi(n)=O(\gamma^n)$.
 \end{definition}
One  powerful consequence of summable-mixing  is that if $\{E_n \}_{n \in \mathbb{N}} $ is a sequence of subsets in $\mathcal{C}$ and $\mu$ is $\Sigma$-mixing with respect to $(T,\cC)$, then in the language of probability theory,  the sets $$A_n:=T^{-n}E_n   \qquad (n \in \mathbb{N}) $$  are essentially  pairwise independent on average.  More precisely,  for any pair of positive integers $a < b$, it can be verified that
\begin{eqnarray}\label{qionav}
\sum_{a\le m, n\le b}\mu\left(A_m\cap A_n\right)
& \le   &  \left(\sum_{a\le n\le b} \!\! \mu(A_n) \right)^2    \ +  \ (2\kappa +1) \sum_{a\le   n \le   b }\mu(A_n) \  \,
\end{eqnarray}
where  $ \kappa := \sum_{n=1}^\infty\phi(n)$.  Note that  without the second term on the right hand side of \eqref{qionav}, we would have full independence as in the classical sense.  Furthermore,  observe that  $\mu(A_n) = \mu(E_n) $ since $T$ is measure preserving and if  the sum $\sum_{n \in \N }\mu(A_n)$ diverges (the interesting case),  then the first term dominates  and the second `error' term  is, up to constants,  the square root of the main term.   Independence is of course a well known fundamental notion in probability theory, as in statistics and the theory of stochastic processes.

With essentially full  independence as \eqref{qionav} at our disposal, it is relatively straightforward to exploit the quantitative form of the Borel-Cantelli Lemma (see Section~\ref{CRR}: Lemma~\ref{countlem}\footnote{We will also present a more versatile form of the familiar Lemma~\ref{countlem}.  The more versatile Lemma~\ref{genharmanlem} is required  for establishing one of our ``application'' results in Section~\ref{apptoprove} and is  potentially  of independent interest.}) to obtain the following  elegant and essentially sharp  asymptotic statement concerning the counting function
    \smallskip
\begin{equation} \label{countdef}
R(x,N) := \# \big\{ 1\le n \le   N :    x \in A_n \} \,  \qquad \ \ \  ( x \in X \, , N \in \N).
\end{equation}
Obviously,  by definition,  \eqref{countdef} is equivalent to $R(x,N) = \# \big\{ 1\le n \le   N :    T^n(x) \in E_n \}$.


\smallskip

\begin{thmA}\label{gencountthm}
 \emph{Let $(X,\mathcal{B},\mu,T)$ be a measure-preserving dynamical system  and $\mathcal{C}$ be a collection of measurable subsets of $X$. Suppose that $\mu$ is $\Sigma$-mixing with respect to $(T,\mathcal{C})$ and let  $\{E_n \}_{n \in \mathbb{N}} $ be a sequence of subsets  in $\mathcal{C}$.   Then, for any given $\varepsilon>0$, we have that
\begin{equation} \label{def_Psi}R(x,N)=\Psi(N)+O\left(\Psi^{1/2}(N) \ (\log\Psi(N))^{3/2+\varepsilon}\right)
\end{equation}
\noindent for $\mu$-almost all $x\in X$, where
$\Psi(N):=\sum_{n=1}^N \mu(A_n) \,  .
$
}
\end{thmA}

 The details of deriving  \eqref{qionav} and in turn Theorem~A from the notion of summable mixing,  can be found in \cite[Section~2]{li2023}.   The upshot of Theorem~A is that if the rate of mixing is summable  and   the measure sum $\Psi(N)$ diverges (as $N \to \infty$),  then for  $\mu$--almost all $x \in X$  the points $x$ lie in the sets $A_n$, or equivalently  the orbits $\{T^nx\}_{n\in\N}$ hit  the target sets $E_n$,  the `expected' number of times. In particular,
 $\lim_{N \to \infty} R(x,N) = \infty  $  for $\mu$--almost all $x \in X$  and so a simple consequence of the theorem is  the following zero-full measure criterion for the associated $\limsup$ set $A_\infty :=\limsup_{n \to \infty} A_n $:
 \begin{eqnarray}\label{appdiv}
		\mu\big(A_\infty  \big)=
		\begin{cases}
			0 &\text{if}\ \  \sum_{n=1}^\infty \mu\big(A_n\big)<\infty\\[2ex]
			1 &\text{if}\ \ \sum_{n=1}^\infty \mu\big(A_n\big)=\infty.
		\end{cases}
	\end{eqnarray}
For some readers, this ``corollary'' is maybe    more familiar than its stronger quantitative form. It resembles,  for example, the standard
\begin{itemize}
  \item  Borel-Cantelli Lemma in probability theory   \cite{bvbc,bill},\vspace*{1ex}
  \item   measure characterization  of shrinking target and recurrent sets in ergodic theory and dynamical systems (see Section~\ref{appintro} below),  and\vspace*{1ex}
  \item   Khintchine-type theorems in metric number theory \cite{BRV2016,harman1998,khintchine1924}.
\end{itemize}
 It is worth highlighting that the latter includes the  Duffin-Schaeffer conjecture recently  proved in the ground breaking work of Koukoulopoulos $\&$ Maynard \cite{koukoulopoulos2020}.
 In pursing a zero-full measure criterion such as \eqref{appdiv}, we can usually get away with a lot less than summable mixing.  Indeed, if we already know  by some other means (such as Kolmogorov's theorem \cite[Theorems 4.5 \& 22.3]{bill} or ergodicity \cite[Section 24]{bill}) that the $\limsup $ set  $A_{\infty}$ satisfies a \emph{zero-one law} (that is to say that  $\mu(A_{\infty}) =     0 \ \text{or} \ 1 $),
then to show full measure it suffices to show that $\mu(A_{\infty}) > 0$ when the measure sum diverges. To prove this,  the main ingredient required is a significantly weaker form of mixing equivalent  to the  pairwise quasi-independent on average statement
\begin{equation}  \label{pqoa}
\limsup_{N\to\infty}\frac{\left(\sum_{n=1}^N\mu(A_n)\right)^2}{\sum_{n,m=1}^N\mu(A_n\cap A_m)} > 0 \, .
\end{equation}
For details of this and for the interested reader, its  related converse  see \cite[Section~1.1]{bvbc} and references within.  Clearly,  \eqref{qionav} implies pairwise quasi-independent on average.  For the sake of completeness and to give a concrete example, we mention that the $\limsup$ set of well approximable numbers  associated with the  Duffin-Schaeffer conjecture satisfies a zero-one law and  in short,  Koukoulopoulos $\&$ Maynard established  \eqref{pqoa} to prove the conjecture.    This independence has subsequently been strengthened  \cite{ABH,KMY} to establish  the  quantitative form of the Duffin-Schaeffer conjecture in the spirit of Theorem~A.   This brings to an end  our brief discussion demonstrating  the ``power of mixing''.   As mentioned at the start,   mixing  is a highly sought-after trait for a system to exhibit. It has  various deep consequences.  We have deliberately chosen to highlight Theorem~A,   since it is in line with the applications we have in mind of our main mixing result (see Theorem~\ref{Main2}) for self-conformal dynamical systems -- see  Section~\ref{SEC5}.


Given the fact that exponential mixing is regarded as a  desirable property for a dynamical system to possess, it is natural to ask: when do we have it? From this point onward, unless specified otherwise,   we assume that  $X$ is a  metric space and we will  use the  reasonably standard protocol to say that  `$\mu$ is exponentially mixing' to mean that $\mu$ is exponentially mixing with respect to $(T,\cC)$ with  $\cC$ equal to the collection of balls in $X$. In other words, we simply say that  \emph{$\mu$ is exponentially mixing} if there exist constants $C> 0 $ and  $\gamma\in (0,1)$   such that
\begin{equation}\label{expmixballs}
  \Big|  \mu(B\cap T^{-n}F)   -   \mu(F)\mu(B) \Big|  \; \leq \;  C  \, \gamma^n   \mu(F)      \qquad  \forall  \ n \in \N \, ,
\end{equation}
for all balls $B$ in  $X$ and $\mu$-measurable sets $F$ in $X$.   Note that in certain situations, such as when $X$ is a subset of $\R^d$ and $\mu$ is a Radon measure,  the above restriction to balls  suffices  to imply \emph{strong-mixing}; that is
\[
\lim_{n\to \infty} \mu(E\cap T^{-n}F)=\mu(E)\mu(F)   \qquad \forall  \  E,F \in\cB \, .
\]
This in turn implies  ergodicity.
\noindent   The following  are examples   of  naturally occurring   measure preserving systems  in the literature that are known to possess the exponentially mixing  property.
\begin{itemize}

    \item Let $X=[0,1]$ and let $T : X \to X$ where $Tx=mx~\tmod ~1$ with $m\in\N_{\geq2}$, or  $Tx=\beta x~\tmod ~1$ with $\beta\in\R_{>1}$  or $Tx=\frac{1}{x}~\tmod ~1$ is the continued fraction map.   Accordingly, let $\mu$ be  Lebesgue measure, or Parry measure or Gauss measure.  In 1967,  Philipp \cite{philipp1967} showed that for each  of these situations the measure  $\mu$ is  exponentially mixing. \smallskip
    \item In 1981, Pommerenke \cite{Pommerenke1981} studied the ergodic properties of inner functions acting on the  unit disc $\mathbb{D}$ of the complex plane.   Indeed, let $f: \D  \to \D$ be an inner function such that  $f(0)=0$ and $f$ is not a rotation. Let $f^*: \partial\D  \to \partial\D $ denote the  radial boundary extension of $f$ and let  $\mu$  be normalised Lebesgue measure on $\partial\D$.  Then a consequence of \cite[Lemma~3]{Pommerenke1981} is that if $|f^{\prime}(0) |  < 1 $, then $ \mu$ is exponentially mixing. In fact, Pommerenke's showed that this is true with $f$ replaced by the composition $f_n \circ \ldots \circ f_1$ of $n$ inner functions.
     \smallskip
    \item In 1996, Liverani, Saussol and Vaienti \cite{liverani1998}  considered a class of systems in  abstract totally ordered sets that cover many one-dimensional systems. For convenience, we restrict our attention to the unit interval. Let $X=[0,1]$  and let $T:X\to X$ be a piecewise monotonic transformation. Let $\varphi$ be a contracting potential. Then, under various assumptions on $T$, their results \cite[Theorem 3.1 and 3.2]{liverani1998} implies that there exists a unique equilibrium state $\mu_{\varphi}$ with respect to $\varphi$ and that it is exponentially mixing. This generalizes the  work of Philipp described above.\smallskip
    \item In 2017, Wang, Wu and Xu \cite[Lemma 3.2]{wang2017}  proved that the Cantor measure on middle-third Cantor set is exponentially mixing with respect to the~$\times 3$ map: $Tx=3x~\tmod~1$. This was subsequently  extended to a range of  self-similar sets on $\R$ --  see for instance  \cite[Section 7]{kleinbock2023} \smallskip
    \item In 2023,  by exploiting a deep result of Saussol \cite{saussol2000}, Li, Liao, Velani and Zorin \cite[Proposition~1]{li2023} proved that  for a certain class of  expanding matrix transformations $T$ of the $d$-dimensional torus $\T^d$ there exists an absolutely continuous invariant measure $\mu$ that is exponentially mixing.   In particular, suppose   $T$ is a real, non-singular $d \times d $ matrix with all eigenvalues of modulus strictly greater than one.  Then the ``certain class'' includes $T$ if in addition it satisfies one of the following conditions: (i) all eigenvalues are of modulus strictly larger than $1 + \sqrt{d}$, (ii) $T$ is diagonal, (iii) $T$ is an integer matrix.
\end{itemize}
We highlight the fact that in  all the aforementioned works, the  focus is either on one-dimensional systems or in higher dimensions on systems equipped with absolutely continuous   invariant measures (with respect to Lebesgue measure).   In short, we are  not aware of any non-trivial situation in two-dimensions (let alone arbitrary dimensions)  for which  the invariant measure is fractal and exponentially mixing.  By  a fractal measure we mean a measure that is   not absolutely continuous with respect to Lebesgue measure.  By non-trivial, we mean that   the support  of the measure is not contained  in the union of finitely many straight lines. This simply avoids the higher dimensional situation reducing to a known  one-dimensional setup for which we have exponentially mixing.  

The upshot of the above discussion is the following question.

\begin{question}\label{ques}
    Let $d \geq 2$ and $ X$ be a subset of $\R^d$.  Is there a measure-preserving dynamical system $(X,\mathcal{B},\mu,T)$  for which $\mu$ is  a non-trivial fractal measure on $\R^d$ and $\mu$ is exponentially mixing?
\end{question}

\noindent This question is pretty much the motivating factor behind this paper. With this in mind,  we  show that  Gibbs measures on a large class of  self-conformal sets generated by iterated function schemes (IFS) on $\R^d$  are exponentially mixing.  The precise statement is given by Theorem~\ref{Main2}.

\medskip

\begin{remark} \label{proddy}
Although not explicitly stated in the above discussion, it is worth emphasizing that unless otherwise specified the default metric on $\mathbb{R}^d$ is the $L^2$-norm (i.e., the Euclidean norm). This point is particularly relevant in the context of Question 1.1. Indeed, had we instead employed the $L^\infty$-norm (i.e., the maximum norm), then in referring to a non-trivial fractal measure, we would also need to exclude the case in which $\mu$ is a product measure constructed from one-dimensional measures that are each exponentially mixing. The reason is that it is relatively straightforward to demonstrate that such “manufactured” higher-dimensional measures exhibit exponential mixing with respect to $L^\infty$-balls in $\mathbb{R}^d$. For completeness, we provide the details of this construction in Appendix~\ref{AppA}. We further elaborate on exponential mixing with respect to $L^\infty$-balls (i.e., hypercubes) in the subsequent appendix.   Briefly, in Appendix~\ref{AppH}, this discussion also serves to place our results within the context of absolutely friendly measures supported on fractal subsets of $\mathbb{R}^d$.
\end{remark}

\medskip

Before describing the setup and formally stating our results,  we  say a few words concerning the  closely related  theory of  decay of correlations.
 Let $(X,\cB,\mu,T)$ be a measure-preserving system. The correlation function $   C_{f_1,f_2} : \N \to \R$  associated with $f_1,f_2\in L^2(\mu)$ is defined as
\begin{eqnarray}\label{corredef}
    C_{f_1,f_2}(n):=\int_Xf_1\cdot f_2\circ T^n~\td\mu-\int_Xf_1~\td\mu\int_Xf_2~\td\mu  \, .
\end{eqnarray}
It is well known  \cite[Theorem 1.23]{walters2000} that  $(X,\cB,\mu,T)$ is strong-mixing if and only if
 $\lim_{n \to \infty} C_{f_1,f_2}(n)=0$ for every  $f_1,f_2\in L^2(\mu)$.  In general, nothing can be said regarding the decay rate of convergence for arbitrary $L^2(\mu)$ functions, so it is usually the case that functions are restricted to smaller Banach spaces $\cE$  of functions. Indeed,   $\cE$ is typically taken to be the space of (i) $\alpha$-Hölder continuous functions (e.g. \cite{baladi2000,bowen2008,parry1990}),  (ii) functions of bounded variation for one dimensional systems (e.g.  \cite{liverani1998}),  and (iii) quasi-Hölder functions for higher dimensional systems (e.g. \cite{saussol2000}).  In these cases, if the dynamical system satisfies suitable assumptions, it can be shown that the rate of decay of $C_{f_1,f_2}(n)$  is exponential for all $f_1\in\cE$ and $f_2\in L^1(\mu)$.    To the best of our  knowledge, the existing results on the rate of decay do not address Question \ref{ques}.  For this, we would need to show  the existence of a measure-preserving system $(X,\cB,\mu,T)$ where  $\mu$ is  a non-trivial fractal measure on $\R^d$ $(d \ge 2)$  and for which  there  exist constants $C> 0 $ and  $\gamma\in (0,1)$   such that
\[
|C_{f_1,f_2}(n)|\leq  C  \, \gamma^n  \cdot\|f_2\|_{L^1(\mu)}    \qquad  \forall  \ n \in \N \, ,
\]
for all functions  $f_1=\one_B$, $f_2=\one_F$  where $B$ is any ball in $X$ and $F \in \cB$ (cf. Theorem~\ref{meaannimpexp}   in Section~\ref{Sec2}).  The main issue in showing this lies in the fact that the characteristic function $\one_B$ of a ball is not continuous  and the sought after  measure $\mu$  is not absolutely continuous with respect to Lebesgue measure.  With this in mind,   Theorem~1.2 provides a sufficient condition (namely \eqref{collection}) under which the exponential decay of correlations for Hölder continuous functions can be transferred to the characteristic functions of balls.

\begin{remark}
For the sake of completeness, it is worth mentioning that there is also an abundance of work on the decay of correlations for continuous-time dynamical systems  (also called flows).  In this setting the correlation function is defined  similarly to that in \eqref{corredef}, but with $n\in\N$ replaced by $t\in[0,+\infty)$ and $T^n$ replaced by the `flow' $\phi^t$.  Anosov and Sinai \cite{anosov1967}  in sixties proved that any $C^2$ Anosov flow that preserves Lebesgue measure is strong mixing unless the stable and unstable foliations are jointly integrable.   Regarding the rate of decay of correlations,
the Bowen-Ruelle conjecture essentially  states  that a mixing Anosov flow should have exponential decay for smooth functions.  The original conjecture  \cite{bowen1975} dates back to the seventies and was made for the wider class of Axiom A flows but this was shown to be false soon after  \cite{ruelle1983}.   The  modified conjecture for Anosov flows represents a fundamental problem in the area of  continuous-time dynamical systems.  It has been the catalyst for much groundbreaking research, see for example   \cite{araujo2016,avila2006,butterley2020,chernov1998,dolgopyat1998,dolgopyat2024, field2007,khalil2023,li2023exponential,nonnenmacher2015,tsujii2023}
and references within. To date,  the Bowen-Ruelle conjecture remains unsolved.  The introductions to  \cite{butterley2020,tsujii2023}  are particularly informative of its current state.
\end{remark}



\subsection{Main results}  \label{MRext}

Let $(\Phi,K,\mu,T)$  be  a self-conformal system on $\R^d$.   For  the definition and basic properties see Section~\ref{Sec2}. In short, the set up considered is one in which:
\begin{itemize}
    \item  $\Phi=\{\varphi_j\}_{1\leq j\leq m}$ ($m\geq2$) is a $C^{1+\alpha}$ conformal IFS on $\R^d$ satisfying the open set condition (see Definition \ref{IFSdef} in Section~\ref{cifs}).
    \vspace*{1ex}
    \item $K\subseteq\R^d$ is the self-conformal set generated by $\Phi$ (see (\ref{defofselfconset}) in Section~\ref{cifs}).
      \vspace*{1ex}
    \item $\mu$ is a  Gibbs measure on $K$ (see Definition \ref{Gibbsmeasonk} in Section~\ref{ruelle conformal}).
      \vspace*{1ex}
    \item $T:\R^d\to\R^d$  is a natural map induced by $\Phi$ such that $T|_K:K\to K$ is conjugate to the shift map on the symbolic space $\{1,2,...,m\}^{\N}$  (see \eqref{defoft} in Section~\ref{ruelle conformal}).
\end{itemize}

\medskip

\noindent In order to state our main results  addressing Question \ref{ques}, we  introduce some standard notation that will be used throughout the paper. Given   $\x\in\R^d$ and a nonempty set $E\subseteq\R^d$,
we denote the distance between $\x$ and $E$ as
\begin{eqnarray}\label{defofdxe}
    \mathbf{d}(\x,E):=\inf\big\{|\x-\y|:\y\in E\big\}\,.
\end{eqnarray}
      The topological boundary of $E$ under the usual topology on $\R^d$ is denoted by $\partial E$. Given  $\varrho>0$, the symbol
	\[
	(E)_{\varrho}:=\big\{\x\in\R^d: \bd(\x,E)<\varrho\big\}
	\]
   represents  the $\varrho$-neighborhood of $E$.

\medskip

The following theorem provides a sufficient condition for a  self-conformal system  to exhibit exponentially mixing with respect to  a given collection $\cC$ of measurable subsets of $K$.

 \begin{theorem}\label{Main}
    Let $d\geq1$ be an integer. Let $(\Phi, K, \mu, T)$ be a self-conformal system on $\R^d$. Let $\cC$ be a collection of $\mu$-measurable subsets in $\R^d$ and suppose there exist constants  $C>0$ and $\delta>0$ so that
\begin{eqnarray}\label{collection}
\mu\big((\partial E)_{\varrho}\big)  \ \leq  \ C  \, \varrho^{\delta},   \qquad \forall \ \varrho>0, \quad \forall \  E \in\cC.
    \end{eqnarray}
    Then $\mu$ is exponentially mixing with respect to $(T,\cC)$.
 \end{theorem}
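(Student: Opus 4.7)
The plan is to reduce Theorem~\ref{Main} to an exponentially mixing statement on cylinder sets, which I regard as the core technical input to be established separately (presumably in Section~\ref{SEC3} via a spectral gap for the Ruelle--Perron--Frobenius operator associated with $\Phi$ and the Gibbs potential defining $\mu$). The form of the cylinder input I would aim to exploit is a uniform bound of the shape
\[
\bigl|\mu(\varphi_w(K)\cap T^{-n}F)-\mu(\varphi_w(K))\mu(F)\bigr| \ \leq \ C_0\,\gamma_0^{\,n-k}\,\mu(\varphi_w(K))\,\mu(F),
\]
valid for all words $w$ of length $k$, all measurable $F\subseteq K$, and all $n\geq k$, with constants $C_0>0$ and $\gamma_0\in(0,1)$ independent of $w$ and $k$. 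Any equivalent uniform-in-$w$ version would serve the same purpose; what matters is that the bound sums cleanly over collections of cylinders.

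With this input in hand, for $E\in\cC$ I would perform a two-sided sandwich approximation by level-$k$ cylinders, with $k$ to be chosen proportional to $n$. Set
\[
\cW_k^-(E):=\{w:|w|=k,\ \varphi_w(K)\subseteq E\}, \qquad \cW_k^+(E):=\{w:|w|=k,\ \varphi_w(K)\cap E\neq\emptyset\},
\]
and $E^\pm:=\bigcup_{w\in\cW_k^\pm(E)}\varphi_w(K)$. The $C^{1+\alpha}$ conformality and uniform contractivity of $\Phi$ yield $\mathrm{diam}(\varphi_w(K))\leq C_1\rho^k$ for some $\rho\in(0,1)$ uniform in $w$; consequently $E^+\setminus E^-$ is contained in the $(C_1\rho^k)$-neighbourhood of $\partial E$, and hypothesis \eqref{collection} delivers the boundary estimate $\mu(E^+\setminus E^-)\leq CC_1^{\delta}\rho^{k\delta}$. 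Summing the cylinder-level bound over $w\in\cW_k^\pm(E)$, and using the OSC together with the Gibbs property to discard the $\mu$-null overlaps between distinct level-$k$ cylinders, I would obtain the sandwich
\[
\mu(E^-)\mu(F)-C_0\gamma_0^{n-k}\mu(F) \ \leq \ \mu(E\cap T^{-n}F) \ \leq \ \mu(E^+)\mu(F)+C_0\gamma_0^{n-k}\mu(F),
\]
which, combined with $\mu(E^-)\leq\mu(E)\leq\mu(E^+)$ and the boundary estimate, yields
\[
\bigl|\mu(E\cap T^{-n}F)-\mu(E)\mu(F)\bigr| \ \leq \ \bigl(CC_1^{\delta}\rho^{k\delta}+C_0\gamma_0^{n-k}\bigr)\,\mu(F).
\]

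To close the argument, I would choose $k$ proportional to $n$ (say $k=\lfloor n/2\rfloor$, or more generally $k=\lfloor\tau n\rfloor$ for any $\tau\in(0,1)$) so that both $\rho^{k\delta}$ and $\gamma_0^{n-k}$ decay exponentially in $n$, giving exponential mixing with respect to $(T,\cC)$. The reduction itself is routine modulo two standard ingredients --- bounded-distortion/OSC control of cylinder measures and the conformal diameter estimate --- so the genuine difficulty I anticipate lies not here but in the cylinder-level mixing input itself: establishing a quasi-compactness/spectral gap for the Ruelle transfer operator and accommodating the fact that indicator functions of cylinders are not H\"older continuous.
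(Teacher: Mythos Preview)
Your proposal is correct and follows essentially the same strategy as the paper: sandwich $E$ between inner and outer unions of level-$k$ cylinders, control the discrepancy via hypothesis~\eqref{collection} and the diameter bound $|K_I|\leq C_3\kappa^{|I|}$, invoke cylinder-level exponential mixing, and balance the two error terms by choosing $k$ proportional to $n$.

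The one difference worth noting is the form of the cylinder input. The paper does not use your bound with the $\mu(\varphi_w(K))$ factor; instead it quotes Corollary~\ref{emprd} (inherited from the symbolic result of Fan--Jiang--Wu), which gives
\[
\bigl|\mu(K_I\cap T^{-n}F)-\mu(K_I)\mu(F)\bigr|\leq C\gamma^n\mu(F)
\]
uniformly over all $I\in\Sigma^*$, with no dependence on $|I|$ but also no $\mu(K_I)$ factor on the right. Summing over the at most $m^q$ level-$q$ cylinders therefore produces an error $Cm^q\gamma^n\mu(F)$, and the paper is forced to choose $q=\lfloor n\log(1/\gamma)/(2\log m)\rfloor$ rather than an arbitrary $\lfloor\tau n\rfloor$; with that choice $m^q\gamma^n\leq\gamma^{n/2}$ and $\kappa^{\delta q}$ is still geometric in $n$. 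Your version, with the $\mu(\varphi_w(K))$ factor, sums to $C_0\gamma_0^{n-k}\mu(F)$ with no $m^k$ penalty and so permits the simpler choice $k=\lfloor n/2\rfloor$; this bound is also obtainable from the transfer-operator spectral gap (apply $\widetilde{\cS}^k$ to $\one_{[I]}$ first, use bounded distortion to see the result has H\"older norm $\asymp\mu([I])$, then apply the gap for the remaining $n-k$ iterates), but it is not the form the paper cites. Either input suffices; just be aware that your parenthetical ``any $\tau\in(0,1)$'' is correct for your bound but would fail for the paper's bound when $\tau\geq\log(1/\gamma)/\log m$.
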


 \noindent  
 In the main body of the paper, we derive Theorem~\ref{Main} as a consequence of a more general statement for measure-preserving systems. This formulation makes explicit the connection between exponential decay of correlations and exponential mixing—see the discussion following Remark~\ref{proddy}. For completeness, we also provide a direct proof of Theorem~\ref{Main} in Appendix~\ref{appendix:directproof}, ensuring the paper remains self-contained within the setting of self-conformal systems. In this context, as shown in the appendix,  the result is a relatively straightforward consequence of exponentially mixing restricted to  cylinder sets.  In order to state the general result we introduce various pieces of useful and relatively standard notation.
 Throughout, given a  bounded  subset $X\subseteq\R^d$ and $\beta>0$, we let $\cC^{\beta}(X)$ denote the collection of all $\beta$-H\"{o}lder continuous functions on $X$  and we let $\|\cdot\|_{\beta}$ denote the $\beta$-H\"{o}lder norm on $\cC^{\beta}(X)$.     Also recall that given a measure-preserving system $(X,\cB,\mu,T)$ on $\R^d$ and $\beta>0$, we say that \emph{$\mu$ has exponential decay of correlations with respect to $\cC^{\beta}(X)$} if there exist $C>0$ and $0<\gamma<1$ such that
    \begin{equation}\label{expforholfun}
        \left|C_{f_1,f_2}(n)\right|\ \leq\  C\gamma^n\|f_1\|_{\beta}\int_{X}|f_2|\,\td\mu
    \end{equation}
    for any $n\in\N$, $f_1\in\cC^{\beta}(X)$ and $f_2\in L^1(\mu)$  where 
 $   C_{f_1,f_2} : \N \to \R$  is the  correlation function  defined  by \eqref{corredef}. 

\begin{theorem}\label{meaannimpexp}
    Let $(X,\cB,\mu,T)$ be a measure-preserving system on $\R^d$ and suppose that  $\mu$ has exponential decay of correlations with respect to $\cC^{\beta}(X)$ for some $0<\beta\leq1$. Let $\cC$ be a collection of $\mu$-measurable subsets in $\R^d$ and suppose there exist constants  $C>0$ and $\delta>0$ so that \eqref{collection} holds.
   Then $\mu$ is exponentially mixing with respect to $(T,\cC)$.
\end{theorem}

In the case of balls, we show that \eqref{collection} is satisfied  and  thus the following   provides an affirmative answer to Question~\ref{ques}.

 \begin{theorem}\label{Main2}
     Let $d\geq1$ be an integer. Let $(\Phi, K, \mu, T)$ be a self-conformal system on $\R^d$ and let $\cC$ be a collection of  balls in $\R^d$. Then $\mu$ is exponentially mixing with respect to $(T,\cC)$.
 \end{theorem}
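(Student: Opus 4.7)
The plan is to deduce Theorem~\ref{Main2} from Theorem~\ref{Main} by taking $\cC$ to be the collection of all balls in $\R^d$ and verifying the boundary decay condition \eqref{collection} for this choice. Since $\partial B(x,r)$ is the sphere of radius $r$ centred at $x$ and $(\partial B(x,r))_\varrho$ is the spherical shell $\{y\in\R^d: r-\varrho<|y-x|<r+\varrho\}$, the problem reduces to establishing a uniform polynomial annular decay estimate: finding constants $C>0$ and $\delta>0$ such that
$$\mu\bigl(\{y\in\R^d: r-\varrho<|y-x|<r+\varrho\}\bigr)\leq C\varrho^\delta$$
for all $x\in\R^d$, $r>0$, and $\varrho>0$. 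Since $\spt\mu\subseteq K$, only balls meeting $K$ contribute, and the estimate is really a statement about how a Gibbs measure on a self-conformal set distributes around spheres.

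The case $r\leq\varrho$ is easy: the whole shell then lies in $B(x,2\varrho)$, and the standard Frostman-type upper bound $\mu(B(y,\rho))\lesssim\rho^s$, valid for some $s>0$ for every Gibbs measure on a $C^{1+\alpha}$ conformal IFS satisfying the OSC, delivers the required bound with $\delta=s$. The substantive case is the thin-shell regime $\varrho\ll r$, where the annulus is nearly $(d-1)$-dimensional. I would treat it via a stopping-time cylinder cover: choose a finite family of words $\{\omega\}$ in $\{1,\ldots,m\}^*$ so that the cylinders $\{\varphi_\omega(K)\}$ form a cover of $K$ with $\mathrm{diam}(\varphi_\omega(K))\asymp\varrho$. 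By conformality, bounded distortion, and the Gibbs property, each such cylinder has $\mu$-mass $\asymp\varrho^s$, so the task reduces to bounding the number of cylinders whose image meets the enlarged shell $\{r-2\varrho<|\cdot-x|<r+2\varrho\}$ by at most $C\varrho^{\delta-s}$ for some $\delta>0$.

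The main obstacle is precisely this counting estimate: controlling how many cylinders can accumulate near a single sphere. A naive volume/covering bound yields only $r^{d-1}\varrho^{s-d+1}$, which is useful when $s>d-1$ but fails in general and cannot rule out pathological clustering on a sphere when $K$ has small dimension. This is exactly the point at which the extension of Mattila's rigidity theorem advertised in the abstract becomes the correct tool: qualitatively, no Gibbs measure on $K$ can charge a sphere or, after straightening at small scales using conformality, a hyperplane. I expect the quantitative upgrade to proceed by a rescaling-and-compactness argument. If annular decay failed along some sequence of scales $\varrho_n\to 0$ and parameters $(x_n,r_n)$, iterating appropriate $\varphi_{\omega_n}^{-1}$ would blow the offending shells up to unit scale while preserving the relative $\mu$-mass; extracting a subsequential weak limit of the renormalised restricted measures would produce a limiting Gibbs-type measure giving positive mass to a sphere or hyperplane, contradicting the extended rigidity theorem. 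Promoting this soft non-concentration statement into a polynomial — as opposed to merely logarithmic — decay rate $\varrho^\delta$ is the principal technical difficulty and, I expect, the real content of the proof.
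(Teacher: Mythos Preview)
Your overall plan—reduce to an annular decay estimate and invoke Theorem~\ref{Main}—is correct in the generic case but has a genuine gap. You assert that ``no Gibbs measure on $K$ can charge a sphere'', but this is precisely what the rigidity theorem does \emph{not} say: the conclusion of Theorem~\ref{thmrigidity} is a trichotomy, and alternatives (ii) and (iii) allow $K$ to lie in a sphere, an affine subspace, or (when $d=2$) a finite union of analytic curves. In those degenerate cases there exist balls $B$ whose boundary sphere contains all of $K$ (or a positive-measure portion of it), so $\mu\big((\partial B)_\varrho\big)$ does not tend to $0$ as $\varrho\to 0$, and condition~\eqref{collection} fails outright; your compactness argument cannot terminate in a contradiction, and Theorem~\ref{Main} cannot be applied to such balls. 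The paper handles this by a case split on $\ell_K$ (see~\eqref{defofellk}): when $\ell_K=d$ the annular decay does hold for all balls (part (ii.a) of Theorem~\ref{thmann}) and Theorem~\ref{Main} applies; when $K\subseteq S$ for a sphere or affine subspace $S$ the paper replaces $\partial B$ by $\partial(B\cap S)$ inside $S$ and reruns the cylinder argument from the proof of Theorem~\ref{Main} with respect to a ball centred on $S$; and when $d=2$ and $K$ lies in analytic curves the paper bypasses Theorem~\ref{Main} entirely, controlling $\sum_{J\in\cJ_n(B)}\mu(K_J)$ directly via a uniform bound on $\#(\Gamma_i\cap\partial B)$ over open balls $B$ (Lemma~\ref{lemcouopenball}).

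Even in the generic case $\ell_K=d$, the paper's quantification differs from your proposed cylinder count. Rather than bounding the \emph{number} of $\varrho$-scale cylinders near a sphere, it proves (Lemma~\ref{3.2}, via exactly the rescaling-and-compactness argument you describe) that every cylinder $K_I$ contains a sub-cylinder $K_{IJ}$, at relative scale $\varrho$, lying at distance $>\varrho\,|K_{I^-}|$ from any prescribed sphere. Iterating this over scales, at each step a fixed proportion $\eta\in(0,1)$ of the surviving mass escapes the shrinking annulus, yielding $\mu\big((\partial B)_{\varrho^k}\big)\le(1-\eta)^{k-1}$ and hence polynomial decay in $\varrho$ directly. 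This neatly sidesteps the difficulty you flag of upgrading a soft non-concentration statement to a quantitative rate.
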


 Note that for balls, the  left hand side of \eqref{collection}  corresponds to the $\mu$-measure of annuli and the desired upper bound is easily verified if $\mu$ is  absolutely continuous with respect to Lebesgue measure or $d=1$.    However,  non-trivial difficulties appear when $\mu$ is a general measure in higher dimensional space. In short, it turns out that  we need to  fully utilize the geometric structure of self-conformal sets to estimate the  measures of annuli. The following classification or equivalently rigidity  of  self-conformal sets is  an important  ingredient in obtaining  the desired estimates and is potentially  of independent interest. To state the result, we introduce the definition of analytic curve in the plane: the set $\Gamma\subseteq\R^2$ is said to be an \emph{analytic curve} if there exist an open set $\cO\subseteq\R^2$ containing $[0,1]\times\{0\}$ and a conformal map $f:\cO\to\R^2$ such that $\Gamma=f([0,1]\times\{0\})$.

 \begin{theorem}\label{thmrigidity}
     Let $(\Phi,K,\mu,T)$ be a self-conformal system on $
    \R^d$ with $d\geq 2$. Given $\ell\in\{1,...,d-1\}$, then one of the following statements hold:
     \begin{enumerate}[label=(\roman*)]
         \item $\mu(K\cap M)=0$ for any  $\ell$-dimensional $C^1$ submanifold $M\subseteq\R^d$;  \vspace*{1ex}
 \item $K$ is contained in a $\ell$-dimensional affine subspace or $\ell$-dimensional geometric sphere;  \vspace*{1ex}
 \item $K$ is contained in a finite disjoint union of analytic curves and this may happen  only when $d=2$ and $\ell=1$.
     \end{enumerate}
 \end{theorem}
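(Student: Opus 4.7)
The plan is to prove the contrapositive: assuming (i) fails, derive (ii) or (iii). So suppose there exists an $\ell$-dimensional $C^1$ submanifold $M\subseteq\R^d$ with $\mu(K\cap M)>0$. Since $\mu$ is a Gibbs measure on the self-conformal set $K$ under the OSC, it is doubling on $K$, and the Besicovitch/Lebesgue differentiation theorem yields a $\mu$-density point $x_0\in K\cap M$ with
\[
\lim_{r\to 0^+}\frac{\mu\bigl(B(x_0,r)\cap K\cap M\bigr)}{\mu\bigl(B(x_0,r)\cap K\bigr)}=1.
\]

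The key step is to transfer this infinitesimal coincidence between $K$ and $M$ at $x_0$ into a global statement, using the self-conformal structure. For each large $n$ pick a word $\omega_n$ of length $n$ with $x_0\in\varphi_{\omega_n}(K)$, so that $\mathrm{diam}(\varphi_{\omega_n}(K))\to 0$. Bounded distortion for the $C^{1+\alpha}$ conformal IFS, together with the Gibbs property of $\mu$, ensures that the inverse branch $\varphi_{\omega_n}^{-1}$ pushes $\mu$ on the cylinder $\varphi_{\omega_n}(K)$ to a measure comparable to $\mu|_K$ with constants independent of $n$. Consequently, the pullback $M_n:=\varphi_{\omega_n}^{-1}\bigl(M\cap\varphi_{\omega_n}(K)\bigr)$ is an $\ell$-dimensional $C^1$ piece of a submanifold with $\mu(K\cap M_n)/\mu(K)\to 1$. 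Extracting a subsequential $C^1$-limit $M_\infty$, one has $\mu(K\setminus M_\infty)=0$, and since $K=\supp(\mu)$ and $M_\infty$ is closed, in fact $K\subseteq M_\infty$.

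The rigidity conclusion then follows from classical constraints on conformal maps. In dimension $d\geq 3$, Liouville's theorem says every $C^1$ conformal map is the restriction of a M\"obius transformation, so each $M_n$ belongs to the M\"obius orbit of $M$, a rigid finite-parameter family. Normalizing by $|\varphi_{\omega_n}'(x_0)|$ so that derivatives stabilize, the $C^1$-limit $M_\infty$ lies in the M\"obius orbit of an affine $\ell$-plane, hence is either an $\ell$-dimensional affine subspace or an $\ell$-dimensional geometric sphere, giving~(ii). In dimension $d=2$, conformal means (anti)holomorphic, so the $M_n$ and all iterated conformal images produced by applying the generators $\varphi_j$ are analytic curves in the paper's sense; combining finitely many such images via the self-conformal decomposition $K=\bigcup_j\varphi_j(K)$ then yields~(iii).

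The main obstacle will be the quantitative transfer step: producing a uniform Gibbs distortion estimate of the form $\mu(\varphi_\omega^{-1}(E))\asymp\mu(E)/\mu(\varphi_\omega(K))$ for arbitrary Borel $E\subseteq\varphi_\omega(K)$, and then upgrading $\mu(K\setminus M_\infty)=0$ to $K\subseteq M_\infty$, which relies on a precompactness argument for the pullback submanifolds $M_n$ in a suitable $C^1$ topology combined with the support identity $K=\supp(\mu)$. A further delicate point, specific to $d\geq 3$, is tuning the rescaling so that the derivatives $D\varphi_{\omega_n}^{-1}(x_0)$ converge up to rotation to an isometry, thereby allowing the M\"obius classification to identify $M_\infty$ unambiguously as either an affine subspace or a geometric sphere.
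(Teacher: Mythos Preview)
Your overall strategy---blow up at a density point and invoke conformal rigidity (Liouville for $d\geq 3$, holomorphy for $d=2$)---matches the paper's. However, the step you flag as the ``main obstacle'' is indeed where your argument diverges from the paper and where a genuine gap remains. You want to extract a $C^1$ limit $M_\infty$ of the pullback submanifolds $M_n=\varphi_{\omega_n}^{-1}(M)$, but there is no evident precompactness for these objects: $M$ is only $C^1$, the maps $\varphi_{\omega_n}^{-1}$ are defined on shrinking domains, and the M\"obius-orbit remark does not by itself yield convergence. The paper sidesteps this entirely by never taking a limit of submanifolds. Instead it fixes once and for all the \emph{tangent plane} $V=x_0+T_{x_0}M$, introduces the affine renormalisations $\psi_n(z)=\|\varphi_{I_n}'\|^{-1}(z-\varphi_{I_n}(x_0))+x_0$, and proves two separate limits: (a) the conformal maps $\psi_n\circ\varphi_{I_n}$ subconverge on $U$ to a conformal map $f$ (normal family, Lemma~\ref{concon}); (b) the affine planes $\psi_n(V)$ subconverge in $A(d,\ell)$ to an affine plane $W$. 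The density condition is used, via the tangent approximation \eqref{tangent}, to prove the key claim \eqref{infmax0} that $\psi_n\circ\varphi_{I_n}(K)$ is asymptotically contained in $\psi_n(V)$; passing to the limit gives $f(K)\subseteq W$, hence $K\subseteq f^{-1}(W\cap f(U))$. Because $W$ is already affine, the trichotomy then comes directly from the structure of $f^{-1}$: M\"obius for $d\geq 3$ gives an $\ell$-plane or $\ell$-sphere, while holomorphic for $d=2$ gives countably many analytic curves, reduced to finitely many by compactness of $K$ (not via the decomposition $K=\bigcup_j\varphi_j(K)$ as you suggest).

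Two smaller points. First, the doubling/density argument is carried out on the symbolic space $\Sigma^{\N}$ with the measure $\ubar\mu$ (where doubling is immediate from the quasi-Bernoulli property), not on $K$; Gibbs measures on $K$ need not be doubling in the Euclidean metric. Second, your Gibbs-distortion transfer gives only $\mu(K\setminus M_n)\asymp \ubar\mu([I_n]\setminus\pi^{-1}(M))/\ubar\mu([I_n])\to 0$ up to constants, which is enough, but one should argue via the complement rather than via $\mu(K\cap M_n)\to 1$.
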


\medskip

 \begin{remark} \label{thy}
      Mattila \cite{mattila1982} obtained the analogous  statement   for self-similar sets satisfying the open set condition and  with $\mu$ being the Hausdorff measure. It is often referred to as Mattila’s rigidity theorem for self-similar sets.    Subsequently, Käenmäki \cite{kaenmaki2003} extended his work to self-conformal systems. Compared with their results,  our statement  is
      \begin{itemize}
    \item  valid for  a broader class of measures (namely Gibbs measures);
    \vspace*{1ex}
    \item  valid without the open set condition  (indeed, see Proposition~\ref{rigidity} for the more general result from which Theorem~\ref{thmrigidity} follows).
\end{itemize}
Furthermore, our result corrects an oversight  in the statement of   \cite[Theorem~2.1]{kaenmaki2003}.  Basically,  Käenmäki claimed  that $K$ is contained in a single analytic curve  when (i) or (ii) are not the case.   However, this is not true even with the open set condition assumption and $\mu$ restricted to Hausdorff measures.  For details of a counterexample  see Example~\ref{counterexam} in Section~\ref{Secrig}.
 \end{remark}

\begin{remark} \label{whyadb}  As already alluded to in Remark~\ref{proddy}, Appendix~\ref{AppH} contains a discussion of how our results relate to the theory of absolutely friendly measures supported on fractal subsets of $\mathbb{R}^d$. This connection provides additional insight into the applicability of our framework within the broader context of fractal geometry and Diophantine approximation.
\end{remark}

\subsection{Application to shrinking target and recurrent sets}  \label{appintro}

In this section we discuss  applications of
Theorem~\ref{Main2} to the  shrinking target and related  recurrent problem for  self-conformal dynamical systems.
Further details  including  proofs of statements presented  will be the subject of Section~\ref{SEC5}.


Throughout, $(X,d)$ is a compact metric space and $(X,\mathcal{B},\mu,T)$ is an ergodic probability measure-preserving system.   We start with describing the application of
Theorem~\ref{Main2} to the shrinking target problem.   For obvious reasons, from the onset,  we restrict our attention the setup in which the ``target sets''  are balls rather than arbitrary measurable sets $E_n$ as in the  general setup of Theorem~A.   With this in mind, given   a sequence of points  $\cY=\{y_n\}_{n\in\N}  \subset X$,    and  a real, positive function $\psi:\N\to[0,+\infty)$ let
\begin{equation*}
            W(\cY,\psi):=  \left\{x\in X:T^nx\in B(y_n,\psi(n))~\text{for infinitely many}~n\in\N\right\}
       \end{equation*}
denote the associated \emph{shrinking target set}.   If $\psi=c$ (a constant) and $\cY$ is contained in the support of $\mu$,  the Ergodic Theorem implies that
$
 \mu( W(\cY,c))  \, = 1      $.
In view of this, it is natural to ask:
what is the  $\mu$-measure of  the set  if $\psi(n) \to 0 $ as $n \to \infty$?
In turn, whenever the $\mu$-measure is zero,  it is natural to ask about the Hausdorff dimension of the sets under consideration.   The shrinking target problem  was introduced  in \cite{hill1995}, with a focus on the dynamics of expanding rational maps.  Subsequently, there has been activity  encompassing a wide range of dynamical systems.   We refer to \cite{aspenberg2019,barany2018,bugeaud2014,fang2020,ghosh2024,li2023,li2014,persson2019} and the references within for dimensional results  and to  \cite{allen2021,fernandez2012,galatolo2015,kim2007,tseng2008} and the references within for measure-theoretical statements.   Concentrating our attention solely on the $\mu$-measure question, note  that
$$
W(\cY,\psi) = \limsup_{n\to\infty}A_n(\cY,\psi)
$$
where for $n \in \N$,
\begin{eqnarray*}
A_n(\cY,\psi)& :=  & \left\{x\in X:T^nx\in B(y_n,\psi(n))\right\}  \\[1ex]  & = &  T^{-n} \big(B(y_n,\psi(n))\big) .
   \end{eqnarray*}
Then, on combining Theorem~\ref{Main2} and   Theorem~A (with $E_n :=  B(y_n,\psi(n))$) we  immediately  see that we  have the following  quantitative shrinking target statement for  self-conformal dynamical systems.

\begin{theorem}  \label{shrinktarg}
    Let $(\Phi,K,\mu,T)$ be a self-conformal system on $\R^d$, let $\psi:\R\to\R_{\geq0}$ be a real positive function  and let $\cY=\{\y_n\}_{n\in\N}$ be a sequence of points in $\R^d$. Then, for any $\epsilon>0$, we have
   \begin{equation} \label{shrinktargcount}
        \sum_{n=1}^N\one_{B(\y_n,\psi(n))}(T^n\x)=\Psi(N)+O\left(\Psi(N)^{1/2}\log^{\frac{3}{2}+\epsilon}(\Psi(N))\right)
    \end{equation}
    for $\mu$-almost all $\x\in K$, where 
    \begin{equation} \label{shrinktargsum}  \Psi(N):=\sum_{n=1}^N\mu\big(B(\y_n,\psi(n))\big)   \, . \end{equation}
\end{theorem}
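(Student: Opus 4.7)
The plan is to derive Theorem~\ref{shrinktarg} as an immediate consequence of the exponential mixing property established in Theorem~\ref{Main2} combined with the quantitative Borel--Cantelli statement recorded as Theorem~A. Once Theorem~\ref{Main2} is at our disposal, no further dynamical or geometric input is required; the proof reduces to verifying that the hypotheses of Theorem~A are met for the sequence of balls under consideration.

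First, I would invoke Theorem~\ref{Main2}, which asserts that $\mu$ is exponentially mixing with respect to $(T,\mathcal{C})$, where $\mathcal{C}$ is the collection of all balls in $\R^d$. By Definition~\ref{defmix}, this means the associated mixing rate satisfies $\phi(n)=O(\gamma^n)$ for some $\gamma\in(0,1)$, and therefore $\sum_{n=1}^\infty \phi(n)<\infty$. In particular, $\mu$ is $\Sigma$-mixing with respect to $(T,\mathcal{C})$, and so the hypotheses of Theorem~A are fulfilled for any sequence of sets drawn from $\mathcal{C}$. Note that the points $\y_n$ are allowed to be arbitrary elements of $\R^d$, possibly lying outside $K$, but since Theorem~\ref{Main2} is stated for \emph{all} balls in $\R^d$ this causes no issue.

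Next, I would set $E_n:=B(\y_n,\psi(n))\in\mathcal{C}$ and $A_n:=T^{-n}E_n$. By the $T$-invariance of $\mu$,
\[
\mu(A_n)=\mu(E_n)=\mu\big(B(\y_n,\psi(n))\big),
\]
so the measure sum \eqref{shrinktargsum} is identical to the quantity $\Psi(N)$ appearing in Theorem~A. Since $\x\in A_n$ if and only if $T^n\x\in B(\y_n,\psi(n))$, the counting function $R(\x,N)$ of \eqref{countdef} coincides with $\sum_{n=1}^N \one_{B(\y_n,\psi(n))}(T^n\x)$. Applying Theorem~A with this choice of $\{E_n\}$ then yields \eqref{shrinktargcount} directly. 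Since Theorem~\ref{Main2} carries essentially the entire burden, there is no genuine obstacle in this step; the argument amounts to a bookkeeping exercise identifying the quantities in Theorem~A with those in the target statement.
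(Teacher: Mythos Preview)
Your proposal is correct and follows exactly the approach indicated in the paper: the theorem is stated as an immediate consequence of combining Theorem~\ref{Main2} (which gives $\Sigma$-mixing with respect to balls) with Theorem~A applied to $E_n:=B(\y_n,\psi(n))$. The paper does not provide a separate proof beyond this one-line deduction, and your bookkeeping verification that the counting function and measure sum match those in Theorem~A is precisely what is needed.
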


 Several comments are in order. Given $ N \in \N$ and $\x \in K$,   note that the left hand side of  \eqref{shrinktargcount} is simply  the  counting function
\begin{eqnarray}
W(\x,N;\cY,\psi)& :=  & \# \big\{ 1\le n \le   N :    \x \in A_n(\cY,\psi) \big\} \, \\[1ex]  & = & \# \big\{ 1\le n \le   N :    T^n\x \in B(\y_n, \psi(n))  \big\}   \nonumber
   \end{eqnarray}
and that the measure sum \eqref{shrinktargsum} is equivalent to
\begin{equation} \label{shrinktargsum2}  \Psi(N):=\sum_{n=1}^N\mu\big(A_n(\cY,\psi)\big)   \, . \end{equation}
Thus,  the theorem shows that for $\mu$-almost all $\x \in K $,  the asymptotic behaviour  of the counting function $W(\x,N;\cY,\psi)$  is determined by the behaviour of   the measure sum  $ \Psi(N)$ involving the sets $A_n(\cY,\psi)$ associated with the $\limsup$ set  $W(\cY,\psi)$. This together with the fact that  $\Psi(N)$ is independent of $\x \in K $,
is well worth keeping in mind for future  comparison with the analogous recurrent  problem.     Next note that  by definition, $ \x \in W(\cY,\psi) $ if and only if $\lim_{N \to \infty}  W(\x,N;\cY,\psi) = \infty$ and  so  an immediate consequence of Theorem~\ref{shrinktarg} is the following zero-full  measure criterion (which naturally  is in line with \eqref{appdiv} in the general setup of measurable sets).

\begin{corollary}  \label{shrinktargcor}
Let $(\Phi,K,\mu,T)$ be a self-conformal system on $\R^d$, let $\psi:\R\to\R_{\geq0}$ be a real positive function  and let $\cY=\{\y_n\}_{n\in\N}$ be a sequence of points in $\R^d$. Then,
\begin{eqnarray*}
		 \mu\left(W(\cY,\psi)\right)=
		\begin{cases}
			0 &\text{if}\ \  \sum_{n=1}^{\infty}\mu\big(B(\y_n,\psi(n))\big)<\infty,\\[2ex]
			1 &\text{if}\ \ \sum_{n=1}^{\infty}\mu\big(B(\y_n,\psi(n))\big)=\infty.
		\end{cases}
	\end{eqnarray*}
\end{corollary}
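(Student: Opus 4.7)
The statement is flagged as an immediate consequence of Theorem~\ref{shrinktarg}, so the plan is essentially to separate the two regimes and verify each one, with all the heavy lifting (exponential mixing plus the quantitative Borel--Cantelli yielding the counting asymptotic) already absorbed into Theorem~\ref{shrinktarg}. I do not expect a serious obstacle here; the argument is routine once the counting theorem is in hand.

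For the convergence case, I would not invoke Theorem~\ref{shrinktarg} at all. Setting $A_n:=T^{-n}(B(\y_n,\psi(n)))$, the $T$-invariance of $\mu$ gives $\mu(A_n)=\mu(B(\y_n,\psi(n)))$, and $W(\cY,\psi)=\limsup_{n\to\infty} A_n$. If $\sum_{n=1}^\infty \mu(B(\y_n,\psi(n)))<\infty$, the classical (convergence half of the) Borel--Cantelli lemma immediately yields $\mu(W(\cY,\psi))=0$. No mixing is needed for this direction.

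For the divergence case, I would apply Theorem~\ref{shrinktarg} directly. Assume $\sum_{n=1}^\infty \mu(B(\y_n,\psi(n)))=\infty$, so that $\Psi(N)\to\infty$ as $N\to\infty$. Fix any $\varepsilon>0$ and recall that the left-hand side of \eqref{shrinktargcount} is precisely $W(\x,N;\cY,\psi)$, the number of $1\le n\le N$ with $\x\in A_n$. Since $\Psi^{1/2}(N)(\log\Psi(N))^{3/2+\varepsilon}=o(\Psi(N))$ as $\Psi(N)\to\infty$, Theorem~\ref{shrinktarg} gives
\[
W(\x,N;\cY,\psi)=\Psi(N)\bigl(1+o(1)\bigr)\longrightarrow\infty
\]
for $\mu$-almost every $\x\in K$. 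For every such $\x$ there are infinitely many $n$ with $\x\in A_n$, i.e.\ $\x\in \limsup_n A_n = W(\cY,\psi)$. Hence $\mu(W(\cY,\psi))=1$, completing the proof.

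The only point worth being slightly careful about is that $\mu$ is supported on $K$ and the balls $B(\y_n,\psi(n))$ may have centres off $K$; but this is harmless, since $\mu(B(\y_n,\psi(n)))$ only records the mass lying in $K$, and both halves of the statement compare the divergence of $\sum_n \mu(B(\y_n,\psi(n)))$ with the $\mu$-size of $W(\cY,\psi)$ directly, with no normalisation issues arising.
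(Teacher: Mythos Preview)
Your proof is correct and matches the paper's approach: the paper simply notes that $\x\in W(\cY,\psi)$ if and only if $\lim_{N\to\infty}W(\x,N;\cY,\psi)=\infty$, so the corollary is an immediate consequence of Theorem~\ref{shrinktarg}. The only cosmetic difference is that you invoke the classical convergence Borel--Cantelli lemma separately for the first case, whereas the paper reads both cases off Theorem~\ref{shrinktarg} directly (when $\Psi(N)$ stays bounded, so does the counting function); either way is fine.
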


\medskip

We now consider the analogue of the shrinking target problem for the recurrence framework.  As above,  the general scene is one in which   $(X,d)$ is  a compact metric space and $(X,\mathcal{B},\mu,T)$ is a probability measure-preserving system.  We do not need the system to be ergodic to pose the recurrent problem.   Given   a real, positive function $\psi:\N\to[0,+\infty)$ let
\begin{equation*}
            R(\psi):=  \left\{x\in X:T^nx\in B(x,\psi(n))~\text{for infinitely many}~n\in\N\right\}
       \end{equation*}

\noindent denote the associated \emph{recurrent set}.  If $\psi=c$ (a constant),   the Poincar\'{e} Recurrence  Theorem implies  that
$
 \mu( R(c) )  \, = 1      $
 and  it is natural to determine the
$\mu$-measure of  the set $R(\psi)$  if $\psi(n) \to 0 $ as $n \to \infty$ and if it is
zero, its size in terms of  Hausdorff dimension. The first results date back to the pioneering work of  Boshernitzan \cite{boshernitzan1993} who studied the case $\psi(n)=n^{-1/\beta}$ ($\beta>0$). For subsequent activity  we refer to \cite{baker2021,baker2024,chang2019,hussain2022,kleinbock2023}  and references within  for measure-theoretical statements and to  \cite{he2022,hu2024,seuret2015,tan2011,wu2022} and references within for dimensional results.  As with the shrinking target problem,  we will concentrate our attention on the $\mu$-measure aspect of the recurrent problem. With this in mind,  we first note that  $R(\psi)$ is clearly  also a  $\limsup$  set; namely
$$
R(\psi) = \limsup_{n\to\infty}R_n(\psi)
$$
where for $n \in \N$,
\begin{equation} \label{anrpsidef}
R_n(\psi) :=   \left\{x\in X:T^nx\in B(x,\psi(n))\right\}  \, .
\end{equation}

\noindent Furthermore, given $ N \in \N$ and $x \in X$,  if we consider  the  associated counting function
\begin{eqnarray} \label{countdefst}
R(x,N;\psi)& :=  & \# \big\{ 1\le n \le   N :    x \in R_n(\psi) \big\} \, \nonumber  \\[1ex]  & = & \# \big\{ 1\le n \le   N :    T^nx \in B(x , \psi(n))  \big\} \nonumber \\[1ex] & = &  \textstyle{\sum_{n=1}^{N}}\mathbbm{1}_{B(x,\psi(n))}(T^nx)  \, ,
   \end{eqnarray}
 then in line with the shrinking target framework (and more  generally that of quantitative Borel-Cantelli), it would not be particularly  outrageous to  suspect (under
suitable but natural assumptions) that for $\mu$-almost all $x \in X$,    the asymptotic behaviour  of the counting function  is determined by the behaviour of the   $\mu$-measure sum  of the sets $R_n(\psi)$.  Let us make this precise in the setting of  self-conformal dynamical systems.

 \begin{claimf}  \label{claimf}
  \emph{ Let $(\Phi,K,\mu,T)$ be a self-conformal system on $\R^d$, let $\psi:\R\to\R_{\geq0}$ be a real positive function and assume that $\sum_{n=1}^{\infty}\mu(R_n(\psi)) $ diverges.  Then, for $\mu$-almost  all $\x \in  K$
 \begin{equation}\label{sbc}
 \lim_{N\to\infty}\frac{\sum_{n=1}^{N}
 \mathbbm{1}_{B(\x,\psi(n))}(T^n\x)}{\sum_{n=1}^{N}\mu(R_n(\psi))} \, = \, 1 \, .
 \end{equation}}
 \end{claimf}

 \noindent Such a claim was also  alluded to in \cite[Section~1]{levesley2024} and it was  shown to be true for a large class of piecewise linear maps in $\R^d$. However, as we shall demonstrate, it turns out that in general the claim is  false (hence the label ``F'') in a rather strong sense.  Indeed, in Section~\ref{examplesec} we provide  explicit examples of self-conformal systems for which the $\mu$-measure of the $\limsup$ set $R(\psi)$ is one but the limit
appearing in \eqref{sbc}  is not even a constant let alone one (cf. Example~ABB below). In
other words, even after excluding a set of $\mu$-measure zero, the limit in \eqref{sbc}  depends on $x$
and thus for these self-conformal systems the associated recurrent sets exhibit (unexpected
and extreme) behaviour that is not present for shrinking target sets.   To the best of our knowledge this phenomena seems not to have been observed previously or at least not explicitly documented. The following summarises the counterexamples to the claim  given in Section~\ref{examplesec}.

\begin{itemize}
  \item  In Example~\ref{egcantor},  we start with
 $\Phi=\{\varphi_1,\varphi_2\}$ where
   $\varphi_1:[0,1]\to[0,1/3]$ and $\varphi_2:[0,1]\to[2/3,1]$ are  given by
    \[
    \varphi_1(x)=\frac{x}{3},  \qquad \varphi_2(x)=\frac{x+2}{3}  \qquad \forall \  x\in[0,1]  .
    \]
   This gives rise to  the ``natural'' associated  self-conformal system $(\Phi,K,\mu,T)$ in which $K$ is  the standard middle-third Cantor set and $\mu$ is the Cantor measure.    Then, for   the constant function $\psi:\R\to\R_{\geq0}$ given by $\psi(x) \, := \,  \textstyle{ \Large{\frac{1}{3}+\frac{2}{3^2} }}  \, , $
   we show that:  for $\mu$--almost all  $x\in K$
   \begin{eqnarray*}
      \lim_{N\to\infty}\frac{\sum_{n=1}^{N}
 \mathbbm{1}_{B(x,\psi(n))}(T^nx)}{\sum_{n=1}^{N}\mu(R_n(\psi))}
      =\left\{
\begin{aligned}
    &\textstyle{\frac{4}{5}  \quad \text{if}  \quad x\in\left(\big[0,\frac{1}{9}\big]\cup\big[\frac{8}{9},1\big]\right)\cap K} ,\\[2ex]
    &\textstyle{\frac{6}{5}   \quad \text{if} \quad x\in\left(\big[\frac{2}{9},\frac{1}{3}\big]\cup\big[\frac{2}{3},\frac{7}{9}\big]\right)\cap K}  \, .
\end{aligned}
      \right.
   \end{eqnarray*}

\medskip

  \item In Example~\ref{egfull} we start with $\Phi=\{\varphi_1,\varphi_2,\varphi_3,\varphi_4\}$  where $\varphi_i$ $(i=1,2,3,4)$ are defined on $[0,1]$ and  given  by
    \[
   \varphi_1(x)=\frac{1}{4}x,\ \varphi_2(x)=\frac{1}{2(1+x)},\ \varphi_3(x)=\frac{1+x}{2+x},\ \varphi_4(x)=\frac{2}{2+x}\,.
    \]
    We show that this gives rise to a  self-conformal system $(\Phi,K,\mu,T)$ in which $K$ is  the unit interval and $\mu$ is the natural Gibbs measure supports on $K$ that is absolutely continuous with respect to Lebsegue measure.  In turn, for any  real positive function $\psi:\R\to\R_{\geq0}$ such that $\psi(x) \to 0 $ as $x \to \infty$ and $\sum_{n=1}^{\infty}\mu(R_n(\psi)) $ diverges, we show that: for $\mu$--almost all  $x\in[0,1]$
\begin{equation*}
     \lim_{N\to\infty}\frac{\sum_{n=1}^{N}
 \mathbbm{1}_{B(x,\psi(n))}(T^nx)}{\sum_{n=1}^{N}\mu(R_n(\psi))} \, = \,
     \frac{2\log 2}{1+x}.
\end{equation*}
\end{itemize}

\medskip

\noindent While the first more familiar ``Cantor''  example requires less sophisticated tools to setup and execute,  it does rely on $ \psi$ being a constant function.

\medskip

\begin{remark}
    The counterexamples show that even though we have exponentially mixing (Theorem~\ref{Main2})  we can not in general guarantee  that the sets $R_n(\psi)$ are pairwise independent on average (in the sense of  \eqref{qionav}) as in the shrinking target framework. The point is that if it did then the quantitative form of the Borel-Cantelli Lemma (see Section~\ref{CRR}: Lemma~\ref{countlem}) would  establish Claim~F  (very much in the same way we deduce Theorem~A from \eqref{qionav}).
\end{remark}

\medskip

Note that in both Example~\ref{egcantor} and \ref{egfull},  we still have that
 $\lim_{N \to \infty} R(x,N;\psi) = \infty  $  for $\mu$--almost all $x \in X$  and so  $\mu(R(\psi))=1$.  Moreover, we  highlight  the fact that in both  the measure $\mu$ is Ahlfors regular  and that for such measures this phenomena (under the assumption that $\sum_{n=1}^{\infty}\mu(R_n(\psi)) $ diverges) is known to hold for any self-conformal system  (see \cite{baker2021})
  and indeed for more general systems (see \cite{hussain2022}).  Recall, a measure $\mu$ on a metric space $(X,d)$ is \emph{$\tau$-Ahlfors regular} if there exists a constant  $C\ge 1$  such that for any ball $B(x, r)  \subset X$ with $x \in X$
\begin{equation}  \label{tar}
C^{-1}r^{\tau}\leq \mu(B(x,r))\leq Cr^{\tau}
\,\qquad  \forall \ 0<r\leq |X|
\, ,  \medskip
\end{equation}
where $|X|$ denotes the diameter of $X$.
The upshot of the above is that given a self-conformal system $(\Phi,K,\mu,T)$ on $\R^d$ for which the Gibbs measure $\mu$ is Ahlfors regular, and a real positive function  $\psi:\R\to\R_{\geq0}$,  then
\begin{eqnarray}  \label{divcondsv1}
		 \mu\left(R(\psi)\right)=
		\begin{cases}
			0 &\text{if}\ \  \sum_{n=1}^{\infty}\mu\big(R_n(\psi)\big)<\infty,\\[2ex]
			1 &\text{if}\ \ \sum_{n=1}^{\infty}\mu\big(R_n(\psi) \big)=\infty.
		\end{cases}
\end{eqnarray}\\
 The convergent part is  a straightforward consequence of the standard convergent Borel-Cantelli Lemma in probability theory.    In view of this, it is tempting to
    suspect that  at the coarser level of a zero-full measure criterion the analogue of  Claim~F is true; that is to say that \eqref{divcondsv1} is true for any self-conformal system on $\R^d$.  Clearly, such a  statement  would correspond to the analogue of Corollary~\ref{shrinktargcor} for recurrent sets.
     However, this  turns out not to be the case.  In a recent  beautiful paper,  Allen, Baker $\&$ B\'{a}r\'{a}ny \cite{allen2025}  consider the recurrent problem within the symbolic dynamics setting for topologically mixing sub-shifts of finite type. More precisely, in this setting they provide sufficient conditions for $\mu(R(\psi))$ to be zero or one when $\mu$ is assumed to be a non-uniform Gibbs measure and thus is not Ahlfors regular.  In terms of Bernoulli measures defined on the full shift,  the condition on the measure means that the  components of the defining probability vector  are not all equal.    As a consequence of their main result, in the introduction  \cite[Section~1]{allen2025} they provide  a class of  examples  within the symbolic dynamics setting for  which the sum of  $\mu(R_n(\psi))$ diverges  but $\mu(R(\psi))$ is equal to zero. In particular, these examples show that \eqref{divcondsv1} is not true  for non-uniform Gibbs measures associated with topologically mixing shifts of finite type. In the final section of \cite{allen2025}, the authors   outline how their theorems  can be transferred, via a relatively standard  argument involving the coding map, to the setting of dynamics on homogenous self-similar sets satisfying the strong separation condition  and for which the corresponding  Gibbs measures are assumed to be non-uniform.  Although not explicitly mentioned, in the same spirit the examples from \cite[Section~1]{allen2025}  can  also be naturally transferred across and when specialised to the middle-third  Cantor we obtain the following concrete example that shows that \eqref{divcondsv1} is not true for any self-conformal system.

\medskip

  \noindent \textbf{Example~ABB.} \ Let $\Phi=\{\varphi_1 \, , \varphi_2 \}$, $T$ and  $K$ be as in Example~\ref{egcantor}.  Recall, $K$ is the  standard  middle-third Cantor. Now let $\mu$ be the weighted Cantor measure  associated with the probability vector  $(p_1,p_2)$ with $p_1\neq p_2$.  Let $\alpha>0$ and $\psi_{\alpha}(n)=3^{-\linte{\alpha\log n}}$. If
    \[
    \frac{1}{-(p_1\log p_1+p_2\log p_2)}<\alpha<\frac{1}{-\log(p_1^2+p_2^2)},
    \]
    then
    \[
    \sum_{n=1}^{\infty}\mu(R_n(\psi_{\alpha}))=\infty\qquad\text{but}\qquad \mu(R(\psi_{\alpha}))=0.
    \]

\begin{remark}
To be precise,  in the above example,   $\mu:=\ubar{\mu}\circ\pi^{-1}$ where $\ubar{\mu}$ is the Bernoulli measure on $\Sigma^{\N}:=\{1,2\}^{\N}$ associated with the probability vector  $(p_1,p_2)$ with $p_1\neq p_2$ and $\pi:\Sigma^{\N}\to K$ is the coding map  associated  to $\Phi$ (see \eqref{cmap} for the definition).
\end{remark}

\medskip

A straightforward consequence of  Example~ABB is that for $\mu$--almost all $x \in X$
    \[
   \sum_{n=1}^{\infty}\one_{B(x,\psi_{\alpha}(n))}
(T^nx)  \, \ll \, 1   \quad {\rm and \ so \ }  \quad
    \lim_{N\to\infty}\frac{\sum_{n=1}^N\one_{B(x,\psi_{\alpha}(n))}
(T^nx)}{\sum_{n=1}^N\mu(R_n(\psi_{\alpha}))}=0  \, .
    \]
 In other words, even though the limit is a constant  for $\mu$--almost all $x \in X$,  it is not one (cf.  Claim~F). Note that  Examples 7.1 and 7.2 show that Claim~F is false  even when  $\mu(R(\psi))=1$ and that for $\mu$-almost all $x \in K$, the limit under consideration is dependent on $x$ and thus  not a constant; that is to say that Claim~F is false on a large scale!


 \medskip

Given that Claim~F is false, it is natural to  attempt to establish   an appropriate ``modified'' statement  that is true for the  full range of dynamical systems under consideration (namely, self-conformal systems).  Such a statement  would obviously follow on establishing the analogue  of Theorem~\ref{shrinktarg} for recurrent sets.  Indeed,  this is the ultimate goal as it  would provide an asymptotic result with an error term.   With this in mind, in order to state our first main result (for recurrent sets) we need to introduce a particular function that will determine the appropriate setup and thus the  asymptotic behaviour.   As usual,  let
$(\Phi,K,\mu,T)$   be a self-conformal system and a $\psi:\R\to\R_{\geq0}$ be a real,  positive function. Then for each  $n\in \N$,  we define the function
%
 $$
 t_n(\cdot)=t_n(\cdot,\psi):K\to\R_{\geq0}
 $$
  by
  \begin{eqnarray}\label{defofrn}
    t_n(\x)=t_n(\x,\psi):=\inf\left\{r\geq0:\mu(B(\x,r))\geq\psi(n)\right\}
\end{eqnarray}
if $ \psi(n)  \leq  1 $  and  we put $ t_n(\x)$ equal to the diameter of the bounded set  $K$ otherwise.
  With the definition of $t_n$ in mind, Theorem~\ref{Main2}  enables us to establish the following analogue  of Theorem~\ref{shrinktarg} for recurrent sets.

\begin{theorem}\label{quantrec}
    Let $(\Phi,K,\mu,T)$ be a self-conformal system on $\R^d$ and let $\psi:\R\to\R_{\geq0}$ be a real positive function such that $\psi(x) \to 0 $ as $x \to \infty$. Furthermore, for $n \in \N$
    let $t_n :K\to\R_{\geq0}$ be given by  (\ref{defofrn}). Then for any $\epsilon>0$, we have
    \begin{equation} \label{recurrentcount}
        \sum_{n=1}^{N}\one_{B(\x,t_n(\x))}(T^n\x)=\Psi(N)+O\left(\Psi(N)^{1/2}\log^{\frac{3}{2}+\epsilon}(\Psi(N))\right)
    \end{equation}
    for $\mu$-almost all $\x\in K$, where
     \begin{equation} \label{recurrentsum}
    \displaystyle\Psi(N):=\sum_{n=1}^N\psi(n)   \, .
    \end{equation}
\end{theorem}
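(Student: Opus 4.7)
The goal is to apply a quantitative Borel--Cantelli lemma to the sequence of events $A_n := \{\x \in K : T^n\x \in B(\x, t_n(\x))\}$, whose ``target balls'' $B(\x,t_n(\x))$ depend on the base point $\x$. Because $A_n$ is not of the form $T^{-n}(E_n)$ for a fixed $E_n$, Theorem~A does not apply directly; the plan is instead to invoke the more versatile Lemma~\ref{genharmanlem}. Its hypotheses reduce to verifying, for the indicators $\one_{A_n}$, that $\mu(A_n)=\psi(n)+o(\psi(n))$ and that for every sufficiently large interval $[M,N]$,
\begin{equation*}
\sum_{M\leq m,n\leq N} \mu(A_m\cap A_n) \;\leq\; \Bigl(\sum_{n=M}^N \psi(n)\Bigr)^{\!2} + O\Bigl(\sum_{n=M}^N \psi(n)\Bigr)\,.
\end{equation*}
From this, the lemma will output (\ref{recurrentcount}) with the stated error term.

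\textbf{Partitioning and the single-event asymptotic.} For each $n\in\N$, decompose $K$ into a finite Borel partition whose pieces (taken as basic cylinders at a sufficiently deep level of the symbolic coding for $\Phi$) have diameter much smaller than $\eta\,t_n(\y)$ for every $\y$ in the piece, where $\eta>0$ is a small parameter. On such a piece $E$ with representative $\y$, one has the sandwich
\begin{equation*}
B\bigl(\y,(1-\eta)t_n(\y)\bigr) \;\subseteq\; B(\x,t_n(\x)) \;\subseteq\; B\bigl(\y,(1+\eta)t_n(\y)\bigr) \qquad (\x\in E),
\end{equation*}
up to an annular error whose $\mu$-measure is bounded by $C\eta^\delta\psi(n)$, via the annulus estimate (\ref{collection}) for balls---the key ingredient already established in the proof of Theorem~\ref{Main2}. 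Applying exponential mixing (\ref{expmixballs}) to the fixed ball $B(\y,t_n(\y))$ and using $\mu(B(\y,t_n(\y)))=\psi(n)+O(\eta^\delta\psi(n))$ by construction of $t_n$, and then summing over pieces, yields $\mu(A_n)=\psi(n)(1+O(\eta^\delta))+O(\gamma^n)$. Letting $\eta\to 0$ gives $\mu(A_n)=\psi(n)+o(\psi(n))$.

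\textbf{Pairwise quasi-independence.} For $m<n$, write
\begin{equation*}
A_m\cap A_n=\bigl\{\x\in K:\; T^m\x\in B(\x,t_m(\x))\ \text{and}\ T^{n-m}(T^m\x)\in B(\x,t_n(\x))\bigr\}.
\end{equation*}
Refine the partition above so that on each piece $E$ with representative $\y$, both $B(\x,t_m(\x))$ and $B(\x,t_n(\x))$ are sandwiched between fixed balls $B_m^\pm=B(\y,(1\pm\eta)t_m(\y))$ and $B_n^\pm=B(\y,(1\pm\eta)t_n(\y))$. The $\mu$-measure of $E\cap A_m\cap A_n$ is then controlled by
$\mu\bigl(E\cap T^{-m}(B_m^+\cap T^{-(n-m)}B_n^+)\bigr)$
up to annular losses. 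Two applications of (\ref{expmixballs})---first to decouple the inner factor $B_m^+\cap T^{-(n-m)}B_n^+$, then to decouple $E$ from $T^{-m}(\cdot)$---reduce this to $\mu(E)\psi(m)\psi(n)$ with geometric error $O(\gamma^{n-m}\psi(m)+\gamma^m\psi(n))+O(\eta^\delta(\psi(m)+\psi(n)))$. Summing over refinement cells and then over $M\leq m<n\leq N$ and letting $\eta$ shrink along a suitable scale in $n$, the geometric errors contribute at most $O(\sum_n\psi(n))$, delivering the required quasi-independence.

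\textbf{Main obstacle.} The chief subtlety is the joint dependence of centre \emph{and} radius of the recurrent ball on $\x$, which forces the partition to be adapted to $\psi(n)$ and requires uniform control on $\mu\bigl((\partial B(\y,t_n(\y)))_\varrho\bigr)$. The former is handled by exploiting the bounded distortion of the conformal IFS to produce Borel partitions of metric scale $\asymp \eta\,t_n(\y)$; the latter is precisely the annulus bound (\ref{collection}) for balls proved en route to Theorem~\ref{Main2}, whose derivation rests on the rigidity result Theorem~\ref{thmrigidity}. Once these two ingredients are in place, the argument is driven entirely by exponential mixing applied to balls and cylinders, with Lemma~\ref{genharmanlem} providing the quantitative Borel--Cantelli conclusion that turns the quasi-independence above into the asymptotic (\ref{recurrentcount}).
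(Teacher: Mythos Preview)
Your outline is in the right spirit and close to the paper's argument, but it has a concrete gap in the single-event estimate. You claim that ``letting $\eta\to 0$'' yields $\mu(A_n)=\psi(n)+o(\psi(n))$. This is too weak: the quantitative Borel--Cantelli lemma outputs an asymptotic whose main term is $\sum_{n\le N}\mu(A_n)$, and to replace this by $\Psi(N)=\sum_{n\le N}\psi(n)$ as in (\ref{recurrentcount}) you need the difference absorbed into the error $O(\Psi(N)^{1/2}\log^{3/2+\epsilon}\Psi(N))$. An $o(\psi(n))$ bound only gives $\sum_n\mu(A_n)=(1+o(1))\Psi(N)$, a discrepancy of order $o(\Psi(N))$ that swamps the claimed error. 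What is actually needed --- and what the paper proves in Lemma~\ref{measan} --- is the exponential estimate $\mu(A_n)=\psi(n)+O(\widetilde\gamma^{\,n})$ for some $\widetilde\gamma\in(0,1)$, whence $\sum_n\mu(A_n)=\Psi(N)+O(1)$. You cannot send $\eta\to0$ after the fact, since the number of partition pieces (hence the accumulated mixing error) blows up; the partition depth must be tied to $n$ from the start. The paper takes cylinders at level $\ell\asymp n$, balancing the mixing error $m^\ell\gamma^n$ against the annulus error $\kappa^{\delta\ell}$, both becoming exponentially small in $n$. The same defect resurfaces in your pairwise sketch, where the paper's Lemma~\ref{capcapineq} in fact requires a case split according to whether the gap $n_2-n_1$ is large or small relative to $n_1$, with different partition depths in each case.

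Two further points. First, your sandwich $B(\y,(1\pm\eta)t_n(\y))$ around $B(\x,t_n(\x))$ tacitly assumes that $t_n(\x)$ is close to $t_n(\y)$ for $\x$ near $\y$; this is not automatic, and the paper establishes it explicitly as the $1$-Lipschitz property of $t_n$ (Lemma~\ref{lem5.1}). With that in hand the sandwich takes the cleaner additive form of Lemma~\ref{ohffs}, perturbing the radius by $|K_I|$ rather than by a multiple of $t_n(\y)$; the annulus error is then an absolute $O(|K_I|^\delta)$, and one avoids your condition ``diameter $\ll\eta\,t_n(\y)$'', which is awkward since $t_n(\y)$ varies over $K$ and is not uniformly bounded below for general Gibbs $\mu$. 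Second, since the expectation $\mu(A_n)$ is a constant independent of $\x$, the standard Lemma~\ref{countlem} already suffices here; the paper reserves the more versatile Lemma~\ref{genharmanlem} for Theorem~\ref{toprove}, where the main term $\mu(B(\x,\psi(n)))$ genuinely depends on $\x$.
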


\noindent
\begin{remark} \label{remarkthm1.5}  Several comments are in order.
\begin{enumerate}
  \item[(i)] It turns out (see Lemma~\ref{lem5.1} in Section~\ref{CRR}) that for all  $ \x \in K $ and all sufficiently large  $n \in \N$
\begin{equation}  \label{omg}   \mu(B(\x,t_n(\x))) = \psi(n)   \, .   \end{equation}
  Thus, up to an additive constant, the sum \eqref{recurrentsum} is simply the sum of the $\mu$-measure of the ``target balls'' $B(\x,t_n(\x)) $  associated with the modified  counting function appearing  on the left hand side of~\eqref{recurrentcount}.  In short, if the measure $\mu$ is non-uniform then the measure of a ball $B(\x, r)$  depends on its location $\x$  and not just its radius $r$.  In order to take this into account,  for  $n$ large, the radii of the target balls within the framework of  Theorem~\ref{quantrec}   are adjusted so that they all have the same measure  (namely $\psi(n)$) regardless of location.
  \medskip
  \item[(ii)] Let $ \hat{R}_n(\x,N;\psi) $   denote the  modified  counting function appearing  on the left hand side of~\eqref{recurrentcount}.  Then by definition,
  \begin{equation*} \label{countdefst-hat}
\hat{R}_n(\x,N;\psi)  =  \# \big\{ 1\le n \le   N :    \x \in \hat{R}_n(\psi) \big\} \, ,
   \end{equation*}
   where
   $$
 \hat{R}_n(\psi)   :=    \big\{\x\in K :    T^n\x \in B(\x , t_n(\x))  \big\} \, .
   $$
 It turns out  (see Lemma~\ref{measan} in Section~\ref{CRR}) that there exists a constant $0< \gamma < 1 $ such that
$$ \mu(\hat{R}_n(\psi))=\psi(n)+O(\gamma^n)    \, . $$
The upshot of this and the equality \eqref{omg} appearing in (i) above is   that
the sum   \eqref{recurrentsum}  appearing in the theorem and the measure sums $\sum_{n=1}^N\mu(B(\x,t_n(\x))) $ and $ \sum_{n=1}^N \mu(\hat{R}_n(\psi)) $  are all equal up to an additive constant.
   \medskip

  \item[(iii)] The theorem is valid for any self-conformal system on $\R^d$. The price we  seemingly have to pay for this generality is that the radii of the target balls  $B(\x,t_n(\x))$ associated with the modified counting function $ \hat{R}_n(\x,N;\psi) $  are dependant on their centres $\x \in K$.  This is   clearly unlike the situation for the ``pure'' counting function  $R_n(\x,N;\psi)$   for which we know that Claim~F is false for all self-conformal systems.\medskip
  \item[(iv)]  A simple consequence of Theorem~\ref{quantrec} is the following asymptotic statement that ``fixes'' Claim~F:
 \emph{ Let $(\Phi,K,\mu,T)$ be a self-conformal system on $\R^d$ and let $\psi:\R\to\R_{\geq0}$ be a real positive function such that $\sum_{n=1}^{\infty} \psi(n)$ diverges. Then for $\mu$-almost all $ \x \in  K$}
 \begin{equation}\label{sbccc}
 \lim_{N\to\infty}\frac{\sum_{n=1}^{N}\one_{B(\x,t_n(\x))}
 (T^n\x)}{\sum_{n=1}^{N}\psi(n)} \, = \, 1 \, .
 \end{equation} \\
 Note that in view of the discussion in (ii) above this ``corrected'' statement simply corresponds to Claim~F in which the counting function $R_n(\x,N;\psi) $ is replaced by the modified counting function $ \hat{R}_n(\x,N;\psi) $    and with $R_n(\psi) $ replaced by $\hat{R}_n(\psi)$.
 \medskip
 \item[(v)] Recently,  under various  growth conditions on the function  $\psi$,  Persson \cite{persson2023} has proved a result in a similar  vein to \eqref{sbccc} for a large class of dynamical systems  with exponential decay of correlations on the unit interval. Subsequently, his work (with the various growth conditions) was extended by Sponheimer \cite{Sponheimer} to  more general dynamical systems including Axiom A diffeomorphisms. We stress that Theorem~\ref{quantrec}, which implies \eqref{sbccc},  is free of growth conditions on $\psi$ and provides  an essentially optimal error term.  At the point of  completing this paper, the preprint \cite{persson2025} of Persson $\&$ Sponheimer   appeared.  In this, under a `short return time assumption' and `$3$-fold  exponential decay' they  essentially remove the growth conditions on $\psi$ imposed in their previous works.
\end{enumerate}
\end{remark}
\medskip



Even though Theorem~\ref{quantrec}  is in some sense a ``complete'' result, it  fails to  directly deal with the main purpose  of  Claim~F.   Indeed, it remains highly desirable  to  obtain asymptotic information regarding the  behaviour of the ``pure''  counting function \eqref{countdefst} in which the radii of the target balls are independent of their centres.  We reiterate that this is not the case within the framework of Theorem~\ref{quantrec}.   In short, our second main result (for recurrent sets) shows that we are in reasonably  good shape for systems with Gibbs measures equivalent to restricted Hausdorff measures $\cH^{\tau}|_K$.  Here and throughout, we say that Borel measures $\mu$ and $\nu$ on a metric space $(X,d)$ are equivalent if there exists a constant $C\geq1$  such that
$
C^{-1}\nu(E)\leq\mu(E)\leq C\nu(E)  $   for any Borel subset $ E\subseteq X$.


\begin{theorem} \label{toprove}
     Let $(\Phi,K,\mu,T)$ be a self-conformal system on $\R^d$ with $\mu$ being a Gibbs measure  equivalent to $\cH^{\tau}|_K$ where $\tau:=\dimH K$. Let $\psi:\R\to\R_{\geq0}$ be a real positive function such that $\psi(x) \to 0 $ as $x \to \infty$. Then for any $\eta>0$ and $\epsilon>0$, we have
     \begin{equation} \label{recurrentcountHM}
        \sum_{n=1}^{N}\one_{B(\x,\psi(n))}(T^n\x)=\sum_{n=1}^N\mu\big(B(\x,\psi(n))\big)+O\left(\Psi_{\eta}(N)^{1/2}(\log\Psi_{\eta}(N))^{\frac{3}{2}+\epsilon}\right)
    \end{equation}
    for $\mu$-almost all $\x\in K$, where
     \begin{equation} \label{recurrentsumHM}
    \Psi_{\eta}(N):=\sum_{n=1}^N\psi(n)^{(1-\eta)\tau}   \, .
    \end{equation}
\end{theorem}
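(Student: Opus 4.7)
The plan is to obtain the asymptotic by applying the versatile quantitative Borel--Cantelli lemma (Lemma~\ref{genharmanlem}) to the recurrent events $R_n(\psi)=\{\x\in K:\,T^n\x\in B(\x,\psi(n))\}$, with the $\x$-dependent main term $\sum_n\mu(B(\x,\psi(n)))$. The two central ingredients will be the $\tau$-Ahlfors regularity of $\mu$ and a pairwise quasi-independence estimate derived from the exponential mixing statement Theorem~\ref{Main2} via a cell decomposition of $K$.

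First I would record that, since $\mu$ is equivalent to $\mathcal{H}^{\tau}|_K$ on a self-conformal attractor of Hausdorff dimension $\tau$, the measure $\mu$ is $\tau$-Ahlfors regular: $\mu(B(\x,r))\asymp r^{\tau}$ uniformly in $\x\in K$ and $0<r\leq\mathrm{diam}(K)$. Consequently $\sum_{n=1}^{N}\mu(B(\x,\psi(n)))\asymp\Psi(N)$ uniformly in $\x$, and the error order $\Psi(N)^{1/2}(\log\Psi(N))^{3/2+\epsilon}$ is exactly what a quantitative Borel--Cantelli argument will produce. Observe also that the function $f_n(\x):=\mu(B(\x,\psi(n)))$ is $\delta$-H\"older continuous in $\x$, because $|f_n(\x)-f_n(\y)|\leq\mu((\partial B(\x,\psi(n)))_{|\x-\y|})\leq C|\x-\y|^{\delta}$ by the estimate $\mu((\partial B)_{\varrho})\leq C\varrho^{\delta}$ established in the proof of Theorem~\ref{Main}.

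The main step is verifying pairwise quasi-independence on average. For each $n$, partition $K$ into cells $\{C_j^{(n)}\}$ of diameter $\rho_n\ll\psi(n)$ with centres $x_j^{(n)}$; Ahlfors regularity gives $O(\rho_n^{-\tau})$ such cells. For $\x\in C_j^{(n)}$ one has the sandwich
\[
\one_{B(x_j^{(n)},\psi(n)-\rho_n)}(T^n\x)\;\leq\;\one_{B(\x,\psi(n))}(T^n\x)\;\leq\;\one_{B(x_j^{(n)},\psi(n)+\rho_n)}(T^n\x),
\]
whose annular discrepancy integrates to at most $C\rho_n^{\delta}$ per index $n$. For $n<m$ I would then bound $\mu(R_n(\psi)\cap R_m(\psi))$ by the cell-wise sum of
\[
\mu\bigl(C_j^{(n)}\cap T^{-n}B(x_j^{(n)},\psi(n)+\rho_n)\cap T^{-m}B(x_j^{(n)},\psi(m)+\rho_m)\bigr),
\]
and apply Theorem~\ref{Main2} twice: first with the ball $C_j^{(n)}$ and time shift $n$, then with the inner ball $B(x_j^{(n)},\psi(n)+\rho_n)$ and time shift $m-n$. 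Using Ahlfors regularity and the H\"older continuity of $f_n$, this yields
\[
\mu(R_n\cap R_m)\;\leq\;\int f_n(\x)f_m(\x)\,\td\mu(\x)+O\!\bigl(\gamma^{m-n}\psi(m)^{\tau}\bigr)+O\!\bigl(\rho_n^{-\tau}\gamma^{n}\bigr)+O(\rho_n^{\delta}\psi(m)^{\tau}).
\]
Summed over $1\leq n<m\leq N$, the $\gamma^{m-n}$ correction telescopes into $O(\Psi(N))$, while the remaining errors are absorbed into $O(\Psi(N))$ provided $\rho_n$ is chosen as a suitable polynomial-type decay such as $\rho_n=n^{-C}\psi(n)^{\tau/\delta}$.

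Feeding the pairwise estimate into Lemma~\ref{genharmanlem} with expectations $f_n(\x)=\mu(B(\x,\psi(n)))$ then delivers \eqref{recurrentcountHM} for $\mu$-almost every $\x\in K$. The principal obstacle I anticipate is the two-scale calibration of the cell radius $\rho_n$: it must be small enough that the annular contribution $\sum_n\rho_n^{\delta}$ and the cell-centre substitution errors remain within $O(\Psi(N))$, yet large enough that the mixing error $\rho_n^{-\tau}\gamma^{n}$ accumulated across the $O(\rho_n^{-\tau})$ cells is summable in $n$. The exponential decay $\gamma^n$ provided by Theorem~\ref{Main2} leaves generous slack at both ends, so a polynomial choice of $\rho_n$ with an exponent depending only on $\tau$ and $\delta$ will satisfy both constraints simultaneously.
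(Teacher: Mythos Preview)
Your overall plan is correct and matches the paper's: apply Lemma~\ref{genharmanlem} with $g_n(\x)=\mu(B(\x,\psi(n)))$, use Ahlfors regularity to replace the $\x$-dependent sum by $\Psi(N)$ in the error, and prove the $L^2$ hypothesis via a local-to-global approximation of $R_n(\psi)$ combined with exponential mixing. The differences are in execution, and one of them touches on a point you should tighten.

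First, the paper does not use a cell decomposition by balls at a $\psi$-dependent polynomial scale $\rho_n$. It uses the cylinder sets $K_I$, $I\in\Sigma^{q_n}$, with $q_n=\lfloor c\,n\rfloor$ chosen so that $m^{q_n}\gamma^n\asymp\gamma^{n/2}$ and $|K_I|\leq C_3\kappa^{q_n}$. This choice is \emph{independent of $\psi$}, and it resolves exactly the calibration issue you flag: the number of cells is $m^{q_n}$ and the annular error is $\kappa^{\delta q_n}$, both exponential in $n$, so both constraints are met simultaneously without reference to $\psi$. Your polynomial choice $\rho_n=n^{-C}\psi(n)^{\tau/\delta}$ would work in the divergent case but is more delicate and unnecessary. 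The sandwich (Lemma~\ref{relalemtoprove}) does not require $|K_I|\ll\psi(n)$; the annular contribution $\mu((\partial B(\z_I,\psi(n)))_{|K_I|})=O(\kappa^{\delta q_n})$ is small regardless.

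Second, and more substantively, to verify \eqref{keycont} you must control not only $\mu(R_n\cap R_m)$ but also the cross term
\[
\int_K \one_{R_{n_1}(\psi)}(\x)\,\mu\big(B(\x,\psi(n_2))\big)\,\td\mu(\x),
\]
since $g_n$ is $\x$-dependent. You do not mention this. The paper handles it (Lemma~\ref{toprolemonern1mebn2}) by passing to the symbolic space and using the decay of correlations for H\"older observables (Theorem~\ref{expmixforhol}) together with the elementary bound $\|l_{n_2}\circ\pi\cdot\one_{[I]}\|_\beta\leq M(1+m^{\beta|I|})$ of Lemma~\ref{bddalholder}; here $l_n(\x)=\mu(B(\x,\psi(n)))/\psi(n)^\tau$ is bounded by Ahlfors regularity. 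This is a genuine additional ingredient beyond Theorem~\ref{Main2}: mixing for balls alone does not directly give decay against a non-indicator test function like $g_m$. Your sketch can be repaired by freezing $g_m$ on each cylinder (using the H\"older bound you already noted for $\x\mapsto\mu(B(\x,r))$) and then applying mixing for cylinders, but you should state this step explicitly.

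Finally, the paper's estimate of $\mu(R_n\cap R_{n+k})$ (Lemma~\ref{topromuofrnpsirnkpsi}) splits into two regimes, $q_{n+k}\geq n$ versus $q_{n+k}<n$, and in the first regime uses the two-cylinder intersection identity (Lemma~\ref{lemtwoseccylin}) rather than iterating Theorem~\ref{Main2}. This is a convenience of working with cylinders; your ball-based argument would need a substitute here.
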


\medskip

\begin{remark}~  \label{newrem} Several comments are in order.

\begin{enumerate}
  \item[(i)]
It is easily versified that within the setup of self-conformal systems, the notion of  $\mu$ being equivalent to $\cH^{\tau}|_K$  and $\mu$ being $\tau$-Ahlfors regular (see \eqref{tar}) coincide  -- for the details, see the proof of Theorem~2.7 in \cite{fan1999}.  In general, we only have that the latter implies  the former.   \\

\item[(ii)] Theorem~\ref{toprove},  could, in principle, be stated with $\eta = \epsilon$ for a cleaner formulation. However, the parameters $\eta$ and $\epsilon$ serve distinct roles. The presence of $\epsilon > 0$ arises necessarily  from the application of the quantitative  Borel–Cantelli Lemma (see Section~\ref{apptoprove}, Lemma~\ref{genharmanlem}) and  cannot be removed.   Notably, this same $\epsilon$ appears in the statements of Theorem~\ref{shrinktarg} and Theorem~\ref{quantrec}.  In contrast, we strongly believe that theorem remains valid with $\eta = 0$, and that the introduction of $\eta>0$ is merely a technical artifact.  Specifically, it arises in Lemma~\ref{bddalholder} to facilitate the proof.  We shall soon see that  this  belief is justified  if we are content with asymptotic statements without error term.
  \\

\item[(iii)] In view of (i) it follows that
\begin{equation}  \label{allcomp101}
\sum_{n=1}^N \mu\big(B(\x,\psi(n))\big)   \;  \asymp \;    \sum_{n=1}^N \psi(n)^{\tau} \,  :=  \, \Psi(N)   \, ;
\end{equation}
that is the main term in \eqref{recurrentcountHM} is comparable to  \eqref{recurrentsumHM} with $\eta =0$.  In turn, it follows that due to the presence of $\eta > 0$ in the error term  in \eqref{recurrentcountHM} we can not always conclude that the main term dominates the error term without imposing a condition on the decay rate of $\psi$.  We give a simple example that we hope clearly illustrates the point being made.  Suppose $d=1$ and $\mu$ is one-dimensional Lebesgue measure. For $\alpha > 0$,  consider the function $\psi_\alpha:\R\to\R_{\geq0} $ given by
$$
\psi_{\alpha} (x) \, :=   \,  x^{-\alpha}  \, .
$$
Then, $   \mu\big(B(x,\psi_{\alpha}(n))\big) = 2 n^{-\alpha} $ for any $x \in K$  and so $\sum_{n=1}^\infty  \mu\big(B(x,\psi_{\alpha}(n))\big)$ diverges for any $\alpha \in (0,1]$. Moreover,
\begin{eqnarray*}\label{}
	\lim_{N\to\infty} \; \frac{{\rm Error \ Term \ in \ \eqref{recurrentcountHM} }}{\sum_{n=1}^N\mu\big(B(x,\psi_{\alpha}(n))\big)}  \ = \
\begin{cases}
			0 &\text{if}\ \  \alpha \in (0,1)\\[2ex]
			\infty &\text{if}\ \   \alpha =1 .
		\end{cases}
	\end{eqnarray*}
Thus, Theorem \ref{toprove} does not yield the desired asymptotic statement at the critical exponent $\alpha =1$.    Nevertheless, apart from this flaw, for reasons outlined earlier,  Theorem~\ref{toprove} is on the whole  a more desirable analogue of Theorem~\ref{shrinktarg} than Theorem~\ref{quantrec} for self-conformal systems with Gibbs measures $\mu$ equivalent to  $\cH^{\tau}|_K$.   Under certain additional conditions on the measure (such as $\mu$ being absolutely continuous with respect to  Lebesgue measure) we are able to show that  the flaw  is not present if we are content with asymptotic statements without error terms.
\end{enumerate}
\end{remark}

Note that the theorem shows that  for $\mu$-almost all $x \in K $,  the asymptotic behaviour  of the counting function $R(\x,N;\psi)$  is determined by the behaviour of the measure sum
\begin{equation} \label{recurrentsumHMx}
\sum_{n=1}^N\mu\big(B(\x,\psi(n))\big)  \, ,
\end{equation}
  which, a priori,  is dependant on $x$.    The point is that  if the measure  $\mu$ is non-uniform,   the measure of the ``target balls''  $B(\x,\psi(n)) $  associated with   $R(\x,N;\psi)$  depends on $x$.
 This is unlike the situation in the shrinking  target framework in which the  measure of the ``target balls''  $B(\y_n,\psi(n)) $  associated with the counting function  $W(\x,N;\cY,\psi)$ are independent of $\x$.  On a slightly different but related note, we point out that the Gibbs measures associated with the  explicit counterexamples (Examples~\ref{egcantor} \& \ref{egfull})  to Claim~F satisfy the conditions of Theorem~\ref{toprove}.    Thus,    the $\mu$-measure sum \eqref{recurrentsumHMx}   can not in general coincide with the  $\mu$-measure sum involving the sets $R_n(\psi)$ associated with the  recurrent $\limsup$ set  $R(\psi)$.
However, it is the case  (see Lemma~\ref{topromuofrnpsi})  that the sums \eqref{recurrentsumHM} with $\eta =0$,  \eqref{recurrentsumHMx} and $ \sum_{n=1}^N\mu\big(R_n(\psi)\big)$ are all comparable\footnote{For the sake of comparison, recall that in the setting of Theorem~\ref{quantrec} the analogous three sums are asymptotically equivalent  (see comment (ii) in Remark~\ref{remarkthm1.5}).}; that is
\begin{equation}  \label{allcomp}
\sum_{n=1}^N \mu\big(B(\x,\psi(n))\big)   \;  \asymp \;   \sum_{n=1}^N \mu\big(R_n(\psi)\big) \;  \asymp \;  \Psi(N):= \sum_{n=1}^N \psi(n)^{\tau}  \, .
\end{equation}
Now with Remark~\ref{remarkthm1.5}\,(ii) in mind,  it follows that if $\mu$ is  $\tau$-Ahlfors regular  then for all $n \in \N$
\begin{equation}  \label{allcomp2}
 \hat{R}_n\big(C^{-1} \psi(n)^{\tau}\big)
 \; \subseteq   \;  R_n(\psi) \;   \subseteq  \;  \hat{R}_n\big(C \psi(n)^{\tau}\big)    \, ,
  \end{equation}
  where $C \ge 1 $ is the ``Ahlfors regular'' constant appearing in \eqref{tar}.  Then on making use of \eqref{allcomp} and
  \eqref{allcomp2},  it is easily verified that  Theorem~\ref{quantrec} implies  the   following  zero-full measure criterion  which validates \eqref{divcondsv1} whenever $\mu$ is equivalent to $\cH^{\tau}|_K$.  Indeed, it  coincides with the main result of  Baker $\&$ Farmer \cite{baker2021} discussed within the context of~\eqref{divcondsv1}.

   \begin{corollary} \label{toprovecor}
     Let $(\Phi,K,\mu,T)$ be a self-conformal system on $\R^d$ with $\mu$ being a Gibbs measure equivalent to $\cH^{\tau}|_K$ where $\tau:=\dimH K$. Let $\psi:\R\to\R_{\geq0}$ be a real positive function.  Then \begin{eqnarray}\label{0-1law}
		 \mu\left(R(\psi)\right)=
		\begin{cases}
			0 &\text{if}\ \  \sum_{n=1}^{\infty}\psi(n)^{\tau} <\infty,\\[2ex]
			1 &\text{if}\ \ \sum_{n=1}^{\infty} \psi(n)^{\tau}  =\infty.
		\end{cases}
	\end{eqnarray}
\end{corollary}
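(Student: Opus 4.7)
The plan is to obtain Corollary~\ref{toprovecor} as a direct consequence of Theorem~\ref{toprove} together with the three-way comparability \eqref{allcomp} and the (elementary) convergence half of the Borel--Cantelli lemma. Without loss of generality I may assume $\psi(n)\to 0$: if this fails then $\psi(n)\geq c>0$ for infinitely many $n$, which makes the sum $\sum\psi(n)^{\tau}$ diverge and forces $\mu(R(\psi))=1$ by the Poincar\'{e} recurrence theorem applied to the fixed balls $B(\x,c)$. So from now on the statements of Theorem~\ref{toprove} and \eqref{allcomp} are both available.

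For the convergence direction, suppose $\sum_{n=1}^{\infty}\psi(n)^{\tau}<\infty$. The middle $\asymp$ in \eqref{allcomp} gives $\sum_{n=1}^{\infty}\mu(R_n(\psi))<\infty$, and since $R(\psi)=\limsup_{n\to\infty}R_n(\psi)$, the classical convergence Borel--Cantelli lemma yields $\mu(R(\psi))=0$.

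For the divergence direction, suppose $\sum_{n=1}^{\infty}\psi(n)^{\tau}=\infty$, so $\Psi(N)\to\infty$. Applying Theorem~\ref{toprove} with any fixed $\epsilon>0$, I have for $\mu$-almost every $\x\in K$ that
$$\sum_{n=1}^{N}\one_{B(\x,\psi(n))}(T^n\x) \; = \; \sum_{n=1}^{N}\mu\bigl(B(\x,\psi(n))\bigr) \, + \, O\Bigl(\Psi(N)^{1/2}\bigl(\log\Psi(N)\bigr)^{3/2+\epsilon}\Bigr).$$
By the first $\asymp$ in \eqref{allcomp}, the main term $\sum_{n=1}^{N}\mu(B(\x,\psi(n)))$ is comparable to $\Psi(N)$ and therefore tends to infinity and dominates the error. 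Hence the counting function on the left is unbounded in $N$, which means $T^n\x\in B(\x,\psi(n))$ for infinitely many $n$, i.e.\ $\x\in R(\psi)$. This holds for $\mu$-almost every $\x$, so $\mu(R(\psi))=1$.

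There is no genuine obstacle at this stage: the corollary is a packaging statement collecting the two halves of a Borel--Cantelli-type dichotomy. All the substantive work (Mattila-type rigidity to control $\mu$-measures of spherical shells, exponential mixing, and the quantitative counting asymptotic with $\x$-dependent radii) has been absorbed into Theorem~\ref{toprove} and the comparability \eqref{allcomp}; the deduction above is essentially a bookkeeping exercise.
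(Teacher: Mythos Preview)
Your approach is exactly the paper's: deduce the corollary from Theorem~\ref{toprove} together with the comparability \eqref{allcomp}, using the convergence Borel--Cantelli lemma for the zero half. The paper itself gives no more than the sentence ``Thus, Theorem~\ref{toprove} implies the following corollary'', so your write-up is already more explicit.

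One point to tighten: your reduction to $\psi(n)\to 0$ via Poincar\'e recurrence does not quite work as stated. Poincar\'e guarantees $T^n\x\in B(\x,c)$ for infinitely many $n$, but not for infinitely many $n$ lying in the particular subsequence $\{n:\psi(n)\geq c\}$, which is what you need; an irrational rotation shows this distinction is genuine. A clean fix that avoids Poincar\'e entirely: enumerate $\{n:\psi(n)\geq c\}=\{n_1<n_2<\cdots\}$ and set $\tilde\psi(n_k)=c\,(\log(k+2))^{-1/\tau}$, $\tilde\psi(n)=0$ otherwise. Then $\tilde\psi\leq\psi$, $\tilde\psi(n)\to0$, and $\sum_n\tilde\psi(n)^\tau=c^\tau\sum_k 1/\log(k+2)=\infty$, so your divergence argument applied to $\tilde\psi$ gives $\mu(R(\psi))\geq\mu(R(\tilde\psi))=1$.
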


\noindent  As we have seen, the corollary follows in a fairly straightforward manner from Theorem~\ref{quantrec}. However, it would follow in an entirely trivial way if we were able to eliminate the dependence on $\eta > 0$ in the statement of Theorem~\ref{toprove}. At present, this is not possible, and so we must invoke Theorem~\ref{quantrec} as stated.

We now point out that in the case $\mu$ is equivalent to $\cH^{\tau}|_K$, beyond  implying  the above zero-full measure criterion,   Theorem~\ref{quantrec} can  also  be utilized to explicitly obtain information regarding the behaviour of the counting function \eqref{countdefst}.  In order to state precisely what exactly can be obtained,  we need to introduce the following notion of upper and lower densities.  Let $\psi:\R\to\R_{\geq0}$ be a real positive function.   Then, for each $\tau>0$, each probability measure $\mu$ on $\R^d$ and each $\x\in\R^d$,  we define the $\tau$-lower and $\tau$-upper densities of $\mu$ at $\x$ associated with $\psi$ by
\[
\Theta^{\tau}_*(\mu, \psi,\x):=\liminf_{n\to \infty}\frac{\mu(B(\x,\psi(n)))}{\psi(n)^{\tau}} \, ,  \quad \Theta^{*\tau}(\mu, \psi,\x):=\limsup_{n\to\infty}\frac{\mu(B(\x,\psi(n)))}{\psi(n)^{\tau}}.
\]
With this in mind, the following can be deduced directly from Theorem~\ref{quantrec}.    We provide the details of its deduction from Theorem~\ref{quantrec} in Section~\ref{weakquantcount}.

\begin{theorem} \label{quantcount}
   Let $(\Phi,K,\mu,T)$ be a self-conformal system on $\R^d$ with $\mu$ being a Gibbs measure equivalent to $\cH^{\tau}|_K$ where $\tau:=\dimH K$. Let $\psi:\R\to\R_{\geq0}$ be a real positive function such that $\psi(x) \to 0 $ as $x \to \infty$ and assume that $\sum_{n=1}^{\infty}\psi(n)^{\tau}
    $ diverges.  Then, for $\mu$-almost  all $ \x \in  K$
    \begin{eqnarray*}\label{hauscount}
    \begin{split}
\frac{\Theta^{\tau}_*(\mu,\psi,\x)}{\Theta^{*\tau}(\mu,\psi,\x)}
&\leq\liminf_{N\to\infty}
\frac{\sum_{n=1}^N\one_{B(\x,\psi(n))}(T^n\x)}{\sum_{n=1}^N\mu(B(\x,\psi(n)))}\\
&\leq\limsup_{N\to\infty}
\frac{\sum_{n=1}^N\one_{B(\x,\psi(n))}(T^n\x)}{\sum_{n=1}^N\mu(B(\x,\psi(n)))}
\leq\frac{\Theta^{*\tau}(\mu,\psi,\x)}{\Theta^{\tau}_*(\mu,\psi,\x)}  \, .
    \end{split}
    \end{eqnarray*}
\end{theorem}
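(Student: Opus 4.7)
The plan is to deduce Theorem~\ref{quantcount} directly from Theorem~\ref{quantrec} by applying the latter to a countable family of auxiliary target functions $\tilde\psi_c(n):=c\,\psi(n)^{\tau}$, indexed by $c\in\Q_{>0}$, and then sandwiching $\one_{B(\x,\psi(n))}(T^n\x)$ between the corresponding indicators $\one_{B(\x,t_n(\x,\tilde\psi_c))}(T^n\x)$ for suitable $c$. A preliminary observation is that, since $\Phi$ satisfies the OSC and $\mu$ is equivalent to $\cH^{\tau}|_K$, the measure $\mu$ is $\tau$-Ahlfors regular on $K$; in particular both densities $\Theta^{\tau}_{*}(\mu,\psi,\x)$ and $\Theta^{*\tau}(\mu,\psi,\x)$ belong to a fixed compact subinterval of $(0,\infty)$ for every $\x\in K$, so the ratios in~\eqref{hauscount} are well defined.

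Since $\tilde\psi_c(n)\to 0$ whenever $\psi(n)\to 0$, Theorem~\ref{quantrec} applies to each $\tilde\psi_c$ and yields a full $\mu$-measure set $E_c\subseteq K$ on which
\begin{equation*}
    \sum_{n=1}^{N}\one_{B(\x,t_n(\x,\tilde\psi_c))}(T^n\x)=\Psi_c(N)+O\!\left(\Psi_c(N)^{1/2}\big(\log\Psi_c(N)\big)^{3/2+\epsilon}\right),
\end{equation*}
where $\Psi_c(N):=c\sum_{n=1}^{N}\psi(n)^{\tau}$ and $t_n(\x,\tilde\psi_c)$ is defined by~\eqref{defofrn} with $\psi$ replaced by $\tilde\psi_c$. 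Setting $E:=\bigcap_{c\in\Q_{>0}}E_c$ gives a full $\mu$-measure set on which this asymptotic holds simultaneously for every rational $c>0$.

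Fix $\x\in E$ and write $\Theta_{*}:=\Theta^{\tau}_{*}(\mu,\psi,\x)$, $\Theta^{*}:=\Theta^{*\tau}(\mu,\psi,\x)$. For any rational $c<\Theta_{*}$, the definition of $\Theta_{*}$ gives $\mu(B(\x,\psi(n)))>c\,\psi(n)^{\tau}=\tilde\psi_c(n)$ for all sufficiently large $n$; the infimum in~\eqref{defofrn} then forces $t_n(\x,\tilde\psi_c)\leq\psi(n)$, and therefore $\one_{B(\x,t_n(\x,\tilde\psi_c))}(T^n\x)\leq\one_{B(\x,\psi(n))}(T^n\x)$. Combining with the previous display and using the divergence of $\sum\psi(n)^{\tau}$ to absorb the error term yields
\begin{equation*}
    \sum_{n=1}^{N}\one_{B(\x,\psi(n))}(T^n\x)\;\geq\; c\sum_{n=1}^{N}\psi(n)^{\tau}+o\!\Big(\sum_{n=1}^{N}\psi(n)^{\tau}\Big),
\end{equation*}
and a symmetric argument with rational $c>\Theta^{*}$, for which eventually $t_n(\x,\tilde\psi_c)\geq\psi(n)$ and the ball containment reverses, produces the matching upper bound.

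To conclude, divide these sandwich estimates by $\sum_{n=1}^{N}\mu(B(\x,\psi(n)))$. The pointwise inequalities $\Theta_{*}-\epsilon\leq\mu(B(\x,\psi(n)))/\psi(n)^{\tau}\leq\Theta^{*}+\epsilon$ (valid for $n$ large and any $\epsilon>0$), together with $\sum\psi(n)^{\tau}=\infty$, transfer by a standard Ces\`aro argument to
\begin{equation*}
\Theta_{*}\leq\liminf_{N\to\infty}\frac{\sum_{n=1}^{N}\mu(B(\x,\psi(n)))}{\sum_{n=1}^{N}\psi(n)^{\tau}}\leq\limsup_{N\to\infty}\frac{\sum_{n=1}^{N}\mu(B(\x,\psi(n)))}{\sum_{n=1}^{N}\psi(n)^{\tau}}\leq\Theta^{*}.
\end{equation*}
Inverting and letting $c\uparrow\Theta_{*}$, respectively $c\downarrow\Theta^{*}$, through rationals in $E$ delivers~\eqref{hauscount}. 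The main technical delicacy lies in the $\x$-dependence of $\Theta_{*}$ and $\Theta^{*}$: this is precisely what forces us to index the auxiliary functions $\tilde\psi_c$ by the countable set $\Q_{>0}$, so that $E=\bigcap_{c}E_c$ still has full $\mu$-measure and the pointwise optimization in $c$ for each $\x\in E$ is legitimate.
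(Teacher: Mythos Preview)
Your proof is correct and follows essentially the same approach as the paper: apply Theorem~\ref{quantrec} to the countable family $\tilde\psi_c(n)=c\,\psi(n)^{\tau}$ with $c\in\Q_{>0}$, use the definitions of $\Theta_*^\tau$ and $\Theta^{*\tau}$ to obtain the sandwich $t_n(\x,\tilde\psi_{c_1})\le\psi(n)\le t_n(\x,\tilde\psi_{c_2})$ for $c_1<\Theta_*^\tau(\mu,\psi,\x)<\Theta^{*\tau}(\mu,\psi,\x)<c_2$ and $n$ large, and then optimize over rationals. The only cosmetic difference is that the paper carries the denominator $\sum\mu(B(\x,\psi(n)))$ through the sandwich in a single chain (using $\mu(B(\x,t_n(\x,\tilde\psi_c)))=c\,\psi(n)^{\tau}$ from Lemma~\ref{lem5.1}), whereas you first bound the numerator by multiples of $\sum\psi(n)^{\tau}$ and then separately control the ratio $\sum\mu(B(\x,\psi(n)))/\sum\psi(n)^{\tau}$ via a Ces\`aro argument; both routes are equivalent.
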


 \medskip

 \noindent Clearly, this result is  weaker  than Theorem~\ref{toprove} whenever the main term in \eqref{recurrentcountHM} dominates. In such cases, Theorem~\ref{toprove} yields an asymptotic statement with a quantified error term, whereas Theorem~\ref{quantcount} provides, at best, an unquantified asymptotic statement. However, when the error term in \eqref{recurrentcountHM} dominates, Theorem~\ref{toprove} becomes ineffective, while Theorem~\ref{quantcount} may still yield meaningful information—and is potentially stronger. In particular, when the lower and upper densities of $\mu$ associated with $\psi$ coincide, we can derive the following asymptotic statement directly from Theorem~\ref{quantcount}.


\begin{corollary} \label{quantcountcor1}
   Let $(\Phi,K,\mu,T)$ be a self-conformal system on $\R^d$ with $\mu$ being a Gibbs measure equivalent to $\cH^{\tau}|_K$ where $\tau:=\dimH K$. Let $\psi:\R\to\R_{\geq0}$ be a real positive function such that $\psi(x) \to 0 $ as $x \to \infty$ and assume that $\sum_{n=1}^{\infty}\psi(n)^{\tau}
    $ diverges and that
     $ \Theta^{\tau}_*(\mu, \psi,\x) =  \Theta^{*\tau}(\mu, \psi,\x) $
    for $\mu$-almost  all $ \x \in  K$.    Then, for $\mu$-almost  all $ \x \in  K$
    \begin{eqnarray*}
    \begin{split}
\lim_{N\to\infty}
\frac{\sum_{n=1}^N\one_{B(\x,\psi(n))}(T^n\x)}{\sum_{n=1}^N\mu(B(\x,\psi(n)))} =1
    \end{split}
    \end{eqnarray*}
\end{corollary}

\medskip

We now consider the special case in which the Gibbs measure is absolutely continuous with respect to $d$-dimensional Lebesgue measure $\cL^d$.  For convenience,  let   $c_d:=\cL^d\big(B(0,1)\big)$ and suppose that $\mu$ is a Gibbs measure equivalent to $\cL^{d}|_K$ with density function $h$.
Then,   the  Lebesgue density theorem
implies  that for $\mu$-almost all $\x\in K$
\smallskip
\begin{equation}  \label{ldt}
    \mu\big(B(\x,\psi(n))\big)= \big(h(\x)+\epsilon_n(\x)\big)\cdot c_d  \, \psi(n)^d  \, ,
\end{equation}

\noindent where $\epsilon_n(\x)\to 0$ as $n\to\infty$.   The upshot of this is the following statement for  absolutely continuous  measures.  The first part is  a rewording  of Theorem~\ref{toprove} while the second part is a  rewording of  Corollary~\ref{quantcountcor1}


\begin{corollary}
\label{quantcountabcont}
     Let $(\Phi,K,\mu,T)$ be a self-conformal system on $\R^d$ and suppose that $\dimH K=d$. Let  $\mu$  be a  Gibbs measure equivalent to $\cL^{d}|_K$ with density function $h$.  Let $\psi:\R\to\R_{\geq0}$ be a real positive function such that $\psi(x) \to 0 $ as $x \to \infty$. Then the following are true.
\begin{itemize}
  \item[(i)] For any $\eta>0$ and $\epsilon>0$, we have
  \begin{eqnarray} \label{lebcount}
        \sum_{n=1}^{N}\one_{B(\x,\psi(n))}(T^n\x) \;
        &=&  \;   c_d h(\x)\Psi(N)   +   c_d\sum_{n=1}^N\epsilon_n(\x)\psi(n)^d  \nonumber  \\[1ex]  &~&  \hspace*{10ex} +\ O\left(\Psi_{\eta}(N)^{1/2}(\log\Psi_{\eta}(N))^{\frac{3}{2}+\epsilon}\right).
    \end{eqnarray}
    for $\mu$-almost all $\x\in K$, where   $\Psi_{\eta}(N) :=  \sum_{n=1}^N\psi(n)^{(1-\eta)d}  \, $,   $\Psi(N) := \Psi_{0}(N)  \, $  and  $\epsilon_n(\x)\to 0$ as $n\to\infty$ satisfies~\eqref{ldt}.\\

  \item[(ii)]  If  \ $\sum_{n=1}^\infty \psi(n)^{d}$ diverges,  then
   \begin{equation}  \label{ghjk}
\lim_{n \to \infty} \frac{\sum_{n=1}^{N}\one_{B(\x,\psi(n))}(T^n\x) }{\Psi(N) }  \, =  \, h(\x)  \quad \text{for $\mu$-almost all $\x\in K$.  }    \end{equation}
\end{itemize}

\end{corollary}


Note that  in general we do not have any information regarding the rate at which  $\epsilon_n(\x)\to0$, so it is not possible to compare the size of the second and third terms appearing on the right hand side of
\eqref{lebcount}.  However, if $\mu= \cL^{d}$ then $\epsilon_n(\x)= 0$ for all $n \in \N $ and $\x \in K$ and so the second term is zero.
With this in mind, it follows that Corollary~\ref{quantcountabcont} is in line with the main result established in \cite{levesley2024} for piecewise linear maps of $[0,1]^d$.   Furthermore, with Theorem~\ref{Main2} at our disposal,  the   asymptotic statement  \eqref{ghjk}
can be  directly derived from the recent impressive  work of He \cite{he2024}. In short, He obtains \eqref{ghjk}  for a class of  measure-preserving systems  for which $\mu$ is exponentially mixing and absolutely continuous with respect to  Lebesgue measure.

\bigskip

We bring this section to an end with a brief discussion concerning  the  recurrent problem  beyond self-conformal systems, or rather beyond the structure inherited by such systems.    In view of Theorem A, we know that  exponential mixing underpins the asymptotic behaviour of the counting function within the setup of the shrinking target problem. Currently, we see no obvious counterexample that  shows that this is not enough within the recurrent framework.  Adding a safety net, by  restricting to Hausdorff  measures, it remains plausible that the following  ``strengthening''  of Theorem~\ref{toprove} is true.   In short it would suggest  that the  key aspect of the system under consideration is that it is exponentially mixing and nothing else.

\noindent\textbf{Claim T.  \ }
 \emph{Let $(X,\mathcal{B},\mu,T)$ be a measure-preserving dynamical system in $\R^d$  with $\mu$ being a  $\tau$-Ahlfors regular measure where $\tau:=\dimH X$.  Let $\mathcal{C}$ be a collection of balls in $\R^d$ and suppose that $\mu$ is exponentially-mixing with respect to $(T,\mathcal{C})$.  Let $\psi:\R\to\R_{\geq0}$ be a real positive function such that $\psi(x) \to 0 $ as $x \to \infty$.  Then, for any given $\varepsilon>0$, we have that
 \begin{equation*}
        \sum_{n=1}^N\one_{B(\x,\psi(n))}(T^n\x)=\Psi(N,\x)+O\left(\Psi(N,\x)^{1/2}\log^{\frac{3}{2}+\epsilon}(\Psi(N,x))\right)
    \end{equation*}
    for $\mu$-almost all $\x\in K$, where
$\Psi(N,\x):=\displaystyle{\sum_{n=1}^N}\mu\big(B(\x,\psi(n))\big)  $ \, .  }

\bigskip

\noindent Several comments are in order.
\begin{enumerate}
\item[(i)]  Recall Remark~\ref{newrem}\,(i), namely that within the setup of self-conformal systems, the notion of  $\mu$ being equivalent to $\cH^{\tau}|_X$ and $\mu$ being $\tau$-Ahlfors regular coincide.
\medskip

\item[(ii)] Clearly, under the assumption that  $\mu$ is a  $\tau$-Ahlfors regular measure as in Claim~T,  we can replace the quantity  $\Psi(N,x)$ by $\sum_{n=1}^N\psi(n)^{\tau}  $  in the error term  and thus making it independent of $ \x \in K$.    The reason that we have not done this is that there is a possibility that the conclusion of the claim is true without the Ahlfors regular assumption and in such generality the error may depend on $\x \in K$; that is to say that  $\Psi(N,x)$ may not be comparable to a sum that is independent of $\x$.
\medskip

\item[(iii)] With the previous comment in mind, it is worth pointing out that \eqref{allcomp}  is in fact true under the hypothesis of Claim~T  (see \cite[Lemma~2.5]{hussain2022}).
Indeed, it is easily checked that all that is essentially required to  establish \eqref{allcomp}  is that $\mu$ is $\tau$-Ahlfors regular and that $\mu$ is exponentially-mixing.
\end{enumerate}


\noindent Even if Claim~T turns out to be false, it does not rule out the following strengthening of Corollary~\ref{toprovecor} which is of independent interest.

\noindent\textbf{Claim 0-\!1. }  \emph{Let $(X,\mathcal{B},\mu,T)$ be a measure-preserving dynamical system in $\R^d$  with $\mu$ being a  $\tau$-Ahlfors regular measure where $\tau:=\dimH X$.  Let $\mathcal{C}$ be a collection of balls in $\R^d$ and suppose that $\mu$ is exponentially-mixing with respect to $(T,\mathcal{C})$.  Let $\psi:\R\to\R_{\geq0}$ be a real positive function.  Then \begin{eqnarray}\label{claim0-1law}
		 \mu\left(R(\psi)\right)=
		\begin{cases}
			0 &\text{if}\ \  \sum_{n=1}^{\infty}\psi(n)^{\tau} <\infty,\\[2ex]
			1 &\text{if}\ \ \sum_{n=1}^{\infty} \psi(n)^{\tau}  =\infty.
		\end{cases}
	\end{eqnarray}}

\medskip

\noindent As already mentioned,  currently we see no obvious counterexample that  shows that Claim~T is false, let alone a counterexample to Claim 0-\!1.

\begin{remark}  \label{nonono}
As mentioned in the discussion leading up to Claim~T, the actual statement of the claim is erring on the side of caution. Indeed, we see no obvious counter example to either Claim~T or Claim~0-1 even if we remove the assumption that the measure $\mu$ is Ahlfors regular.  Obviously, without the latter assumption,  in Claim~0-1 we would  replace the sum appearing in  \eqref{claim0-1law} by  $\sum_{n=1}^{\infty}\mu\big(B(\x,\psi(n))\big)  $.  It is worth pointing out that   a relatively painless  calculation shows that within the context of Example~ABB, we have that
$$
\sum_{n=1}^{\infty}\mu\big(B(\x,\psi_{\alpha}(n))\big) \, <  \, \infty
$$
for $\mu$-almost all $\x \in K$ (see Proposition~\ref{propnocounter}  in Appendix~\ref{appendixABB}  for the details).
Thus,  Example~ABB is not a counterexample to the bolder statement in which the Ahlfors regular assumption is dropped.  Finally,  at the very basic level,  as far as we are aware, it is not known whether or not $\mu(R(\psi))$ satisfies a zero-one law; i.e.  $\mu(R(\psi))= 0 $ or $1$.
\end{remark}

\subsection{Organizations of the paper}
This paper is organized as follows. In Section \ref{Sec2}, we introduce the background knowledge, including concepts and basic results regarding  conformal maps, conformal  iterated
function schemes and Ruelle operators on symbolic spaces and  self-conformal sets. In Section \ref{Secrig}, we prove Theorem~\ref{thmrigidity} in a more general framework that does not require the open set condition (by definition, this is implicit in the framework of a self-conformal system). In addition, with reference to Remark~\ref{thy},  we provide a counterexample to Käenmäki's result in dimension two.
 In Section~\ref{SEC3}, we prove Theorem~\ref{Main} modulo  Theorem~\ref{meaannimpexp} and in Section~\ref{SEC3A} we prove the latter.\footnote{In Appendix~\ref{appendix:directproof}, we present a direct proof of Theorem~\ref{Main} using the exponentially mixing property for cylinder sets, thereby keeping the paper self-contained within the framework of self-conformal systems. }
 Sections \ref{proofM2} $\&$ \ref{annulusec}   are devoted to the proof of Theorem~\ref{Main2} -  our main result.
In  Section~\ref{SEC5}, we establish the statements presented  in  Section~\ref{appintro}  regarding the  applications of
Theorem~\ref{Main2} to the recurrent problem for  self-conformal dynamical systems. This involves establishing a more versatile form of the standard quantitative  Borel-Cantelli Lemma.

	\section{Self conformal systems:  the preliminaries}\label{Sec2}
 For convenience, various pieces of notation  that are  frequently used throughout the paper are listed below:
 \begin{itemize}
     \item $d\geq1$ is an integer. \vspace*{1ex}
     \item For any $\x=\big(x_1,...,x_d\big)\in\R^d$, denote by  $|\x|:=(x_1^2+\cdots+x_d^2)^{1/2}$ the Euclidean norm of $\x$.\vspace*{1ex}
     \item If $A$ is a $d\times d$ real matrix, the maximal norm of $A$ is denoted  by $$|A|:=\sup\big\{|A\x|:|\x|=1\big\}.$$ 
     \item For a function $f\in C^1(\Omega)$ on an open set $\Omega\subseteq\R^d$, the symbol $f'(\x)$  represents the Jacobian matrix of $f$ at $\x\in\Omega$.  It is also common to  use the notation  $D_{\x}f $.    \vspace*{1ex}
     \item The diameter of a set $E\subseteq\R^d$ under the Euclidean norm is denoted by  $|E|$, and we write $\overline{E}$ for the closure of $E$ in the topology induced by this norm. \vspace*{1ex}
     \item For any $\x\in\R^n$ and $r>0$, we use $B(\x,r)$ to denote the open ball centered at $\x$ with radius $r$ under the Euclidean norm.
 \end{itemize}

    \subsection{Conformal maps}

\begin{definition}
    Let $\Omega\subseteq\R^d$ be an open set. We say that  $f\in C^1(\Omega)$ is \emph{conformal} on $\Omega$ if $f$ is injective and
    \[
        f'(\x)\neq0  \quad \text{and}  \quad    |(f'(\x))(\y)|=|f'(\x)|\cdot|\y|,  \qquad \forall  \,  \x\in\Omega,~\forall \, \y\in\R^d.
    \]
\end{definition}

Let $\Omega\subseteq\R^d$ be a connected open set. We shall recall the rigidity of conformal maps on $\Omega$. When $d=1$, a map $f:\Omega\to\R$ is a conformal map  if and only if $f\in C^1(\Omega)$ and $f'(x)\neq0$ for all $x\in\Omega$. When $d=2$, if we  view $\R^2$ as the complex plane $\bC$, then an injective map $f:\Omega\to \bC$ is conformal if and only if $f$ is holomorphic (or anti-holomorphic) on $\Omega$. 
When $d\geq3$, by Liouville's theorem (see \cite[Section 3.8]{gehring2017}), a map $f:\Omega\to\R^d$ is conformal if and only if it is a restriction to $\Omega$ of a  Möbius transformation on $\overline{\R}^d:=\R^d\cup\{\infty\}$, that is
\[
    f(\x)=\mathbf{b}+\frac{c}{|\x-\mathbf{a}|^{\epsilon}}\cdot A(\x-\mathbf{a}),
\]
where $\mathbf{a},\mathbf{b}\in\R^d$, $c\in\R$, $\epsilon\in\{0,2\}$ and $A$ is a $d\times d$ orthogonal matrix. We end this section with the following useful result.
	\begin{lemma}\label{concon}
		Let $d\geq2$ be an integer and let $\Omega\subseteq\R^d$ be a bounded connected open set.   Suppose $\{f_n\}_{n\in\N}$ is a sequence of conformal maps on $\Omega$ such  that:
  \begin{enumerate}[label=(\roman*)]
      \item\label{concon1} there exists $C>1$ so that for any $n\geq1$ we have
      \[
          C^{-1}|\x-\y|\,\leq\,|f_n(\x)-f_n(\y)|\,\leq\, C\,|\x-\y| \, , \qquad \forall \; \x, \,  \y \in \Omega \,
      \]
      \item\label{concon2} $\{f_n\}_{n\in\N}$ is uniformly bounded on $\Omega$; that is to say that there exists $M>0$ so that for any $n\geq1$ we have
      $$|f_n(\x)|\leq M    \, , \qquad \forall \; \x \, \in \Omega   \, . $$
  \end{enumerate}
  Then,  there exist a subsequence $\{f_{n_k}\}\subseteq\{f_n\}$ and a conformal map $g$ on $\Omega$ such that $f_{n_k}\to g$ uniformly on $\Omega$.
	\end{lemma}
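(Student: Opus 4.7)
The plan is first to produce a candidate limit $g$ via a normal-family argument, and then to verify that $g$ inherits conformality from the $f_n$ by splitting into the two regimes $d=2$ and $d\geq 3$ that structure the rigidity of conformal maps recalled just above the lemma.

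Hypothesis \ref{concon1} makes $\{f_n\}$ uniformly $C$-Lipschitz on $\Omega$ and therefore equicontinuous, while \ref{concon2} gives equiboundedness. Since $\Omega$ is bounded, each $f_n$ extends continuously (indeed $C$-Lipschitz) to the compact set $\overline{\Omega}$, so the Arzel\`a--Ascoli theorem yields a subsequence $f_{n_k}$ converging uniformly on $\overline{\Omega}$ to a continuous map $g:\overline{\Omega}\to\R^d$. Passing to the limit $k\to\infty$ in \ref{concon1} gives
$$C^{-1}|\x-\y|\;\leq\;|g(\x)-g(\y)|\;\leq\;C|\x-\y|\qquad(\x,\y\in\Omega),$$
so in particular $g$ is a bi-Lipschitz injection on $\Omega$.

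It then remains to verify that $g$ is conformal. When $d=2$, identify $\R^2$ with $\bC$: each $f_n$ is either holomorphic or antiholomorphic on $\Omega$, and after one further extraction I may assume all $f_{n_k}$ are of the same type, say holomorphic. Weierstrass's theorem on uniform limits of holomorphic functions then shows that $g$ is holomorphic on $\Omega$, and the injectivity already established forces $g'\neq 0$ by univalence, so $g$ is conformal. When $d\geq 3$, Liouville's theorem writes each $f_{n_k}$ as the restriction of a M\"obius transformation
$$f_{n_k}(\x)=\bb_{n_k}+\frac{c_{n_k}}{|\x-\ba_{n_k}|^{\epsilon_{n_k}}}\,A_{n_k}(\x-\ba_{n_k}),$$
with $\epsilon_{n_k}\in\{0,2\}$ and $A_{n_k}$ orthogonal. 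Passing to a further subsequence I may assume $\epsilon_{n_k}$ is constant and $A_{n_k}\to A$ orthogonal. The identity $|f_{n_k}'(\x)|=|c_{n_k}|/|\x-\ba_{n_k}|^{\epsilon_{n_k}}$ combined with the two-sided bound in \ref{concon1} pins this quantity between $C^{-1}$ and $C$ throughout $\Omega$; in the inversive case $\epsilon_{n_k}=2$ this prevents $\ba_{n_k}$ from accumulating in $\overline{\Omega}$ and keeps $|c_{n_k}|$ comparable to a fixed positive scale, while the affine case $\epsilon_{n_k}=0$ is immediate. Hypothesis \ref{concon2} then controls $\bb_{n_k}$ up to the possible escape of $\ba_{n_k}$ to infinity. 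Extracting once more within this (compactified) parameter space produces a limiting M\"obius transformation (or, in the limit $|\ba_{n_k}|\to\infty$, an affine similarity) whose restriction to $\Omega$ coincides with $g$, and this limit is conformal.

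The main obstacle is the parameter-compactness argument in the case $d\geq 3$: I must rule out the two modes of degeneration, namely $\ba_{n_k}$ approaching $\overline{\Omega}$ and $|c_{n_k}|$ escaping to $0$ or $\infty$ relative to the natural scale $|\x-\ba_{n_k}|^{\epsilon_{n_k}}$, and must handle delicately the remaining case $|\ba_{n_k}|\to\infty$ in which the M\"obius transformations degenerate to affine similarities. The two-sided nature of \ref{concon1} is precisely what blocks the first two degenerations, and \ref{concon2} is what controls the residual translation parameter $\bb_{n_k}$ even when the inversive centre escapes.
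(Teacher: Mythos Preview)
Your argument is correct. The Arzel\`a--Ascoli step and the $d=2$ case match the paper's proof essentially verbatim (the paper cites a Weierstrass-type theorem and uses condition \ref{concon1} to rule out $g'=0$, just as you do via the inherited bi-Lipschitz bound).

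The genuine difference is in $d\geq 3$. The paper does not unpack the M\"obius parameters at all: it invokes \cite[Corollary~3.6.6]{gehring2017} to conclude directly from condition \ref{concon1} that $\{f_n\}$ is a normal family of M\"obius transformations on $\overline{\R}^d$ (with the chordal metric), and then \cite[Theorem~3.6.7]{gehring2017} to identify the limit as M\"obius. Your route instead extracts convergent subsequences of the structural parameters $(\ba_{n_k},\bb_{n_k},c_{n_k},A_{n_k},\epsilon_{n_k})$ from the Liouville representation. This is a legitimate and self-contained alternative, but it obliges you to handle by hand exactly the degeneration that the chordal-metric normal-family machinery absorbs automatically: when $|\ba_{n_k}|\to\infty$ (forcing $|c_{n_k}|\asymp|\ba_{n_k}|^2$ and $|\bb_{n_k}|\to\infty$), one must verify that the inversive maps converge to an affine similarity $\x\mapsto g(\x_0)+c_\infty A R_{\hat\ba}(\x-\x_0)$ after the appropriate rescaling. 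You flag this correctly as the main technical point; carrying it out is a routine but nontrivial asymptotic expansion. The paper's approach is shorter by outsourcing this to the literature; yours is more elementary but longer to execute in full.
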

\begin{proof}
    Let $\{f_n\}_{n\in\N}$ satisfy the above conditions \ref{concon1} and \ref{concon2}. Then by the Arzelà–Ascoli Theorem, there exist a subsequence $\{f_{n_k}\}\subseteq\{f_n\}$ and a continuous map $g:\Omega\to\R^d$ such that $f_{n_k}\to g$ uniformly on $\Omega$. We now show  that $g$ is conformal.  When $d=2$, we view $\R^2$ as the complex plane $\bC$ and it follows from  \cite[Theorem \uppercase\expandafter{\romannumeral3} 1.3]{busam2009} that $g$ is holomorphic on $\Omega$. By condition \ref{concon1}, we know that $g'(z)\neq0$ for all $z\in \Omega$ and  hence $g$ is conformal. If $d\geq3$, then $\{f_n\}_{n\in\N}$ is a sequence of Möbius maps on $\overline{\R}^d$. Moreover, by combining condition \ref{concon1} and \cite[Corollary 3.6.6]{gehring2017},   the sequence $\{f_n\}_{n\in\N}$ is a normal family over $\overline{\R}^d$. So we can assume that $f_{n_k}\to g$ uniformly on $\overline{\R}^d$ under the chordal metric (see \cite[Page 7]{gehring2017} for the definition of chordal metric). Finally, we conclude that $g$ is a Möbius map by means of \cite[Theorem 3.6.7]{gehring2017} and thus conformal.
\end{proof}

	\subsection{$C^{1+\alpha}$ conformal IFS}  \label{cifs}
	Let $d\geq1$ be an integer. In this section, we introduce the definition of a $C^{1+\alpha}$ conformal IFS on $\R^d$ and bring  together  some simple but useful properties that are frequently used throughout the paper.

 \begin{definition}
     Let $\Omega\subseteq\R^d$ be an open set and $\alpha>0$. Given a function $f:\Omega\to\R^d$, we say that $f\in C^{1+\alpha}(\Omega)$ if $f\in C^1(\Omega)$ and $f'$ is $\alpha$-Hölder continuous on $\Omega$; that is, there exists some constant $C>0$ such that
	$$\big| \, |f'(\x)|-|f'(\y)|  \, \big| \, \leq  \,   C\,|\x-\y|^{\alpha}\, , \qquad \forall \; \x, \,  \y \in \Omega \, . $$
 \end{definition}

 \medskip

 \begin{definition}\label{IFSdef}
     Fix an integer $m\geq2$. We say that $\Phi=\{\varphi_j\}_{1\leq j\leq m}$ is a  \emph{conformal IFS on $\R^d$} if there exists a bounded connected open set $\Omega\subseteq\R^d$  such that each map  $\varphi_j$ is an injective and contractive  conformal map on $\Omega$ satisfying
     \begin{eqnarray}\label{varphi'contr}
         \overline{\varphi_j(\Omega)}\subseteq\Omega\,\,\,\,\,\,\,\,\text{and}\,\,\,\,\,\,\,\,0  \, <  \, \inf_{\x\in\Omega}|\varphi_j'(\x)| \; \leq \; \sup_{\x\in\Omega}|\varphi_j'(\x)|   \, < \, 1.
     \end{eqnarray}
     In particular, we say that $\Phi$ is a $C^{1+\alpha}$ conformal IFS if each $\varphi_j$ $(j=1,2,...,m)$ above belongs to $C^{1+\alpha}(\Omega)$.
 \end{definition}

 It is well known (Hutchinson \cite{hutchinson1981}) that there is a unique compact set $K\subseteq\Omega$ such that
      \begin{eqnarray}\label{defofselfconset}
    K=\bigcup_{j=1}^m\varphi_j(K)  \, .
      \end{eqnarray}
	We call this set $K$ the \emph{self-conformal set generated by $\Phi$}. In particular, we say that $\Phi$ satisfies the \emph{open set condition} (OSC) if there is a nonempty open set $V\subseteq\Omega$ such that $\varphi_j(V)\subseteq V$ and $\varphi_i(V)\cap\varphi_j(V)=\emptyset$ for any $i\neq j\in\{1,...,m\}$.

Here and throughout, let $\Sigma:=\{1,2,...,m\}$ denote a finite alphabet composed of $m$ elements. For any $n\in\N$, the set $\Sigma^n$  consists of  all  words of length $n$ over $\Sigma$, while $\Sigma^*$ represents the collection of all finite words as follows
 $$\Sigma^*:=\bigcup_{k\geq1}\Sigma^k\,.$$ Given a word $I=i_1...i_n$, we define the associated map $\varphi_I$ as the composition $$\varphi_I:=\varphi_{i_1}\circ\cdot\cdot\cdot\circ\varphi_{i_n}   \, ,$$
 and let $$ K_I:=\varphi_I(K)$$ denote the images of $K$ under $\varphi_I$.
The length of a finite word $I\in\Sigma^*$ is denoted by $|I|$.   The  cylinder set associated with $I=i_1\cdot\cdot\cdot i_n$ is defined as $$[I] \, = \, \{J=(j_1j_2\cdot\cdot\cdot)\in\Sigma^{\N}:j_1=i_1,...,j_{n}=i_{n}\}.$$ Additionally, the composition of two finite words $I=i_1i_2\cdot\cdot\cdot i_{n}$ and $J=j_1j_2\cdot\cdot\cdot j_k$ is given by
 \[
 IJ:=i_1\cdot\cdot\cdot i_nj_1\cdot\cdot\cdot j_k\,.
 \]

 \noindent The following statements hold for any $C^{1+\alpha}$ conformal IFS on $\R^d$:
	
	\begin{itemize}
		\item There exists $C_1>1$ such that
		\begin{eqnarray} \label{sv89}
			|\varphi_I'(\x)|\,\leq\, C_1\,|\varphi_I'(\y)|,\,\,\,\,\,\,\,\,\forall\, \x,\y\in\Omega,~\forall\, I\in\Sigma^*,
		\end{eqnarray}
		and hence
		\begin{eqnarray}\label{2.2}
			C_1^{-1}\|\varphi_I'\|\,\leq\,|\varphi_I'(\x)|\,\leq\, C_1\,\|\varphi_I'\|,\,\,\,\,\,\,\,\,\forall\, \x\in\Omega,~\forall \,I\in\Sigma^*,
		\end{eqnarray}
		where $\|f'\|:=\sup_{x\in\Omega}|f'(\x)|$. For a proof, see \cite[Lemma 2.1]{patzschke1997}. \vspace*{2ex}
		
		\item There exists a bounded  open set $U\subseteq\Omega$  and a constant $C_2>1$ such that
\begin{eqnarray*}
\overline{U}\subseteq\Omega,\,\,\,\,\varphi_{j}(U)\subseteq U\ \ \ \ \ (j=1,2,...,m)
\end{eqnarray*}
and
		\begin{eqnarray}\label{p2}	C_2^{-1}\|\varphi_I'\|\cdot|\x-\y|\,\leq\,|\varphi_I(\x)-\varphi_I(\y)|\,\leq\, C_2\,\|\varphi_I'\|\cdot|\x-\y|
		\end{eqnarray}
  for all $\x,\y\in U$ and all $I\in\Sigma^*$. For a proof, see \cite[Lemma 2.2]{patzschke1997}. \vspace*{2ex}
		
		\item It follows directly from (\ref{p2}) that there exists $C_3>1$ such that
		\begin{eqnarray}\label{p3}
			C_3^{-1}\|\varphi_I'\|\,\leq\,|K_I|\,\leq\, C_3\,\|\varphi_I'\|,\,\,\,\,\,\,\,\,\forall\, I\in\Sigma^*.
		\end{eqnarray}
        Therefore,  on letting
        \begin{eqnarray}\label{defkap}
            \kappa:=\max\big\{\|\varphi_j'\|:1\leq j\leq m\big\},
        \end{eqnarray}
         we have that
        \begin{eqnarray}\label{p3'}
            |K_I|\,\leq\, C_3\,\kappa^{|I|},\,\,\,\,\,\,\,\,\forall\, I\in\Sigma^*  \, .
        \end{eqnarray}

		\vspace*{2ex}

		\item By  (\ref{2.2}) and (\ref{p3}) and the fact that the equality $$|(f\circ g)'(\x)|=|f'(g(\x))|\cdot|g'(\x)|$$ holds for any $\x\in\Omega$ and any pair of conformal mappings $f,g:\Omega\to\Omega$, there exists $C_4>1$ such that
		\begin{eqnarray}\label{p4}
			C_4^{-1}|K_I||K_J|\,\leq\, |K_{IJ}|\,\leq\, C_4\,|K_I||K_J|,\,\,\,\,\,\,\,\,\forall\, I,J\in\Sigma^*.
		\end{eqnarray}
	\end{itemize}

\medskip

    \begin{remark}\label{weakrk}
        With reference to Definition \ref{IFSdef}, it can be verified that the second condition  in (\ref{varphi'contr}) can be weakened to  the statement: there exists $n_0\in\N$ such that
        \begin{eqnarray}\label{instead}
            0\,<\,\inf_{\x\in\Omega}|\varphi_I'(\x)|\,\leq\,\sup_{\x\in\Omega}|\varphi_I'(\x)|\,<\,1,\,\,\,\,\,\,\,\,\forall \,I\in\Sigma^{n_0}.
        \end{eqnarray}
        without effecting the results obtained in this paper.  In other words, our theorems  hold for   the corresponding  larger class of self conformal systems coming from the self-conformal IFS $\{\varphi_I\}_{I\in\Sigma^{n_0}}$.  For a concrete example of this  see Remark~\ref{rembigger}.
    \end{remark}

    \begin{remark}
       Let $\Phi=\{\varphi_j:\Omega\to\Omega\}_{1\leq j\leq m}$ be a conformal IFS (not necessarily $C^{1+\alpha}$) on $\R^d$ with $d\geq2$. It is easy to find a bounded connected open set $U\subseteq\Omega$ such that
       \[
       \overline{U}\subseteq\Omega,\ \ \ \ \varphi_j(U)\subseteq U\quad(j=1,2,...,m)\,.
       \]
        Then, by \cite[Proposition 4.2.1]{mauldin2003}, each $\varphi_j$ ($j=1,2,...,m$) is $C^{1+\alpha}$ on $U$ with $\alpha=1$.
    \end{remark}

	\subsection{Ruelle operators on symbolic spaces}\label{ROonsymbolic}
	
	In this section, we introduce Ruelle operators on symbolic spaces. Fix an integer $m\geq2$, let $\Sigma=\{1,2,...,m\}$. Define a metric on $\Sigma^{\N}$ as
	\begin{equation}  \label{qwer} \dist(I,J):=m^{-\sup\left\{k\geq1:i_1=j_1,...,i_k=j_k\right\}},\,\,\,\,\,\,\,\,\forall\, I=i_1i_2\cdot\cdot\cdot,~J=j_1j_2\cdot\cdot\cdot\in\Sigma^{\N},  \end{equation}
	where we set $\sup\emptyset:=0$ and $m^{-\infty}:=0$. It is well known that  $\dist(\cdot,\cdot)$ is an ultrametric and that $(\Sigma^{\N},\dist)$ is a compact metric space. Let $\sigma:\Sigma^{\N}\to\Sigma^{\N}$ be the shift map on $\Sigma^{\N}$, that is $\sigma(i_1i_2\cdot\cdot\cdot)=i_2i_3\cdot\cdot\cdot$ for any $i_1i_2\cdot\cdot\cdot\in\Sigma^{\N}$.
	
	Given $g\in C(\Sigma^{\N})$, we define  $\cS:C(\Sigma^{\N})\to C(\Sigma^{\N})$ to be  the \emph{Ruelle operator with potential $g$} by setting
	$$\cS f(I) \   :=\! \sum_{J\in\sigma^{-1}(I)}g(J)f(J),   \qquad \forall\, f\in C(\Sigma^{\N}),  \  \ \forall\, I\in\Sigma^N.$$
	It can be verified that the iterates of $\cS$ can be  written as
	$$\cS^n f(I) \ = \! \sum_{J\in\sigma^{-1}(I)}g^{(n)}(J)f(J),  \qquad \forall\, n\in\N, \ \forall\, f\in C(\Sigma^{\N}), \ \forall\, I\in\Sigma^{\N},$$
	where $g^{(n)}(I):=g(I)g(\sigma I)\cdot\cdot\cdot g(\sigma^{n-1}I)$ for $I\in\Sigma^{\N}$.
	For any $f\in C(\Sigma^{\N})$, let $$\|f\|_{\infty}:=\sup_{I\in\Sigma^{\N}}|f(I)|\,.$$ The norm of $\cS$ is defined as $$\norm{\cS}_{\infty}:=\sup\big\{\|\cS f\|_{\infty}:f\in C(\Sigma^{\N}),~\|f\|_{\infty}=1\big\}.$$ It is clear that $\norm{\cS}_{\infty}\leq m\|g\|_{\infty}$ and hence $\cS$ is a bounded linear operator on the Banach space $(C(\Sigma^{\N}),\norm{\cdot}_{\infty})$. We define the \emph{spectral radius} of $\cS$ as
    \begin{eqnarray}\label{spera}
R:=\lim_{n\to\infty}\|\cS^n\|_{\infty}^{1/n}.
    \end{eqnarray}
    This limit exists since  $\|\cS^{n+m}\|_{\infty}\leq\|\cS^{n}\|_{\infty}\|\cS^m\|_{\infty}$  (see \cite[Corollary 1.3]{falconer1997}).
	
	Let $\cM(\Sigma^{\N})$ be the collection of all finite Borel signed measures. By the  Riesz Representation Theorem \cite[Theorem 7.17]{folland1999}, we know that $\cM(\Sigma^{\N})$ can be viewed as the dual space of $C(\Sigma^{\N})$. We then define the dual operator $\cS^*:\cM(\Sigma^{\N})\to\cM(\Sigma^{\N})$ of $\cS$ by setting
    \begin{eqnarray}\label{daulsym}
\langle\cS^*\nu,f\rangle=\langle\nu,\cS f\rangle
    \end{eqnarray}
	for any $\nu\in\cM(\Sigma^{\N})$ and $f\in C(\Sigma^{\N})$, where $\displaystyle\langle \nu,f\rangle:=\int_{\Sigma^{\N}} f~\td\nu$.
	
	Let $\beta>0$. Denote by $\cC^{\beta}(\Sigma^{\N})$ the collection of all $\beta$-Hölder continuous functions.  
 We are now in the position to introduce Ruelle's Theorem. In short, it provides us with the existence of   `good' measures on $\Sigma^{\N}$.

	\begin{theorem}[Ruelle {\cite[Theorem 1.5]{baladi2000}}]\label{Theorem 1.5}
		Let $\beta>0$, let $g:\Sigma^{\N}\to\R_{>0}$ be a positive $\beta$-Hölder continuous function,
 let $\cS$ be the Ruelle operator with potential $g$ and let $R$ be the spectral radius of $\cS$. Then, there  exist unique positive $h\in\cC^{\beta}(\Sigma^{\N})$ and Borel probability measure $\nu\in\cM(\Sigma^{\N})$ such that
		\begin{eqnarray}\label{eigen}
			\cS h=Rh, \quad \cS^*\nu=R\nu,  \quad \int_{\Sigma^{\N}}h  \ \td\nu=1.
		\end{eqnarray}
	\end{theorem}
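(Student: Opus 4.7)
The plan is to obtain $\nu$ first via a fixed-point argument, then extract $h$ from the iterates of $\cS$, and finally establish uniqueness through a positivity/contraction argument. The space $\cM_1(\Sigma^{\N})$ of Borel probability measures on $\Sigma^{\N}$ is convex and weak-$*$ compact, and the normalized dual map
\[
\Lambda:\cM_1(\Sigma^{\N})\to\cM_1(\Sigma^{\N}),\qquad \Lambda(\mu):=\frac{\cS^*\mu}{(\cS^*\mu)(\Sigma^{\N})}
\]
is continuous in the weak-$*$ topology (note that $(\cS^*\mu)(\Sigma^{\N}) = \langle \mu, \cS \one\rangle \geq m\,\inf g > 0$, so $\Lambda$ is well-defined). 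Applying the Schauder–Tychonoff fixed point theorem yields $\nu\in\cM_1(\Sigma^{\N})$ with $\cS^*\nu=\lambda\nu$ for some $\lambda>0$. Iterating gives $\int \cS^n\one\,\td\nu=\lambda^n$, and combining this with $\|\cS^n\one\|_\infty \geq \int \cS^n\one\,\td\nu$ together with the opposite direction obtained from Hölder control of $g^{(n)}$ (see below) will identify $\lambda=R$.

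The next step, which will be the main technical obstacle, is producing the eigenfunction $h$. The natural candidate is a limit point of $h_n:=R^{-n}\cS^n\one$ (or a Cesàro average). The key ingredient is a bounded distortion estimate: using the fact that $g\in\cC^\beta(\Sigma^{\N})$ and the ultrametric \eqref{qwer}, if $I,J\in\Sigma^{\N}$ agree in their first $k$ coordinates, then for every preimage branch the telescoping product $g^{(n)}$ along matching orbits satisfies
\[
\Big|\log g^{(n)}(KI)-\log g^{(n)}(KJ)\Big| \;\leq\; \sum_{j=0}^{n-1}\frac{\|\log g\|_{\cC^\beta}}{m^{\beta(k+j)}} \;\leq\; \frac{C}{m^{\beta k}},
\]
uniformly in $n$. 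This yields $|h_n(I)-h_n(J)|\leq C' m^{-\beta k}\|h_n\|_\infty$, and together with the $\nu$-normalization $\int h_n\,\td\nu=1$ gives uniform bounds and equicontinuity. Arzelà–Ascoli produces a subsequential limit $h\in\cC^\beta(\Sigma^{\N})$ which is strictly positive (by the distortion bound applied to $h_n\geq \inf h_n >0$) and satisfies $\cS h=Rh$ after passing to the limit in $\cS h_n=R h_{n+1}$. Rescaling gives $\int h\,\td\nu=1$.

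Finally, for uniqueness I would argue as follows. If $\tilde h$ is another positive Hölder eigenfunction for the eigenvalue $R$, then the ratio $\rho:=\tilde h/h\in C(\Sigma^{\N})$ is continuous and strictly positive; its extrema are attained since $\Sigma^{\N}$ is compact. Writing $\cS(h\rho)=R h \rho$ and dividing through by $h$ shows that $\rho$ is a fixed point of an averaging operator $\cL f(I):=\sum_{J\in\sigma^{-1}I}p_J(I) f(J)$ with positive weights $p_J(I)=g(J)h(J)/(Rh(I))$ satisfying $\sum_J p_J(I)=1$. Applying this at a point where $\rho$ attains its maximum forces $\rho$ to be constant on the whole preimage tree, and by transitivity of $\sigma$ on $\Sigma^{\N}$ (the full shift is topologically mixing) one concludes $\rho\equiv\text{const}$, hence $\tilde h = ch$. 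The same normalization then forces $c=1$. Uniqueness of $\nu$ follows from a dual application of this argument to $\cS^*$, or equivalently from the spectral gap that the Hilbert projective metric contraction provides for $\cS$ acting on positive Hölder functions.
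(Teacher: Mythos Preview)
The paper does not prove this theorem; it is quoted as a black box from Baladi's book (the citation in the theorem heading). So there is no ``paper's own proof'' to compare against. Your sketch is the standard Ruelle--Perron--Frobenius argument and is essentially what one finds in Baladi, Bowen, or Parry--Pollicott: Schauder--Tychonoff for the eigenmeasure, bounded distortion plus Arzel\`a--Ascoli for the eigenfunction, and a maximum-principle argument for uniqueness of $h$.

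Two places in your outline are thinner than the rest and would need work in a full proof. First, the identification $\lambda=R$: you need the bounded distortion estimate \emph{before} this step, not after, since it is what gives $\sup_{I}(\cS^n\one)(I)\le C\inf_{I}(\cS^n\one)(I)$ uniformly in $n$; combining this with $\int\cS^n\one\,\td\nu=\lambda^n$ yields $\|\cS^n\one\|_\infty\asymp\lambda^n$ and hence $R=\lambda$. As written, the order of your argument is slightly circular. Second, uniqueness of $\nu$ is not a straightforward ``dual application'' of the $h$-argument, because $\cS^*$ does not act on a convenient function space. The clean route is to pass to the normalized operator $\widetilde{\cS}$ (as the paper does in \eqref{normal}) and observe that any eigenmeasure $\tilde\nu$ gives rise to a $\sigma$-invariant probability $h\,\td\tilde\nu$ satisfying the Gibbs property \eqref{gibbs}; two such measures must agree on cylinders up to bounded multiplicative constants and hence are mutually absolutely continuous, and ergodicity then forces equality.
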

The theorem naturally enables us to define ``good'' measures on symbolic spaces.

    \begin{definition}[Gibbs measure on $\Sigma^{\N}$] \label{gibbssigma}
       Let $\beta>0$, let $g:\Sigma^{\N}\to\R_{>0}$ be a positive $\beta$-Hölder continuous function and
 let $\cS$ be the Ruelle operator with potential $g$.   Let $h$ be the eigenfunction of $\cS$ and $\nu$ be the eigenmeasure of $\cS$ as in (\ref{eigen}).
    We define the \emph{Gibbs measure $\mu$ with respect to the $\beta$-Hölder potential $g$} (or briefly, a \emph{Gibbs measure} when $\beta$ and $g$ are not explicitly relevant) on $\Sigma^{\N}$ by
    $$\td\mu:=h~\td\nu   \, . $$
    \end{definition}
	
	 We now list various  useful elementary  properties regarding   the Gibbs measure $\mu$:
	\begin{itemize}
		\item Let $\widetilde{\cS}$ denote  the normalized Ruelle operator; that is  $$\widetilde{\cS} f(I) \ := \! \sum_{J\in\sigma^{-1}(I)}\tilde{g}(J)f(J)  \quad \text{where}   \quad \tilde{g}:=\frac{1}{R}\cdot\frac{g\cdot h}{ h\circ\sigma} \, . $$  Then we have
		\begin{eqnarray}\label{normal}
			\widetilde{\cS}1\equiv 1   \quad {\rm and } \quad \widetilde{\cS}^*\mu=\mu.
		\end{eqnarray}
		\item $\mu$ is $\sigma$-invariant.  \vspace*{2ex}
		\item (\emph{Gibbs property})  There exists $C_5>1$ such that
		\begin{eqnarray}\label{gibbs}
			C_5^{-1}\tilde{g}^{(|I|)}(J)\,\leq\,\mu([I])\,\leq\, C_5\,\tilde{g}^{(|I|)}(J), \qquad \forall\, I\in\Sigma^*,  \ \forall\, J\in[I]  \, .\vspace{2ex}
		\end{eqnarray}

		\item (\emph{quasi-Bernoulli property}) There exists $C_6>1$ such that
		\begin{eqnarray}\label{quassiber}
			C_6^{-1}\mu([I])\mu([J])\,\leq\,\mu([IJ])\,\leq\, C_6\,\mu([I])\mu([J]),\,\,\,\,\,\,\,\,\forall\, I,J\in\Sigma^*.
		\end{eqnarray}
		
	\end{itemize}
	The identity (\ref{normal}) and the $\sigma$-invariance of $\mu$ can be verified directly by definition. For a proof of the inequality (\ref{gibbs}), we refer to
	\cite{bowen2008}. The quasi-Bernoulli property follows from (\ref{gibbs}) and the fact that $\tilde{g}^{(|I|)}(J_1)\asymp \tilde{g}^{(|I|)}(J_2)$ for any $I\in\Sigma^*$ and any $J_1,J_2\in[I]$. The latter, can be proved by making use of the fact that $\tilde{g}$ is positive    and the $\beta$-Hölder continuity of $\log\tilde{g}$. 

    The following well-known result states that any Gibbs measure on $\Sigma^{\N}$ exhibits exponentially decay of correlations for $\beta$-Hölder continuous functions. To state the result, we  define the $\beta$-Hölder norm for $f\in\cC^{\beta}(\Sigma^{\N})$ as
    \[
    \|f\|_{\beta}:=\|f\|_{\infty}+\sup\left\{\frac{|f(I)-f(J)|}{\dist(I,J)^{\beta}}:I,J\in\Sigma^{\N},\,I\neq J\right\}.
    \]

    \begin{theorem}[{\cite[Theorem 1.6]{baladi2000}}]\label{expmixforhol}
        Let $\beta>0$ and let $\mu$ be a Gibbs measure with respect to a $\beta$-Hölder potential  on $\Sigma^{\N}$. Then there exist $C>0$ and $\gamma\in(0,1)$ such that
        \begin{eqnarray*}
            \left|\int_{\Sigma^{\N}}f_1\cdot f_2\circ\sigma^n\,\td\mu-\int_{\Sigma^{\N}}f_1\,\td\mu\cdot\int_{\Sigma^{\N}}f_2\,\td\mu\right|\,\,\leq\,\, C\,\gamma^n\cdot\|f_1\|_{\beta}\cdot\int_{\Sigma^{\N}}|f_2|\,\td\mu
        \end{eqnarray*}
        for any  $f_1\in\cC^{\beta}(\Sigma^{\N})$,  any $f_2\in L^1(\mu)$ and   any $ n \in \N$.
    \end{theorem}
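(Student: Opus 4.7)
The plan is to reduce the correlation bound to a spectral gap statement for the normalized Ruelle operator $\widetilde{\cS}$ acting on $\cC^{\beta}(\Sigma^{\N})$, and then to establish that spectral gap via a projective-contraction argument. First I would exploit the duality $\widetilde{\cS}^{*}\mu=\mu$ together with the elementary identity $\widetilde{\cS}(g\cdot (h\circ\sigma))=h\cdot\widetilde{\cS}(g)$, which follows directly from the definition of $\widetilde{\cS}$ since $\sigma J=I$ for every $J\in\sigma^{-1}(I)$. Iterating gives $\widetilde{\cS}^{n}(f_{1}\cdot f_{2}\circ\sigma^{n})=f_{2}\cdot\widetilde{\cS}^{n}f_{1}$, and integrating against $\mu$ yields
\[
\int_{\Sigma^{\N}} f_{1}\cdot f_{2}\circ\sigma^{n}\,\td\mu=\int_{\Sigma^{\N}} f_{2}\cdot\widetilde{\cS}^{n}f_{1}\,\td\mu,\qquad \int_{\Sigma^{\N}} f_{1}\,\td\mu=\int_{\Sigma^{\N}} \widetilde{\cS}^{n}f_{1}\,\td\mu.
\]
Setting $c:=\int f_{1}\,\td\mu$ and subtracting, the correlation in the theorem is bounded by $\|\widetilde{\cS}^{n}f_{1}-c\|_{\infty}\cdot\int|f_{2}|\,\td\mu$. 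Hence the theorem reduces to proving that there exist $C'>0$ and $\gamma\in(0,1)$ with
\[
\|\widetilde{\cS}^{n}f-\textstyle\int f\,\td\mu\|_{\infty}\;\leq\; C'\,\gamma^{n}\,\|f\|_{\beta}\qquad\forall\,f\in\cC^{\beta}(\Sigma^{\N}).
\]

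To establish this spectral gap, my preferred route is the Birkhoff--Hilbert projective metric method. Because the normalized potential $\tilde g=R^{-1}(g\cdot h)/(h\circ\sigma)$ is strictly positive and $\beta$-H\"older, one can choose a constant $M>0$ large enough so that the cone
\[
\cK_{M}:=\Big\{f\in\cC^{\beta}(\Sigma^{\N}): f>0,\ f(I)\leq f(J)\exp\!\big(M\,\dist(I,J)^{\beta}\big)\ \forall I,J\in\Sigma^{\N}\Big\}
\]
is mapped \emph{strictly} into itself by $\widetilde{\cS}$, and such that the image $\widetilde{\cS}(\cK_{M})$ has finite diameter with respect to Hilbert's projective metric on $\cK_{M}$. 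The key mechanism is that any pair $I,J$ of distinct points in $\Sigma^{\N}$ lifts under $\sigma^{-1}$ to pairs whose distances are contracted by the factor $m^{-\beta}$ coming from the metric \eqref{qwer}; this, together with the quotient structure $\tilde g(iI)/\tilde g(iJ)=1+O(\dist(I,J)^{\beta})$ and the summation in $\widetilde{\cS}1\equiv 1$, forces a uniform strict improvement of the log-H\"older constant. Birkhoff's theorem then delivers a uniform contraction ratio $\theta<1$ in the projective metric, which yields exponential convergence of $\widetilde{\cS}^{n}f/\widetilde{\cS}^{n}1=\widetilde{\cS}^{n}f$ to its $\mu$-average for every $f\in\cK_{M}$.

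Finally, I would pass from the cone $\cK_{M}$ to arbitrary $f\in\cC^{\beta}(\Sigma^{\N})$ by the standard decomposition trick: for any $f\in\cC^{\beta}$ and a large enough constant $L\asymp\|f\|_{\beta}$, both $L+f$ and $L-f$ lie in $\cK_{M}$, so the contraction on $\cK_{M}$ transfers to the linear bound $\|\widetilde{\cS}^{n}f-\int f\,\td\mu\|_{\infty}\ll\theta^{n}\|f\|_{\beta}$. Combined with the reduction of the first paragraph, this produces the claimed exponential mixing estimate with $\gamma:=\theta$ and some absolute constant $C>0$. As a fallback, should the projective-metric verification prove cumbersome, I would run the Lasota--Yorke / Ionescu-Tulcea--Marinescu route, establishing a Doeblin--Fortet inequality $|\widetilde{\cS}f|_{\beta}\leq r|f|_{\beta}+B\|f\|_{\infty}$ with $r<1$ from the same metric contraction $m^{-\beta}$, and then invoking quasi-compactness plus the fact that $1$ is a simple eigenvalue of $\widetilde{\cS}$ (by the uniqueness clause in Theorem~\ref{Theorem 1.5}) to obtain the spectral gap. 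The main obstacle either way is the same: to verify the strict contraction of the H\"older (or log-H\"older) oscillation under $\widetilde{\cS}$, which is where the H\"older regularity of $g$, the positivity of $h$, and the full-shift expansion all enter in a delicate but standard fashion.
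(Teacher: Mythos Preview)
The paper does not supply its own proof of this theorem: it is stated as a known result and attributed directly to \cite[Theorem~1.6]{baladi2000}, with no argument given beyond the citation. Your outline is a correct sketch of the standard proof one finds in that reference (and in Parry--Pollicott, Bowen, etc.): the duality reduction $\int f_{1}\cdot f_{2}\circ\sigma^{n}\,\td\mu=\int f_{2}\cdot\widetilde{\cS}^{n}f_{1}\,\td\mu$ is exactly right, and the spectral gap for $\widetilde{\cS}$ on $\cC^{\beta}$ is indeed obtained either by the Birkhoff--Hilbert cone contraction or by the Ionescu-Tulcea--Marinescu quasi-compactness route, both of which you describe accurately. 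There is nothing to compare here since the paper defers entirely to the literature; your proposal simply fills in what the cited source does.
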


       The authors in \cite{fan2010} utilized Theorem~\ref{expmixforhol} to prove the following result which says  that any  Gibbs measure  on $\Sigma^{\N}$ is exponentially mixing with respect to $(\sigma,\cC)$, where $\cC$ is the collection of all balls in $\Sigma^{\N}$.

 \begin{theorem}[{\cite[Proposition 7.2]{fan2010}}]\label{empsym}
     Let $\mu$ be a  Gibbs measure
     on $\Sigma^{\N}$. Then there exist $C>0$ and $\gamma\in(0,1)$ such that
     \begin{eqnarray*}
         \big|\mu\big([I]\cap\sigma^{-n}F\big)-\mu([I])\mu(F)\big|\,\,\leq\,\, C\gamma^n\mu(F)
     \end{eqnarray*}
     for  any $I\in\Sigma^*$,  any measurable set $F\subseteq\Sigma^{\N}$ and any $n \in \N$.
 \end{theorem}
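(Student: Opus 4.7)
The plan is to exploit the spectral gap of the normalized Ruelle operator $\widetilde{\cS}$. Since the indicator $\one_{[I]}$ has $\beta$-Hölder semi-norm growing like $m^{\beta|I|}$, Theorem~\ref{expmixforhol} cannot be applied to it directly. The idea is to first apply $\widetilde{\cS}$ exactly $|I|$ times, producing a function whose Hölder norm is uniformly bounded in $I$, and then to invoke the Hölder decay of correlations on this well-behaved function.

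By the commutator identity $\widetilde{\cS}(f \cdot g\circ\sigma)=(\widetilde{\cS} f)\cdot g$ together with $\widetilde{\cS}^*\mu=\mu$ (see \eqref{normal}), one has $\mu([I]\cap\sigma^{-n}F)=\int \widetilde{\cS}^n\one_{[I]}\cdot\one_F\,\td\mu$. Setting $k:=|I|$, a direct unwinding of the definition of $\widetilde{\cS}^k$ shows that $\widetilde{\cS}^k\one_{[I]}(J)=\tilde g^{(k)}(IJ)$, because $IJ$ is the unique preimage of $J$ under $\sigma^k$ lying in $[I]$. Define $h_I(J):=\tilde g^{(k)}(IJ)/\mu([I])$. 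The Gibbs property \eqref{gibbs} (applied at $J':=IJ\in[I]$) yields $C_5^{-1}\leq h_I\leq C_5$, while $\widetilde{\cS}^*\mu=\mu$ gives $\int h_I\,\td\mu=1$. For the Hölder control, write
$$
\log h_I(J)-\log h_I(J')=\sum_{i=0}^{k-1}\big[\log\tilde g(\sigma^i(IJ))-\log\tilde g(\sigma^i(IJ'))\big].
$$
Since $\sigma^i(IJ)$ and $\sigma^i(IJ')$ agree in at least the first $(k+p-i)$ coordinates whenever $J,J'$ agree in the first $p$ coordinates, the $\beta$-Hölder continuity of $\log\tilde g$ bounds the sum by a geometric tail of size $\lesssim m^{-\beta p}\asymp\dist(J,J')^\beta$, \emph{independently} of $I$ and $k$. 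Since $h_I$ is bounded away from $0$ and $\infty$, this gives $\|h_I\|_\beta\leq M$ with $M$ independent of $I$.

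For $n\geq k$, linearity and Step~2 give $\widetilde{\cS}^n\one_{[I]}=\mu([I])\,\widetilde{\cS}^{n-k}h_I$. Applying Theorem~\ref{expmixforhol} with $f_1=h_I$ and $f_2=\one_F$ yields $\big|\int \widetilde{\cS}^{n-k}h_I\cdot\one_F\,\td\mu-\mu(F)\big|\leq CM\gamma_0^{\,n-k}\mu(F)$, where $\gamma_0\in(0,1)$ is the decay rate. Multiplying by $\mu([I])$ and using $\mu([I])\leq C_5\lambda^k$ with $\lambda:=\|\tilde g\|_\infty$, we obtain $|\mu([I]\cap\sigma^{-n}F)-\mu([I])\mu(F)|\leq C\lambda^k\gamma_0^{\,n-k}\mu(F)$. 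One checks that $\lambda<1$: indeed $\widetilde{\cS} 1\equiv 1$ forces $\sum_{\sigma J=I}\tilde g(J)=1$, and $\tilde g>0$ is continuous on the compact space $\Sigma^{\N}$ with $m\geq 2$ positive summands. Choosing $\gamma:=\max(\lambda,\gamma_0)\in(0,1)$ gives $\lambda^k\gamma_0^{\,n-k}\leq\gamma^n$, which settles this case. For $n<k$, write $I=I'I''$ with $|I'|=n$; since $[I]\subseteq[I']$ and the previously handled $n=k$ case applied to $[I']$ gives $\mu([I']\cap\sigma^{-n}F)\leq C\lambda^n\mu(F)\leq C\gamma^n\mu(F)$ and $\mu([I])\mu(F)\leq \mu([I'])\mu(F)\leq C\lambda^n\mu(F)\leq C\gamma^n\mu(F)$, the triangle inequality closes the argument.

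The crux is the uniform Hölder bound on $h_I$: although $\tilde g^{(k)}(IJ)$ itself has size $\mu([I])\to 0$ exponentially in $k$, its \emph{logarithmic} variation in $J$ telescopes into a uniformly convergent geometric series controlled only by the $\beta$-Hölder modulus of $\log\tilde g$. Without this uniformity, the spectral gap from Theorem~\ref{expmixforhol} would produce a constant depending on $|I|$ and would not suffice for the bound $C\gamma^n\mu(F)$.
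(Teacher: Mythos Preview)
Your proof is correct and follows the standard transfer-operator route that the paper indicates (the paper does not supply its own proof, noting only that the authors in \cite{fan2010} utilized Theorem~\ref{expmixforhol}). The essential idea---pushing $\one_{[I]}$ forward $|I|$ times via $\widetilde{\cS}$ to obtain $h_I(J)=\tilde g^{(|I|)}(IJ)/\mu([I])$, whose $\beta$-H\"older norm is bounded \emph{uniformly} in $I$ thanks to the geometric telescoping of $\log h_I$, and then invoking Theorem~\ref{expmixforhol} on $h_I$---is exactly the mechanism behind the cited result.

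One minor stylistic remark on the case $n<|I|$: your appeal to ``the previously handled $n=k$ case applied to $[I']$'' is slightly roundabout. It is more direct to note from your Step~3 that $\widetilde{\cS}^n\one_{[I']}(J)=\tilde g^{(n)}(I'J)\leq\lambda^n$, whence $\mu([I]\cap\sigma^{-n}F)\leq\mu([I']\cap\sigma^{-n}F)=\int_F\widetilde{\cS}^n\one_{[I']}\,\td\mu\leq\lambda^n\mu(F)$, after which your triangle-inequality argument closes as written.
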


   This concludes our discussion concerning Ruelle operators and associated Gibbs measures on symbolic spaces.  We now turn our attention to Ruelle operators on  self-conformal sets.

	\subsection{Ruelle operators on $C^{1+\alpha}$ conformal IFS}\label{ruelle conformal}
 Let $m\in\N_{\geq2}$, let $d\in\N$, let $\alpha>0$ and let $\alpha'>0$. Throughout this section,  let $$\Phi=\{\varphi_j:\Omega\to\Omega\}_{1\leq j\leq m}$$ be a $C^{1+\alpha}$ conformal IFS on $\R^d$,  let $K\subseteq\R^d$ be the self-conformal set generated by $\Phi$, and let $$\{g_j:\varphi_j(\Omega)\to\R_{>0}\}_{1\leq j\leq m}$$ be  positive $\alpha'$-Hölder continuous functions. Define the Ruelle operator $\cL:C(K)\to C(K)$ with  potentials $\{g_j\}_{1\leq j\leq m}$ by setting
 \begin{eqnarray}\label{RuelleRd}
     \left(\cL f\right)(\x):=\sum_{j=1}^{m}g_j\left(\varphi_j(\x)\right)f\left(\varphi_j(\x)\right)
 \end{eqnarray}
 where  $f\in C(K)$  and $\x\in K$. The \emph{spectral radius} of $\cL$ is defined similarly as in (\ref{spera}) with $\cS$ replaced by $\cL$ and $\Sigma^{\N}  $ replaced by $K$. Let $\cM(K)$ be the collection of all finite Borel signed measures on $K$. The dual operator $\cL^*:\cM(K)\to\cM(K)$ corresponding to $\cL$ is defined similarly as in (\ref{daulsym}).   We now construct the Ruelle operator $\cS$ on the symbolic space $\Sigma^{\N}=\{1,...,m\}^{\N}$ associated with $\cL$ via the functions  $\{g_j\}_{1\leq j\leq m}$. In turn,  we  investigate the relationship between the two operators $\cL$ and $\cS$.

Fix $\x_0\in K$. The \emph{coding map  associated} to $\Phi$,  denoted by $\pi:\Sigma^{\N}\to K$, is defined as  follows:
\begin{equation}    \label{cmap} \pi(I):=\lim_{n\to\infty}\varphi_{i_1}\circ\cdot\cdot\cdot\circ\varphi_{i_n}(\x_0)   \quad \text{where}    \ \ I=i_1i_2\cdot\cdot\cdot\in\Sigma^{\N}.
\end{equation}
 The above limit exists  since the maps  $\varphi_j$  $(1\leq j\leq m)$ are contractive, and  its value is independent of the choice of $\x_0\in K$.
The Ruelle operator $\cS:C(\Sigma^{\N})\to C(\Sigma^{\N})$ associated with $\{g_j\}_{1\leq j\leq m}$  (or equivalently induced by $\cL$) is defined by
\begin{eqnarray}\label{inducedruelle}
    \cS \barbelow{f}(I):=\sum_{J\in\sigma^{-1}(I)}\barbelow{g}(J)\barbelow{f}(J)\,\qquad \forall\, \barbelow{f}\in C(\Sigma^{\N}),\,\,\forall\, I\in\Sigma^N,
 \end{eqnarray}
 where we set
 \begin{eqnarray*}
    \barbelow{g}(I):=g_{i_1}(\pi(I)),    \qquad \forall\,I=i_1i_2\cdot\cdot\cdot\in\Sigma^{\N}.
 \end{eqnarray*}

\noindent A direct calculation yields that $\barbelow{g}$   is a $\beta$-Hölder continuous function on $\Sigma^{\N}$, where
\[
\beta:=\frac{-\alpha'\log\kappa}{\log m}\qquad \text{ and  \ $\kappa\in(0,1)$  \ is as in \eqref{defkap}}.
\]
It turns out that $\cL$ and $\cS$ should have some relations. Indeed,
it can be verified that  the operators  $\cL$ and $\cS$ are  naturally related  via the coding map in the following manner:
\begin{eqnarray}\label{relofsandl}
    \cS(f\circ\pi)=(\cL f)\circ\pi  \qquad  \, \forall \,  f\in C(K) \, .
\end{eqnarray}
With this at hand, one can show that  $\cL$ and $\cS$  share the same spectral radius which we denote by $R$. According to Theorem~\ref{Theorem 1.5}, there exist unique positive  $\ubar{h}\in\cC^{\beta}(\Sigma^{\N})$ and Borel probability measure $\ubar{\nu}\in\cM(\Sigma^{\N})$ such that
\begin{eqnarray}\label{eigensym}
    \cS \ubar{h}=R\,\ubar{h},  \quad \cS^*\ubar{\nu}=R\,\ubar{\nu}, \quad \int_{\Sigma^{\N}}  \ubar{h}  \ \td\ubar{\nu}=1.
\end{eqnarray}

In  \cite{fan1999},  Fan and Lau established the existence and uniqueness of the eigenfunction and eigenmeasure of the operator $\cL$  and also revealed  their respective  relationship to $\ubar{h}$ and $\ubar{\nu}$.  More precisely, this is summarised by the following statement.  Clearly the first part is the analogue of Theorem~\ref{Theorem 1.5}  for self-conformal sets.

\begin{theorem}[{\cite[Proof of Theorem 1.1; Theorem 2.2]{fan1999}}]\label{ruellethmrd}
  Let $\cL$ be a Ruelle operator defined as in (\ref{RuelleRd}), and let $R$ be the spectral radius of $\cL$.  Let $\cS$ be the Ruelle operator on $\Sigma^{\N}$ induced by $\cL$ (see (\ref{inducedruelle})), and let
  $\ubar{h}\in\cC^{\beta}(\Sigma^{\N})$ and $\ubar{\nu}\in\cM(\Sigma^{\N})$
  be as in (\ref{eigensym}). Then we have:
    \begin{enumerate}[label=(\roman*)]
        \item\label{2.3.1} There exist unique positive  $h\in C(K)$ and Borel probability measure $\nu\in\cM(K)$ such that
        \[
            \cL h=R\,h,  \quad \cL^*\nu=R\,\nu,  \quad \int_{K}h~\td\nu=1.
        \]
        \item\label{2.3.2} Let $h$ and $\nu$ be the function and measure obtained  in \ref{2.3.1}.  Then $$\ubar{h}=h\circ\pi   \quad  { and } \quad \nu=\ubar{\nu}\circ\pi^{-1}   \, . $$
        \item\label{2.3.3} Let $\nu$ be the measure obtained in \ref{2.3.1}. If  $\Phi$ (the IFS)  satisfies the  open set condition (OSC), then for all $I,J\in\Sigma^*$ with  $|I|=|J|$ and $I\neq J$ we have that $$\nu(K_I\cap K_J)=0   \, . $$
     \end{enumerate}
\end{theorem}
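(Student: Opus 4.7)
The strategy is to derive all three statements from the symbolic Ruelle theorem (Theorem~\ref{Theorem 1.5}) by transporting spectral data through the coding map $\pi$, using the intertwining identity \eqref{relofsandl}. Parts (i) and (ii) will follow from explicit pushforward/pullback constructions; part (iii) requires a genuinely geometric argument exploiting the OSC and is, I expect, the main obstacle.

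For the measure side of (i), set $\nu:=\ubar{\nu}\circ\pi^{-1}$, a Borel probability measure on $K$. A short calculation using \eqref{relofsandl} shows, for any $f\in C(K)$,
\[
\int_K\cL f\,\td\nu=\int_{\Sigma^{\N}}(\cL f)\circ\pi\,\td\ubar{\nu}=\int_{\Sigma^{\N}}\cS(f\circ\pi)\,\td\ubar{\nu}=R\int_{\Sigma^{\N}}f\circ\pi\,\td\ubar{\nu}=R\int_K f\,\td\nu,
\]
so $\cL^*\nu=R\nu$. For the eigenfunction, iterating \eqref{relofsandl} yields $\cS^n(f\circ\pi)=(\cL^n f)\circ\pi$; applied to $f\equiv 1$ this gives $(R^{-n}\cL^n\mathbf{1})\circ\pi=R^{-n}\cS^n\mathbf{1}$. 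Since the right-hand side converges uniformly on $\Sigma^{\N}$ to a positive scalar multiple of $\ubar{h}$ (by the classical proof of Theorem~\ref{Theorem 1.5}) and $\pi$ is a continuous surjection, the sequence $R^{-n}\cL^n\mathbf{1}$ is Cauchy in $C(K)$ under the sup-norm and so converges uniformly to some positive $h\in C(K)$ with $h\circ\pi=c\,\ubar{h}$ for some $c>0$. Applying $\cL$ and invoking \eqref{relofsandl} once more gives $(\cL h-Rh)\circ\pi\equiv 0$, so $\cL h=Rh$ on $K$ by surjectivity of $\pi$. Normalising $h$ so that $\int_K h\,\td\nu=1$ combined with $\int_{\Sigma^{\N}}\ubar{h}\,\td\ubar{\nu}=1$ forces $c=1$, which simultaneously proves the identity $h\circ\pi=\ubar{h}$ of (ii). Uniqueness reduces to the symbolic case: any competing positive $h'\in C(K)$ with $\cL h'=Rh'$ pulls back via \eqref{relofsandl} to a positive continuous eigenfunction $h'\circ\pi$ of $\cS$, whence $h'\circ\pi=c\,\ubar{h}=c\,h\circ\pi$ by Theorem~\ref{Theorem 1.5}, and $h'=ch$ follows from surjectivity of $\pi$; a dual computation against test functions handles uniqueness of $\nu$.

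Part (iii) is the main obstacle, as it genuinely requires the OSC. Fix $I,J\in\Sigma^n$ with $I\neq J$ and let $k$ be the first index where they differ. Combining $\varphi_{i_k}(V)\cap\varphi_{j_k}(V)=\emptyset$ with the nestedness $\varphi_l(V)\subseteq V$ and the injectivity of the common prefix map $\varphi_{i_1\cdots i_{k-1}}=\varphi_{j_1\cdots j_{k-1}}$ gives $\varphi_I(V)\cap\varphi_J(V)=\emptyset$. Replacing $V$ if necessary so that $K\subseteq\overline{V}$ (which the OSC permits without loss of generality) yields $K_I\cap K_J\subseteq(\partial\varphi_I(V))\cup(\partial\varphi_J(V))$, reducing the claim to showing $\nu(\partial\varphi_I(V))=0$ for every $I\in\Sigma^*$. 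Iterating $\cL^*\nu=R\nu$ and exploiting the pairwise disjointness of $\{\varphi_J(V)\}_{|J|=|I|}$ together with the quasi-Bernoulli bounds \eqref{quassiber}, one reduces this further to $\nu(\partial V)=0$. Finally, $\nu(\partial V)=0$ is obtained from $\sum_{|J|=n}\nu(\varphi_J(V))\leq\nu(V)\leq 1$ combined with a Gibbs-type lower bound on each $\nu(\varphi_J(V))$: any positive $\nu$-mass on $\partial V$ would accumulate on the disjoint iterated boundaries $\partial\varphi_J(V)$, contradicting this uniform total-mass bound, following the strategy of Fan and Lau.
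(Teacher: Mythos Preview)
The paper does not prove this theorem; it is quoted from Fan--Lau \cite{fan1999} (their Theorem~1.1 and Theorem~2.2), so there is no in-paper argument to compare against. Your treatment of (i) and (ii) via the intertwining relation \eqref{relofsandl} is the standard route and is essentially how Fan--Lau proceed: push $\ubar{\nu}$ forward by $\pi$ to obtain $\nu$, pull the uniform convergence $R^{-n}\cS^n\mathbf{1}\to\ubar{h}$ back through the surjection $\pi$ to produce $h$, and reduce both uniqueness statements to the symbolic ones. One small point: for uniqueness of $\nu$ you implicitly need $R^{-n}\cL^n f\to h\int f\,\td\nu$ for \emph{all} $f\in C(K)$, not just $f=\mathbf{1}$; this does follow from \eqref{relofsandl} and the full symbolic convergence, but you should say so.

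For part (iii) your sketch has two genuine gaps. First, the assertion that one may choose the OSC set $V$ with $K\subseteq\overline{V}$ is not free: it is equivalent to the \emph{strong} open set condition ($V\cap K\neq\emptyset$), and the implication OSC $\Rightarrow$ SOSC for conformal IFSs is a nontrivial theorem (Schief for self-similar, Peres--Rams--Simon--Solomyak for self-conformal) that must be invoked. Second, your concluding ``accumulation'' argument fails as written: the iterated boundaries $\varphi_J(\partial V)=\partial\varphi_J(V)$ are \emph{not} pairwise disjoint in general (they typically share pieces along common faces), so positive mass on $\partial V$ does not produce a divergent sum of disjoint masses. What actually makes the computation work is the observation that for $I\neq J$ one has $\overline{\varphi_I(V)}\cap\varphi_J(V)=\emptyset$ (two disjoint open sets have disjoint closure--interior), which kills the cross terms in the expansion of $\nu(\varphi_J(V))$ via $\cL^*\nu=R\nu$ \emph{exactly} and yields the identity $\nu(V_n)=\int_{K\cap V}R^{-n}\cL^n\mathbf{1}\,\td\nu$; Fan--Lau then combine this with the spectral convergence and a further iteration to force $\nu(K\setminus V)=0$. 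Your outline points in the right direction but does not supply this mechanism.
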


\medskip

In the same way that  Theorem~\ref{Theorem 1.5}   enables us to naturally define ``good'' measures on symbolic spaces, the above theorem enables us to define  ``good'' measures on self-conformal sets.

\begin{definition}[Gibbs measure on $K$]\label{Gibbsmeasonk}
   Let $\alpha'>0$, let $\{g_j:\varphi_j(\Omega)\to\R_{>0}\}_{1\leq j\leq m}$ be positive $\alpha'$-Hölder continuous functions and  let $\cL$ be the Ruelle operator with respect to  $\{g_j\}_{1\leq j\leq m}$ as defined by  (\ref{RuelleRd}). Let $h$ be the eigenfuction of  $\cL$ and $\nu$ be the eigenmeasure of $\cL$  as in  part \ref{2.3.1} of Theorem~\ref{ruellethmrd}.  We define the
  \emph{Gibbs measure $\mu$ with respect to the $\alpha'$-Hölder potentials $\{g_j\}_{1\leq j\leq m}$} (or briefly, a \emph{Gibbs measure}) on $K$ by
\begin{eqnarray}\label{defofgibbs}
     \td{\mu}:=h~\td{\nu}.
 \end{eqnarray}
\end{definition}

\medskip

Now let  $\mu$ be the Gibbs measure with respect to the $\alpha'$-Hölder potentials $\{g_j\}_{1\leq j\leq m}$. In turn, let $\ubar{h}\in\cC^{\beta}(\Sigma^{\N})$ and $\ubar{\nu}\in\cM(\Sigma^{\N})$ be the eigenfunction and eigenmeasure of the Ruelle operator $\cS$ associated with $\{g_j\}_{1\leq j\leq m}$ (see (\ref{eigensym})), and let $\td\ubar{\mu}:=\ubar{h}~\td\ubar{\nu}$ be the Gibbs measure on $\Sigma^{\N}$ with respect to the $\beta$-Hölder potential $\barbelow{g}$ (see Definition~\ref{gibbssigma}). Then, as a consequence of part \ref{2.3.2} of  Theorem~\ref{ruellethmrd},  we have that
 \begin{eqnarray}\label{mubarmupi}
     \mu=\ubar{\mu}\circ\pi^{-1}.
 \end{eqnarray}
 The upshot is that any Gibbs measure $\mu$ on $K$ is an image measure of a Gibbs measure $\ubar{\mu}$ on the symbolic space $\Sigma^{\N}$ under the coding map.

In the following, we assume that $\Phi$ satisfies the OSC.  Let $\widetilde{K}$ be the set of points with unique symbolic representation; that is
$$\widetilde{K}:=\left\{\x\in K:\#\big(\pi^{-1}(\x)\big)=1\right\}.$$  Then by part \ref{2.3.3} of Theorem~\ref{ruellethmrd}, we have that
 \begin{eqnarray}\label{uniqueword}
     \mu\big(\widetilde{K}^c\big)=0.
 \end{eqnarray}
With this in mind, consider the map  $T:\R^d\to \R^d$ defined by
\begin{eqnarray}\label{defoft}
    T(\x):=\left\{
    \begin{aligned}
        &\pi\circ\sigma\circ\pi^{-1}(\x)    \quad \text{if} \ \  \  \x\in\widetilde{K},\\[1ex]
        &\x    \quad  \  \text{if} \ \ \  \x\notin \widetilde{K}.
    \end{aligned}\right.
\end{eqnarray}

 \noindent By (\ref{mubarmupi}), (\ref{uniqueword}) and the $\sigma$-invariance of $\ubar{\mu}$ (see Section \ref{ROonsymbolic}), it can be  verified  that $\mu$ is $T$-invariant. Indeed, since $\mu=\ubar{\mu}\circ\pi^{-1}$, then (\ref{uniqueword}) implies that $\ubar{\mu}((\pi^{-1}\widetilde{K})^c)=0$. Moreover, by the  definitions of $\widetilde{K}$ and $T$, we have  that
 $$\widetilde{K}\cap T^{-1}F=\pi^{-1}\widetilde{K}\cap\sigma^{-1}(\pi^{-1}F)$$
for any  $\mu$-measurable set $F\subseteq \R^d$. This together with the fact that $\ubar{\mu}$ is $\sigma$-invariant in mind, implies  that
\begin{eqnarray*}
    \mu(T^{-1}F)&=&\mu(\widetilde{K}\cap T^{-1}F)\\[4pt]
    &=&\ubar{\mu}(\pi^{-1}\widetilde{K}\cap\sigma^{-1}(\pi^{-1}F))\\[4pt]
    &=&\ubar{\mu}(\sigma^{-1}(\pi^{-1}F))\\[4pt]
    &=&\ubar{\mu}(\pi^{-1}F)\\[4pt]
    &=&\mu(F)
\end{eqnarray*}
 as desired. Next, given any $n\in\N$, any $I\in\Sigma^*$ and any $\mu$-measurable subset $F\subseteq \R^d$, the following relation  follows directly via the  definitions of $\widetilde{K}$ and $T$
$$\pi^{-1}\left(K_I\cap T^{-n}F\cap\widetilde{K}\right)=[I]\cap\sigma^{-n}(\pi^{-1}F)\cap\pi^{-1}\widetilde{K}.$$
Then on using (\ref{uniqueword}), we find that
\begin{eqnarray}\label{mukicaptnf}
\mu\big(K_I\cap T^{-n}F\big)=\ubar{\mu}\left([I]\cap\sigma^{-n}(\pi^{-1}F)\right).
\end{eqnarray}
In particular,  if we put  $F=K$ in (\ref{mukicaptnf}), it follows that
\begin{eqnarray}\label{muubarmu}
    \mu(K_I)=\ubar{\mu}([I]),   \qquad \forall\, I\in\Sigma^*\,.
\end{eqnarray}

The upshot of the above is that on combining (\ref{mubarmupi}),  (\ref{mukicaptnf}), (\ref{muubarmu}) with the $\sigma$-invariance of $\ubar{\mu}$ and Theorem~\ref{empsym}, we obtain the following statement  that plays a crucial role in the ``direct'' proof of Theorem~\ref{Main} given in Appendix \ref{appendix:directproof}. It can be viewed as the analogue of Theorem~\ref{empsym}  for self-conformal sets.

\begin{corollary}\label{emprd}
    Let $\Phi$ be a $C^{1+\alpha}$ conformal IFS satisfying the OSC on $\R^d$, let $K$ be the self-conformal set generated by $\Phi$, let $\mu$ be a  Gibbs measure supported on $K$ and let $T$ be defined as in  (\ref{defoft}). Then there exist $C>0$ and $\gamma\in(0,1)$ such that
 \begin{eqnarray*}
     \left|\mu\big(K_I\cap T^{-n}F\big)-\mu(K_I)\mu(F)\right|\leq C\gamma^n\mu(F)
 \end{eqnarray*}
 for any $n\geq1$, any $I\in\Sigma^*$ and any $\mu$-measurable set $F\subseteq \R^d$.
\end{corollary}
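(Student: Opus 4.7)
The plan is to deduce Corollary~\ref{emprd} directly from Theorem~\ref{empsym} (exponential mixing on the symbolic space $\Sigma^{\N}$) by transferring the estimate across the coding map $\pi:\Sigma^{\N}\to K$. All of the heavy lifting has essentially been done already: the three identities \eqref{mubarmupi}, \eqref{mukicaptnf} and \eqref{muubarmu} were established in the preceding discussion precisely so that this transfer becomes mechanical. The corresponding symbolic Gibbs measure $\ubar{\mu}$ on $\Sigma^{\N}$ is, by construction, the Gibbs measure associated (via Definition~\ref{gibbssigma}) with the $\beta$-Hölder potential $\barbelow{g}$ induced by the potentials $\{g_j\}_{1\leq j\leq m}$ defining $\cL$, so Theorem~\ref{empsym} is directly applicable to it.

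Carrying this out, given $n\geq 1$, $I\in\Sigma^*$ and a $\mu$-measurable set $F\subseteq\R^d$, I would combine \eqref{mukicaptnf}, \eqref{muubarmu} and $\mu(F)=\ubar{\mu}(\pi^{-1}F)$ (from \eqref{mubarmupi}) to obtain
\[
\bigl|\mu\bigl(K_I\cap T^{-n}F\bigr)-\mu(K_I)\mu(F)\bigr|
\,=\,\bigl|\ubar{\mu}\bigl([I]\cap\sigma^{-n}(\pi^{-1}F)\bigr)-\ubar{\mu}([I])\,\ubar{\mu}(\pi^{-1}F)\bigr|.
\]
Applying Theorem~\ref{empsym} to the cylinder $[I]$ and the $\ubar{\mu}$-measurable set $\pi^{-1}F$ then yields constants $C>0$ and $\gamma\in(0,1)$ (depending only on $\Phi$ and the chosen Gibbs data) such that the right-hand side is bounded by
\[
C\gamma^n\,\ubar{\mu}(\pi^{-1}F)\,=\,C\gamma^n\,\mu(F),
\]
which is exactly the desired inequality.

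The only point that needs a brief comment is the $\ubar{\mu}$-measurability of $\pi^{-1}F$ when $F$ is merely $\mu$-measurable (rather than Borel). Since $\pi$ is continuous, this is immediate for Borel $F$; the general case follows from a standard completion argument using $\mu=\ubar{\mu}\circ\pi^{-1}$, as any $\mu$-measurable $F$ differs from a Borel set by a $\mu$-null set whose $\pi$-preimage is $\ubar{\mu}$-null in $\Sigma^{\N}$. There is no genuine obstacle here: the whole content of the corollary is packaged into the three transfer identities of the previous subsection together with the symbolic mixing result Theorem~\ref{empsym}.
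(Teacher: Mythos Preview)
Your proposal is correct and is essentially identical to the paper's approach: the paper states the corollary as an immediate consequence of combining the transfer identities \eqref{mubarmupi}, \eqref{mukicaptnf}, \eqref{muubarmu} with Theorem~\ref{empsym}, which is exactly what you have written out in detail. Your added remark on the $\ubar{\mu}$-measurability of $\pi^{-1}F$ is a reasonable technical aside that the paper does not spell out.
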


For the sake of completeness, we end this section by  describing  two concrete and well known  examples of  Gibbs measures on self-conformal sets $K$. The first example shows  that   any self-similar measure is a Gibbs measure.  The  second example shows  the existence of Gibbs measures that are  equivalent to the restricted Hausdorff measure on $K$.\\

\begin{example}
    Let $\Phi=\{\varphi\}_{1\leq j\leq m}$ be a self-similar IFS on $\R^d$ and $K$ be the self-similar set generated by $\Phi$. Let $\p=(p_1,...,p_m)$ be a probability vector. Then $\p$ induces on $K$  the  self-similar measure
    \[
        \mu=\sum_{j=1}^{m}p_j\,\mu\circ \varphi_j^{-1}\,.
    \]
     Let $\cL:C(K)\to C(K)$ be the Ruelle operator defined by
    \[\left(\cL f\right)(\x):=\sum_{j=1}^{m}p_jf\left(\varphi_j(\x)\right).\]
    Then  a straightforward calculation shows  that the spectral radius of $\cL$ is $1$ and
    \[
        \cL 1\equiv 1     \quad {\rm and }  \quad \cL^*\mu=\mu.
    \]
   The upshot is that any self-similar measure is a  Gibbs measure. \\

    \begin{example}\label{hausgibbs}
        Let $\Phi=\{\varphi_j\}_{1\leq j\leq m}$ be a $C^{1+\alpha}$ conformal IFS on $\R^d$ satisfying the OSC, and let $K$ be the self-conformal set generated by $\Phi$. Let $\tau:=\dimH K$. Let  $\cL:C(K)\to C(K)$ be the the Ruelle operator defined by
        \[
            \left(\cL f\right)(\x) :=\sum_{j=1}^{m}|\varphi_j'(\x)|^{\tau}f(\varphi_j(\x)).
        \]
Then by \cite[Theorems 2.7 and 2.9]{fan1999}, the spectral radius of $\cL$ is $1$
        and the unique eigenmeasure $\nu \in \cM(K)$  (the existence is guaranteed by part (i) of Theorem~\ref{ruellethmrd}) is given by the normalized restricted Hausdorff measure
        $$\nu:=\frac{\cH^{\tau}|_K}{\cH^{\tau}(K)}    \qquad  {\rm which \  satisfies} \qquad \cL^*\nu=\nu   \, . $$
        By definition, the  corresponding Gibbs measure $\mu$  is given by  $\td\mu=h\,\td\nu$ for some positive continuous function $h\in C(K)$ and it is easily verified that  there exists $C>1$ such that
        \[
            C^{-1}\cH^{\tau}(E)\,\leq\,\mu(E)\,\leq\, C\,\cH^{\tau}(E)    \qquad \text{for all Borel sets}~E\subseteq K.
        \]
In other words, $\mu$ is  equivalent to $ \cH^{\tau}|_K$ - the restricted Hausdorff measure on $K$.

    \end{example}
\end{example}

  \section{Self-conformal sets: rigidity without  separation conditions and  the proof of Theorem~\ref{thmrigidity}}\label{Secrig}

In this section, we will prove Theorem~\ref{thmrigidity}. As alluded in Remark~\ref{thy}, and indeed the title of this section,   we will in fact  prove a stronger statement that does not require the OSC assumption that is implicit in the   definition of a self-conformal system.      Let $\ubar{\mu}$ be a Borel probability measure on the symbolic space $\Sigma^{\N}=\{1,2,...,m\}^{\N}$.  Recall, that  $(\Sigma^{\N}, \dist) $   is a compact metric space  where $\dist$ is given by \eqref{qwer}.   With this in mind,  we say that $\ubar{\mu}$ is \emph{doubling} if there exists $C>1$ such that
\[
0<\ubar{\mu}\big(B(I,2r)\big)\leq C\,\ubar{\mu}\big(B(I,r)\big)<\infty \qquad \forall\ I\in\Sigma^{\N}\quad\text{and}\quad  r>0\,.
\]
It can be verified that this standard definition is equivalent to the statement that  there exists $\eta\in(0,1)$ such that
 \begin{equation}  \label{nsdoub}
 \ubar{\mu}([Ij]) \ >\ \eta\cdot\ubar{\mu}([I])    \qquad   \forall \quad  I\in\Sigma^*    \quad {\rm  and }  \quad  1\leq j\leq m \, , \end{equation}
 \noindent where $[Ij]$ denotes the cylinder set
 \[
 \left\{J=j_1j_2\cdot\cdot\cdot\in\Sigma^{\N}:\,j_1j_2\cdot\cdot\cdot j_{|I|}=I,\,\,j_{|I|+1}=j\,\right\}.
 \]

A direct consequence of (\ref{quassiber})  is that any Gibbs measure on $\Sigma^{\N}$ is doubling and so the following statement  implies Theorem~\ref{thmrigidity}.

\medskip

 \begin{proposition}\label{rigidity}
     Let $\Phi=\{\varphi_j\}_{1\leq j\leq m}$ be a $C^{1+\alpha}$ conformal IFS (without any separation condition) on $\R^d$ with $d\geq2$, let $K$ be  the self-conformal set generated by $\Phi$ and let $\pi$ be the coding map (see \eqref{cmap}). Let  $\ubar{\mu}$ be a doubling Borel probability measure on $\Sigma^{\N}=\{1,...,m\}^{\N}$ and $\mu=\ubar{\mu}\circ \pi^{-1}$. Given any $\ell\in\{1,...,d-1\}$, then one of the following statements hold:
\begin{enumerate}[label=(\roman*)]
         \item $\mu(K\cap M)=0$ for any  $\ell$-dimensional $C^1$ submanifold $M\subseteq\R^d$;
         \vspace*{1ex}
 \item\label{2} $K$ is contained in a $\ell$-dimensional affine subspace or $\ell$-dimensional geometric sphere;
  \vspace*{1ex}
 \item\label{3} $K$ is contained in a finite disjoint union of analytic curves and this may happen  only when $d=2$ and $\ell=1$.
     \end{enumerate}
 \end{proposition}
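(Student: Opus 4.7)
The plan is to prove a dichotomy: if (i) fails then (ii) or (iii) must hold. Suppose therefore that there exists an $\ell$-dimensional $C^1$ submanifold $M \subseteq \R^d$ with $\mu(K \cap M) > 0$. Since $\ubar{\mu}$ is doubling on the metric space $(\Sigma^{\N}, \dist)$, the Lebesgue differentiation theorem applies on $\Sigma^{\N}$ and yields a $\ubar{\mu}$-density point of $\pi^{-1}(K \cap M)$ along cylinders. Pushing this forward through $\pi$ produces a point $\x_0 \in K \cap M$ together with a sequence $I_n \in \Sigma^*$ such that $|I_n| \to \infty$, $\x_0 \in K_{I_n}$, and the relative $\mu$-measure of $M \cap K_{I_n}$ in $K_{I_n}$ tends to $1$.

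The heart of the argument is a zoom-in at $\x_0$ via the inverse conformal maps. For each $n$ I would set
\[
G_n(\y) := \tfrac{1}{\|\varphi_{I_n}'\|}\bigl(\varphi_{I_n}^{-1}(\y) - \x_0\bigr),
\]
so that each $G_n$ is conformal on $\varphi_{I_n}(\Omega)$ with derivatives uniformly comparable to $1$ by the distortion estimates \eqref{2.2} and \eqref{p2}, and sends $K_{I_n}$ to a uniformly bounded subset of $\R^d$. Applying Lemma~\ref{concon} to an appropriate subfamily, one obtains a subsequential uniform limit $g : \Omega' \to \R^d$ which is itself conformal. Because $M$ is $C^1$, the images $G_n(M \cap K_{I_n})$ flatten to (a piece of) the tangent $\ell$-plane to $M$ at $\x_0$, and combined with the density condition above this forces $g(K) \subseteq V$ for some $\ell$-dimensional affine subspace $V \subseteq \R^d$.

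To upgrade this to the desired statement about $K$, I would exploit the rigidity of conformal maps. When $d \geq 3$, Liouville's theorem forces $g$ to be the restriction of a Möbius transformation on $\overline{\R}^d$; since Möbius maps send $\ell$-dimensional affine subspaces to $\ell$-dimensional affine subspaces or $\ell$-dimensional geometric spheres, $K \subseteq g^{-1}(V)$ lands in case~(ii). When $d = 2$, identifying $\R^2$ with $\bC$, the map $g$ is either holomorphic or anti-holomorphic and injective on $\Omega'$, so $g^{-1}(V \cap g(\Omega'))$ is an analytic curve containing a neighbourhood of $\x_0$ in $K$. To globalise, I would invoke the self-conformal identity $K = \bigcup_j \varphi_j(K)$: each $\varphi_j$ is conformal and therefore sends analytic curves to analytic curves, so iterating yields a \emph{finite} family of analytic curves whose union contains $K$, which is precisely case~(iii). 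In the subcase where $g$ is additionally Möbius, $g^{-1}(V)$ is a line or a circle and we land back in case~(ii).

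The main obstacle I anticipate is carrying out the density-point argument and the zoom-in in the \emph{absence} of any separation condition. Overlapping cylinders mean that the inequality $\ubar{\mu}([I]) \leq \mu(K_I)$ may be strict, so the density has to be organised symbolically via the doubling of $\ubar{\mu}$ and transferred through $\pi$; this is where the present statement genuinely strengthens Mattila and Käenmäki. A second delicate point is that the limit $g$ is only defined on the open set $\Omega'$, so passing from a local statement about $g(K)$ to a global statement about $K$ in case~(iii) relies on iteration through the IFS, and it is precisely here that the finite-union phenomenon illustrated by Example~\ref{counterexam} (the correction to Käenmäki's theorem) becomes unavoidable.
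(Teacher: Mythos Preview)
Your high-level plan (density point on $\Sigma^{\N}$ via doubling, conformal zoom-in, Liouville rigidity) is the paper's, but your zoom-in is set up in the wrong direction and this propagates into two genuine gaps.

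First, your $G_n$ involves $\varphi_{I_n}^{-1}$ and is therefore defined only on the shrinking domain $\varphi_{I_n}(\Omega)$, so Lemma~\ref{concon} does not apply; worse, your claim that $G_n(K_{I_n})$ is uniformly bounded is false, since $G_n(K_{I_n}) = \|\varphi_{I_n}'\|^{-1}(K - \x_0)$ has diameter tending to infinity. The paper instead considers the \emph{forward} family $\psi_n \circ \varphi_{I_n}$ on the fixed domain $U \supseteq K$, where $\psi_n(\z) = \|\varphi_{I_n}'\|^{-1}(\z - \varphi_{I_n}(\z_0)) + \z_0$ is affine. This family is uniformly bi-Lipschitz and bounded by \eqref{p2}, so Lemma~\ref{concon} produces a conformal limit $f$ on all of $U$. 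Consequently, once one has $f(K) \subseteq W$ for an $\ell$-plane $W$, the inclusion $K \subseteq f^{-1}(W \cap f(U))$ is already global. No IFS iteration is needed: in $d=2$, $f^{-1}(W \cap f(U))$ is a countable union of analytic curves and the finiteness comes from compactness of $K$, not from the identity $K = \bigcup_j \varphi_j(K)$. Your proposed iteration ``yields a finite family'' has no evident termination mechanism.

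Second, you skip the step that actually uses doubling. In the paper this is the claim \eqref{infmax0}: one shows $\inf_n \max_{\x \in K} \bd(\psi_n \circ \varphi_{I_n}(\x), \psi_n(V)) = 0$, where $V = \z_0 + T_{\z_0}M$. The proof is by contradiction: if the infimum were $\delta_0 > 0$, pick $\x_n \in K$ realising the distance, drop to a sub-cylinder $[I_n j_1^{(n)} \cdots j_{m_0}^{(n)}]$ of fixed depth $m_0$ with $\varphi_{I_n}(\x_n) \in K_{I_n j_1^{(n)} \cdots j_{m_0}^{(n)}}$, and combine \eqref{p4} with the tangent approximation \eqref{tangent} to see that this sub-cylinder is disjoint from $M$. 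The doubling hypothesis \eqref{nsdoub} then guarantees $\ubar{\mu}([I_n j_1^{(n)} \cdots j_{m_0}^{(n)}]) \geq \eta\,\ubar{\mu}([I_n])$ for some fixed $\eta > 0$, contradicting the density point property \eqref{limden}. Your sentence ``combined with the density condition above this forces $g(K) \subseteq V$'' is exactly where this argument is hiding, and it does not follow without it.
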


Before giving the proof we mention a useful generic  fact concerning submanifolds of $\R^d$ that will be exploited in the course of establishing the proposition.   Let $\ell\in\{1,...,d-1\}$ and let $M\subseteq\R^d$ be a $\ell$-dimensional $C^1$ submanifold. Given $\p\in M$, there exists an open set $U\subseteq M$ (under the topology of $M$) with $\p\in U$ such that we can find a diffeomorphism $\varphi: U\to\varphi(U)$  from $U$ into an open subset of $\R^{\ell}$. Let $$\psi:=\varphi^{-1}:\varphi(U)\to U$$ be the inverse map of $\varphi$ on $\varphi(U)$. Then, the tangent space of $M$ at $\p$  is given by
\begin{equation} \label{tanny}
       T_{\p} M:=\left\{\big(\psi'(\varphi(\p))\big)(\x):~\x\in\R^{\ell}\right\}.
\end{equation}
 It is easy to check that this definition is independent of the choice of the diffeomorphism $\varphi$. In the proof of Proposition \ref{rigidity}, we will utilize the following simple observation which follows directly on utilizing   Taylor's formula: \emph{For any $\p\in M$ and any $\epsilon>0$, there exists $r_0=r_0(\p,\epsilon)>0$ such that for any $0<r<r_0$, we have
 \begin{eqnarray}\label{tangent}
     B(\p,r)\cap M\subseteq (\p+T_{\p}M)_{\epsilon r}.
 \end{eqnarray}
 }

 \medskip

 \begin{proof}[Proof of Proposition \ref{rigidity}]
      Let $\ell\in\{1,...,d-1\}$. Throughout the proof, we assume that there exists a $\ell$-dimensional $C^1$ submanifold $M\subseteq\R^d$ such that
      $$\mu(K\cap M)>0   \, . $$ With this in  mind, the proof boils down to showing that either \ref{2} or \ref{3} of Proposition \ref{rigidity} hold.

   Since $\mu=\ubar{\mu}\circ\pi^{-1}$, then in view of the assumption we have that  $\ubar{\mu}\big(\pi^{-1}(K\cap M)\big)>0$. With this at hand,  on combining the doubling property of $\ubar{\mu}$  with the Lebesgue differentiation theorem  for doubling measure \cite[Theorem 1.8]{heinonen2001}, it follows that there exists $I=i_1i_2\cdot\cdot\cdot\in\pi^{-1}(K\cap M)$ such that
   \begin{eqnarray}\label{limden}
       \lim_{n\to\infty}\frac{\ubar{\mu}\big([i_1\cdot\cdot\cdot i_n]\cap\pi^{-1}(K\cap M)\big)}{\ubar{\mu}([i_1\cdot\cdot\cdot i_n])}=1.
   \end{eqnarray}
   Throughout, fix $I=i_1i_2\cdot\cdot\cdot\in\pi^{-1}(K\cap M)$ that satisfies (\ref{limden}) and let $\z_0:=\pi(I)$. Now with the notation and language of Section~\ref{cifs} in mind, for any $n\in\N$, let $I_n:=i_1\cdot\cdot\cdot i_n$ and consider the map given by
		\begin{eqnarray*}
			\psi_{n}(\z):=\|\varphi_{I_n}'\|^{-1}(\z-\varphi_{I_n}(\z_0))+\z_0,\,\,\,\,\,\,\,\,\forall\, \z\in\R^d.
		\end{eqnarray*}
  To ease notation, let $V:=\z_0+T_{\z_0}M$ where the last term is the tangent space at $\z_0$ (see \eqref{tanny} above). We claim that
  \begin{eqnarray}\label{infmax0}
      \inf_{n\in\N}~\max_{\x\in K}~\bd\big(\psi_n\circ\varphi_{I_n}(\x),\psi_n(V)\big)=0.
  \end{eqnarray}

\noindent For the moment let us assume the validity of (\ref{infmax0}). Then,  there exists a subsequence $\{n_k\}_{k\geq1}\subseteq\N$ such that
 \begin{eqnarray}\label{maxto0}
     \max_{\x\in K}~\bd\big(\psi_{n_k}\circ\varphi_{I_{n_k}}(\x),\psi_{n_k}(V)\big)\to0   \quad \text{as}   \quad k\to+\infty.
 \end{eqnarray}
Let $U\subseteq\R^d$ be the bounded connected open set appearing above (\ref{p2}). Then  by (\ref{p2}), we have  that
\begin{eqnarray*}
    C_2^{-1}|\x-\y|\leq|\psi_n\circ\varphi_{I_{n}}(\x)-\psi_n\circ\varphi_{I_{n}}(\y)|\leq C_2|\x-\y| \, ,   \qquad  \forall\, \x,\y\in U, \ ~\forall\, n\in\N.
\end{eqnarray*}
		This together with the fact that  $\z_0$ is a fixed point of $\psi_n\circ\varphi_{I_n}$ implies that $\{\psi_n\circ\varphi_{I_n}\}_{n\in\N}$ is uniformly bounded on $U$. Therefore, in view of  Lemma~\ref{concon} and by passing to a subsequence of $\{\psi_{n_k}\circ\varphi_{n_k}\}_{k\in\N}$, there exists  a conformal map $f:U\to\R^d$  such that $\psi_{n_k}\circ\varphi_{I_{n_k}} \to f$ uniformly on $U$.
         This together with (\ref{maxto0}), implies that
  \begin{eqnarray}\label{maxfto0}
      \max_{\x\in K}~\bd\big(f(\x),\psi_{n_k}(V)\big)\to0\ \ \ \ \text{as}\ \ \ \ k\to+\infty.
  \end{eqnarray}

 \noindent  Now, let $A(d,\ell)$ be the collection of all $\ell$-dimensional affine subspaces in $\R^d$. 
  It is well known that  $A(d,\ell)$ can be viewed as a locally compact metric subspace of $\R^{d^2+d}$,  see \cite[Section 3.16]{mattila1999}. By (\ref{maxfto0}) and the compactness of $f(K)$, the sequence $\{\psi_{n_k}(V)\}_{k\geq1}$ is bounded on $A(d,\ell)$. 
  Then by the locally compactness of $A(d,\ell)$ and passing to a subsequence if necessary, we may assume that there exists $W\in A(d,\ell)$ such that $\psi_{n_k}(V)\to W$.   This together with (\ref{maxfto0}),implies that $f(K)\subseteq W$ and hence $$K\subseteq f^{-1}(W\cap f(U)) \, .$$ Recall, that  $f$ is conformal,   so  $f^{-1}$ is also a conformal map on its domain $f(U)$.

\begin{itemize}
\item[$\circ$]     When $d\geq3$, we note that a conformal map in a connected open set can be extended to a Möbius transformation  in $\overline{\R}^d$. Thus, it follows  that $ f^{-1}(W\cap f(U))$ is contained in either a $\ell$-dimensional affine subspace of $\R^d$ or a $\ell$-dimensional geometric sphere.  \medskip

\item[$\circ$] When $d=2$, we note that a conformal map in a connected open set is a holomorphic (or anti-holomorphic) map.  Thus, it follows that  $f^{-1}(W\cap f(U))$ is contained in countable many analytic curves. Since $K$ is compact, it follows that  $K$ is contained in at most finitely many analytic curves.

\end{itemize}

  The upshot is that  part \ref{2} or \ref{3} of Proposition \ref{rigidity} hold under the assumption that (\ref{infmax0}) is valid.

 We now prove (\ref{infmax0}). If it is not true, then
  \[
      \delta_0:=\inf_{n\in\N}~\max_{\x\in K}~\bd\big(\psi_n\circ\varphi_{I_n}(\x),\psi_n(V)\big)>0.
  \]
  Therefore, for any $n\in\N$, on making use of (\ref{p3}),  it follows that  there exists $\x_n\in K$ such that
  \begin{eqnarray}\label{distpsiv}
      \bd(\varphi_{I_n}(\x_n),V)&=&\|\varphi_{I_n}'\|\cdot \bd(\psi_n\circ\varphi_{I_n}(\x_n),\psi_n(V))\nonumber\\[1ex]
&\geq&\delta_0\cdot\|\varphi_{I_n}'\|\nonumber\\[1ex]
      &\geq&C_3^{-1}\delta_0\cdot|K_{I_n}|  \, .
  \end{eqnarray}
 Fix $\epsilon\in(0,C_3^{-1}\delta_0)$. Then,  by (\ref{tangent}) there exists $N\in\N$ such that for all $n>N$, we have that
  \begin{eqnarray}\label{kcapmsub}
      K_{I_n}\cap M  \ \subseteq \  B(\z_0,|K_{I_n}|)\cap M   \ \subseteq \  (V)_{\epsilon|K_{I_n}|}.
  \end{eqnarray}
  For any $n\in\N$, choose $j^{(n)}_1j^{(n)}_2\cdot\cdot\cdot\in\pi^{-1}(\x_n)$. Then for any $m\in\N$ and $n>N$, by (\ref{p4}) and (\ref{distpsiv}), it follows that
  \begin{eqnarray}\label{dkinjv}
      \bd\big(K_{I_nj^{(n)}_1\cdot\cdot\cdot j^{(n)}_m},V\big)&\geq& \bd(\varphi_{I_n}(\x_n),V)-\big|K_{I_nj^{(n)}_1\cdot\cdot\cdot j^{(n)}_m}\big|\nonumber\\[1ex]
      &\geq&\left(C_3^{-1}\delta_0-C_4\big|K_{j^{(n)}_1\cdot\cdot\cdot j^{(n)}_m}\big|\right)\cdot|K_{I_n}|.
  \end{eqnarray}
  Choose $m_0\in\N$  large enough so that
  \[
      C_3^{-1}\delta_0-C_4\cdot\max_{J\in\Sigma^{m_0}}\big|K_{J}\big|\geq\epsilon \, .
  \]
  Then, it follows that  from (\ref{dkinjv}) and (\ref{kcapmsub}), that
  \begin{eqnarray*}
      \big[I_nj^{(n)}_1\cdot\cdot\cdot j^{(n)}_{m_0}\big]\subseteq\left([I_n]\cap\pi^{-1}(K\cap M)\right)^{c}
  \end{eqnarray*}
  for any $n>N$. Hence, for any $n>N$,
  \[
      \ubar{\mu}\big([I_n]\cap\pi^{-1}(K\cap M)\big) \ \leq  \ \ubar{\mu}([I_n])-\ubar{\mu}([I_nj^{(n)}_1\cdot\cdot\cdot j^{(n)}_{m_0}]).
  \]
  Since $\ubar{\mu}$ is doubling (see  \eqref{nsdoub}), there exists $\eta\in(0,1)$ such that for any $n>N$
  \[
      \ubar{\mu}([I_nj^{(n)}_1\cdot\cdot\cdot j^{(n)}_{m_0}])\geq\eta\cdot\ubar{\mu}([I_n]),
  \]
  and so it follows that
  \[
      \ubar{\mu}\big([I_n]\cap\pi^{-1}(K\cap M)\big)\leq (1-\eta)\cdot\ubar{\mu}([I_n]) \, ,   \qquad  \forall\, n>N.
  \]
  However, this  contradicts  (\ref{limden}) and so   (\ref{infmax0}) is true as desired.
 \end{proof}

\medskip

 As mentioned  in the introduction  (more precisely Remark~\ref{thy}) a similar result to our Theorem~\ref{thmrigidity}, equivalently Proposition~\ref{rigidity} with the OSC and $\mu$ a Gibbs measure,    was obtained by   Käenmäki \cite[Theorem 2.1]{kaenmaki2003}    in which  $\mu$ is  restricted to Hausdorff measures $\cH^s$ and part (iii) of  the theorem is replaced with the statement that $K$ is contained in a single analytic curve. However, this is not true as the following example demonstrates.  In short, it shows that  one can construct a self-conformal set $K\subseteq\R^2$ satisfying the OSC such that there exists a straight line $L$ with $\cH^{\tau}(K\cap L)>0$ (where $\tau=\dimH K$), but $K$ is contained in two different straight lines.  So trivially  (i) and (ii) are  not satisfied and we can not claim that $K$ is contained in a single analytic curve.

 \begin{example} (Counterexample to \cite[Theorem 2.1]{kaenmaki2003}). \label{counterexam}
     In this example, we view $\R^2$ as the complex plane $\bC$. 
    Let $\rmi$ be the imaginary unit, that is the solution of the equation $x^2+1=0$. Given
     $z \in \bC\backslash\{0\}$, there exist unique $r>0$ and $\theta\in[0,2\pi)$ such that $z=re^{\rmi\theta}$, and we  let  $\arg(z):=\theta$.   For any $\theta\in[0,2\pi)$, denote
\[
  L_{\theta}:=\{z\in\bC\backslash\{0\}:\arg(z)=\theta\}.
\]
Let $K$ be the self-similar set generated by $$\Phi=\{\varphi_1,\varphi_2\}=\left\{\frac{1}{3}z-\frac{2}{3}\rmi,\,\frac{1}{3}z+\frac{2}{3}\rmi\right\}.$$
It is straightforward to verify that
\[
\varphi_1\big([-1,1]^2\big)\cap\varphi_2\big([-1,1]^2\big)=\emptyset.
\]
Then $\Phi$ satisfies the strong separation condition and hence the open set condition. In fact, $K$ is exactly a similarity  of the middle-third Cantor set, and so $$\tau:=\dimH K=\log 2/\log 3.$$
Moreover, $K$ satisfies the following facts:
\begin{itemize}
    \item[(i)] $0\notin K$;
    \vspace*{1ex}
    \item[(ii)] $K\subseteq L_{\frac{3\pi}{2}}\cup L_{\frac{\pi}{2}}$; \vspace*{1ex}
    \item[(iii)] $\cH^{\tau}\left(K\cap L_{\frac{3\pi}{2}}\right)>0$ and $\cH^{\tau}\left(K\cap L_{\frac{\pi}{2}}\right)>0$.
\end{itemize}
 Now consider the function $f: z \mapsto \sqrt{z}$.  Then $f$ is a conformal map on $\bC\big\backslash\big([0,+\infty)\times\{0\}\big)$. Let  $K^{'}:=f(K)$.  Then $K^{'}$ is a self-conformal set generated by the conformal IFS $$\Phi'=\{f\circ\varphi_1\circ f^{-1},\,f\circ\varphi_2\circ f^{-1}\}=\left\{\sqrt{\frac{1}{3}z^2-\frac{2}{3}\,\rmi},\,\sqrt{\frac{1}{3}z^2+\frac{2}{3}\,\rmi}\right\}$$
which is defined on the open upper half-plane $$\Omega:=\left\{z=a+b\,\rmi:a\in\R,\,b\in(0,+\infty)\right\}.$$
Clearly, $\Phi'$ satisfies the strong separation condition. Since $f$ is conformal then  it is locally bi-Lipschitz. This implies that $$\dimH K^{'}=\dimH K=\tau.$$
Note that
\begin{eqnarray*}
    f\big(L_{\frac{3\pi}{2}}\big)\subseteq L_{\frac{3\pi}{4}}    \quad {\rm and}  \quad f\big(L_{\frac{\pi}{2}}\big)\subseteq L_{\frac{\pi}{4}}.
\end{eqnarray*}
This together with the above fact (iii) implies tht
\begin{eqnarray*}
    K^{'}\subseteq L_{\frac{3\pi}{4}}\cup L_{\frac{\pi}{4}}\quad {\rm and}  \quad \cH^{\tau}\left(K^{'}\cap L_{\frac{3\pi}{4}}\right)>0, \quad \  \cH^{\tau}\left(K^{'}\cap L_{\frac{\pi}{4}}\right)>0.
\end{eqnarray*}
Thus parts (i) and (ii) of Theorem~\ref{thmrigidity} are not  satisfied and
obviously, the two lines $L_{\frac{3\pi}{4}}$ and $L_{\frac{\pi}{4}}$ are not contained in the same analytic curve.
 \end{example}

 \section{Proof of Theorem~\ref{Main} modulo Theorem~\ref{meaannimpexp}  }\label{SEC3}

With Theorem~\ref{meaannimpexp} at hand, Theorem~\ref{Main} is a direct of the following statement.

\begin{proposition}\label{decofcorselcof}
    Let $(\Phi,K,\mu,T)$ be a self-conformal system, then $\mu$ has exponential decay of correlation with respect to $\cC^{\beta'}(K)$ for some $0<\beta'\leq 1$.
\end{proposition}

\begin{proof}
    Let $\ubar{\mu}$ be the Gibbs measure on the symbolic space $\Sigma^{\N}$ with respect to a $\beta$-H\"{o}lder potential such that $\mu=\ubar{\mu}\circ\pi^{-1}$. Let $m$ be the number of the elements in $\Phi$ and let $\kappa\in(0,1)$ be the constant in \eqref{defkap}. Note that any $\beta$-H\"{o}lder continous function is a $\beta'$-H\"{o}lder continuous function if $0<\beta'\leq\beta$. Then, without loss of generality, we assume that $0<\beta\leq\frac{\log 1/\kappa}{\log m}$. By Theorem~\ref{expmixforhol},  there exist $C>0$ and $\gamma>0$ such that
    \begin{eqnarray}\label{expforholsymbol}
            \left|\int_{\Sigma^{\N}}f_1\cdot f_2\circ\sigma^n\,\td\mu-\int_{\Sigma^{\N}}f_1\,\td\mu\cdot\int_{\Sigma^{\N}}f_2\,\td\mu\right|\,\,\leq\,\, C\,\gamma^n\cdot\|f_1\|_{\beta}\cdot\int_{\Sigma^{\N}}|f_2|\,\td\mu
        \end{eqnarray}
        for any  $f_1\in\cC^{\beta}(\Sigma^{\N})$,  any $f_2\in L^1(\mu)$ and   any $ n \in \N$.

    Let $\beta':=\beta\log m/\log(1/\kappa)$.  Then,  in view of the range of $\beta$, we have that  $0<\beta'\leq 1$.  We now show that $\mu$ has exponential decay of correlations with respect to $\cC^{\beta'}(K)$. With this in mind, let $f\in\cC^{\beta'}$ and
    \[
        M:=\sup\left\{\frac{|f(\x)-f(\y)|}{|\x-\y|^{\beta'}}:\x\neq\y\in K\right\}.
    \]
    Let $C_3>1$ be the constant in \eqref{p3'}.
     Then for any $I\neq J\in\Sigma^{\N}$, we have that
    \begin{eqnarray*}
        \frac{|f(\pi(I))-f(\pi(J))|}{\dist(I,J)^{\beta}}&\leq&\frac{M\cdot|\pi(I)-\pi(J)|^{\beta'}}{\dist(I,J)^{\beta}}
        \ \leq\ \frac{M\cdot C_3\cdot\kappa^{\frac{-\beta'\log\dist(I,J)}{\log m}}}{\dist(I,J)^{\beta}}\\
        &=&\frac{M\cdot C_3\cdot\dist(I,J)^{\frac{\beta'\log1/\kappa}{\log m}}}{\dist(I,J)^{\beta}}\ =\ M\cdot C_3.
    \end{eqnarray*}
    It follows that $\|f\circ\pi\|_{\beta}\leq C_3\cdot\|f\|_{\beta'}$. On combining this with \eqref{expforholsymbol}, we obtain that for any $f_1\in\cC^{\beta'}(K)$, $f_2\in L^1(\mu)$ and $n\in\N$,
    \begin{eqnarray*}
        &~&\hspace{-10ex}\left|\int_{X}f_1\cdot f_2\circ T^n\,\td\mu-\int_{X}f_1\,\td\mu\int_{X}f_2\,\td\mu\right|\\
        &=&\left|\int_{\Sigma^{\N}}f_1\circ\pi\cdot f_2\circ\pi\circ \sigma^n\,\td\ubar{\mu}-\int_{\Sigma^{\N}}f_1\circ\pi\,\td\ubar{\mu}\int_{\Sigma^{\N}}f_2\circ\pi\,\td\ubar{\mu}\right|\\
        &\leq& C\gamma^n\cdot\|f_1\circ\pi\|_{\beta}\cdot\int_{\Sigma^{\N}}|f_2\circ\pi|\,\td\ubar{\mu}\\
        &\leq&CC_3\gamma^n\cdot\|f\|_{\beta'}\cdot\int_{X}|f_2|\,\td\mu.
    \end{eqnarray*}
    This shows  that $\mu$ has exponential decay of correlations with respect to $\cC^{\beta'}(K)$.
\end{proof}

\section{Proof of Theorem~\ref{meaannimpexp}  }\label{SEC3A}

\begin{proof}[Proof of Theorem~\ref{meaannimpexp}]
Without loss of generality, we assume that $|X|=1$ where as usual  $ |X| $ denotes the diameter of X. Since $\mu$ has exponential decay of correlations with respect to $\cC^{\beta}(X)$ for some $0<\beta\leq 1$, there exist  $C>0$ and $\gamma\in(0,1)$ such that the inequality \eqref{expforholfun} holds for any $n\in\N$, $f_1\in\cC^{\beta}(X)$ and $f_2\in L^1(\mu)$. Fix $E\in\cC$, $\rho\in(\gamma,1)$ and $n\in\N$. Consider the function $f_n:\R^d\to\R$ given by
\begin{eqnarray*}
    f_n(\x):=\left\{\begin{aligned}
        &1-\frac{\bd(\x,E)}{\rho^{n}},\quad\text{if $\bd(\x,E)<\rho^n$,}\\
        &0,\qquad\qquad\quad\,\, \ \text{if $\bd(\x,E)\geq\rho^n$.}
    \end{aligned}\right.
\end{eqnarray*}
It is easily verified that
\begin{equation}\label{oneelefnle}
    \one_{E}(\x)\leq  f_n(\x)\leq\one_{(E)_{\rho^n}}(\x),\qquad\forall \ \x\in\R^d
\end{equation}
and thus
\begin{equation}\label{mueleintf}
    \mu(E)\ \leq\ \int_X f_n\,\td\mu\ \leq\ \mu(E)+\mu\big((E)_{\rho^n}\setminus E\big)\ \leq\ \mu(E)+\mu\big((\partial E)_{\rho^n}\big),
\end{equation}
where the last inequality holds since  $(E)_{\rho^n}\setminus E\subseteq(\partial E)_{\rho^n}$.  We next estimate the $\beta$-H\"{o}lder norm of $f_n$. Fix $\x,\y\in X$ and observe that
\begin{itemize}
    \item[$\circ$] if $\bd(\x,E)\geq\rho^n$ and $\bd(\x,E)\geq\rho^n$, then
    \begin{equation*}
        \frac{|f_n(\x)-f_n(\y)|}{|\x-\y|^{\beta}}=0.
    \end{equation*} \medskip
    \item[$\circ$] if $\bd(\x,E)<\rho^n$ and $\bd(\y,E)\geq\rho^n$, then
    \begin{eqnarray*}
        \frac{|f_n(\x)-f_n(\y)|}{|\x-\y|^{\beta}} & = & \frac{1-\frac{\bd(\x,E)}{\rho^n}}{|\x-\y|^{\beta}}\leq\frac{\frac{\bd(\y,E)}{\rho^n}-\frac{\bd(\x,E)}{\rho^n}}{|\x-\y|^{\beta}} \\[2ex] &\leq & \frac{|\x-\y|^{1-\beta}}{\rho^n}\leq\frac{|X|^{1-\beta}}{\rho^n}=\frac{1}{\rho^n}.
    \end{eqnarray*}  \medskip
    \item[$\circ$] if $\bd(\x,E)<\rho^n$ and $\bd(\y,E)<\rho^n$, then
    \begin{eqnarray*}
    \frac{|f_n(\x)-f_n(\y)|}{|\x-\y|^{\beta}}=\frac{\left|\frac{\bd(\y,E)}{\rho^n}-\frac{\bd(\x,E)}{\rho^n}\right|}{|\x-\y|^{\beta}}\leq\frac{|\x-\y|^{1-\beta}}{\rho^n}\leq\frac{|X|^{1-\beta}}{\rho^n}=\frac{1}{\rho^n}.
    \end{eqnarray*}
\end{itemize}

\noindent The upshot of the above is that $\|f_n\|_{\beta}\leq1/\rho^n$. This together with  \eqref{collection}, \eqref{expforholfun},  \eqref{oneelefnle} and \eqref{mueleintf} implies that for any $\mu$-measurable subset $F\subseteq\R^d$,
\begin{eqnarray}\label{muetnfleinf}
\begin{aligned}
     \mu(E\cap T^{-n}F)&\leq\int_X f_n\cdot\one_F\circ T^n\,\td\mu\\[4pt]
    &\leq\int_X f_n\,\td\mu\cdot\mu(F)+Cr^n\cdot\|f_n\|_{\beta}\cdot \mu(F)\\[4pt]
    &\leq\mu(E)\mu(F)+\Big(\mu\big((\partial E)_{\rho^n}\big)+C\cdot(\gamma/\rho)^n\Big)\cdot\mu(F)\\[2ex]
    &\leq\mu(E)\mu(F)+C\big(\rho^{n\delta}+
    \gamma_0^n\big)\cdot\mu(F)\qquad\big(\gamma_0:=\gamma/\rho\in(0,1)\big)\\[2ex]
&\leq\mu(E)\mu(F)+C\gamma_1^n\cdot\mu(F)\qquad \big(\gamma_1:=\max\{\rho^{\delta},\gamma_0\}\in(0,1)\big).
\end{aligned}
\end{eqnarray}

To derive the converse inequality, consider the map $g_n:\R^d\to\R$ defined by
\begin{eqnarray*}
    g_n(\x):=\left\{\begin{aligned}
        &1-\frac{\bd(\x,E\setminus(\partial E)_{\rho^n})}{\rho^n},\quad\text{if $\bd(\x,E\setminus(\partial E)_{\rho^n})<\rho^n$,}\\
        &0,\qquad\qquad\qquad\qquad\ \ \ \ \ \text{if $\bd(\x,E\setminus(\partial E)_{\rho^n})\geq\rho^n$.}
    \end{aligned}\right.
\end{eqnarray*}
Then,  it is easily verified that
\begin{equation*}
    \one_{E\setminus(\partial E)_{\rho^n}}(\x)\leq g_n(\x)\leq\one_{E}(\x),\qquad\forall \ \x\in\R^d,
\end{equation*}
and thus
\[
    \mu(E)\ \geq\ \int_X g_n\,\td\mu\ \geq\ \mu(E)-\mu\big((\partial E)_{\rho^n}\big).
\]
By employing a similar argument  used in deriving \eqref{muetnfleinf} with obvious modifications, we  obtain the desired lower bound; i.e. for any $\mu$-measurable subset $F\subseteq\R^d$
$$
\mu(E\cap T^{-n}F)   \geq     \mu(E)\mu(F) - C\gamma_1^n\cdot\mu(F)   \, .
$$
  This together with \eqref{muetnfleinf} shows that $\mu$ is exponentially mixing with respect to $(T,\cC)$. This completes the proof of Theorem~\ref{meaannimpexp}.
\end{proof}

 \section{Proof of Theorem~\ref{Main2}}\label{proofM2}


Given Theorem~\ref{Main},  the strategy for establishing Theorem~\ref{Main2} is simple enough:   we establish  (\ref{collection}) for balls.   In order to do this we make use of the rigidity theorem (namely Theorem~\ref{thmrigidity}) to prove the following result which provides the desired  upper bound estimate  in   essentially all cases.  Throughout given  a self-conformal system $(\Phi,K,\mu,T)$  on $\R^d$, we let
\begin{eqnarray}\label{defofellk}
    \ell_K:=\min\left\{1\leq\ell\leq d \ \left|   \,\,\,\begin{aligned}
        &\text{There exists a $\ell$-dimensional $C^1$ submanifold }\\ &\text{$M\subseteq\R^d$ such that $\mu(K\cap M)>0$\,.}
    \end{aligned}\right.\right\}\,.
\end{eqnarray}
Note that $\ell_K$ exists since  we always have that  $\mu(K\cap\R^d)=1$. Trivially, when $d=1$  we have that $\ell_K=1=d$. For $d\geq 2$, the statement  $\ell_K=d$ is equivalent to the statement that $K$ satisfies (i) of  Theorem~\ref{thmrigidity} for all $\ell\leq d-1$.

\medskip

\begin{theorem}\label{thmann}
		Let $(\Phi,K,\mu,T)$ be a self-conformal system on $\R^d$.
\begin{itemize}
  \item[(i)] There exists $C>0$, $s>0$ and  such that
  \begin{eqnarray}\label{ballmes}
     \mu(B(\x,r))\leq Cr^{s}  \quad \forall  \; \x\in \R^d,~\forall \; r>0 \, .
  \end{eqnarray}

 \item[(ii)] There exists $C>0$, $\delta>0$ and $r_0>0$ such that
  \begin{eqnarray}\label{thmmeaann}
      \mu\big((\partial B(\x,r))_{\varrho}\big)\leq C\varrho^{\delta}    \qquad \forall \ \x\in K ,    \ \forall \  0<r\leq r_0, \ \forall \ \varrho>0.
  \end{eqnarray}

  \medskip
  \noindent Furthermore, let $\ell_K$ be as in \eqref{defofellk}  and suppose that

  \medskip
  \begin{itemize}
      \item[(a)]   $\ell_K=d$. Then  (\ref{thmmeaann}) holds for any $\x\in\R^d$, any $r>0$ and any $\varrho>0$; \medskip
      \item[(b)]   $\ell_K<d$ where $d\geq2$ and part (ii) of Theorem~\ref{thmrigidity} holds with $\ell=\ell_K$.  Then  (\ref{thmmeaann}) holds for any $\x\in S$, any $r>0$ and any $\varrho>0$ where $S\subseteq\R^d$ is  the $\ell_K$-dimensional affine subspace or geometric sphere associated with  part (ii) of Theorem~\ref{thmrigidity}.
  \end{itemize}
\end{itemize}
\end{theorem}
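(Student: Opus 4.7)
\textbf{Part (i)} follows directly from the Gibbs property. Using the coding map and the identity $\mu(K_I) = \ubar{\mu}([I])$ from \eqref{muubarmu}, it suffices to bound $\ubar{\mu}$ on cylinders. Since $\widetilde{\cS}1 \equiv 1$, with $m \geq 2$ preimages under $\sigma$ and positive continuous $\tilde g$ on the compact space $\Sigma^{\N}$, one has $p := \sup \tilde g < 1$; then \eqref{gibbs} yields $\ubar{\mu}([I]) \leq C_5 \, p^{|I|}$. For any $\x \in \R^d$ and $r > 0$, choose $n$ with $\kappa^n \asymp r$; by \eqref{p2}--\eqref{p3} the set $B(\x,r) \cap K$ is covered by a uniformly bounded number (depending only on $d$) of cylinders $K_I$ with $|I| = n$, and summing gives \eqref{ballmes} with $s := \log(1/p)/\log(1/\kappa) > 0$.

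\textbf{Part (ii): strategy and key dichotomy.} My plan for bounding $\mu((\partial B(\x,r))_\varrho)$ is to combine Proposition~\ref{rigidity} with a self-conformal renormalization in the spirit of its proof. The heart of the argument is the following \emph{dichotomy}: there exist absolute constants $\theta \in (0,1)$ and $c > 0$ such that for every sphere $S = \partial B(\x,r)$ and every cylinder $K_I$ with $|K_I| \geq c\,\varrho$,
\[
\mu\big((S)_\varrho \cap K_I\big) \; \leq \; \theta \, \mu(K_I).
\]
Once this is established, one iterates it across cylinders at successive scales, starting from a bounded family of cylinders $K_I$ of level $n_0$ meeting $B(\x,r+\varrho)$ with $\kappa^{n_0} \asymp \varrho$; using the uniform contraction between levels, this produces $\mu((S)_\varrho) \leq C\theta^{n_0} \leq C\varrho^\delta$ with $\delta := \log(1/\theta)/\log(1/\kappa)$, which is exactly \eqref{thmmeaann}.

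\textbf{Proof of the dichotomy via compactness.} Suppose the dichotomy fails. Then there exist sequences of cylinders $K_{I_k}$ with $|K_{I_k}|/\varrho_k \to \infty$ and spheres $S_k = \partial B(\x_k, r_k)$ with $\mu((S_k)_{\varrho_k} \cap K_{I_k})/\mu(K_{I_k}) \to 1$. Fix $\z_0 \in K$ and introduce the rescaling $\psi_k(\z) := \|\varphi_{I_k}'\|^{-1}(\z - \varphi_{I_k}(\z_0)) + \z_0$, as in the proof of Proposition~\ref{rigidity}. Lemma~\ref{concon} provides, along a subsequence, a conformal limit $f$ of $\psi_k \circ \varphi_{I_k}$. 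The rescaled spheres $\psi_k(S_k)$ form a sequence parametrised in $A(d,d-1)$ together with their inverse curvatures; by local compactness of this parameter space (allowing curvatures to vanish in the limit, i.e.\ spheres degenerating to hyperplanes) a further subsequence converges to a $(d-1)$-dimensional $C^1$ submanifold $M_\infty \subseteq \R^d$. The quasi-Bernoulli property \eqref{quassiber} allows us to compare the normalized restriction $\mu|_{K_{I_k}}/\mu(K_{I_k})$ with the pushforward of $\mu$ by $\psi_k \circ \varphi_{I_k}$; passing to the weak-$*$ limit, the concentration hypothesis forces $\mu \circ f^{-1}$ to assign positive mass to $M_\infty$, whence $\mu(f^{-1}(M_\infty) \cap K) > 0$. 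Since $f$ is conformal, $f^{-1}(M_\infty)$ is itself a $(d-1)$-dimensional $C^1$ submanifold, contradicting Proposition~\ref{rigidity} applied with $\ell = d-1$.

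\textbf{Case refinement and main obstacle.} In case (a), $\ell_K = d$ and Proposition~\ref{rigidity} with $\ell = d-1$ is available throughout $\R^d$, so the contradiction above goes through for all $\x \in \R^d$ and $r > 0$. In case (b), $K$ lies in an $\ell_K$-dim affine subspace or geometric sphere $S$; transferring the problem inside $S$ (conformally a copy of $\R^{\ell_K}$) and noting that $\partial B(\x,r) \cap S$ is a sphere or hyperplane of dimension $\ell_K - 1$ whenever $\x \in S$, the same argument applies with $\ell = \ell_K - 1$ inside $S$. The main statement of (ii), with only $\x \in K$ and $r \leq r_0$, is handled by first choosing a cylinder $K_I \ni \x$ with $|K_I| \asymp r$ (guaranteed by $\x \in K$ and $r \leq r_0$ via \eqref{p3}) and then applying one of the above cases inside $K_I$; the analytic-curve case (iii) of Proposition~\ref{rigidity} in dimension two is handled separately by running the one-dimensional version of the argument on each of the finitely many curves. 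The principal obstacle is the compactness step: one must simultaneously track the limits of the rescaled spheres (whose curvatures may stay bounded or diverge) and of the rescaled measures, so that the limit object $M_\infty$ is indeed a $(d-1)$-dim $C^1$ submanifold with $\mu(f^{-1}(M_\infty) \cap K) > 0$; the analytic-curve case requires separate one-dimensional treatment that is softer but qualitatively distinct from the main hypersurface argument.
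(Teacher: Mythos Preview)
Your Part (i) is correct and in fact simpler than the paper's route (which works without the OSC). Two small corrections: the covering number depends on $\Phi$, not just on $d$; and if the IFS is non-homogeneous you should cover $B(\x,r)\cap K$ by cylinders from the stopping family $\Lambda_r$ rather than by all level-$n$ cylinders, since the latter need not have diameters comparable to $r$.

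For Part (ii) your strategy is the same in spirit as the paper's (compactness plus rigidity, then iterate), but the iteration as you phrase it does not work. Your dichotomy says that for $|K_I|\ge c\varrho$ one has $\mu((S)_\varrho\cap K_I)\le\theta\,\mu(K_I)$. Applying this to all children $K_{Ij}$ and summing gives
\[
\mu((S)_\varrho\cap K_I)=\sum_j \mu((S)_\varrho\cap K_{Ij})\le \theta\sum_j\mu(K_{Ij})=\theta\,\mu(K_I),
\]
which is the \emph{same} bound; there is no contraction from level to level. To obtain $\mu((S)_\varrho)\le C\theta^{n_0}$ you need the stronger statement that inside every such $K_I$ at least one subcylinder $K_{IJ}$ lies \emph{entirely outside} $(S)_\varrho$. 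That is precisely the paper's Lemma~\ref{3.2} (and its analogues Lemmas~\ref{lemdistxk} and \ref{lem3.3}): there exists $J\in\Lambda_\varrho(I)$ with $\bd(K_{IJ},\partial B)>\varrho\,|K_{I^-}|$. This excluded subcylinder carries a fixed fraction $\eta>0$ of $\mu(K_I)$ by the doubling property, and \emph{that} is what makes the surviving mass contract by $1-\eta$ at each step. Your compactness argument can be repurposed to prove this stronger statement---indeed it is very close to how the paper proves \eqref{fm3.1}---but the dichotomy you wrote down is too weak.

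Two further points. The weak-$*$ limit step (comparing $\mu|_{K_{I_k}}/\mu(K_{I_k})$ with $(\psi_k\circ\varphi_{I_k})_*\mu$ via quasi-Bernoulli and passing to the limit) needs more care than you indicate; the paper avoids it entirely by working with the pointwise infimum \eqref{fm3.1} rather than with measures. And the analytic-curve case is not a ``one-dimensional version of the argument'': the paper's treatment (Section~\ref{pfofprcofproii}) is qualitatively different, using the area formula, Rolle's theorem, and lower bounds on integrals of analytic functions (Lemmas~\ref{zeros}--\ref{intlowbd}) to control how many components of $(\partial B)_\varrho\cap\Gamma$ there are and how long each is. You should expect this case to require genuinely new work.
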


\medskip

Note that \eqref{thmmeaann} in part (ii) of  Theorem~\ref{thmann}   together   with Theorem~\ref{Main} is not enough to establish  Theorem~\ref{Main2} since we need  \eqref{thmmeaann} to hold for all $\x \in \R^d$ (not just $K$) and all $r > 0$ (not just $r  \le r_0$). Nevertheless,  we shall see in the course of proving  Theorem~\ref{Main2}, that  the furthermore part of  (ii) together with Theorem~\ref{thmrigidity} allows us to do precisely this   in  all cases except when  $d=2$ and $K$ satisfies the statement in part (iii) of Theorem~\ref{thmrigidity}; that is, $K$ is contained within a finite disjoint union of analytic curves.  For this  ``remaining''  case we verify the desired exponentially mixing property directly.

We now establish  Theorem~\ref{Main2} assuming the validity of  Theorem~\ref{thmann}.  The proof of Theorem~\ref{thmann} will be the subject of Section~\ref{annulusec}.


 \begin{proof}[Proof of Theorem~\ref{Main2}  modulo Theorem~\ref{thmann} ]

Throughout, $\cC$ is a  collection of balls in $\R^d$ and  $\ell_K$ is as in \eqref{defofellk}. Then in view of  Theorem~\ref{thmrigidity}, we can split the proof of the theorem  into  three cases:

\begin{itemize}[leftmargin=4.5em]

    \item[Case 1:] $\ell_K=d$;\vspace*{1ex}

   \item[Case 2:] $d\geq2$, $\ell_K<d$ and $K$ satisfies part (ii) of Theorem~\ref{thmrigidity} with $\ell=\ell_K$;
    \vspace{1ex}

    \item[Case 3:] $d=2$, $\ell_K=1$ and $K$ satisfies part (iii) of Theorem~\ref{thmrigidity}.
\end{itemize}

\noindent To see this, simply note  that when $d=1$, then by definition $\ell_K=1$ and  we are in Case 1.

\noindent$\bullet$ \textit{Dealing with Case 1.} In view of part (ii.a) of Theorem \ref{thmann} and the fact that $\mu$ is a Borel measure, any ball in $\R^d$ is $\mu$-measurable and satisfies the upper bound estimate \eqref{collection}. This together with Theorem~\ref{Main} implies that $\mu$ is exponentially mixing with respect to $(T,\cC)$.

\noindent$\bullet$ \textit{Dealing with Case 2.} 
Let $B\subseteq\R^d$ an arbitrary ball in $\R^d$ and let $S\subseteq\R^d$ be the $\ell_K$-dimensional affine subspace or geometric sphere associated with  part (ii) of Theorem~\ref{thmrigidity}.
It can be verified that the intersection $B\cap S$  is either: (i) the empty set; (ii) equal to $S$ (which happens possibly only when $S$ is a geometric sphere); (iii) a single point; (iv) a set with boundary being a $(\ell_K-1)$-dimensional geometric sphere.
\begin{itemize}
    \item[$\circ$]  Suppose that $B\cap S$ satisfies any one of   the first three cases. Since $\mu$ has no atom  and is supported on $K\subseteq S$, we know that $\mu(B)=0$ or $1$. It follows that the term in the left hand side of the inequality (\ref{expmixballs}) equals zero and hence the desired exponential mixing inequality \eqref{expmixballs} trivially holds for this ball.\medskip

    \item[$\circ$]  Suppose that $B\cap S $ satisfies case (iv). Let $m\in\N_{\geq2}$ be the number of elements in the conformal IFS $\Phi$, and let $\gamma\in(0,1)$ be as in Corollary \ref{emprd}. For any $n\in\N$, let
\begin{eqnarray}\label{defqn}
    q_n:=\linte{\frac{-n\log\gamma}{\log m}}
\end{eqnarray}
and let
\[
\cI_n:=\left\{I\in\Sigma^{q_n}:K_I\subseteq B\right\}.
\]
Similar to (\ref{relation1}), it is easily verified that
\[
\bigcup_{I\in\cI_n}K_I~~\subseteq~~ B\cap K~~\subseteq ~~\left(\bigcup_{I\in\cI_n}K_I\right)\cup\big(\partial (B\cap S)\big)_{2C_3\kappa^{q_n}}.
\]
Then on adapting the  arguments used to derive (\ref{uppbd2}) and (\ref{lowbd2})  from (\ref{relation1}), we find that
\begin{eqnarray}\label{expmixint}
    |\mu(B\cap T^{-n}F)-\mu(B)\mu(F)|=O\left(\mu\big((\partial(B\cap S))_{2C_3\kappa^{q_n}}\big)+\gamma^{n/2}\right)\mu(F)
\end{eqnarray}
for all $\mu$-measurable subsets $F\subseteq\R^d$, where the big-$O$ constant does not depend on $B$ and $F$. Since $B$ satisfies (iv), then $\partial(B\cap S)$ is a $\ell_K-1$-dimensional geometric sphere in $S$, and hence there exist $\z\in S$ and $r>0$ such that
\begin{eqnarray}\label{easyobs}
    \partial(B\cap S)\,=\,(\partial B(\z,r))\cap S\,\subseteq\, \partial B(\z,r).
\end{eqnarray}
This together with part (ii.b) of Theorem \ref{thmann} implies that there exists $\delta>0$ for which
\[
\mu\big((\partial(B\cap S))_{2C_3\kappa^{q_n}}\big)\leq\mu\big((\partial B(\z,r))_{2C_3\kappa^{q_n}}\big)=O\left(\kappa^{\delta\cdot q_n}\right).
\]
With this in mind, by (\ref{expmixint}) we have that
\begin{eqnarray*}
    |\mu(B\cap T^{-n}F)-\mu(B)\mu(F)|=O\big(\widetilde{\gamma}^n\big)\,\mu(F)
\end{eqnarray*}
for any $\mu$-measurable subset $F\subseteq\R^d$ and any ball $B\subseteq\R^d$ in  case (iv), where $\widetilde{\gamma}\in(0,1)$ is given by
\[
\widetilde{\gamma}:=\max\left\{\kappa^{\frac{\delta\log(1/\gamma)}{\log m}},\gamma^{1/2}\right\}\,.
\]
\end{itemize}
 The upshot of the above is that in Case 2, the measure $\mu$ is exponentially mixing with respect to $(T,\cC)$.

\noindent$\bullet$ \textit{Dealing with Case 3.}  Let $k\geq1$ be an integer.
Suppose that $$K\subseteq\bigsqcup_{i=1}^k\Gamma_i,$$   where each $\Gamma_i\subseteq\R^2$ ($1\leq i\leq k$) is an analytic curve. It is easily verified that there exists $n_0\in\N$ such that for any $I\in\Sigma^{n_0}$, the corresponding cylinder set $K_I$ is contained within an analytic curve $\Gamma_i$ for some $1\leq i\leq k$. So by iterating the IFS up to $n_0$ if necessary, without loss of generality, we can  assume that each $\varphi_j(K)$ $(j=1,2,...,m)$ is contained within an analytic curve $\Gamma_i$ for some $1\leq i\leq k$. Furthermore, in order to establish the desired exponential mixing inequality (\ref{expmixballs}), without loss of generality, we can assume that $B$ is an open ball in $\R^2$. The point is that the desired inequality for closed ball follows on using the fact that any closed ball can be written as an  intersection of a decreasing sequence of open balls and then applying the obvious limiting argument.

 For any $n\in\N$ and any open ball $B\subseteq\R^2$, let $q_n$ be defined as in (\ref{defqn}) and let
\begin{eqnarray*}
   \cI_n(B)&:=&\left\{I\in\Sigma^{q_n}:K_I\subseteq B\right\},\\[4pt]
         \cJ_n(B)&:=&\left\{J\in\Sigma^{q_n}:K_J\cap B\neq\emptyset,~K_J\cap B^c\neq\emptyset\right\}.
\end{eqnarray*}
Then  it is easily verified that
\[
\bigcup_{I\in\cI_n(B)}K_I~~\subseteq~~ B\cap K~~\subseteq ~~\left(\bigcup_{I\in\cI_n(B)}K_I\right)\cup\left(\bigcup_{J\in\cJ_n(B)}K_J\right).
\]
By adapting the arguments used  in deriving (\ref{uppbd}) and (\ref{lowbd})  from (\ref{relation1}), it follows from the above that
\begin{eqnarray}\label{expmixcyl}
    |\mu(B\cap T^{-n}F)-\mu(B)\mu(F)|=O\left(\sum_{J\in\cJ_n(B)}\mu(K_J)+\gamma^{n/2}\right)\mu(F)
\end{eqnarray}
for all open balls $B\subseteq\R^2$ and all $\mu$-measurable subsets $F\subseteq\R^2$, where the big-$O$ constant does not depend on $B$ and $F$. To estimate the measure sum term in \eqref{expmixcyl}, let
\[
\cK(B):=\left\{1\leq i\leq k:~B\cap\Gamma_i\neq\emptyset\right\}.
\]
Since for any $1\leq i\leq k$, the map $f_i$ is injective on $[0,1]$ and $f_i'(t)\neq0$ for all $t\in[0,1]$, it follows that each map $f_i:[0,1]\to\Gamma_i$ is bi-Lipschitz. With this in mind, it is easy to check that  for any open ball $B\subseteq\R^2$ and any $J\in\cJ_n(B)$,  there exists  $i\in\cK(B)$ so that:
\begin{itemize}
   \item[$\circ$]   $K_J\subseteq \Gamma_i$ and $\Gamma_i\cap\partial B\neq\emptyset$;
    \medskip

    \item[$\circ$]  There exists  $\x\in\Gamma_i\cap\partial B$ and $C>0$ (independent of $B$ and $J$) such that $K_J\subseteq B(\x,C\kappa^{q_n})$, where $\kappa$ is defined as in (\ref{defkap}).
\end{itemize}
 On combining these two facts with part (i) of Theorem~\ref{thmann},  we find that there exists $s>0$ such that for any open ball $B\subseteq\R^2$
\begin{eqnarray}\label{sumofcyl}
    \sum_{J\in\cJ_n(B)}\mu(K_J)= O\Big(\kappa^{s\cdot q_n}\cdot\max\left\{\#(\Gamma_i\cap\partial B):i\in\cK(B)\right\}\Big),
\end{eqnarray}
 where the implied big-$O$ constant does not depend on $B$. We claim that
\begin{eqnarray}\label{supmaxgaicp}
     \sup_{B\subseteq\R^2\,\text{an open ball}}\,\max\{\#(\Gamma_i\cap\partial B):i\in\cK(B)\}<+\infty.
 \end{eqnarray}
If \eqref{supmaxgaicp} is true, then  together with (\ref{expmixcyl}) and (\ref{sumofcyl}) we have that  the measure $\mu$ is exponentially mixing with respect to $(T,\cC)$ and we are done.  The proof of \eqref{supmaxgaicp} is the subject of Lemma~\ref{lemcouopenball} below.
\end{proof}

\begin{lemma}\label{lemcouopenball}
    Let $k\geq1$ be an integer and let $\Gamma_i $  ($1\leq i\leq k$)   be disjoint analytic curves in $\R^d$. For any set $E\subseteq\R^2$, denote
    \[
    \cK(E):=\left\{1\leq i\leq k:\Gamma_i\cap E\neq\emptyset\right\}\,.
    \]
    Then we have
\begin{eqnarray}\label{supbopenmaxcoun}
        \sup_{B\subseteq\R^2\,\text{an open ball}}\,\max\{\#(\Gamma_i\cap\partial B):i\in\cK(B)\}<+\infty\,.
    \end{eqnarray}
\end{lemma}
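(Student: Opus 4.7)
The plan is to reduce the intersection count to the zero set of an explicit real-analytic family of functions on $[0,1]$, and then to argue by compactness on the parameter space of generalized circles. Fix $i\in\{1,\ldots,k\}$ and let $f_i=(f_{i,1},f_{i,2}):\cO_i\to\R^2$ be the conformal parametrization with $\Gamma_i=f_i([0,1])$. For an open ball $B=B(c,r)$, the set $\Gamma_i\cap\partial B$ is the $f_i$-image of the zero set on $[0,1]$ of
\[
\phi_{c,r}(t)\,:=\,|f_i(t)|^2 - 2\,c\cdot f_i(t) + |c|^2 - r^2 \,=\, \alpha\,|f_i(t)|^2 + \beta\,f_{i,1}(t) + \gamma\,f_{i,2}(t) + \delta,
\]
where $\mathbf v:=(\alpha,\beta,\gamma,\delta)=(1,-2c_1,-2c_2,|c|^2-r^2)$. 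Because the zero set depends only on the projective class $[\mathbf v]\in\P^3$, I would normalize $\mathbf v$ to lie on $S^3$ and work with the resulting $\phi_{\mathbf v}$ over this compact parameter space.

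I would argue by contradiction. If \eqref{supbopenmaxcoun} fails, then since there are only finitely many curves one can fix an index $i$ and find open balls $B_n$ with $i\in\cK(B_n)$ and $\#(\Gamma_i\cap\partial B_n)\to\infty$. Normalizing the coefficient vectors of $\partial B_n$ to $\mathbf v_n\in S^3$ and passing to a convergent subsequence gives $\mathbf v_n\to\mathbf v_0=(\alpha_0,\beta_0,\gamma_0,\delta_0)\in S^3$, and smoothness of $f_i$ on $[0,1]$ yields $\phi_{\mathbf v_n}\to\phi_{\mathbf v_0}$ uniformly on $[0,1]$ together with uniform convergence of every derivative.

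Now split into two cases. In Case~(a), $\phi_{\mathbf v_0}\not\equiv 0$ is a real-analytic function on $[0,1]$ and so has only finitely many zeros, each of finite multiplicity. Setting $\tilde f_i(z):=\overline{f_i(\bar z)}$ (holomorphic since $f_i$ is conformal), the product $f_i(z)\tilde f_i(z)$ extends $|f_i|^2$ holomorphically to a fixed complex neighborhood of $[0,1]$; thus each $\phi_{\mathbf v_n}$ extends holomorphically there and converges uniformly on compacta to the holomorphic extension of $\phi_{\mathbf v_0}$. A Rouché argument in small disks around the isolated complex zeros of $\phi_{\mathbf v_0}$ then bounds, for all large $n$, the number of complex zeros of $\phi_{\mathbf v_n}$ (and \emph{a~fortiori} its real zeros on $[0,1]$) by the total multiplicity of the zeros of $\phi_{\mathbf v_0}$, contradicting $\#(\Gamma_i\cap\partial B_n)\to\infty$. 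In Case~(b), $\phi_{\mathbf v_0}\equiv 0$ forces $\Gamma_i$ to lie in the generalized circle
\[
C_0 \,:=\, \{\,z\in\R^2 : \alpha_0|z|^2 + \beta_0 z_1 + \gamma_0 z_2 + \delta_0 = 0\,\},
\]
which is necessarily a genuine line (when $\alpha_0=0$) or circle (when $\alpha_0\neq 0$), the degenerate possibilities of a point or empty set being incompatible with $\Gamma_i$ being a non-degenerate arc. Two classical facts now close this case: distinct generalized circles meet in at most two points; and the hypothesis $i\in\cK(B_n)$ with $B_n$ open rules out the coincidence $\partial B_n=C_0$ (if $C_0$ is a line it cannot equal the circle $\partial B_n$, while if $C_0$ is a circle then $\partial B_n=C_0$ would give $\Gamma_i\subseteq\partial B_n$ and hence $\Gamma_i\cap B_n=\emptyset$). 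Therefore $\#(\Gamma_i\cap\partial B_n)\leq 2$ for all $n$, again contradicting the assumption.

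The principal obstacle is the Rouché-based perturbation step in Case~(a): uniform convergence of real-analytic functions on a compact interval does not by itself bound the number of real zeros, and it is the holomorphic extension afforded by the conformal structure of $f_i$ (via the companion $\tilde f_i$) that converts $L^{\infty}$-convergence of $\phi_{\mathbf v_n}$ into the desired combinatorial bound; everything else is essentially bookkeeping using the projective parameter space and the open ball condition.
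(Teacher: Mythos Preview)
Your proof is correct and takes a genuinely different, more economical route than the paper's.

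The paper works directly with $(\x_n,r_n)\in\R^2\times(0,\infty)$, splitting according to whether the limits are finite or both infinite. In the finite case it sets $F_n(t)=|f_{i_0}(t)-\x_n|^2$ and uses iterated Rolle's theorem to force $F_\infty\equiv r_\infty^2$, hence $\Gamma_{i_0}$ is on a circle. In the infinite case it carries out a lengthy ad hoc construction showing that the relevant circular arcs $h_n([0,1])$ converge in $C^k$ to an affine parametrization $h_\infty(s)=\mathbf a_\infty+\mathbf b_\infty s$ of a line segment, then repeats the Rolle argument through the inverse parametrization $g_{i_0}^{-1}$ to conclude $\Gamma_{i_0}$ is on a line. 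The preliminary case where $\Gamma_i$ is an arc of a circle is handled separately at the outset.

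Your projective normalization $(\alpha,\beta,\gamma,\delta)\in S^3$ compactifies all three regimes at once: finite limits, the line degeneration, and the arc-of-a-circle case all land in your Case~(b). And your Case~(a) replaces the Rolle chain by a single Hurwitz/Rouch\'e step, made available by the holomorphic extension $\Phi_{\mathbf v}(z)=\alpha f_i(z)\tilde f_i(z)+\tfrac{\beta}{2}(f_i+\tilde f_i)(z)+\tfrac{\gamma}{2i}(f_i-\tilde f_i)(z)+\delta$; this is exactly where the conformal structure of the curves pays off. The paper's argument, by contrast, stays purely real-analytic after the setup, which is more elementary but costs a page of estimates in the $|\x_n|,r_n\to\infty$ case.

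One small cosmetic point: ``holomorphic since $f_i$ is conformal'' should be accompanied by the observation that if $f_i$ is anti-holomorphic one may pass to $z\mapsto f_i(\bar z)$ without changing $\Gamma_i$.
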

 \begin{proof}
Since $\Gamma_i$ ($1\leq i\leq k$) are analytic curves, we note that for each  $1\leq i\leq k$ we can write
$$\Gamma_i=g_i([0,1]\times\{0\})   \quad  {\rm where } \quad g_i:\cO\to\R^2$$ is  a conformal map on an open set $\cO\subseteq\R^2$ containing $[0,1]\times\{0\}$. In turn, for each  $1\leq i\leq k$,   we define the map $f_i:t\mapsto g_i(t,0)$ on the unit interval. Then each $f_i$ is an injective real analytic map with $f_i'(t)\neq0~(t\in[0,1])$.

Now observe that  given any open ball $B\subseteq\R^2$ and any $1\leq i\leq k$ for which $\Gamma_i$ is an arc of a circle,  the set $\Gamma_i\cap\partial B$ is either:
 (i) equal to $\Gamma_i$, (ii) the empty set, (iii) a single point, (iv) a set with two points.
 \begin{itemize}
     \item[$\circ$]   If (i) is the case,  $\Gamma_i\cap B=\emptyset$ since  $B$ is open and so  $i\notin\cK(B)$.
     \medskip

     \item[$\circ$] In the last three cases, we have that  $\#(\Gamma_i\cap \partial B)\leq 2$.
 \end{itemize}
  The upshot of this  is that  if $\Gamma_i$ is a part of a circle, then $$\#(\Gamma_i\cap\partial B)\leq2$$
  for any open ball $B\subseteq\R^2$ for which $i\in\cK(B)$. In particular,  it  shows that \eqref{supbopenmaxcoun} is valid  if every  $\Gamma_i$ ($i=1,2,...,k$) is an arc of a  circle.  Thus, from this  point onwards, we assume that not every  $\Gamma_i$ ($i=1,2,...,k$) and so  proving  (\ref{supbopenmaxcoun}) boils down to showing that
\begin{eqnarray}\label{aimcount}
    \sup\left\{\#(\Gamma_i\cap \partial B):~B~\text{is an open ball in}~\R^2\right\}<+\infty
\end{eqnarray}
for any $1\leq i\leq k$ for which $\Gamma_i$ is not  contained in any circle. Fix such an $i$, call it $i_0$.
We prove, by contradiction, that (\ref{aimcount}) is true for $\Gamma_{i_0}$. Suppose that (\ref{aimcount}) is not true, then for any $n\in\N$, there are $\x_n\in\R^2$ and $r_n>0$ such that
\begin{eqnarray}\label{nmbgeqn}
    \#\big(\Gamma_{i_0}\cap\partial B(\x_n,r_n)\big)~\geq~n. ,
\end{eqnarray}
or equivalently, for any $n\in\N$, there are $0\leq t_1^{(n)}<t_2^{(n)}<\cdot\cdot\cdot<t_n^{(n)}\leq 1$ that satisfy
\begin{eqnarray} \label{gamcapparbsup}
     \Gamma_{i_0}\cap\partial B(\x_n,r_n)~~\supseteq~~\left\{f_{i_0}(t_1^{(n)}),f_{i_0}(t_2^{(n)}),...,f_{i_0}(t_n^{(n)})\right\}.
\end{eqnarray}
By passing to a subsequence if necessary, we may assume that $$\x_n\to \x_{\infty}\in \R^2\cup\{\infty\}\,\,\,\,\,\,\,\,\&\,\,\,\,\,\,\,\,r_n\to r_{\infty}\in[0,+\infty]\,\,\,\,\,\,\,\
(n\to+\infty)\,.$$  If $(\x_{\infty},r_{\infty})\in\R^2\times\{+\infty\}$ or $(\x_{\infty},r_{\infty})\in\{\infty\}\times[0,+\infty)$,  then it is easy to verify that $\Gamma_{i_0}\cap\,\partial   B(\x_n,r_n)=\emptyset$ for $n$  sufficiently large, which contradicts (\ref{nmbgeqn}). Therefore, for (\ref{nmbgeqn}) to hold, it is necessary that $$(\x_{\infty},r_{\infty})\in\R^2\times[0,+\infty)   \quad {\rm or}  \quad \x_{\infty}=\infty   \ {\rm and} \   r_{\infty}=+\infty \, . $$
\noindent We deal with these two case separately.

 \noindent$\bullet$ \emph{Case (i): $(\x_{\infty},r_{\infty})\in\R^2\times[0,+\infty)$. }For any $n\in\N$, let $F_n:[0,1]\to\R_{\geq0}$ be defined by setting
\begin{eqnarray*}
    F_n(t):=|f_{i_0}(t)-\x_n|^2\qquad(t\in[0,1])\,.
\end{eqnarray*}
Also consider the map on [0,1] given by
\begin{eqnarray*}
    F_{\infty}:\ t\mapsto|f_{i_0}(t)-\x_{\infty}|^2 \qquad(t\in[0,1])\,.
\end{eqnarray*}
Clearly, $F_{\infty}$ and  $\{F_n\}_{n\in\N}$ are  analytic functions on $[0,1]$. Moreover, in view of the fact that $\x_n\to\x_{\infty}$ as $n\to\infty$ and the analyticity of $f_{i_0}$, it is easily verified that for any $k\in\Z_{\geq0}$, the limit \begin{eqnarray}\label{unilimfndkfn}
   \frac{\td^kF_n}{\td t^k}(t)\to\frac{\td^k F_{\infty}}{\td t^k}(t)\qquad(n\to\infty)
\end{eqnarray}
holds uniformly with respect to $t\in[0,1]$. In view of \eqref{gamcapparbsup},  we have $F_n(t_j^{(n)})=r_n^2$ for any $n\in\N$ and any $j\in\{1,...,n\}$. For any $j\in\N$, let $\cT_j$ be the set of limit points of $\{t_j^{(n)}\}_{n\geq j}$, and let $\cT$ represents the union of $\cT_j$ over $j\in\N$. Then, for any $t\in\cT$, there exist $j_0=j_0(t)\in\N$ and $n_1<n_2<n_3<\cdot\cdot\cdot$ such that $t_{j_0}^{(n_k)}\to t$ $(k\to\infty)$, and hence
\begin{eqnarray}\label{Finftequrinf2}
    F_{\infty}(t)=\lim_{k\to\infty}|f_{i_0}(t_{j_0}^{(n_k)})-\x_{n_k}|^2=\lim_{k\to\infty}r_{n_k}^2=r_{\infty}^2\,.
\end{eqnarray}

\emph{Subcase (i): $\#\cT=+\infty$. }In view of  \eqref{Finftequrinf2}, in this subcase there are infinitely many $t\in[0,1]$ that satisfy $F_{\infty}(t)=r_{\infty}^2$. Then,  since the function $F_{\infty}$ is analytic it follows 
$$F_{\infty}(t)\equiv r_{\infty}^2\qquad (t\in[0,1])\,.$$ This implies that $\Gamma_{i_0}$ is a subset of the circle $\partial B(\x_{\infty},r_{\infty})$, which is a contradiction.

\emph{Subcase (ii): $\#\cT<+\infty$. }In this subcase, there exists $t_0\in\cT$ such that $t_0\in\cT_j$ for infinitely many $j\in\N$. Without the loss of generality, we assume that $t_0\in\cT_j$ for all $j\in\N$. Throughout this subcase, fix an arbitrary  integer $k\in\N$. Then there exists a subsequence $n_1<n_2<n_3<\cdot\cdot\cdot$ such that $t_j^{(n_{\ell})}\to t_0~(\ell\to\infty)$ for any $j=1,2,...,k+1$. Recall that $F_n(t_j^{(n)})=r_n^2$ for any $n,j\in\N$ with $1\leq j\leq n$.  So  by Rolle's theorem, there exists $\xi_n\in(t_1^{(n)},t_{k+1}^{(n)})$ for each $n\geq k+1$ such that
\[
\frac{\td^k F_n}{\td t^k}(\xi_n)=0.
\]
   In view of the fact that $\xi_{n_{\ell}}\in(t_1^{(n_{\ell})},t_{k+1}^{(n_{\ell})})$ and $t_1^{(n_{\ell})}\to t_0$,  $t_{k+1}^{(n_{\ell})}\to t_0$ as $\ell\to\infty$, we have that $\xi_{n_{\ell}}\to t_0$ as  $\ell\to\infty$.  This together with  the uniformly convergent property of the $k$-th derivatives of $F_n$  (see (\ref{unilimfndkfn})) implies that
\begin{eqnarray*}
    \frac{\td^kF_{\infty}}{\td t^k}(t_0)  \ = \ \lim_{\ell\to\infty}\frac{\td^kF_{n_{\ell}}}{\td t^k}(\xi_{n_{\ell}})=0.
\end{eqnarray*}
Now $k \in \N $ is arbitrary and $F_{\infty} $ is analytic within $[0,1]$, so it follows that
$$F_{\infty}(t)\equiv F_{\infty}(t_0)=r_{\infty}^2\qquad (t\in[0,1]),$$ which implies that $\Gamma_{i_0}$ is a subset of the circle $\partial B(\x_{\infty},r_{\infty})$. This contradicts the assumption that $\Gamma_{i_0}$ is not an arc of any circle.

\noindent$\bullet$ \emph{Case (ii): $\x_{\infty}=\infty$ and $r_{\infty}=+\infty$. } We identify $\R^2$ with the complex plane $\bC$.   For any $n\in\N$, fix a point $\z_n\in\Gamma_{i_0}\cap\partial B(\x_n,r_n)$. It is easily verified that $$\Gamma_{i_0}\cap\partial B(\x_n,r_n)\,\subseteq\,B(\z_n,2|\Gamma_{i_0}|)\cap\partial B(\x_n,r_n)$$
and so the right hand side is an arc of the circle $\partial B(\x_n,r_n)$. Since $r_n\to+\infty$ as $n\to\infty$, it follows that for  all sufficiently large $n\in\N$, there exist $\theta_1^{(n)},\,\theta_2^{(n)}\in[-2\pi,2\pi]$ such that  $$0<\theta_2^{(n)}-\theta_1^{(n)}<\pi$$ and
\[
h_n\big((0,1)\big)=B(\z_n,2|\Gamma_{i_0}|)\cap \partial B(\x_n,r_n),
\]
where we set
\[
h_n(s)=:\x_n+r_ne^{\rmi\big((1-s)\,\theta_1^{(n)}\,+\,s\,
\theta_2^{(n)}\big)}\qquad (s\in\R)\,.
\]
Without the loss of generality, we suppose that this fact holds for all $n\in\N$.  We claim that
\begin{eqnarray}\label{rnthetaupp}
    0<\inf\left\{r_n\big(\theta_2^{(n)}-\theta_1^{(n)}\big):\,n\in\N\right\}\leq\sup\left\{r_n\big(\theta_2^{(n)}-\theta_1^{(n)}\big):\,n\in\N\right\}\,<\,+\infty
\end{eqnarray}
and hence
\begin{eqnarray}\label{thedif}
    \theta_2^{(n)}-\theta_1^{(n)}\to0\qquad (n\to\infty).
\end{eqnarray}
To prove the upper bound of the supremum term in (\ref{rnthetaupp}), just note that
\begin{eqnarray}
    r_n\,(\theta_2^{(n)}-\theta_1^{(n)})&\leq&\pi\, r_n\sin{\frac{\theta_2^{(n)}-\theta_1^{(n)}}{2}}\label{leasin}\\[4pt]
    &=&\frac{\pi}{2}\,|h_n(1)-h_n(0)|\nonumber\\[4pt]
    &\leq&2\,\pi\,|\Gamma_{i_0}|\label{leqdiam}
\end{eqnarray}
for all  $n\in\N$, where we use the fact that $\sin x\geq 2x/\pi$ ($x\in[0,\pi/2]$) in deriving (\ref{leasin}), and inequality (\ref{leqdiam}) holds since  the points $h_n(0)$ and $h_n(1)$ belong to the closure of $B(\z_n,2|\Gamma_{i_0}|)$.     For the lower bound of the  infimum term in (\ref{rnthetaupp}), we note that
\begin{eqnarray*}
    r_n\,(\theta_2^{(n)}-\theta_1^{(n)})&=&\cH^1(h_n(0,1))\\[4pt]
    &\geq&|h_n(1)-h_n(0)|\\[4pt]
    &=&2\cdot\sqrt{4\,|\Gamma_{i_0}|^2-\Big(r_n-r_n\cos\frac{\theta_2^{(n)}-\theta_1^{(n)}}{2}\Big)^2}\\[4pt]
    &\asymp&2\cdot\sqrt{4\,|\Gamma_{i_0}|^2-\frac{1}{64}\,(r_n\,(\theta_2^{(n)}-\theta_1^{(n)}))^2\cdot(\theta_2^{(n)}-\theta_1^{(n)})^2}\\[4pt]
    &\to&4\,|\Gamma_{i_0}|\,\,\,\,(n\to\infty).
\end{eqnarray*}

With (\ref{rnthetaupp}) and (\ref{thedif}) at hand, we now show,  by passing to a subsequence if necessary, that there exist $\ba_{\infty}\in\bC$ and  $\bfb_{\infty}\in\bC\backslash\{0\}$ such that for any $k\in\Z_{\geq0}$, the following limit
\begin{eqnarray}\label{hntohinf}
  \lim_{n\to\infty}\frac{\td^k\,h_n}{\td s^k}(s)=\frac{\td^k\,h_{\infty}}{\td s^k}(s)\qquad (h_{\infty}(s):=\ba_{\infty}+\bfb_{\infty}\,s)
\end{eqnarray}
holds uniformly with respect to $s\in[0,1]$. To prove this statement, for any $n\in\N$ and $s\in[0,1]$,  note that by the definition of $h_n$,
\begin{eqnarray}
    h_n(s)-h_n(0)
&=&r_n\left(e^{\rmi\big((1-s)\,\theta_1^{(n)}+s\,\theta_2^{(n)}\big)}-e^{\rmi\,\theta_1^{(n)}}\right)\nonumber\\
    &=&r_n\cdot s\cdot(\theta_2^{(n)}-\theta_1^{(n)})\cdot\frac{e^{\rmi\big((1-s)\,\theta_1^{(n)}+s\,\theta_2^{(n)}\big)}-e^{\rmi\,\theta_1^{(n)}}}{s\cdot(\theta_2^{(n)}-\theta_1^{(n)})}\,.\label{hn-h0}
\end{eqnarray}
By (\ref{rnthetaupp}),    (\ref{thedif}) and the boundedness of $\{\theta_1^{(n)}\}_{n\in\N}$ and $\{h_n(0)\}_{n\in\N}$, there exists a subsequence $n_1<n_2<\cdot\cdot\cdot$ on $\N$ that satisfies
\begin{eqnarray}\label{conver1}
    r_{n_k}(\theta_2^{(n_k)}-\theta_1^{(n_k)})\to\ell_{\infty},\,\,\,\,\theta_1^{(n_k)}\to\theta_{\infty},\,\,\,\,h_{n_k}(0)\to\z_{\infty}\,\,\,\,(k\to\infty)
\end{eqnarray}
for some $\ell_{\infty}>0$ and $\theta_{\infty}\in[-2\pi,2\pi]$, and that the limit
\begin{eqnarray}\label{conver2}
    \frac{e^{\rmi\big((1-s)\,\theta_1^{(n_k)}+s\,\theta_2^{(n_k)}\big)}-e^{\rmi\,\theta_1^{(n_k)}}}{s\cdot(\theta_2^{(n_k)}-\theta_1^{(n_k)})}\to\left.\frac{\td(e^{\rmi\,\theta})}{\td\, \theta}\right|_{\theta=\theta_{\infty}}=\,\,\,\,e^{\rmi\left(\theta_{\infty}+\frac{\pi}{2}\right)}\,\,\,\,\,\,\,\,\,(k\to\infty)
\end{eqnarray}
holds uniformly with respect to $s\in[0,1]$. Then on combining (\ref{hn-h0}), (\ref{conver1}) and  (\ref{conver2}), we obtain that the limit
\[
h_{n_k}(t)\to\ba_{\infty}+\bfb_{\infty}\,t \qquad (n\to\infty)
\]
holds uniformly with respect to $t\in[0,1]$, where $\ba_{\infty}\in\bC$ and $\bfb_{\infty}\in\bC\backslash\{0\}$ is defined as
\[
\ba_{\infty}:=\z_{\infty},\,\,\,\,\bfb_{\infty}:=\ell_{\infty}\,e^{\rmi\left(\theta_{\infty}+\frac{\pi}{2}\right)}.
\]
This proves \eqref{hntohinf} when  $k=0$. In view of (\ref{conver1}) and (\ref{conver2}), it is easily verified that for any integer $j\geq 2$, the limits
\begin{eqnarray*}
    \lim_{k\to\infty}\frac{\td\,h_{n_k}}{\td\,t}(t)
    =\bfb_{\infty}=\frac{\td\,h_{{\infty}}}{\td\,t}(t), \qquad \lim_{k\to\infty}\frac{\td^j\,h_{n_k}}{\td\,t^j}(t)=0=\frac{\td^j\,h_{\infty}}{\td\,t^j}(t)
\end{eqnarray*}
hold uniformly with respect to $t\in[0,1]$. The upshot is that the desired limit (\ref{hntohinf}) is true  on the subsequence $\{h_{n_k}\}_{k\in\N}$. Without the loss of generality, assume that (\ref{hntohinf}) holds for $\{h_n\}_{n\in\N}$.

The foundations are now in place to show that
\begin{equation} \label{vgtrf}
\Gamma_{i_0} \,  \subset \,  L:=h_{\infty}(\R) \, .
\end{equation}
By definition,  $L$ is a straight line  so the above implies that  $\#(\Gamma_{i_0}\cap\partial B)\leq 2$ for any ball $B$ in $\bC$, which contradicts (\ref{nmbgeqn}) and so (\ref{aimcount}) is true as desired. To prove  \eqref{vgtrf},  we start by recalling  that  $\cT_j$ ($j\in\N$)  is the set of limit points of $\{t_j^{(n)}\}_{n\geq j}$ and  $\cT$ represents the union of $\cT_j$ over $j\in\N$. For each $j\in\N$ and $n\geq j$, since $f_{i_0}(t_{j}^{(n)})\in h_n\big((0,1)\big)$, then there exists $s_j^{(n)}\in(0,1)$ such that $f_{i_0}(t_{j}^{(n)})=h_{n}(s_{j}^{(n)})$.
Since $s_j^{(n)}$ are bounded, we know that for any $t\in\cT$, there exist $j_0=j_0(t)\in\N$ and $n_1<n_2<n_3<\cdot\cdot\cdot$ such that
\begin{eqnarray}\label{tnktotsnktos}
    t_{j_0}^{(n_k)}\to t\qquad\text{and}\qquad s_{j_0}^{(n_k)}\to s\qquad(k\to\infty)
\end{eqnarray}
 for some $s\in[0,1]$. Then, by the uniformly convergent property (\ref{hntohinf}) associated with $\{h_n\}_{n\in\N}$, we have
\begin{eqnarray}\label{fi0t=hinfs}
    f_{i_0}(t)=\lim_{k\to\infty}f_{i_0}(t_{j_0}^{(n_k)})=\lim_{k\to\infty}h_{n_k}(s_{j_0}^{(n_k)})=h_{\infty}(s)\in L\,.
\end{eqnarray}
In view of \eqref{fi0t=hinfs} and the fact that $h_{\infty}$ is invertible, we have $s=h_{\infty}^{-1}\circ f_{i_0}(t)$. We continue by considering the following two subcase.

\emph{Subcase (i): $\#\cT=+\infty$.} In this subcase, with (\ref{fi0t=hinfs}) in mind, there are infinitely many $t\in[0,1]$ such that $f_{i_0}(t)\in L$, then  by the analyticity of $f_{i_0}$, the curve $\Gamma_{i_0}=f_{i_0}([0,1])$ is contained in the straight line $L$.

\emph{Subcase (ii): $\#\cT<+\infty$.} In this subcase, there exists $t_0\in\cT$ such that there are  infinitely many $j\in\N$  for  which $t_0\in\cT_j$. Without the loss of generality, we assume that $t_0\in\cT_j$ for all $j\in\N$. Let $s_0:=h_{\infty}^{-1}\circ f_{i_0}(t_0)$.  Recall that $$g_{i_0}:\cO\to g_{i_0}(\cO)$$ is a conformal map on an open set $$\cO\supseteq[0,1]\times\{0\}$$ such that $g_{i_0}(t,0)=f_{i_0}(t)$ $(t\in[0,1])$ and $g_{i_0}^{-1}$ is also conformal on the domain $g_{i_0}(\cO)$.  With  the uniformly convergent property \eqref{hntohinf} associated with $\{h_n\}_{n\in\N}$ and the fact that $h_{\infty}(s_0)\in g_{i_0}(\cO)$ in mind, it is easily verified  that there exists $\delta>0$ such that $$h_{n}\big([s_0-\delta,s_0+\delta]\big)\subseteq g_{i_0}(\cO),\quad h_{\infty}\big([s_0-\delta,s_0+\delta]\big)\subseteq g_{i_0}(\cO) \quad (n\in\N)\,.$$
It follows that the functions
\[
G_n(s):=\Ima\left(g_{i_0}^{-1}\circ h_n(s)\right),\quad G_{\infty}(s):=\Ima\left(g_{i_0}^{-1}\circ h_{\infty}(s)\right)\qquad (n\in\N)
\]
are real analytic with respect to $s\in[s_0-\delta,s_0+\delta]$, where $\Ima(z)$ denotes the imaginary part of  $z\in\bC$.

Now fix a $n_0\in\N$. Recall  our assumption that $t_0\in\cT_j$ for any $j\geq1$; that is to say that $t_0$ is a limit point of $\{t_j^{(n)}\}_{n\geq j}$ for any $j\geq1$. Then there exists  a subsequence $k_1<k_2<k_3<\cdot\cdot\cdot$ such that $t_j^{(k_{\ell})}\to t_0$ ~$(\ell\to\infty)$ for any $j=1,2,...,n_0+1$. On the other hand, concerning the sequence $s_j^{(k_{\ell})}$, by passing to a subsequence if necessary, we know that for any $j=1,2,...,n_0+1$, there exists $s_j\in[0,1]$ such that $s_j^{(k_{\ell})}\to s_j$~$(\ell\to\infty)$.  In view of \eqref{fi0t=hinfs} and the fact that $h_{\infty}$ is invertible, we obtain that $s_j=h_{\infty}^{-1}\circ f_{i_0}(t_0)=s_0$ for any $j=1,2,...,n_0+1$.  Therefore, it follows that when $\ell$ is sufficiently large, we have $s_j^{(k_{\ell})}\in[s_0-\delta,s_0+\delta]$ for any $j=1,2,...,n_0+1$, which means that $s_j^{(k_{\ell})}$ lies in the domains of $G_n$ and $G_{\infty}$. For any such $\ell\in\N$, by the definitions of $s_{j}^{(k_{\ell})}$ and $t_{j}^{(k_{\ell})}$, we have
\begin{eqnarray*}
    G_{k_{\ell}}(s_{j}^{(k_{\ell})})=\Ima\left(g_{i_0}^{-1}\circ h_{k_{\ell}}(s_{j}^{(k_{\ell})})\right)=\Ima(t_{j}^{(k_{\ell})})=0,\qquad\forall\,j=1,2,...,n_0+1\,.
\end{eqnarray*}
With this in mind, on applying Rolle's theorem $n_0$ times   to the function $G_{k_{\ell}}$, it follows that  there exists $\xi_{k_{\ell}}\in\big(\min_{1\leq j\leq n_0+1}s_{j}^{(k_{\ell})},\max_{1\leq j\leq n_0+1}s_{j}^{(k_{\ell})}\big)$  such that
\begin{eqnarray*}
    \frac{\td^{n_0}\,G_{k_{\ell}}}{\td s^{n_0}}(\xi_{k_{\ell}})=0\,.\ \ \ \
\end{eqnarray*}
Since $\xi_{k_\ell}\to s_0$ $(\ell\to\infty)$ and the limit $\displaystyle\frac{\td^{j} \,h_k}{\td s^{j}}\to \frac{\td^{j} \,h_{\infty}}{\td s^{j}}$ $(k\to\infty)$ holds uniformly on $[0,1]$ for any $j=0,1,...,n_0$,  we have that
\begin{eqnarray*}
    \frac{\td^{n_0} \,G_{\infty}}{\td s^{n_0}}(s_0)=0.
\end{eqnarray*}
Now since $n_0\in\N$ is arbitrary and $ G_{\infty} $ is analytic in the domain of interest, 
we obtain that
\begin{eqnarray*}
    G_{\infty}(s)\equiv G_{\infty}(s_0)=0,\qquad \forall\,\, s\in[s_0-\delta,s_0+\delta].
\end{eqnarray*}
The upshot of this is that there exists $\epsilon>0$ such that $f_{i_0}([t_0-\epsilon,t_0+\epsilon])\subseteq L$. Since $f_{i_0}$ is analytic, the curve $\Gamma_{i_0}=f_{i_0}([0,1])$ is contained in the straight line $L$.
 \end{proof}

 \section{Proof of Theorem~\ref{thmann}}\label{annulusec}

 Theorem~\ref{thmann} is easily seen to be  a direct  consequence of the following proposition by
 On observing that any Gibbs measure on $\Sigma^{\N}$ is doubling and $K$ is not a singleton if the OSC is satisfied.
	\begin{proposition}\label{Prop3.1}
		Let $\Phi=\{\varphi_j\}_{1\leq j\leq m}$ be a $C^{1+\alpha}$ conformal IFS (without any separation condition) on $\R^d$, let $K$ be  the self-conformal set generated by $\Phi$ and let $\pi$ be the coding map. Let  $\ubar{\mu}$ be a doubling Borel probability measure on $\Sigma^{\N}=\{1,...,m\}^{\N}$ and $\mu:=\ubar{\mu}\circ \pi^{-1}$.
\begin{itemize}
  \item[(i)]  If $K$ is not a singleton, then there exists $C>0$ and $s>0$ such that
        \begin{eqnarray}\label{Crs}
            \mu(B(\x,r))\leq Cr^{s}  \quad \forall  \; \x\in \R^d,~\forall \; r>0 \, .
        \end{eqnarray}

  \item[(ii)]  If $K$ is not a singleton,
		then there exist $C>0$, $\delta>0$ and $r_0>0$ such that
  \begin{eqnarray}\label{annpro}
      \mu\big((\partial B(\x,r))_{\varrho}\big)\leq C\varrho^{\delta}, \quad \forall  \; \x\in K,~\forall \; 0<r\leq r_0,~\forall\; \varrho>0.
  \end{eqnarray}

   \medskip
  \noindent Furthermore, let $\ell_K$ be as in \eqref{defofellk}  and suppose that

  \medskip
  \begin{itemize}
      \item[(a)]   $\ell_K=d$. Then  (\ref{annpro}) holds for any $\x\in\R^d$, any $r>0$ and any $\varrho>0$; \medskip
      \item[(b)]   $\ell_K<d$ where $d\geq2$ and part (ii) of Proposition \ref{rigidity} holds with $\ell=\ell_K$.  Then  (\ref{annpro}) holds for any $\x\in S$, any $r>0$ and any $\varrho>0$ where $S\subseteq\R^d$ is  the $\ell_K$-dimensional affine subspace or geometric sphere associated with  part (ii) of
      Proposition \ref{rigidity}.
      \medskip

      \item[(c)] $d=2$, $\ell_K=1$ and $K$ is contained in a single analytic curve. Then (\ref{annpro}) holds for any $\x\in\Gamma$, any $r>0$ and any $\varrho>0$, where $\Gamma\subseteq\R^2$ is the corresponding analytic curve.
  \end{itemize}
\end{itemize}
       \end{proposition}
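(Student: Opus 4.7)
The proof naturally splits into the Frostman bound (i) and the annulus estimate (ii), with (ii) further branching according to the trichotomy supplied by Proposition~\ref{rigidity}. Once the three cases (a), (b), (c) are in hand, the main statement of (ii) for $\x\in K$, $0<r\leq r_0$ follows automatically: case (a) delivers the bound globally; case (b) applies because $K\subseteq S$ forces $\x\in S$; and in case (c), choosing $r_0$ smaller than half the minimum gap between any two analytic curves whose union may contain $K$ confines the relevant annulus to the single curve containing $\x$ (the large-$\varrho$ regime being trivial via $\mu\leq 1$).

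For (i), the doubling hypothesis on $\ubar{\mu}$ yields $\rho:=1-(m-1)\eta\in(0,1)$ with $\ubar{\mu}([Ij])\leq\rho\,\ubar{\mu}([I])$ for every $I\in\Sigma^*$ and $j\in\Sigma$, hence $\ubar{\mu}([I])\leq\rho^{|I|}$. Since $K$ is not a singleton, one can fix $n_0\in\N$ and words $I_1,I_2\in\Sigma^{n_0}$ with $\delta_0:=\dist(K_{I_1},K_{I_2})>0$. The bounded-distortion estimate~\eqref{p2} propagates this separation to every scale: $\dist(K_{JI_1},K_{JI_2})\geq c_0|K_J|$ uniformly in $J\in\Sigma^*$. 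Consequently any ball of radius $<c_0|K_J|/4$ meets at most one of $K_{JI_1},K_{JI_2}$, so its $\pi$-preimage intersected with $[J]$ misses one of $[JI_1],[JI_2]$, each of measure at least $\eta^{n_0}\ubar{\mu}([J])$. Running this pigeonhole down a tree whose node diameters decay geometrically per level, the total surviving cylinder measure at depth $k$ contracts by a factor $(1-\eta^{n_0})^k$; stopping at the depth where node diameters match $r$ yields $\mu(B(\x,r))\leq C r^s$ for some $s>0$.

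The heart of (ii) is case (a), $\ell_K=d$, which I would attack by a rescaling/contradiction argument modeled on the proof of Proposition~\ref{rigidity}. Suppose the required bound fails, and select sequences $(\x_n,r_n,\varrho_n)$ witnessing the failure; part~(i) restricts attention to the regime $\varrho_n\ll r_n$. For each $n$, pigeonholing over cylinders at the level where $\kappa^n\asymp r_n$, locate a cylinder $K_{I_n}$ of diameter $\asymp r_n$ whose intersection with $(\partial B(\x_n,r_n))_{\varrho_n}$ carries a significant fraction of its $\mu$-mass. Apply the affine rescaling $\psi_n(\z):=\|\varphi_{I_n}'\|^{-1}(\z-\varphi_{I_n}(\z_0))+\z_0$ so that $K_{I_n}$ is sent to a set of unit diameter. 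Lemma~\ref{concon} supplies a subsequential conformal limit $f=\lim\psi_n\circ\varphi_{I_n}$, while the rescaled annulus, of bounded radius and thickness $\varrho_n/r_n\to 0$, collapses in Hausdorff distance to a $(d-1)$-dimensional sphere or hyperplane $S_\infty$. Taking the weak-$\ast$ limit of the suitably normalised pushforward measures yields a non-trivial limit measure concentrated on $f(K)\cap S_\infty$; since $f$ is conformal, $M:=f^{-1}(S_\infty)$ is a $(d-1)$-dimensional $C^1$ submanifold of $\R^d$ with $\mu(K\cap M)>0$, contradicting $\ell_K=d$.

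Case (b) is handled by the same rescaling scheme carried out intrinsically within the $\ell_K$-dimensional surface $S$: conformal maps of $\R^d$ preserve the class of affine subspaces and geometric spheres, so the subsequential limit sends $S$ to a set of the same type, and the definition of $\ell_K$ rules out any $(\ell_K-1)$-dimensional positive-$\mu$-mass submanifold of $S$, furnishing the contradiction in reduced dimension. Case (c) is geometrically direct: write $\Gamma=f([0,1])$ for an analytic $f$ with $f'\neq 0$; Lemma~\ref{lemcouopenball} bounds $\#(\Gamma\cap\partial B(\x,r))$ uniformly in $\x$ and $r$, and a Taylor expansion of $t\mapsto|f(t)-\x|$ near each intersection point shows that $f^{-1}((\partial B(\x,r))_\varrho)$ is contained in a uniformly bounded number of intervals of length $\asymp\varrho$; the $\mu$-mass of each arc image is $O(\varrho^s)$ by part~(i), and summing gives the required bound. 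The main obstacle throughout is the rescaling step in case (a): one must confirm that enough mass survives in the weak-$\ast$ limit to place a positive-mass set on the limiting sphere, and the uniform doubling of $\ubar{\mu}$ is precisely the input that makes the compactness and non-degeneracy arguments close.
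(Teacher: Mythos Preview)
Your treatment of part~(i) is correct and in fact more elementary than the paper's: you use a direct two-word separation propagated by bounded distortion, whereas the paper establishes a uniform escape lemma (Lemma~\ref{lemdistxk}) via rescaling and compactness. Both lead to the same tree-decay argument.

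However, your approach to case~(a) of part~(ii) has a genuine gap at the weak-$\ast$ step. Even granting the pigeonhole (which is itself not automatic: the failure of the bound only says $\mu((\partial B)_{\varrho_n})\gg\varrho_n^{\delta_n}$ for some $\delta_n\to 0$, and this quantity may still tend to zero, so no cylinder need carry a \emph{uniformly} positive fraction of its mass in the annulus), the limit measure $\nu$ you obtain lives on $f(K)$ and is built from renormalised restrictions $\ubar{\mu}|_{[I_n]}$ along a varying sequence of cylinders. There is no mechanism --- absent quasi-Bernoulli, which a general doubling $\ubar{\mu}$ does not enjoy --- to identify $\nu$ with (something comparable to) $\mu$ itself, so the conclusion $\mu(K\cap f^{-1}(S_\infty))>0$ does not follow. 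The paper sidesteps this entirely: it first proves a \emph{uniform} separation lemma (Lemma~\ref{3.2}) stating that for some fixed $\varrho>0$, every cylinder $K_I$ and every ball $B$ admit a sub-cylinder $K_{IJ}$ with $\bd(K_{IJ},\partial B)>\varrho|K_{I^-}|$. The contradiction hypothesis there forces $f(K)\subseteq S_\infty$, i.e.\ \emph{containment} rather than positive mass, which is exactly what $\ell_K=d$ forbids via Proposition~\ref{rigidity}. With Lemma~\ref{3.2} in hand, the same tree argument as in~(i) gives the explicit exponent directly, with no limiting measure needed. Case~(b) in the paper follows the same pattern via Lemma~\ref{lem3.3}, together with an additional geometric step (the inclusion~\eqref{annulus}) relating $(\partial B)_\varrho\cap S$ to a $\sqrt{\varrho}$-neighbourhood of $(\partial B)\cap S$ when $S$ is a sphere.

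Your case~(c) also has a gap: the assertion that $f^{-1}((\partial B(\x,r))_\varrho)$ consists of boundedly many intervals of length $\asymp\varrho$ is false at tangential intersections, where $t\mapsto|f(t)-\x|-r$ may vanish to higher order and the preimage interval has length $\asymp\varrho^{1/k}$. Extracting a \emph{uniform} such $k$ over all $\x\in\Gamma$ and $r>0$ is precisely the content of the analytic-function machinery the paper develops (Lemmas~\ref{zeros}, \ref{dermin}, \ref{cccount}, \ref{intlowbd}): one applies the area formula to $F(t,s)=|f(t)-f(s)|^2$, bounds the number of connected components via Lemma~\ref{cccount}, and uses Lemma~\ref{intlowbd} to force $|I|\ll\varrho^{1/n_0}$ for a fixed $n_0$, after which part~(i) finishes the job. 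A bare Taylor expansion at each intersection point, without this uniformity, does not suffice.
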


\medskip

Before providing the proof we introduce some useful notation.    For any two subsets $A$, $B\subseteq\R^d$, define the distance between $A$ and $B$ as
	\[
	\bd(A,B):=\inf\big\{\bd(\x,B):\x\in A\big\}.
	\]
 For any $\varrho>0$,  let
	$$\Lambda_{\varrho}:=\left\{I\in\Sigma^*:|K_I|<\varrho\leq|K_{I^-}|\right\},$$
where
\begin{eqnarray*}
I^-:=\left\{\begin{aligned}
    &i_1i_2\cdot\cdot\cdot i_{n-1},~~ \  \ \text{if} \ ~I=i_1i_2\cdot\cdot\cdot i_n   \ \ {\rm and } \ \ n\geq2, \\[2ex]
    &\emptyset,~~ \  \ \ \ \ \ \ \ \ \ \ \ \ \ \ \,  \text{if} \ ~|I|=1  \ .
\end{aligned}\right.
\end{eqnarray*}
In the above, the symbol $\emptyset$ is used to denote the empty word  and we  define  $K_{\emptyset}:=K$.
	Given any $I\in\Sigma^*$ and $\varrho>0$, we let
	\[
	\Lambda_{\varrho}(I):=\left\{J\in\Sigma^*:IJ\in\Lambda_{\varrho\,|K_{I^-}|}\right\}.
	\]
	Let $C_4>1$ be the constant appearing in (\ref{p4}) and let
    \begin{eqnarray}\label{defofvarrho0}
        \varrho_0:=C_4^{-1}\cdot\min_{1\leq i\leq m}|K_i|\,.
    \end{eqnarray}
    Then in view of (\ref{p4}), given any $0<\varrho<\varrho_0$, it is easily verified that
    \begin{eqnarray}\label{cunldisjointuni}
        \Lambda_{\varrho}(I)\neq\emptyset\ \ \ \  \& \ \ \ \  [I]=\bigsqcup_{J\in\Lambda_{\varrho}(I)}[IJ]\ \ \ \ \ \ \ \ \forall\,I\in\Sigma^*\,,
    \end{eqnarray}
     where  we use the symbol `$\sqcup$' to denote a disjoint union.
Furthermore, denote
\[
M_{\varrho}(I):=\sup\{|J|:J\in\Lambda_{\varrho}(I)\}\,.
\]
Then by \eqref{p3'} and \eqref{p4}, we have
\begin{eqnarray}\label{supmirhoi}
    \sup_{I\in\Sigma^*}M_{\varrho}(I)<+\infty\,.
\end{eqnarray}

 \subsection{Proof of Proposition \ref{Prop3.1}: part~(i)} \label{onedim}
With the above notation in mind, we start by proving the following statement regarding the distance between points in $\R^d$ and cylinder sets within the self-conformal set $K$. It is essential for proving part (i) of Proposition \ref{Prop3.1}.  Throughout, let $\varrho_0>0$ be as in \eqref{defofvarrho0}.

\begin{lemma}\label{lemdistxk}
    Under the setting of Proposition \ref{Prop3.1}, there exists       $\varrho\in(0,\varrho_0)$ that satisfies the following statement: given any $I\in\Sigma^*$ and any $\x\in\R^d$, there exists $J\in\Lambda_{\varrho}(I)$ such that $$\bd(\x,K_{IJ}) \; >  \: \varrho \, |K_{I^-}|  \, . $$
\end{lemma}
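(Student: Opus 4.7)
The plan is to exploit the fact that $K$ is not a singleton in order to locate a point $\y \in K_I$ which is a definite multiple of $|K_{I^-}|$ away from $\x$, and then to pick a sub-cylinder $K_{IJ}$ containing $\y$ whose diameter is a sufficiently small multiple of $|K_{I^-}|$; the triangle inequality will then give the required lower bound on $\bd(\x,K_{IJ})$.

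To begin, fix $\x_1,\x_2\in K$ with $|\x_1-\x_2|=|K|$, which is strictly positive since $K$ is not a singleton. For any $I\in\Sigma^*$, the images $\varphi_I(\x_1),\varphi_I(\x_2)\in K_I$ are separated by at least $C_2^{-1}|K|\,\|\varphi_I'\|$ in view of \eqref{p2}, so for any $\x\in\R^d$ the triangle inequality forces at least one of them, which I shall call $\y$, to satisfy
\[
|\x-\y| \;\geq\; \tfrac{1}{2}\,C_2^{-1}\,|K|\,\|\varphi_I'\|.
\]

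Next I will show that $\|\varphi_I'\|\geq c\,|K_{I^-}|$ for some uniform constant $c>0$ depending only on the IFS. For $|I|\geq 2$, writing $I=I^-i_n$ and using the chain rule $|\varphi_I'(\z)|=|\varphi_{I^-}'(\varphi_{i_n}(\z))|\cdot|\varphi_{i_n}'(\z)|$ together with the distortion comparison \eqref{sv89}, the uniform positive lower bound on $\inf_\Omega|\varphi_{i_n}'|$ guaranteed by \eqref{varphi'contr}, and the estimate \eqref{p3}, one obtains $\|\varphi_I'\|\gtrsim \|\varphi_{I^-}'\|\gtrsim |K_{I^-}|$. For $|I|=1$ one has $|K_{I^-}|=|K|$, a constant, and the same inequality is automatic since $\min_{1\leq j\leq m}\|\varphi_j'\|>0$. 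Combining this with the preceding display produces a constant $c''>0$, independent of $I$ and $\x$, such that $|\x-\y|\geq c''\,|K_{I^-}|$.

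Finally, choose any $\varrho\in(0,\varrho_0)$ satisfying $\varrho<c''/2$. By \eqref{cunldisjointuni} the point $\y\in K_I$ lies in $K_{IJ}$ for some $J\in\Lambda_\varrho(I)$, and by definition of $\Lambda_\varrho(I)$ such a sub-cylinder obeys $|K_{IJ}|<\varrho\,|K_{I^-}|$. Since $\y\in K_{IJ}$ gives $\bd(\x,K_{IJ})\geq |\x-\y|-|K_{IJ}|$, I conclude
\[
\bd(\x,K_{IJ}) \;>\; c''\,|K_{I^-}| \,-\, \varrho\,|K_{I^-}| \;>\; \varrho\,|K_{I^-}|,
\]
as required. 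The only real work is careful bookkeeping of the various bounded-distortion constants from Section~\ref{cifs}; the argument uses no tool beyond the estimates \eqref{sv89}--\eqref{p4} and the partition identity \eqref{cunldisjointuni}.
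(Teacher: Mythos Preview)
Your argument is correct. Picking two extremal points $\x_1,\x_2\in K$, pushing them forward by $\varphi_I$, and using the bi-Lipschitz estimate \eqref{p2} to bound $|\varphi_I(\x_1)-\varphi_I(\x_2)|$ from below is a clean way to produce a point of $K_I$ that is $\gtrsim \|\varphi_I'\|$ away from any given $\x$; the remaining bookkeeping (passing from $\|\varphi_I'\|$ to $|K_{I^-}|$ via \eqref{p3}--\eqref{p4}, then choosing $J\in\Lambda_\varrho(I)$) is exactly right. One minor remark: \eqref{p2} is stated for points of the open set $U$, while $\x_1,\x_2\in K\subseteq\overline{U}$; the estimate extends to $\overline{U}$ by continuity of $\varphi_I'$ on $\Omega\supseteq\overline{U}$, so this is harmless, but worth a word.

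The paper proceeds differently. It introduces the rescaled maps $\psi_I\circ\varphi_I$ and proves the key positivity
\[
\delta=\inf_{I\in\Sigma^*}\inf_{\x\in\R^d}\sup_{\z\in K}|\x-\psi_I\circ\varphi_I(\z)|>0
\]
by contradiction: if $\delta=0$, a normal-families argument (Lemma~\ref{concon}) extracts a limiting conformal map $f$ with $f(K)$ a single point, contradicting that $K$ is not a singleton. Your two-point argument bypasses this compactness step entirely and is more elementary for this lemma. The paper's rescaling-plus-compactness template, however, is what it later reuses for Lemmas~\ref{3.2} and~\ref{lem3.3}, where one must separate $K_I$ not from a point but from a sphere or an affine slice; there the simple two-point trick no longer suffices, and the normal-families machinery becomes essential.
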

\begin{proof}
    Fix  $\z_0\in K$. For any $I\in\Sigma^*$, denote
    \begin{eqnarray}\label{psi}
			\psi_I(\z):=\|\varphi_I'\|^{-1}(\z-\varphi_I(\z_0))+\z_0,\ \ \ \ \forall\, \z\in\R^d.
		\end{eqnarray}
 We  claim that
  \begin{eqnarray}\label{distlow}
      \delta:=\inf_{I\in\Sigma^*}\inf_{\x\in\R^d}\sup_{\z\in K}|\x-\psi_I\circ\varphi_I(\z)|>0.
  \end{eqnarray}
  Indeed, if (\ref{distlow}) is not true, then there exist $\{I_k\}_{k\geq1}\subseteq\Sigma^*$ and $\{\x_k\}_{k\geq1}\subseteq\R^d$ such that
  \begin{eqnarray}\label{supto0}
      \sup_{\z\in K}|\x_k-\psi_{I_k}\circ\varphi_{I_k}(\z)|\to0   \quad \text{as}   \quad k\to\infty.
  \end{eqnarray}
Let $U\subseteq\R^d$ be the bounded connected open set associated with \eqref{p2}.  In view of (\ref{p2}),  it follows that
\begin{eqnarray}\label{biblip}
    C_2^{-1}|\x-\y|\leq|\psi_I\circ\varphi_I(\x)-\psi_I\circ\varphi_I(\y)|\leq C_2|\x-\y| \qquad (\x,\y\in U,~ I\in\Sigma^*)\,.
\end{eqnarray}
		Moreover, since $\z_0$ is a fixed point of $\psi_I\circ\varphi_I$, then $\{\psi_I\circ\varphi_I\}_{I\in\Sigma^{*}}$ is uniformly bounded on $U$. Therefore, according to  Lemma~\ref{concon} and by passing to a subsequence if necessary, we may assume that there exists  a conformal map $f:U\to\R^d$  such that $\psi_{I_k}\circ\varphi_{I_k} \to f$ uniformly on $U$. 	With this and (\ref{supto0}) in mind, we have
  \begin{eqnarray}\label{supfto0}
      \sup_{\z\in K}|\x_k-f(\z)|\to0   \quad \text{as}   \quad k\to\infty.
  \end{eqnarray}
  It follows that the sequence $\{\x_k\}_{k\geq1}$ is bounded. Then, by passing to a subsequence, we may assume that $\x_k\to\x$ for some $\x\in\R^d$.  This together with (\ref{supfto0}) implies that $f(K)=\{\x\}$.  Now,  with this and the fact that  $f$ is conformal in mind, we conclude that $K$ is a singleton, which contradicts our setting. This proves (\ref{distlow}).

Now fix an arbitrary $\x\in\R^d$ and  $I\in\Sigma^*$. Then by the compactness of $K$ and the definition of $\delta$, there exists $\z\in K$ such that
		\begin{eqnarray}
			|\x-\varphi_I(\z)|&=&\|\varphi_I'\|\cdot |\psi_I(\x)-\psi_I\circ\varphi_I(\z)|\nonumber\\[4pt]
	&\geq&\delta\,\|\varphi_I'\|\nonumber\\[4pt]
			&\geq& C_3^{-1}\delta\,|K_I|\label{varphii'geqki}\\[4pt]
			&\geq& C_3^{-1}C_4^{-1}\delta\cdot\big(\min_{1\leq i\leq m}|K_i|\big)\cdot|K_{I^-}|,\label{distxz}
		\end{eqnarray}
		where inequality \eqref{varphii'geqki} follows  from (\ref{p3}) and \eqref{distxz} follows  from (\ref{p4}). Let $\varrho>0$ be a small number that will be determined later. For any $j_1j_2\cdot\cdot\cdot\in\pi^{-1}(\z)$, there exists unique $n_0\in\N$ such that $J=j_1\cdot\cdot\cdot j_{n_0}\in\Lambda_{\varrho}(I)$. By (\ref{distxz}), we have
		\begin{eqnarray}\label{dist2xz}
			\bd(\x,K_{IJ})&\geq& |\x-\varphi_I(\z)|-|K_{Ij_1\cdot\cdot\cdot j_{n_0}}|\nonumber\\[4pt]
			&>& |\x-\varphi_I(\z)|-\varrho\, |K_{I^{-}}|\nonumber\\
			&\geq&\left(C_3^{-1}C_4^{-1}\delta\cdot\big(\min_{1\leq i\leq m}|K_i|\big)-\varrho\right)\cdot |K_{I^-}|\,.
		\end{eqnarray}
		Now choose  $\varrho\in(0,\varrho_0)$ to be a sufficiently  small  number (independent of the choices of $\x\in\R^d$ and $I\in\Sigma^*$) such that
		\[
		C_3^{-1}C_4^{-1}\delta\cdot\big(\min_{1\leq i\leq m}|K_i|\big) \ \geq \  2\, \varrho.
		\]
		Then in view of  the inequality (\ref{dist2xz}), we obtain the desired lower bound $$\bd(\x,K_{IJ})>\varrho \,|K_{I^-}|\,.$$
        This completes the proof of Lemma~\ref{lemdistxk}.
\end{proof}



We are now in the position to establish part (i) of Proposition~\ref{Prop3.1}.   Let $\varrho\in(0,\varrho_0)$ be as  in Lemma~\ref{lemdistxk}. To prove \eqref{Crs}, we first show that there exist $s>0$ and $N\in\N$ such that
\begin{equation}\label{mubxrhok}
    \mu(B(\x,\varrho^k))\leq \varrho^{ks}\qquad(\x\in\R^d,\ k\geq N)\,.
\end{equation}
Throughout, fix an arbitrary $\x\in\R^d$ and let
  \begin{eqnarray*}
      \cA_1&
  :=\left\{I\in\Lambda_{\varrho}:K_I\cap B(\x,\varrho)\neq\emptyset\right\}\,.
  \end{eqnarray*}
  For any $I\in\Sigma^*$ and any integer $k\geq1$, let
  \begin{eqnarray*}
      \cA_k(I)&
      :=&\left\{J\in\Lambda_{\varrho}(I):K_{IJ}\cap B(\x,\varrho^k)\neq\emptyset\right\},\\[0.4em]
E_k&:=&\bigcup_{I_1\in\cA_1}\bigcup_{I_2\in\cA_{2}(I_1)}\cdot\cdot\cdot\bigcup_{I_k\in\cA_k(I_1I_2\cdot\cdot\cdot I_{k-1})}[I_1I_2\cdot\cdot\cdot I_k].
  \end{eqnarray*}
		Roughly speaking, $E_k$ is the union of those cylinder sets $[I]$ in the symbolic space such that the associated cylinder sets $K_I$ within the self-conformal set satisfy
        \[
        |K_I|\asymp\varrho^k\qquad  {\rm and } \qquad K_I\cap B(\x,\varrho^k)\neq\emptyset.
        \]
        In view of \eqref{cunldisjointuni}, it can be verified that the unions in the definition of $E_k$ are  disjoint. Also, it is easily seen that $E_{k+1}\subseteq E_k$ for any $k\geq1$. Now given any $k\in\N$ and any
        \[
        I_1\in\cA_1,\,I_2\in\cA_2(I_1),\,...\,,\,I_k\in\cA_k(I_1I_2\cdot\cdot\cdot I_{k-1}),
        \]
        on applying Lemma~\ref{lemdistxk} to $I=I_1I_2\cdot\cdot\cdot I_k$ and $\x$, we obtain that there exists $J\in\Lambda_{\varrho}(I_1I_2\cdot\cdot\cdot I_k)$ such that
        \begin{eqnarray}\label{dxkgeqvarrhoki}
            \bd(\x,K_{I_1\cdot\cdot\cdot I_kJ})>\varrho\,|K_{I_1I_2\cdot\cdot\cdot I_k^{-}}|\,.
        \end{eqnarray}
        It can be verified that $|K_{I_1I_2\cdot\cdot\cdot I_k^{-}}|\geq\varrho^{k}$ by the choices of $I_1,I_2,...,I_k$. Then by \eqref{dxkgeqvarrhoki}, we have that $$\bd(\x,K_{I_1\cdot\cdot\cdot I_kJ})>\varrho^{k+1}$$
        which implies that
	$$[I_1I_2\cdot\cdot\cdot I_kJ]\cap E_{k+1}=\emptyset.$$ The upshot is that \begin{eqnarray}\label{intersectionball}
	\ubar{\mu}\big([I_1I_2\cdot\cdot\cdot I_k]\cap E_{k+1}\big)\leq\ubar{\mu}([I_1I_2\cdot\cdot\cdot I_k])-\ubar{\mu}([I_1I_2\cdot\cdot\cdot I_kJ])\,.
		\end{eqnarray}
		In view of  the fact that $\ubar{\mu}$ is doubling and that the sequence $\{M_{\varrho}(I)\}_{I\in\Sigma^*}$ is bounded (see \eqref{supmirhoi}), there exists $\eta\in(0,1)$ such that
        $$\ubar{\mu}([IJ])\geq\eta\cdot\ubar{\mu}([I]),\ \ \ \ \ \ \ \ \forall\,I\in\Sigma^*,\,\,\,\forall\,J\in\Lambda_{\varrho}(I).$$
        This together with (\ref{intersectionball}) implies that
		\begin{eqnarray}\label{mui1ikcaples}
			\ubar{\mu}\big([I_1I_2\cdot\cdot\cdot I_k]\cap E_{k+1}\big)\leq(1-\eta)~\ubar{\mu}([I_1I_2\cdot\cdot\cdot I_k]).
		\end{eqnarray}
	   On combining \eqref{mui1ikcaples} with the fact that $E_{k+1}\subseteq E_k$, we have that  for any $k \ge 1$
		\begin{eqnarray}
			\ubar{\mu}(E_{k+1})&=&\mu(E_k\cap E_{k+1})\nonumber\\[3pt]
&=&\sum_{I_1\in\cA_1}\sum_{I_2\in\cA_2(I_1)}\cdot\cdot\cdot\sum_{I_k\in\cA_{k}(I_1I_2\cdot\cdot\cdot I_{k-1})}\ubar{\mu}([I_1I_2\cdot\cdot\cdot I_k]\cap E_{k+1})\nonumber\\[3pt]
			&\leq&(1-\eta)\cdot\sum_{I_1\in\cA_1}\sum_{I_2\in\cA_2(I_1)}\cdot\cdot\cdot\sum_{I_k\in\cA_{k}(I_1I_2\cdot\cdot\cdot I_{k-1})}\ubar{\mu}([I_1I_2\cdot\cdot\cdot I_k])\nonumber\\[3pt]
			&=&(1-\eta)\cdot\ubar{\mu}(E_k)    \, .\label{Ek+1}
		\end{eqnarray}
		Then, by iterating  inequality (\ref{Ek+1}), we obtain that
        \begin{eqnarray}\label{ubarmuek}
            \ubar{\mu}(E_k)\leq(1-\eta)^{k-1}  \qquad \forall\,k\geq1.
        \end{eqnarray}
         Furthermore, by the definition of $E_k$, we have
		\[
		\pi^{-1}\big(B(\x,\varrho^k)\cap K\big)\subseteq E_k,\,\,\,\,\,\,\,\,\forall\, k\geq1\,.
		\]
		This together with (\ref{ubarmuek}) and the fact that $\mu=\ubar{\mu}\circ\pi^{-1}$ implies that
		\begin{eqnarray}
			\mu\big(B(\x,\varrho^k)\big)&=&\ubar{\mu}\left(\pi^{-1}\big(K\cap B(\x,\varrho^k)\big)\right)\nonumber\\[2ex]
			&\leq&\nonumber\ubar{\mu}(E_k)\\[3pt]
			&\leq&(1-\eta)^{k-1}\nonumber\\[3pt]
			&=&\varrho^{k\cdot\frac{(k-1)\log(1-\eta)}{k\log\varrho}}\nonumber\\
            &\leq&\varrho^{k\cdot\frac{\log(1-\eta)}{2\log\varrho}}    \qquad  \forall~ k\geq2 \, . \label{rhok}
		\end{eqnarray}
        This proves \eqref{mubxrhok} with $N=2$  and
       \begin{equation}  \label{junjieW}
           s:=\frac{\log(1-\eta)}{2\log\varrho}  \, .
       \end{equation}

    To complete the proof, we need to consider  the $\mu$-measure of balls  $B(\x,r))$ with arbitrary radius $r>0$.  For this we consider the following two cases.
        \begin{itemize}
            \item[$\circ$]  If $r\leq\varrho^2$, then there exists unique integer $k\geq2$ such that $\varrho^{k+1}<r\leq\varrho^k$. From this and the inequality $(\ref{rhok})$, we have
    \begin{eqnarray}\label{rsmall}
            \mu\big(B(\x,r)\big)~\leq~\left(\frac{r}{\varrho}\right)^{\frac{\log(1-\eta)}{2\log\varrho}}.
        \end{eqnarray}

\item[$\circ$]    If $r>\varrho^2$, then
        \begin{eqnarray}\label{lrho2}
            \mu\big(B(\x,r)\big)~\leq~ 1~<\left(\frac{r}{\varrho^2}\right)^{\frac{\log(1-\eta)}{2\log\varrho}}.
        \end{eqnarray}
        \end{itemize}
        On combining (\ref{rsmall}) and (\ref{lrho2}), we obtain the desired inequality (\ref{Crs}) with $s$ given by \eqref{junjieW} and
        $$
        C=\varrho^{-\log(1-\eta)/\log\varrho} \, .
        $$
        This completes the proof of part (i) of Proposition \ref{Prop3.1}.

	\subsection{Proof of Proposition \ref{Prop3.1}: \!part (ii) }\label{hidim}   We first establish the ``Furthermore'' part of the statement.  As we  shall see in Section~\ref{junjieZ},  this together with the  rigidity statement Proposition~\ref{rigidity}  implies  \eqref{annpro}  in essentially all situations.


 \subsubsection{Proof of part (a) of Proposition \ref{Prop3.1} (ii)}\label{pfofpaofii}
Let $d=1$. Then,  for any $x\in\R$, $r>0$ and $\varrho>0$, we have $(\partial B(x,r))_{\varrho}=B(x-r,\varrho)\cup B(x+r,\varrho)$.  Thus,
part (i) of Proposition~\ref{Prop3.1} implies that
$$
\mu \big( (\partial B(x,r))_{\varrho} \big)  \leq 2 C\cdot\varrho^s  \, .
$$
This is precisely  the desired inequality  \eqref{annpro}  for all $x\in\R$, all $r > 0$ and all $\varrho>0$.

Without loss of generality, we assume that $d\geq2$. Then, in view of the definition of $\ell_K$ and Proposition~\ref{rigidity},  the statement $\ell_K=d$ is equivalent to that $K$ is not contained in any $(d-1)$-dimensional $C^1$ submanifold.

We start the $d \ge 2$ proof with establishing the following lemma concerning the tangent plane of a geometric sphere. It is required in estimating the lower bound of the distance between cylinder   sets and the boundary of balls   (namely in proving Lemma~\ref{3.2}).

	\begin{lemma}\label{lem3.1}
		Let $d \ge 2$, $\ell\in\{1,...,d-1\}$ and $S\subseteq\R^d$ be a $\ell$-dimensional geometric sphere with radius $R$.  Then  then for any $\varrho\in(0,1)$, any $r\in(0,\varrho R]$ and any $\x\in S$, we have
		\begin{eqnarray*}
			S\cap B(\x,r)\subseteq (\x+T_{\x}S)_{\varrho\, r}\,.
		\end{eqnarray*}
	\end{lemma}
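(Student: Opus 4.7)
The plan is to reduce to a direct algebraic computation inside the $(\ell+1)$-dimensional affine subspace that contains $S$, and then exploit the fact that a point on a round sphere that is close to $\x$ must deviate quadratically from the tangent hyperplane of the sphere at $\x$.

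First I would set up coordinates. A $\ell$-dimensional geometric sphere in $\R^d$ is, by definition, of the form $S=\{\y\in V:|\y-\bfc|=R\}$ for some $(\ell+1)$-dimensional affine subspace $V\subseteq\R^d$ and some center $\bfc\in V$. Translating by $-\bfc$, I may assume $\bfc=0$, so that $V$ is an $(\ell+1)$-dimensional linear subspace of $\R^d$ and $|\y|=R$ for every $\y\in S$. Under this identification the tangent space at $\x\in S$ is simply
\[
T_{\x}S=\{v\in V:\langle v,\x\rangle=0\},
\]
and the distance from any $\y\in\R^d$ to the affine plane $\x+T_{\x}S$ equals $|\langle \y-\x,\x\rangle|/R$ provided $\y-\x\in V$ (which is automatic when $\y\in S$).

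Next, I would fix $\x\in S$ and $\y\in S\cap B(\x,r)$, and decompose $\y-\x$ along and orthogonal to $\x$ inside $V$:
\[
\y-\x=t\,\tfrac{\x}{R}+\w,\qquad \w\in T_{\x}S,\quad t\in\R.
\]
The constraint $|\y|^2=R^2$ then gives $(R+t)^2+|\w|^2=R^2$, i.e.\ $|\w|^2=-2Rt-t^2$, so that
\[
|\y-\x|^{2}=t^{2}+|\w|^{2}=-2Rt.
\]
In particular $t\le 0$ and the distance from $\y$ to $\x+T_{\x}S$ equals $|t|=|\y-\x|^{2}/(2R)$.

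Finally, I would substitute the hypotheses $|\y-\x|\le r$ and $r\le\varrho R$ to conclude
\[
\dist\bigl(\y,\x+T_{\x}S\bigr)=\frac{|\y-\x|^{2}}{2R}\le\frac{r^{2}}{2R}\le\frac{\varrho r}{2}<\varrho r,
\]
which proves $\y\in(\x+T_{\x}S)_{\varrho r}$ as required. There is no genuine obstacle; the only point that deserves a word of care is justifying that $T_{\x}S$ may be identified with the orthogonal complement of $\x$ inside $V-\bfc$, which follows from the standard description of tangent spaces of level sets of $\y\mapsto|\y-\bfc|^{2}$ restricted to $V$.
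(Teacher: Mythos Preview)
Your proof is correct and follows essentially the same approach as the paper: both reduce, via a suitable choice of coordinates, to the elementary observation that the normal displacement of a point $\y\in S$ from the tangent plane at $\x$ equals $|\y-\x|^2/(2R)$ (the paper rationalises $R-\sqrt{R^2-y_2^2-\cdots-y_{\ell+1}^2}$ to reach the same quadratic bound). Your orthogonal-decomposition presentation is slightly more coordinate-free and even yields the sharper constant $\varrho r/2$, but the underlying argument is identical.
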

 \begin{proof}
     Let $S$ be a $\ell$-dimensional geometric sphere with radius $R$ and let $\x\in S$. After applying  an isometry if necessary, we may assume that
     \begin{eqnarray*}
         S=\left\{\y=(y_1,...,y_{\ell+1},0,...,0)\in\R^d:y_1^2+y_2^2+\cdot\cdot\cdot+y_{\ell+1}^2=R^2\right\}
     \end{eqnarray*}
     and that $\x=(R,0,...,0)$. Then
     \[
         \x+T_{\x}S=\left\{\y\in\R^d:y_1=R,\,y_{\ell+2}=\cdot\cdot\cdot=y_d=0\right\}.
     \]
     Let $\varrho\in(0,1)$ and $r\in(0,\varrho R)$. For any $\y\in S\cap B(\x,r)$, a straightforward calculation yields that
     \begin{eqnarray*}
         \bd(\y,\x+T_{\x}S)&=&R-y_1
            \ =   \ R-\sqrt{R^2-y_2^2-\cdot\cdot\cdot-y_{\ell+1}^2}\\[2ex]
         &=&\frac{y_2^2+\cdot\cdot\cdot y_{\ell+1}^2}{R+\sqrt{R^2-y_2^2-\cdot\cdot\cdot-y_{\ell+1}^2}}
         \ \leq \  \frac{r^2}{R}
         \  \leq  \ \varrho\,r\,.
     \end{eqnarray*}
     This completes the proof.
 \end{proof}

 The following result can be viewed as an analogue of Lemma~\ref{lemdistxk}. Throughout, let $\varrho_0>0$ be as in \eqref{defofvarrho0}.

	\begin{lemma}\label{3.2}
		Suppose that $K$ is not contained in  any $(d-1)$-dimensional $C^1$ submanifold, then there exists $\varrho\in(0,\varrho_0)$ that satisfies the following statement:  given any  $I\in\Sigma^*$ and any  ball $B\subseteq\R^d$, there exists $J\in\Lambda_{\varrho}(I)$ such that  $$\bd(K_{IJ},\partial B)>\varrho\,|K_{I^-}|\,.$$
	\end{lemma}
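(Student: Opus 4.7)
The plan is to mimic the proof of Lemma~\ref{lemdistxk}, replacing the role of a point by that of the sphere $\partial B$. Concretely, I aim to establish the scale-invariant lower bound
\[
\delta\,:=\,\inf_{I\in\Sigma^*}\,\inf_{B\subseteq\R^d\ \text{ball}}\,\sup_{\z\in K}\,\bd\!\left(\psi_I\circ\varphi_I(\z),\,\psi_I(\partial B)\right)\;>\;0,
\]
with $\psi_I$ as in \eqref{psi}. Once this is known, for any $I\in\Sigma^*$ and any ball $B$, the compactness of $K$ yields a $\z^{*}\in K$ realising the inner supremum, and the identity $\bd(\psi_I\circ\varphi_I(\z^*),\psi_I(\partial B))=\|\varphi_I'\|^{-1}\bd(\varphi_I(\z^*),\partial B)$ together with \eqref{p3} and \eqref{p4} produces $\bd(\varphi_I(\z^{*}),\partial B)\geq C_3^{-1}C_4^{-1}\delta\,(\min_{1\leq i\leq m}|K_i|)\,|K_{I^-}|$. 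Choosing any $j_1j_2\cdots\in\pi^{-1}(\z^{*})$ and the unique $n_0$ with $J:=j_1\cdots j_{n_0}\in\Lambda_{\varrho}(I)$, the triangle inequality together with $|K_{IJ}|<\varrho|K_{I^-}|$ then gives $\bd(K_{IJ},\partial B)>\varrho|K_{I^-}|$ provided $\varrho\in(0,\varrho_0)$ is chosen small enough (independently of $I$ and $B$), exactly as in the closing lines of the proof of Lemma~\ref{lemdistxk}.

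To prove $\delta>0$, I argue by contradiction: if $\delta=0$, there exist $I_k\in\Sigma^*$ and balls $B_k=B(\x_k,r_k)$ with $\sup_{\z\in K}\bd(\psi_{I_k}\circ\varphi_{I_k}(\z),\,\psi_{I_k}(\partial B_k))\to 0$. As in Lemma~\ref{lemdistxk}, the bi-Lipschitz bound \eqref{biblip} and the fact that $\z_0$ is fixed by each $\psi_I\circ\varphi_I$ make $\{\psi_{I_k}\circ\varphi_{I_k}\}$ uniformly bounded on $U$, so Lemma~\ref{concon} supplies (after passing to a subsequence) a conformal map $f:U\to\R^d$ with $\psi_{I_k}\circ\varphi_{I_k}\to f$ uniformly on $U$, while $\psi_{I_k}\circ\varphi_{I_k}(K)$ remains confined to a fixed bounded set. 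The image $\psi_{I_k}(\partial B_k)$ is the sphere $\partial B(\hat{\x}_k,\hat{r}_k)$ with $\hat{\x}_k:=\psi_{I_k}(\x_k)$ and $\hat{r}_k:=\|\varphi_{I_k}'\|^{-1}r_k$. I now split on the behaviour of $\hat{r}_k$. If a subsequence has $\hat{r}_k\to\hat{r}_\infty\in[0,+\infty)$, the proximity hypothesis forces $\{\hat{\x}_k\}$ to be bounded, so a further extraction gives $\hat{\x}_k\to\hat{\x}_\infty$ and $f(K)\subseteq\partial B(\hat{\x}_\infty,\hat{r}_\infty)$, either a single point or a $(d-1)$-dimensional geometric sphere. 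If instead $\hat{r}_k\to+\infty$, I take $\p_k$ to be the point on $\partial B(\hat{\x}_k,\hat{r}_k)$ nearest to $\z_0$ and extract a convergent subsequence of the associated inward unit normals; Lemma~\ref{lem3.1} then shows that $f(K)$ lies in the resulting limiting $(d-1)$-dimensional affine hyperplane. Either way $f(K)$ is contained in a $(d-1)$-dimensional $C^1$ submanifold $M\subseteq\R^d$, and since $f:U\to f(U)$ is a conformal diffeomorphism, $f^{-1}(M\cap f(U))$ is a $(d-1)$-dimensional $C^1$ submanifold containing $K$, contradicting the standing hypothesis.

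The main obstacle is the ``flattening'' case $\hat{r}_k\to+\infty$: one has to rigorously extract a limiting hyperplane even though both the centres $\hat{\x}_k$ and the radii $\hat{r}_k$ diverge. The key point is that the nearest-point projections $\p_k$ stay bounded (since they are $o(1)$-close to the bounded set containing $\psi_{I_k}\circ\varphi_{I_k}(K)$), so after a normal-vector compactness argument Lemma~\ref{lem3.1} can be applied on a fixed ball $B(\z_0,R)$ with $R<\hat{r}_k$ to quantify how rapidly the sphere approximates its tangent plane at $\p_k$; letting $k\to\infty$ then transfers the sphere-proximity of $\psi_{I_k}\circ\varphi_{I_k}(K)$ into genuine containment of $f(K)$ in a $(d-1)$-dimensional affine subspace.
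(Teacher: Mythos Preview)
Your proposal is correct and follows essentially the same route as the paper's proof: both reduce to the scale-invariant estimate $\delta>0$ via the normalizing maps $\psi_I$, extract a conformal limit $f$ by Lemma~\ref{concon}, and then split on whether the (rescaled) radii stay bounded (limit sphere) or diverge (limit hyperplane via Lemma~\ref{lem3.1}), concluding with the same triangle-inequality endgame as in Lemma~\ref{lemdistxk}. The only cosmetic differences are that the paper parametrises the spheres directly by $(\z_k,r_k)$ rather than by $(\hat{\x}_k,\hat{r}_k)=\psi_{I_k}(\x_k,r_k)$ and extracts the limiting hyperplane via compactness in $A(d,d-1)$ instead of your point-plus-normal argument.
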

	\begin{proof}
		We adapt the proof of Lemma~\ref{lemdistxk}. Fix  $\z_0\in K$. For any $I\in\Sigma^*$, define $\psi_I:\R^d\to\R^d$ as in (\ref{psi}).
		We first show  under the setting of  Lemma \ref{3.2} that
		\begin{eqnarray}\label{fm3.1}
			\delta:=\inf_{I\in\Sigma^*}\inf_{\z\in \R^d}~\inf_{r>0}~\sup_{\x\in K}~\bd(\psi_I\circ\varphi_I(\x),\partial B(\z,r))>0\,.
		\end{eqnarray}
		Suppose by contradiction that  (\ref{fm3.1}) is not true, then there exist $\{I_k\}_{k\geq1}\subseteq\Sigma^*$, $\{\z_k\}_{k\geq1}\subseteq\R^d$ and $\{r_k\}_{k\geq1}\subseteq(0,+\infty)$ such that
		\begin{eqnarray}\label{limit}
			\sup_{\x\in K}~\bd(\psi_{I_k}\circ\varphi_{I_k}(\x),\partial B(\z_k,r_k))\to 0\quad\text{as}\quad k\to\infty.
		\end{eqnarray}
Let $U\subseteq\R^d$ be the bounded connected open set associated with \eqref{p2}. In view of  (\ref{biblip}), the uniformly boundedness of $\{\psi_I\circ\varphi_I\}_{I\in\Sigma^*}$ and Lemma~\ref{concon},  we may assume that there exists  a conformal map $f:U\to\R^d$  such that $\psi_{I_k}\circ\varphi_{I_k} \to f$ uniformly on $U$. 		Then it follows from (\ref{limit}) that
  \begin{eqnarray}\label{limit2}
      \sup_{x\in K}~\bd(f(\x),\partial B(\z_k,r_k))\to0\quad\text{as}\quad k\to\infty.
  \end{eqnarray}
  By passing to a subsequence if necessary, we may assume that
  \[
  \z_k\to\z_{\infty},\qquad r_k\to r_{\infty}\qquad\text{as}\qquad k\to\infty
  \]
  for some $\z_{\infty}\in\R^d\cup\{\infty\}$ and $r_{\infty}\in[0,+\infty]$.
  If $(\z_{\infty},r_{\infty})\in\R^d\times\{+\infty\}$  or $(\z_{\infty},r_{\infty})\in\{\infty\}\times[0,+\infty)$, then it can be verified that
  \begin{eqnarray}\label{contra}
      \limsup_{k\to\infty}~\sup_{\x\in K}~\bd(f(\x),\partial B(\z_k,r_k))=+\infty
  \end{eqnarray}
		which contradicts (\ref{limit2}). Therefore, for  \eqref{limit} to hold, it is necessary   $$(\z_{\infty},r_{\infty})\in\R^d\times[0,+\infty)   \quad {\rm or}  \quad \z_{\infty}=\infty   \ {\rm and} \   r_{\infty}=+\infty \, . $$

\noindent We deal with these two case separately

\begin{itemize}
    \item[$\circ$] Suppose that $\z_{\infty}\in\R^d$ and $r_{\infty}\in[0,+\infty)$. Then it follows from~(\ref{limit2}) that $$\sup_{\x\in K}\bd(f(\x),\partial B(\z_{\infty},r_{\infty}))=0\,,$$ which implies that $K\subseteq f^{-1}(\partial B(\z,r)\cap f(U))$. In turn, this means  that $K$ is a subset of  a $(d-1)$-dimensional $C^1$ manifold which contradicts the hypothesis  of the lemma.
    \medskip

    \item[$\circ$] Suppose that $\z_{\infty}=\infty$ and $r_{\infty}=+\infty$. In this case,
  \begin{eqnarray}\label{limtoinf}
      \bd(\z_k,f(K))\to +\infty\quad\text{and}\quad r_k\to+\infty,~~\text{as}~~k\to+\infty.
  \end{eqnarray}
 Then, on combining (\ref{limit2}) and (\ref{limtoinf}) with Lemma~\ref{lem3.1}, we can find a sequence $\{\x_k\}_{k\in\N}\subseteq\R^d$  that satisfies  the following two statements:
 \medskip

 \begin{itemize}
     \item[(i)] for each $k\geq1$, we have that  $\x_k\in\partial B(\z_k,r_k)$ .
     \medskip

     \item[(ii)]  for any $\epsilon>0$, there exists $N>0$ such that for all $k>N$, we have $\bd(\x_k,f(K))<\epsilon$ and
		\begin{eqnarray}\label{fksubseteqxktxk}
			f(K)\subseteq\big(\x_k+T_{\x_k}\partial B(\z_k,r_k)\big)_{\epsilon}\,.
		\end{eqnarray}
 \end{itemize}
		Note that the sequence $\{\x_k\}_{k\geq1}$ is bounded.  Thus,   the sequence $$\{\x_k+T_{\x_k}\partial B(\z_k,r_k)\}_{k\geq1}$$ is  bounded in $A(d,d-1)$, where $A(d,d-1)$ denotes the collection of all $(d-1)$-dimensional affine subspaces in $\R^d$ (see for example \cite[Section 3.16]{mattila1999}). With this and \eqref{fksubseteqxktxk} in mind,  it follows that there exists a $(d-1)$-dimensional affine subspace $V\subseteq\R^d$ such that $f(K)\subseteq (V)_{\epsilon}$ for any $\epsilon>0$. Letting $\epsilon$ approach to zero, we have $f(K)\subseteq V$ and thus $K\subseteq f^{-1}(V\cap f(U))$. It implies that $K$ is contained in a $(d-1)$-dimensional $C^1$ submanifold, which is a contradiction.
\end{itemize}

		The upshot of the above  is that inequality (\ref{fm3.1}) is true.
		 Let $B\subseteq\R^d$ be a ball and fix $I\in\Sigma^*$. Then by the definition of $\delta$, there exists $\mathbf{x}\in K$ such that
		\begin{eqnarray}
			\bd(\varphi_I(\mathbf{x}),\partial B)&=&\|\varphi_I'\|\cdot \bd(\psi_I\circ\varphi_I(\x),\psi_I(\partial B))\nonumber\\[4pt]
			&\geq&\delta\|\varphi_I'\|\nonumber\\[4pt]
			&\geq& C_3^{-1}\delta|K_I|\label{phinorm}\\[4pt]
			&\geq& C_3^{-1}C_4^{-1}\delta\cdot\big(\min_{1\leq i\leq m}|K_i|\big)\cdot|K_{I^-}|\,,\label{dist1}
		\end{eqnarray}
		where inequality \eqref{phinorm} follows from (\ref{p3}) and the  inequality \eqref{dist1} is a consequence of (\ref{p4}). Let $\varrho\in(0,\varrho_0)$ be a  small number that will be determined later. For any infinite word $j_1j_2\cdot\cdot\cdot\in\pi^{-1}(\x)$, there exists unique $n_0\in\N$ such that the finite word $J=j_1\cdot\cdot\cdot j_{n_0}\in\Lambda_{\varrho}(I)$. By  (\ref{dist1}) and the definition of $\Lambda_{\varrho}(I)$, we have
		\begin{eqnarray}\label{dist2'}
			\bd(K_{IJ},~\partial B)&\geq& \bd(\varphi_{I}(\x),\partial B)-|K_{IJ}|\nonumber\\[4pt]
			&>& \bd(\varphi_{I}(\x),\partial B)-\varrho |K_{I^{-}}|\nonumber\\[4pt]
			&\geq&\left(C_3^{-1}C_4^{-1}\delta\cdot\big(\min_{1\leq i\leq m}|K_i|\big)-\varrho\right)\cdot |K_{I^-}|\,.
		\end{eqnarray}
		Let $\varrho\in(0,\varrho_0)$ be sufficiently same so that
		\[
		C_3^{-1}C_4^{-1}\delta\cdot\big(\min_{1\leq i\leq m}|K_i|\big)-\varrho\geq\varrho.
		\]
		Then the inequality (\ref{dist2'}) yields the desired lower bound $\bd(K_{IJ},~\partial B)>\varrho\,|K_{I^-}|$.
	\end{proof}

		We are now in the position  to establish part (a) of Proposition \ref{Prop3.1} (ii) when $d \ge 2$.
        The  proof is an adaptation of the proof of Proposition \ref{Prop3.1} (i).
 Recall that under the setting of part (a) with $d\geq2$, the self-conformal set $K$ is not contained in any $d-1$  dimensional $C^1$ submanifold. This means that $K$ satisfies  the  hypothesis of Lemma~\ref{3.2}.  Let $\varrho>0$ be the   constant associated with that lemma.   Note that, to prove part (a),  it suffices to find $\delta>0$ and $N\in\N$ such that
 \begin{eqnarray}\label{muparbrhok}
     \mu((\partial B)_{\varrho^k})\leq \varrho^{k\delta}
 \end{eqnarray}
  for any  ball $B\subseteq\R^d$ and any $k\geq N$ (for arbitrary $r>0$ we simply follow the arguments used at the end of the proof of Proposition~\ref{Prop3.1}  (i)).  So with this in mind, fix a ball $B\subseteq\R^d$  and define
  \begin{eqnarray*}
      \cA_1
  :=\left\{I\in\Lambda_{\varrho}:K_I\cap (\partial B)_{\varrho}\neq\emptyset\right\}.
  \end{eqnarray*}
  For any $I\in\Sigma^*$ and positive integer $k\geq2$, define
  \begin{eqnarray*}
      \cA_k(I)
      &:=&\left\{J\in\Lambda_{\varrho}(I):K_{IJ}\cap(\partial B)_{\varrho^k}\neq\emptyset\right\},\\[0.4em]
E_k&:=&\bigcup_{I_1\in\cA_1}\bigcup_{I_2\in\cA_{2}(I_1)}\cdot\cdot\cdot\bigcup_{I_k\in\cA_k(I_1I_2\cdot\cdot\cdot I_{k-1})}[I_1I_2\cdot\cdot\cdot I_k]\,.
  \end{eqnarray*}
		Note that the above definitions of $\cA_1$, $\cA_k(I)$ and $E_k$ are slightly different from those appearing in the proof of Proposition \ref{Prop3.1} (i) even though we use the same symbols.
 In terms of \eqref{cunldisjointuni}, it can be verified that the  unions in the definition of $E_k$ are all disjoint. Also, it is easily seen that $E_{k+1}\subseteq E_k$ for any $k\geq1$. Now, by Lemma~\ref{3.2},
   we know  that for any $k\in\N$ and any
 \[
 I_1\in\cA_1,\,I_2\in\cA_2(I_1),\,...\,,\,I_k\in\cA_k(I_1I_2\cdot\cdot\cdot I_{k-1}),
 \] there exists $J\in\Lambda_{\varrho}(I_1I_2\cdot\cdot\cdot I_k)$ such that $$\bd(K_{I_1\cdot\cdot\cdot I_kJ},~\partial B)>\varrho\,|K_{I_1I_2\cdot\cdot\cdot I_k^{-}}|.$$ Furthermore, we have that $|K_{I_1I_2\cdot\cdot\cdot I_k^{-}}|\geq\varrho^{k}$ by the choices of $I_1,I_2,...,I_k$. Then
  $$\bd(K_{I_1\cdot\cdot\cdot I_kJ},~\partial B)>\varrho^{k+1}$$ and thus
  $J\notin \cA_{k+1}(I_1\cdot\cdot\cdot I_k)$, which in turn implies that
		 $$[I_1I_2\cdot\cdot\cdot I_kJ]\cap E_{k+1}=\emptyset\,.$$ The above discussion shows that
	\begin{eqnarray}\label{intersection}
	\ubar{\mu}\big([I_1I_2\cdot\cdot\cdot I_k]\cap E_{k+1}\big)\leq\ubar{\mu}([I_1I_2\cdot\cdot\cdot I_k])-\ubar{\mu}([I_1I_2\cdot\cdot\cdot I_kJ])\,.
		\end{eqnarray}
		Now since $\ubar{\mu}$ is doubling and the sequence $\{M_{\varrho}(I)\}_{I\in\Sigma^*}$ is  bounded (see \eqref{supmirhoi}),  there exists $\eta\in(0,1)$ such that $$\ubar{\mu}([IJ])\geq\eta\cdot\ubar{\mu}([I]),\,\,\,\,\,\,\,\,\forall\,I\in\Sigma^*,\,\forall\,J\in\Lambda_{\varrho}(I).$$ With this and (\ref{intersection}) in mind,  it follows that
		\begin{eqnarray*}
			\ubar{\mu}\big([I_1I_2\cdot\cdot\cdot I_k]\cap E_{k+1}\big)\leq(1-\eta)~\ubar{\mu}([I_1I_2\cdot\cdot\cdot I_k]).
		\end{eqnarray*}
		Since $E_{k+1}\subseteq E_k$,  the upshot of the above is that
		\begin{eqnarray*}
			\ubar{\mu}(E_{k+1})&=&\mu(E_k\cap E_{k+1})\\[3pt]
&=&\sum_{I_1\in\cA_1}\sum_{I_2\in\cA_2(I_1)}\cdot\cdot\cdot\sum_{I_k\in\cA_{k}(I_1I_2\cdot\cdot\cdot I_{k-1})}\ubar{\mu}([I_1I_2\cdot\cdot\cdot I_k]\cap E_{k+1})\\[3pt]
			&\leq&(1-\eta)\cdot\sum_{I_1\in\cA_1}\sum_{I_2\in\cA_2(I_1)}\cdot\cdot\cdot\sum_{I_k\in\cA_{k}(I_1I_2\cdot\cdot\cdot I_{k-1})}\ubar{\mu}([I_1I_2\cdot\cdot\cdot I_k])\\[3pt]
			&=&(1-\eta)\cdot\ubar{\mu}(E_k).
		\end{eqnarray*}
		Therefore, for any $k\geq1$,  $$\ubar{\mu}(E_k)\leq(1-\eta)^{k-1}\,.$$ Combining this inequality with the following easily verified inclusion
		\[
		\pi^{-1}\big(K\cap(\partial B)_{\varrho^k}\big)\subseteq E_k,   \qquad \forall\, k\geq1,
		\]
		we obtain that
		\begin{eqnarray*}
			\mu\big((\partial B)_{\varrho^k}\big)&=&\ubar{\mu}\left(\pi^{-1}\big(K\cap(\partial B)_{\varrho^k}\big)\right)\\[3pt]
			&\leq&\ubar{\mu}(E_k)\\[3pt]
			&\leq&(1-\eta)^{k-1}\\[3pt]
			&=&\varrho^{k\cdot\frac{(k-1)\log(1-\eta)}{k\log\varrho}}\\
            &\leq&\varrho^{k\cdot\frac{\log(1-\eta)}{2\log\varrho}}
            ,\,\,\,\,\forall~k\geq2.
		\end{eqnarray*}
       This shows \eqref{muparbrhok} with $\delta=\log(1-\eta)/(2\log\varrho)$ and $N=2$, and thus the proof of part (a) of Proposition \ref{Prop3.1} (ii) is complete.

	 \subsubsection{Proof of part (b) of Proposition \ref{Prop3.1} (ii)} Under the setting of  part (b), the self-conformal set $K$ is contained in a $\ell_K$ dimensional affine space or $\ell_K$ dimensional geometric sphere, and   not contained in any $(\ell_K-1)$-dimensional $C^1$ submanifold.

       The following result can be viewed as an analogue of Lemma~\ref{3.2}. Throughout, we define $\bd(A,\emptyset):=+\infty$ for a non-empty subset $A\subseteq\R^d$ and let $\varrho_0>0$ be as in \eqref{defofvarrho0}.
	\begin{lemma}\label{lem3.3}
		Let $d\geq2$ and let $\ell\in\{1,...,d-1\}$. Suppose that $K\subseteq S$ where $S$ is a $\ell$-dimensional geometric sphere or a $\ell$-dimensional affine hyperplane in $\R^d$, and $K$ is not contained in any $(\ell-1)$-dimensional $C^1$ submanifold. Then there exists $\varrho\in(0,\varrho_0)$ that satisfies the following statement: given any $I\in\Sigma^*$ and any ball $B\subseteq\R^d$ centered at $K$, there exists $J\in\Lambda_{\varrho}(I)$ such that
        \begin{eqnarray}\label{dkijparbsge}
            \bd(K_{IJ},(\partial B)\cap S)>\varrho\,|K_{I^-}|\,.
        \end{eqnarray}
	\end{lemma}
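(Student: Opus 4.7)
\textbf{Proof plan for Lemma \ref{lem3.3}.} The plan is to mimic the contradiction/normal-family argument from Lemma \ref{3.2}, rescaling by the similitudes $\psi_I(\z):=\|\varphi_I'\|^{-1}(\z-\varphi_I(\z_0))+\z_0$ so that the cylinders $\varphi_I(K)$ have uniformly comparable diameters. The analogue of the quantity to bound below is
\[
\delta \, :=\, \inf_{I\in\Sigma^*}\ \inf_{\substack{\z\in K,\,r>0\\ \partial B(\z,r)\cap S\neq\emptyset}}\ \sup_{\x\in K}\ \bd\!\left(\psi_I\circ\varphi_I(\x),\,\psi_I\big(\partial B(\z,r)\cap S\big)\right).
\]
The same bi-Lipschitz estimates \eqref{biblip} and Lemma \ref{concon} give, along any subsequence, a limit $\psi_{I_k}\circ\varphi_{I_k}\to f$ with $f:U\to\R^d$ conformal; if $\delta=0$ we could choose $I_k,\z_k,r_k$ so that the corresponding $\sup$ tends to $0$. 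Once $\delta>0$ is established, the final step is a verbatim copy of the end of the proof of Lemma \ref{3.2}: pick $\x\in K$ realising the supremum, descend to a $J\in\Lambda_\varrho(I)$ whose cylinder $K_{IJ}$ contains $\varphi_I(\x)$ up to $\varrho|K_{I^-}|$, and take $\varrho$ small enough so that $C_3^{-1}C_4^{-1}\delta\cdot\min_i|K_i|-\varrho\geq\varrho$, giving \eqref{dkijparbsge}.

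The crux is therefore showing $\delta>0$. Suppose not and write $\tilde{\z}_k=\psi_{I_k}(\z_k)$, $\tilde r_k=r_k/\|\varphi_{I_k}'\|$, $\tilde S_k=\psi_{I_k}(S)$ (still a sphere or hyperplane of dimension $\ell$, since $\psi_{I_k}$ is a similitude), and $\tilde\Sigma_k:=\partial B(\tilde\z_k,\tilde r_k)\cap\tilde S_k$. The hypothesis forces
\[
\sup_{\x\in K}\,\bd\!\left(f(\x),\tilde\Sigma_k\right)\ \longrightarrow\ 0.
\]
Since $f(K)$ is compact, this immediately rules out $\tilde r_k\to\infty$ with $\{\tilde\z_k\}$ bounded (the sphere escapes to infinity), and also rules out $\tilde\z_k\to\infty$ with $\tilde r_k$ bounded. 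Thus, after passing to subsequences, we may assume that the quadruple $(\tilde\z_k,\tilde r_k,\tilde S_k,\tilde\Sigma_k)$ has an honest limit: $\partial B(\tilde\z_k,\tilde r_k)$ converges (locally near $f(K)$) to either a sphere $\Sigma_{\partial B}$ of finite radius or a hyperplane (the $\tilde r_k\to\infty$ with $\tilde \z_k\to\infty$ case, in a coordinated way), and likewise $\tilde S_k$ converges to a sphere or hyperplane $\tilde S_\infty$.

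Having identified these limits, the key geometric fact is that $\partial B(\tilde\z_k,\tilde r_k)$ and $\tilde S_k$ are \emph{distinct} codimension-$1$-in-$\R^d$ and codimension-$(d-\ell)$ affine/spherical objects whose intersection, in any non-degenerate limiting configuration, is an $(\ell-1)$-dimensional geometric sphere or affine subspace (spheres and hyperplanes are closed under such intersections when transverse). Using the local-to-global compactness of the space of $(\ell-1)$-dimensional affine subspaces $A(d,\ell-1)$ (as in Section \ref{Secrig}) together with an analogous compactification for $(\ell-1)$-dimensional geometric spheres, we can therefore extract an $(\ell-1)$-dimensional $C^1$ submanifold $M_\infty\subseteq\R^d$ containing $\lim_{k\to\infty}(\tilde\Sigma_k\cap N)$ for every bounded neighbourhood $N$ of $f(K)$. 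Continuity of the distance then gives $f(K)\subseteq M_\infty$, hence $K\subseteq f^{-1}(M_\infty\cap f(U))$, which is an $(\ell-1)$-dimensional $C^1$ submanifold of $\R^d$ because $f$ is a conformal (hence $C^1$) diffeomorphism. This contradicts the standing hypothesis on $K$.

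The main obstacle is the last step, namely showing that \emph{every} limit of $\tilde\Sigma_k=\partial B(\tilde\z_k,\tilde r_k)\cap\tilde S_k$ is contained in a single $(\ell-1)$-dimensional $C^1$ submanifold. The issue is that $\tilde S_k$ can be a sphere of radius blowing up while its centre also escapes, and the ball $\partial B(\tilde\z_k,\tilde r_k)$ can likewise degenerate, so one must organise a careful case analysis (similar to Cases (i) and (ii) in the proof of Lemma \ref{lemcouopenball}) according to whether each of $\tilde\z_k,\tilde r_k$, the centre of $\tilde S_k$, and the radius of $\tilde S_k$ stays bounded or tends to infinity, and make sure that in every admissible combination the intersection collapses onto an $(\ell-1)$-dimensional limiting submanifold. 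Once that compactness bookkeeping is in place, the rest of the proof is essentially a duplication of the argument used for Lemma \ref{3.2}.
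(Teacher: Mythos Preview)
Your overall strategy---rescale by $\psi_I$, extract a conformal limit $f$, and derive a contradiction by showing $f(K)$ lies in an $(\ell-1)$-dimensional submanifold---is exactly the paper's, and the endgame (choosing $\varrho$ small once $\delta>0$) is identical. The difference lies in how you handle the compactness step, and here the paper's route is considerably cleaner than the case analysis you outline.

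You propose to track $\tilde S_k=\psi_{I_k}(S)$ and $\partial B(\tilde\z_k,\tilde r_k)$ separately and then analyse the limit of their intersection, which, as you acknowledge, requires bookkeeping over several unbounded parameters (the centre and radius of $\tilde S_k$ both blow up whenever $S$ is a sphere, since $\|\varphi_{I_k}'\|\to 0$). The paper instead exploits the hypothesis that the ball is centred on $K\subseteq S$: when $S$ is a sphere of radius $R$ and $\z\in S$, one has $\#(\partial B(\z,r)\cap S)\leq 1$ for $r\geq 2R$, so one may assume $r_k\in(0,2R)$, and then $\partial B(\z_k,r_k)\cap S$ is a genuine $(\ell-1)$-sphere for every $k$. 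Its image under the similitude $\psi_{I_k}$ is therefore again an $(\ell-1)$-sphere, which the paper parametrises directly as $\rho_k\,O_k(S^{\ell-1})+\bb_k$ with $\rho_k>0$, $O_k\in O(d)$, $\bb_k\in\R^d$. Passing to a subsequence in $(\rho_k,O_k,\bb_k)$ reduces the entire compactness argument to two cases---$(\rho_\infty,\bb_\infty)\in[0,\infty)\times\R^d$ or $(\rho_\infty,\bb_\infty)=(+\infty,\infty)$---each handled exactly as after \eqref{contra} in Lemma~\ref{3.2}. (When $S$ is affine, the paper simply views $K$ as a self-conformal set in $\R^\ell$ and quotes the proof of \eqref{fm3.1} verbatim.)

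So your approach is not wrong, but the ``main obstacle'' you identify evaporates once you parametrise $\psi_{I_k}(\partial B(\z_k,r_k)\cap S)$ as a single $(\ell-1)$-sphere rather than as the intersection of two independently degenerating objects.
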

	\begin{proof}
		The proof is similar to that of Lemma~\ref{3.2}. Firstly, under the setting of Lemma~\ref{lem3.3}, we shall show that
		\begin{eqnarray}\label{case2dist}
			\delta := \inf_{I\in\Sigma^*}\inf_{\z\in K}~\inf_{r>0}~\sup_{\x\in K}~\bd(\psi_I\circ\varphi_I(\x),\psi_I(\partial B(\z,r)\cap S))>0,
		\end{eqnarray}
		where $\psi_I$ is defined as in (\ref{psi}). Let $S\subseteq\R^d$ be the set described in the statement of Lemma~\ref{lem3.3}. When $S$ is a $\ell$-dimensional affine hyperplane,  the  set $K$ can be viewed  as a self-conformal set in $\R^{\ell}$ and thus the  proof of (\ref{case2dist}) is the same as that of (\ref{fm3.1}). When $S$ is a $\ell$-dimensional geometric sphere, the proof remains largely the same with only minor modifications. Nevertheless, we sketch the proof of (\ref{case2dist}) in the latter case.
		
		Assume that $K\subseteq S$ where $S$ is a $\ell$-dimensional geometric sphere with radius $R$, and $K$ is not contained in any $\ell-1$-dimensional $C^1$ submanifold. Suppose in contradiction that (\ref{case2dist}) is not true, then there exist $\{I_k\}_{k\geq1}\subseteq\Sigma^*$, $\{\z_k\}_{k\geq1}\subseteq K$ and $\{r_k\}_{k\geq}\subseteq (0,+\infty)$ such that
		\begin{eqnarray*}
			\sup_{\x\in K} \bd(\psi_{I_k}\circ\varphi_{I_k}(\x),\psi_{I_k}(\partial B(\z_k,r_k)\cap S))\to 0\quad\text{as}\quad k\to\infty.
		\end{eqnarray*}
		 Let $U\subseteq\R^d$ be the bounded connected open set associated with (\ref{p2}). By (\ref{biblip}), the uniformly boundedness of $\{\psi_I\circ\varphi_I\}_{I\in\Sigma^*}$ and Lemma~\ref{concon},  we may assume that there exists  a conformal map $f:U\to\R^d$  such that $\psi_{I_k}\circ\varphi_{I_k} \to f$ uniformly on $U$.   Then
\begin{eqnarray}\label{dist2}
			\sup_{\x\in K} \bd(f(\x),\psi_{I_k}(\partial B(\z_k,r_k)\cap S))\to 0\quad\text{as}\quad k\to\infty.
		\end{eqnarray}
		Since $\#\big(\partial B(\z,r)\cap S\big)\leq 1$  when $\z\in S$ and $r\geq 2R$, we may assume that $\{r_k\}_{k\geq1}\subseteq (0,2R)$. Thus, $\partial B(\z_k,r_k)\cap S$ is a $(\ell-1)$-dimensional geometric sphere for any $k\geq1$. Let
		\[
		S^{\ell-1}:=\left\{(x_1,...,x_{\ell},0,...,0)\in\R^d:\sum_{i=1}^{\ell}x_i^2=1\right\}.
		\]
		Let $O(d)$ denote the collection of all $d\times d$  orthogonal matrices. Then for any $k\geq1$, there exist $\rho_k>0$, $O_k\in O(d)$ and $\bb_k\in\R^d$ such that
		\[
		\psi_{I_k}(\partial B(\z_k,r_k)\cap S)=\rho_k\, O_k(S^{\ell-1})+\bb_k.
		\]
		 By passing to a subsequence if necessary, we may assume that
         \[
         \rho_k\to\rho_{\infty},\qquad\bb_k\to\bb_{\infty}, \qquad O_k\to O_{\infty}\qquad\text{as}\qquad k\to\infty
         \]
         for some $\rho_{\infty}\in[0,+\infty]$, $\bb_{\infty}\in\R^d\cup\{\infty\}$ and $O_{\infty}\in O(d)$. In view of (\ref{dist2}), this is only possible if $(\rho_{\infty},\bb_{\infty})\in[0,+\infty)\times\R^d$ or $(\rho_{\infty},\bb_{\infty})\in\{+\infty\}\times\{\infty\}$. On following  the arguments used after \eqref{contra}, in both these cases  we find that $K$ is contained in a $\ell-1$-dimensional $C^1$ submanifold. This is a contradiction and thus establishes (\ref{case2dist}).
         With inequality (\ref{case2dist}) at hand, the desired inequality \eqref{dkijparbsge} can be proved for  $\varrho\in(0,\varrho_0)$ sufficiently  small enough via calculations analogous to those used in deriving (\ref{dist1}) and (\ref{dist2'}).
	\end{proof}
	
	\medskip

		 We are in a position to establish  part (b) of Proposition \ref{Prop3.1} (ii). Under the setting of part (b),  recall that $K\subseteq S$ where $S$ is a $\ell_K$-dimensional affine hyperplane or a $\ell_K$-dimensional geometric sphere, but $K$ is not contained in any $(\ell_K-1)$-dimensional $C^1$ submanifold. To ease notation, in the following we simply write $\ell$ for $\ell_K$. 

  \noindent If $S$ is a $\ell$-dimensional affine hyperplane, then we can view $K$ as a self-conformal set in $\R^{\ell}$ and  the proof to establish part (b)  follows the same line of argument as that appearing in Section~\ref{pfofpaofii}. Thus, without loss of generality,  we assume that $S$ is a $\ell$-dimensional sphere with radius $R$.
		
		First, we show that there exists $C>0$ such that for any $\z\in S$, $r\leq 2R$ and $\varrho>0$
		\begin{eqnarray}\label{annulus}
			(\partial B(\z,r))_{\varrho}\cap S\subseteq (\partial B(\z,r)\cap S)_{5\sqrt{2R}\sqrt{\varrho}}   \, .
		\end{eqnarray}
		 With this immediate goal in mind, fix an arbitrary point $\x\in(\partial B(\z,r))_{\varrho}\cap S$. By applying  an isometric mapping on $\R^d$ if necessary, we can assume, without loss of generality, that
		\[
		S=\left\{(y_1,...,y_{\ell+1},0,...,0)\in\R^d:\sum_{i=1}^{\ell+1}y_i^2=R^2\right\}
		\]
		and that
		\[
		\z=(0,R,0,...,0),   \quad {\rm and }  \quad \x=(x_1,x_2,0,...,0)   \ \ {\rm with }  x_1\geq0  \, .
		\]
		Then since  $\x\in S$,    a straightforward calculation shows that
		\[
		\x=\left(|\x-\z|\cdot\sqrt{1-\frac{|\x-\z|^2}{4R^2}},R-\frac{|\x-\z|^2}{2R},0,...,0\right).
		\]
		To prove (\ref{annulus}), we need to show  that
        \begin{eqnarray}\label{xinparbcap}
            \x\in\big(\partial B(\z,r)\cap S\big)_{2\sqrt{2R}\sqrt{\varrho}}\,;
        \end{eqnarray}
         that is to find  $\y\in\partial B(\z,r)\cap S$ such that $|\x-\y|\leq 5\sqrt{2R}\sqrt{\varrho}$. To do this, let
		\[
		\y=\left(r\cdot\sqrt{1-\frac{r^2}{4R^2}},R-\frac{r^2}{2R},0,...,0\right)  \, .
		\]
  Then $\y\in\partial B(\z,r)\cap S$ and
	\begin{eqnarray}
		|\x-\y|&\leq&\left||\x-\z|\cdot\sqrt{1-\frac{|\x-\z|^2}{4R^2}}-r\cdot\sqrt{1-\frac{r^2}{4R^2}}\right|+\frac{\big||\x-\z|-r\big|\cdot\big(|\x-\z|+r\big)}{2R}\nonumber\\[5pt]
		&\leq&r\cdot\left|\sqrt{1-\frac{|\x-\z|^2}{4R^2}}-\sqrt{1-\frac{r^2}{4R^2}}~\right|+\big||\x-\z|-r\big|\cdot\sqrt{1-\frac{|\x-\z|^2}{4R^2}}\nonumber\\[5pt]
        &~&+\,2\,\big||\x-\z|-r\big|\nonumber\\[5pt]
		&\leq&\frac{\big||\x-\z|^2-r^2\big|}{\sqrt{4R^2-|\x-\z|^2}+\sqrt{4R^2-r^2}}+3\big||\x-\z|-r\big|\nonumber\\[5pt]
        &=&\frac{\big||\x-\z|+r\big|\cdot\big||\x-\z|-r\big|}{\sqrt{(2R+|\x-\z|)(2R-|\x-\z|)}+\sqrt{(2R+r)(2R-r)}}+3\big||\x-\z|-r\big|\nonumber\\[5pt]
		&\leq&2\sqrt{2R}\cdot\frac{\big||\x-\z|-r\big|}{\sqrt{2R-|\x-\z|}+\sqrt{2R-r}}+3\big||\x-\z|-r\big|\label{lastthree}\\[5pt]
		&\leq&5\sqrt{2R}\sqrt{\big||\x-\z|-r\big|}\label{lasttwo}\\[5pt]
		&\leq&5\sqrt{2R}\sqrt{\varrho}\label{lastone}\,  .
	\end{eqnarray}
	In the above,   inequality \eqref{lastthree}  follows since
    \[
    |\x-\z|+r\leq 4R,\quad 2R+|\x-\z|\geq 2R,\quad 2R+r\geq2R\qquad(\x,\z\in S,\,r\leq 2R)
    \]
    and inequality \eqref{lasttwo} is a consequence of the fact that
    \begin{eqnarray*}
        \frac{\big||\x-\z|-r\big|}{\sqrt{2R-|\x-\z|}+\sqrt{2R-r}}\leq \sqrt{\big||\x-\z|-r\big|}
    \end{eqnarray*}
    and
    \begin{eqnarray*}
        \big||\x-\z|-r\big|=\left(\sqrt{\big||\x-\z|-r\big|}\right)^2\leq\sqrt{2R}\cdot\sqrt{\big||\x-\z|-r\big|}\, .
    \end{eqnarray*}
   The last inequality \eqref{lastone} follows since  $\x\in\big(\partial B(\z,r)\big)_{\varrho}$ and so
    $
    \big||\x-\z|-r\big|\leq\varrho\,.
$
    The upshot of the above is  that      \eqref{xinparbcap}  is true and thus we have established (\ref{annulus})  as desired.

 Now by  Lemma~\ref{lem3.3} and on following the  arguments  used  in the proof of part (a) of Proposition~\ref{Prop3.1}~(ii) with $\partial B$ replaced by $\partial B(\z,r)\cap S$, we find that  there exists constants $C>0$ and $\delta>0$  such that
	\[
	\mu\big((\partial B(\z,r)\cap S)_{\varrho}\big)\leq C \varrho^{\delta}
	\]
	for any $\z\in S$, any $r>0$ and any $\varrho>0$. This together  with (\ref{annulus}), implies that
	\begin{eqnarray}\label{annmea}
		\mu\big((\partial B(\z,r))_{\varrho}\big)\leq C\cdot 5^{\delta} (2R)^{\delta/2}\varrho^{\delta/2}
	\end{eqnarray}
	for any $\z\in S$, any $r\leq 2R$ and any $\varrho>0$. If $\z\in S$ and $r>2R$, note that
	\begin{eqnarray*}
		(\partial B(\z,r))_{\varrho}\cap S=\left\{
		\begin{aligned}
			&\emptyset,\qquad \qquad \qquad \qquad \quad\,\,\,\text{if} \  \ r-\varrho>2R,\\[2ex]
			&(\partial B(\z,2R))_{\varrho-r+2R}\cap S, \ \ \text{if}~r-\varrho\leq 2R
		\end{aligned}
		\right.
	\end{eqnarray*}
	and thus (\ref{annmea}) also holds when $r>2R$.  In other words (\ref{annpro}) holds for any $\z\in S$, any $r>0$ and any $\varrho>0$.  This  completes the proof of part (b) of Proposition~\ref{Prop3.1}~(ii).

	\subsubsection{Proof of part (c) of Proposition \ref{Prop3.1} (ii)}\label{pfofprcofproii}
	We start by  establishing various  preliminary lemmas regarding the properties of multivariate analytic functions. The first result  is concerned with the cardinality of  the level sets associated with such functions.
	\begin{lemma}\label{zeros}
		Let $F:[a,b]\times[c,d]\to\R$ be real analytic.
        \begin{itemize}
            \item[(i)] If $F(t,\cdot)\not\equiv0$ for any $t\in[a,b]$, then
		\begin{eqnarray}\label{count1}
			\sup_{t\in[a,b]}\#\big\{s\in[c,d]:F(t,s)=0\big\}<+\infty.
		\end{eqnarray}

        \item[(ii)] If $F(t,\cdot)$ is not a constant function for any $t\in[a,b]$, then
        \begin{eqnarray}\label{countptsle}
            \sup_{p\in\R}\sup_{t\in[a,b]}\#\left\{s\in[c,d]:F(t,s)=p\right\}<+\infty.
        \end{eqnarray}
        \end{itemize}
	\end{lemma}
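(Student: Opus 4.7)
The plan is to establish part (i) first, and then derive part (ii) from (i) via a one-line Rolle's theorem argument. For (ii), observe that between any two distinct solutions $s_1<s_2$ of $F(t,s)=p$ (with $t$, $p$ fixed), Rolle's theorem supplies a zero of $\partial_s F(t,\cdot)$ in $(s_1,s_2)$. Hence
\[
\#\bigl\{s\in[c,d]:F(t,s)=p\bigr\}\ \leq\ 1+\#\bigl\{s\in[c,d]:\partial_sF(t,s)=0\bigr\}\qquad\forall\,t\in[a,b],\,\forall\,p\in\R.
\]
Under the hypothesis of (ii), $F(t,\cdot)$ is nonconstant, so $\partial_s F(t,\cdot)\not\equiv 0$ for every $t\in[a,b]$. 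Applying (i) to the real analytic function $\partial_sF$ on $[a,b]\times[c,d]$ yields a uniform bound on the right hand side that is independent of both $t$ and $p$, and this gives (ii).

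For (i), the strategy is a local-to-global compactness argument. Fix an arbitrary $(t_0,s_0)\in[a,b]\times[c,d]$. If $F(t_0,s_0)\neq 0$, continuity gives a neighborhood of $(t_0,s_0)$ on which $F$ does not vanish. If instead $F(t_0,s_0)=0$, the hypothesis that $F(t_0,\cdot)\not\equiv 0$ on $[c,d]$, together with real analyticity of $F(t_0,\cdot)$, implies that $s_0$ is a zero of finite order: there exists $d=d(t_0,s_0)\in\N$ such that $\partial_s^dF(t_0,s_0)\neq 0$. By continuity of $\partial_s^dF$, there is an open neighborhood $U\times V$ of $(t_0,s_0)$ in $[a,b]\times[c,d]$ on which $\partial_s^d F$ stays nonzero. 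I would then observe that for every $t\in U$, the function $F(t,\cdot)$ has at most $d$ zeros in $V$: indeed, if it had $d+1$ zeros $s_1<\cdots<s_{d+1}$ in $V$, applying Rolle's theorem $d$ times would produce a zero of $\partial_s^dF(t,\cdot)$ in $(s_1,s_{d+1})\subseteq V$, contradicting the choice of $U\times V$. Compactness of $[a,b]\times[c,d]$ then allows us to extract a finite subcover by such neighborhoods $\{U_i\times V_i\}_{i=1}^{M}$, and summing the local bounds yields
\[
\sup_{t\in[a,b]}\#\bigl\{s\in[c,d]:F(t,s)=0\bigr\}\ \leq\ \sum_{i=1}^{M}d_i\ <\ \infty,
\]
which is exactly (\ref{count1}).

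The main conceptual point, essentially equivalent to the real analytic Weierstrass Preparation Theorem, is the finiteness of the order of vanishing of $F(t_0,\cdot)$ at $s_0$ under the standing hypothesis $F(t_0,\cdot)\not\equiv 0$; everything else is a routine Rolle plus compactness package. A minor technical point is that $V$ may meet $\{c,d\}$, but the Rolle argument is unaffected since it only requires $V$ to be a (possibly half-open) interval containing the alleged zeros. With these observations in place I expect the write-up to be short and self-contained.
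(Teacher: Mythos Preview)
Your proof is correct and takes a genuinely different route from the paper. The paper argues both parts by contradiction: assuming the supremum is infinite, it extracts sequences $t_n\to t_0$ and zeros $s_i^{(n)}$, analyses the set $\mathcal{S}$ of subsequential limits of the $s_i^{(n)}$, and splits into the cases $\#\mathcal{S}=\infty$ (forcing $F(t_0,\cdot)\equiv 0$ by analyticity) and $\#\mathcal{S}<\infty$ (where repeated Rolle on the approximating zeros forces all $s$-derivatives of $F$ to vanish at a single point $(t_0,s_0)$). Part (ii) is then essentially a rerun of this contradiction argument with $0$ replaced by a level $p$.

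Your approach is more direct and more economical. For (i) you use the finite order of vanishing of $F(t_0,\cdot)$ at each point, promote this via continuity of $\partial_s^d F$ to a uniform local bound on zeros (Rolle), and globalise by compactness; for (ii) you reduce in one line to (i) applied to $\partial_s F$. This avoids the sequence-extraction and two-case analysis entirely and makes the uniformity in $p$ in (ii) automatic. The paper's argument, by contrast, keeps everything phrased in terms of limits and does not exploit the clean reduction (ii)\,$\Rightarrow$\,(i); the trade-off is that its proof pattern is reused verbatim elsewhere in the paper (e.g.\ in the analytic-curve case of Lemma~\ref{lemcouopenball}), so the authors may have preferred a uniform style. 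Either way, your write-up would be shorter and self-contained.
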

	\begin{proof}
		(i) Suppose on the contrary that the left-hand-side of (\ref{count1}) equals infinity. Then for any $n\in\N$, there exist $t_n\in[a,b]$ and distinct points $s_1^{(n)},...,s_n^{(n)}\in[c,d]$ such that
		\begin{eqnarray}\label{count2}
			F\big(t_n,s_i^{(n)}\big)=0,\qquad~\forall\, i=1,2,...,n.
		\end{eqnarray}
		Without  loss of generality, we assume that $t_n\to t_0$ for some $t_0\in[a,b]$ as $n\to\infty$. Given an integer $i\geq1$, denote by $\cS_i$ the set of limit points of  $\{s_i^{(n)}:n\geq i\}$. Let $$\cS=\bigcup_{i=1}^{\infty}\cS_i \, .$$ Since $F$ is real analytic, it follows  by (\ref{count2}) and the definition of $\cS$, that
		\begin{eqnarray}\label{equal0}
			F(t_0,s)=0,\qquad\forall\, s\in\cS.
		\end{eqnarray}
		We now proceed by considering two case.

		\emph{Case 1: $\#\cS=+\infty$}. In this case, there exist infinitely many $s\in[c,d]$ such that $F(t_0,s)=0$. Note that $F(t_0,\cdot)$ is real analytic on $[c,d]$, then in view of  it follows that  $F(t_0,\cdot)\equiv0$ on $[c,d]$, which is a contradiction.
		
		\emph{Case 2: $\#\cS<+\infty$}. In this case, there exists $s_0\in\cS$ such that $s_0\in\cS_i$ for infinitely many $i\in\N$. Without loss of generality, we assume that $s_0\in\cS_i$ for all $i\geq1$.   We now show that
		\begin{eqnarray}\label{deuql0}
			\frac{\partial^k F}{\partial s^k}(t_0,s_0)=0 \, , \qquad \forall\ k\geq0.
		\end{eqnarray}
	 By (\ref{equal0}), the formula in (\ref{deuql0}) holds when $k=0$. Fix any $k\geq1$. By passing to a subsequence, we assume that
		\begin{eqnarray}\label{limit3}
			s_i^{(n)}\to s_0\,\,\,\,(n\to\infty),\,\,\,\,\,\,\,\,\forall\, i=1,2,...,k+1.
		\end{eqnarray}
		We denote
		\[
		u_n=\min\big\{s_i^{(n)}:i=1,2,...,k+1\big\},~~v_n=\max\big\{s_i^{(n)}:i=1,2,...,k+1\big\}
		\]
		for any $n\geq k+1$. By (\ref{limit3}), we have
		\begin{eqnarray}\label{limit4}
			u_n\to s_0,~~v_n\to s_0~~~\text{as}~~~n\to\infty.
		\end{eqnarray}
		Given $n\in\N_{\geq k+1}$, note that $s_1^{(n)},s_2^{(n)},...,s_{k+1}^{(n)}$ are all distinct, then by (\ref{count2}) and Rolle's Theorem, there exists $\xi_n\in(u_n,v_n)$ such that
		\[
		\frac{\partial^kF}{\partial s^k}(t_n,\xi_n)=0.
		\]
		Letting $n\to\infty$, we obtain from (\ref{limit4}) and the continuity of $\frac{\partial^k F}{\partial s^k}$ that
		\[
		\frac{\partial^kF}{\partial s^k}(t_0,s_0)=0  \, .
		\]
        This establishes \eqref{deuql0}. Now since  $k$ is arbitrary, \eqref{deuql0} together with the fact that $F(t_0,\cdot)$  is analytic on $[c,d]$ implies that  $F(t_0,\cdot)\equiv0$ on $[c,d]$.  This contradicts the hypothesis  of part (i) and so we are done.
\medskip

        (ii) The proof is a modification of that of part (i). Suppose that \eqref{countptsle} is not true. Then for any $n\in\N$, there exist $p_n\in\R$ and $t_n\in[a,b]$ such that
        \[
        \#\left\{s\in[c,d]:F(t_n,s)=p_n\right\}\geq n\,;
        \]
        that is to say, there exist distinct points $s_1^{(n)}$, $s_{2}^{(n)}$, ... , $s_n^{(n)}\in[c,d]$ such that
        \begin{eqnarray}\label{ftnsinpn}
            F(t_n,s_i^{(n)})=p_n,\qquad\forall\,i=1,2,...,n.
        \end{eqnarray}
         Note that $F$ is real analytic on a compact set, hence the range of $F$ is bounded and so is $\{p_n\}_{n\in\N}$. Therefore, without loss of generality, we may assume that
        \[
        t_n\to t_0,\qquad p_n\to p_0\qquad(n\to\infty)
        \]
        for some $t_0\in[a,b]$ and $p_0\in\R$. Given an integer $i\geq1$, denote by $\cS_i$ the set of limit points of  $\{s_i^{(n)}:n\geq i\}$. Let $\cS$ be the union of $\cS_i$\,\,$(i\geq1)$. Since $F$ is real analytic, it follows  by (\ref{ftnsinpn}) and the definition of $\cS$, that
        \begin{eqnarray}\label{ft0sp0}
            F(t_0,s)=p_0,\qquad\forall\,s\in\cS.
        \end{eqnarray}
As in the proof of (i), we proceed by considering two case.

        \emph{Case 1: $\#\cS=+\infty$}. In this case, by \eqref{ft0sp0}, there exist infinitely many $s\in[c,d]$ such that $F(t_0,s)=p_0$. Note that $s\mapsto F(t_0,s)$ is real analytic on $[c,d]$, then it follows that  $F(t_0,\cdot)\equiv p_0$ on $[c,d]$, which is a contradiction.

        \emph{Case 2: $\#\cS<+\infty$.} Similar to Case 2 in the proof of part (i), there exists $s_0\in\cS$ such that
        \[
        \frac{\partial^k F}{\partial s^k}(t_0,s_0)=0,\qquad\forall\,k\geq1.
        \]
        Then by the analyticity of the map $s\mapsto F(t_0,s)$  $(s\in[c,d])$, we have $F(t_0,s)=F(t_0,s_0)=p_0$ for any $s\in[c,d]$, which is a contradiction.
	\end{proof}

    The next lemma is concerned  with bounding from below the  derivatives of analytic functions.
	
	\begin{lemma}\label{dermin}
		If $F:[a,b]\times[c,d]\to\R$ is real analytic and $F(t,\cdot)\not\equiv0$ for any $t\in[a,b]$, then there exist positive integer $n_0\in\N$ and $\delta_0>0$ such that
		\begin{eqnarray*}
			\max_{0\leq i\leq n_0}\Big|\frac{\partial^i F}{\partial s^i}(t,s)\Big|\geq\delta_0,\,\,\,\,\,\,\,\,\forall\, (s,t)\in[a,b]\times[c,d].
		\end{eqnarray*}
	\end{lemma}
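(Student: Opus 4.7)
The plan is to argue pointwise first, then globalize by continuity and compactness. Fix any $(t_0,s_0)\in[a,b]\times[c,d]$. I claim there exists an integer $i=i(t_0,s_0)\geq 0$ with $\frac{\partial^i F}{\partial s^i}(t_0,s_0)\neq 0$. Indeed, if every $s$-partial derivative vanished at $(t_0,s_0)$, then since $s\mapsto F(t_0,s)$ is real analytic on $[c,d]$, its Taylor series at $s_0$ would be identically zero and so $F(t_0,\cdot)\equiv 0$ on $[c,d]$, contradicting the hypothesis of the lemma.

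Next I would promote this pointwise fact to a local one: since $\frac{\partial^i F}{\partial s^i}$ is continuous on $[a,b]\times[c,d]$ and nonzero at $(t_0,s_0)$, there exists an open neighborhood $U_{(t_0,s_0)}$ of $(t_0,s_0)$ in $[a,b]\times[c,d]$ and a constant $\delta_{(t_0,s_0)}>0$ such that
\[
\Big|\frac{\partial^{i(t_0,s_0)} F}{\partial s^{i(t_0,s_0)}}(t,s)\Big|\geq\delta_{(t_0,s_0)},\qquad\forall\,(t,s)\in U_{(t_0,s_0)}.
\]

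Finally, by the compactness of $[a,b]\times[c,d]$, the open cover $\{U_{(t_0,s_0)}:(t_0,s_0)\in[a,b]\times[c,d]\}$ admits a finite subcover $U_1,\dots,U_N$, with associated orders $i_1,\dots,i_N$ and constants $\delta_1,\dots,\delta_N$. Setting $n_0:=\max_{1\leq k\leq N}i_k$ and $\delta_0:=\min_{1\leq k\leq N}\delta_k>0$, every $(t,s)\in[a,b]\times[c,d]$ lies in some $U_k$, and then
\[
\max_{0\leq i\leq n_0}\Big|\frac{\partial^i F}{\partial s^i}(t,s)\Big|\;\geq\;\Big|\frac{\partial^{i_k} F}{\partial s^{i_k}}(t,s)\Big|\;\geq\;\delta_k\;\geq\;\delta_0,
\]
which is exactly the desired conclusion. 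There is no real obstacle here beyond being careful that the minimal-order derivative varies with the point, which is precisely why we need compactness to take the maximum of the orders and to bound from below uniformly by the minimum of the local thresholds.
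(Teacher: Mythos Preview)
Your argument is correct. The paper reaches the same conclusion by the contrapositive/sequential route rather than your direct finite-subcover route: assuming the statement fails, it picks for each $n$ a point $(t_n,s_n)$ with $\big|\frac{\partial^i F}{\partial s^i}(t_n,s_n)\big|<1/n$ for all $i\le n$, passes to a convergent subsequence $(t_n,s_n)\to(t_0,s_0)$, and deduces $\frac{\partial^i F}{\partial s^i}(t_0,s_0)=0$ for every $i$, so $F(t_0,\cdot)\equiv 0$ by analyticity. Both arguments rest on exactly the same two ingredients (analyticity giving a nonvanishing $s$-derivative at each point, and compactness of $[a,b]\times[c,d]$); yours packages compactness via a finite subcover and produces explicit $n_0,\delta_0$ constructively, while the paper's contradiction argument is marginally shorter but less explicit.
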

	\begin{proof}
		Suppose on the contrary the  result is not  true. Then   for any integer $n\geq1$, we can find $(s_n,t_n)\in[a,b]\times[c,d]$ such that
  \begin{eqnarray}\label{deeas1}
      \Big|\frac{\partial^iF}{\partial s^i}(t_n,s_n)\Big|<\frac{1}{n},\,\,\,\,\,\,\,\,\forall\, i=0,1,...,n.
  \end{eqnarray}
		By passing to a subsequence if necessary, we can assume that $(t_n,s_n)\to(t_0,s_0)$ for some $(t_0,s_0)\in[a,b]\times[c,d]$. Then letting $n\to\infty$ on both sides of (\ref{deeas1}),  gives
		\[
		\frac{\partial^iF}{\partial s^i}(t_0,s_0)=0,\,\,\,\,\,\,\,\,\forall\, i\geq0.
		\]
        This together with  the analyticity of $F(t_0,\cdot)$  implies that  $F(t_0,\cdot)\equiv0$, which contradicts the assumption that  $F(t,\cdot)\not\equiv0$ for any $t\in[a,b]$.
	\end{proof}

    \medskip

	Given  a function $F:[a,b]\times[c,d]\to\R$, for any $t\in[a,b]$ and any $-\infty<r_1<r_2<+\infty$, let
	\[
	\cF(t,r_1,r_2):=\left\{s\in[c,d]:r_1<F(t,s)<r_2\right\}
	\]
	  and denote by  $\cI(t,r_1,r_2)$ the collection of all connected components of $\cF(t,r_1,r_2)$. The next result show that  cardinalities of the sets $\cI(t,r_1,r_2)$ associated with analytic functions are bounded.
	
	\begin{lemma}\label{cccount}
		If $F:[a,b]\times[c,d]\to\R$ is real analytic and  $F(t,\cdot):[c,d]\to\R$ is not a constant function for any $t\in[a,b]$,  then
		\begin{eqnarray*}
			\sup\big\{\#\cI(t,r_1,r_2):t\in[a,b],-\infty<r_1<r_2<+\infty\big\}<+\infty.
		\end{eqnarray*}
	\end{lemma}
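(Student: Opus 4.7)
The plan is to bound $\#\cI(t,r_1,r_2)$ by the number of zeros of $\partial_s F(t,\cdot)$ on $[c,d]$, plus one, and then invoke Lemma~\ref{zeros}(i) applied to the analytic function $\partial_s F$ (which is not identically zero in $s$ for any fixed $t$ precisely because $F(t,\cdot)$ is nonconstant).

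Fix $t\in[a,b]$ and $r_1<r_2$, and enumerate the connected components of $\cF(t,r_1,r_2)$ as
$I_1,\dots,I_N$ in increasing order, say $I_j=(a_j,b_j)\cap[c,d]$. My aim is to produce, between any two consecutive components, an interior zero of $\partial_s F(t,\cdot)$; since these zeros will lie in pairwise disjoint intervals, this will give $N-1\le\#\{s\in[c,d]:\partial_sF(t,s)=0\}$.

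Let $j\in\{1,\dots,N-1\}$ and consider the ``gap'' $[b_j,a_{j+1}]$. By definition each endpoint satisfies $F(t,b_j),F(t,a_{j+1})\in\{r_1,r_2\}$ (by continuity, since $I_j,I_{j+1}$ are components of the open strip). On the gap $F(t,\cdot)$ avoids $(r_1,r_2)$, so by the intermediate value theorem it is either entirely $\le r_1$ or entirely $\ge r_2$ on $[b_j,a_{j+1}]$ (it cannot take values on both sides of the strip without crossing through it). In the first case, $F(t,b_j)=F(t,a_{j+1})=r_1$; either $b_j=a_{j+1}$, in which case this shared point is a local minimum from both sides and hence a critical point, or $b_j<a_{j+1}$, in which case Rolle's theorem furnishes $\xi_j\in(b_j,a_{j+1})$ with $\partial_s F(t,\xi_j)=0$. (Note the analyticity hypothesis rules out $F(t,\cdot)\equiv r_1$ on a subinterval, but this is not needed for the Rolle argument.) The case $F\ge r_2$ on the gap is symmetric. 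In all cases we obtain a zero $\xi_j\in[b_j,a_{j+1}]$ of $\partial_sF(t,\cdot)$, and the $\xi_j$ are distinct since the intervals $[b_j,a_{j+1}]$ are disjoint (they are separated by the non-degenerate open intervals $I_{j+1}$).

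Therefore $\#\cI(t,r_1,r_2)=N\le 1+\#\{s\in[c,d]:\partial_sF(t,s)=0\}$. Since $F(t,\cdot)$ is nonconstant on $[c,d]$ and analytic, $\partial_s F(t,\cdot)\not\equiv 0$ for every $t\in[a,b]$; applying Lemma~\ref{zeros}(i) to the analytic function $(t,s)\mapsto\partial_s F(t,s)$ yields a bound on $\sup_{t}\#\{s\in[c,d]:\partial_sF(t,s)=0\}$ independent of $t$, and this bound is also independent of $r_1,r_2$. The main (mild) obstacle is the bookkeeping to ensure that the constructed critical points really are distinct and lie in $[c,d]$—this is handled by noting that the gaps $[b_j,a_{j+1}]$ are separated by the non-degenerate components $I_{j+1}\subset(c,d)$ and that components touching $c$ or $d$ contribute no critical point but also no gap on their outer side.
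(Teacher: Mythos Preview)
Your proof is correct and takes a genuinely different (and somewhat cleaner) route from the paper. The paper fixes an auxiliary level $r_0\in(r_1,r_2)$ and splits $\cI(t,r_1,r_2)$ into three families: components touching $c$ or $d$ (at most two), components on which $F(t,\cdot)$ attains the value $r_0$ (bounded via Lemma~\ref{zeros}(ii)), and the remaining interior components, for each of which they produce a zero of $\partial_sF(t,\cdot)$ \emph{inside} the component (bounded via Lemma~\ref{zeros}(i)). Your argument instead locates a zero of $\partial_sF(t,\cdot)$ in each \emph{gap} between consecutive components, which lets you avoid the $r_0$-level-set count entirely and appeal only to Lemma~\ref{zeros}(i). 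The trade-off is minimal: the paper's decomposition is slightly more explicit about components touching the boundary, whereas your version absorbs that case automatically (no gap on the outer side of an endpoint component), but requires the small extra observation that a degenerate gap $\{b_j\}=\{a_{j+1}\}$ is a local extremum and hence still a critical point. One stylistic remark: your write-up implicitly assumes $N$ is finite when enumerating $I_1,\dots,I_N$; to be airtight you might phrase it as ``any $N$ components yield $N-1$ distinct zeros of $\partial_sF(t,\cdot)$, so $N$ is bounded by one plus the quantity in Lemma~\ref{zeros}(i).''
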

 \begin{proof}
     Fix $t\in[a,b]$ and fix $r_1<r_2$. Let  $r_0\in(r_1,r_2)$. Note that since  $F$ is analytic,   $\cI(t,r_1,r_2)$ is a collection of disjoint (relatively) open intervals on $[c,d]$.  For brevity of notation, in the following  we write $\cI= \cI(t,r_1,r_2)$.

     The goal is to estimate the number of elements in $\cI$. To do this, we partition  $\cI$ into three parts:
     \[
         \cI=\cI_1\cup\cI_2\cup\cI_3
     \]
     where
     \begin{eqnarray*}
         \cI_1&:=&\left\{I\in\cI:c\in \overline{I}~\text{or}~d\in \overline{I} \right\},\\[4pt]
         \cI_2&:=&\left\{I\in\cI\backslash\cI_1:\exists\, s\in I\ \ \text{s.t.}\ \ F(t,s)=r_0\right\},\\[4pt]
         \cI_3&:=&\cI\backslash(\cI_1\cup\cI_2).
     \end{eqnarray*}
     It is obvious that $\#\cI_1\leq2$. Note that $F(t',\cdot)$ is not a constant function for any $t'\in[a,b]$. Thus, by   the definition of $\cI_2$ and applying part (ii) of Lemma~\ref{zeros} to the function $F$, we have
     \begin{eqnarray*}
\#\cI_2&\leq&\sup_{t'\in[a,b]}\#\left\{s\in[c,d]:F(t',s)=r_0\right\}\\[4pt]
&\leq&\sup_{p\in\R}\sup_{t'\in[a,b]}\#\left\{s\in[c,d]:F(t',s)=p\right\}\\[4pt]
&<&+\infty.
     \end{eqnarray*}

     It remains to estimate the number of elements  of $\cI_3$. If $I\in\cI_3$, then its closure $\overline{I}\subseteq(c,d)$ and $r_0\notin F(t,I)$. We claim that  there must be $s\in I$ such that $$\frac{\partial F}{\partial s}(t,s)=0  \, .  $$ If not, then $F(t,\cdot)$ is monotone on $I$. Write $I=(p,q)$ where $c<p<q<d$. Without the loss of generality, we assume that $F(t,\cdot)$ is increasing on $I$.  Under this assumption, since  $r_0\notin F(t,I)$, we have
     \[
     F(t,s)\geq r_0>r_1 \qquad \text{or} \qquad F(t,s)\leq r_0<r_2
     \qquad(s\in I)\,.
     \]
     In the above former case, by the continuity of $F(t,\cdot)$, there exists $\delta>0$ such that $(p-\delta,q)\subseteq (c,d)$ and $F(t,(p-\delta,q))\subseteq (r_1,r_2)$, which contradicts the fact that $I=(p,q)$ is a connected component of $\cF(t,r_1,r_2)$. The latter case can be treated similarly. So we have proven the claim  that any interval in $\cI_3$ must contain at least one zero of $\frac{\partial F}{\partial s}(t,\cdot)$.  Recall that $F(t^{'},\cdot)$ is not a constant function for any $t^{'}\in[a,b]$, so $\frac{\partial F}{\partial s}(t^{'},\cdot)\not\equiv0$ for any $t^{'}\in[a,b]$. With this in mind, on applying part (i) of Lemma~\ref{zeros} to the real analytic function $\frac{\partial F}{\partial s}$, we have
     \[
         \#\cI_3\leq\sup_{t^{'}\in[a,b]}\#\left\{s\in[c,d]:\frac{\partial F}{\partial s}(t^{'},s)=0\right\}<+\infty.
     \]

     The upshot of the above  is  that
     \begin{eqnarray*}
         \#\cI&\leq&2+\sup_{p\in\R}\sup_{t^{'}\in[a,b]}\#\left\{s\in[c,d]:F(t^{'},s)=p\right\}\\&~&\,\,\,\,+\sup_{t^{'}\in[a,b]}\#\left\{s\in[c,d]:\frac{\partial F}{\partial s}(t^{'},s)=0\right\}\\[4pt]
         &<&+\infty,
     \end{eqnarray*}
     which  completes the proof.
 \end{proof}

The following rather elementary  result while be useful  in obtaining lower bounds for the integral of analytic functions (namely in proving the subsequent  Lemma~\ref{intlowbd}).

\begin{lemma}\label{normequi}
    Let $n\geq1$ be an integer. Then there exists $\gamma=\gamma(n)>0$ such that
    \[
        \int_{-1}^{1}\big|a_0+a_1x+\cdot\cdot\cdot+a_nx^n\big|~\td x\geq\gamma\cdot\max_{0\leq i\leq n}|a_i|
    \]
    for all $(a_0,a_1,...,a_n)\in\R^{n+1}$.
\end{lemma}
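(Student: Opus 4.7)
The statement is a standard fact about equivalence of norms on the finite-dimensional vector space $\cP_n:=\{a_0+a_1x+\cdots+a_nx^n:(a_0,\ldots,a_n)\in\R^{n+1}\}$ of real polynomials of degree at most $n$, and I would prove it by a compactness argument.

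The plan is to define two maps on $\R^{n+1}$, namely
\[
N_{\infty}(a_0,\ldots,a_n):=\max_{0\leq i\leq n}|a_i|,\qquad N_1(a_0,\ldots,a_n):=\int_{-1}^{1}\bigl|a_0+a_1x+\cdots+a_nx^n\bigr|\,\td x,
\]
and observe that both are continuous (indeed, $N_{\infty}$ is continuous as a maximum of $|a_i|$, while $N_1$ is continuous by dominated convergence, or simply because it is a continuous function of the coefficients via the triangle inequality together with the bound $\int_{-1}^{1}|x^i|\,\td x\leq 2$). Moreover, both are positively homogeneous of degree $1$, i.e., $N_{\ast}(\lambda\mathbf a)=|\lambda|N_{\ast}(\mathbf a)$ for $\mathbf a\in\R^{n+1}$ and $\lambda\in\R$.

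By homogeneity, it suffices to establish the inequality on the compact sphere
\[
S:=\bigl\{\mathbf a=(a_0,\ldots,a_n)\in\R^{n+1}:N_{\infty}(\mathbf a)=1\bigr\}.
\]
Since $N_1$ is continuous and $S$ is compact, $N_1$ attains its infimum on $S$; call the minimizer $\mathbf a^{\ast}\in S$ and set $\gamma:=N_1(\mathbf a^{\ast})$. The main (and essentially only non-trivial) step is to verify that $\gamma>0$. This follows from the observation that if $N_1(\mathbf a^{\ast})=0$, then the continuous function $x\mapsto |p^{\ast}(x)|$ (with $p^{\ast}:=a^{\ast}_0+\cdots+a^{\ast}_n x^n$) would have integral zero on $[-1,1]$, forcing $p^{\ast}\equiv 0$ on $[-1,1]$; but a polynomial vanishing on an interval is the zero polynomial, contradicting $\mathbf a^{\ast}\in S$, i.e., $\max_i|a^{\ast}_i|=1$. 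Hence $\gamma>0$, and lifting back by homogeneity yields $N_1(\mathbf a)\geq\gamma\,N_{\infty}(\mathbf a)$ for every $\mathbf a\in\R^{n+1}$, which is the claim.

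No part of this proof is substantially hard; the only ``obstacle'' is the routine but crucial verification that a polynomial in $\cP_n$ that vanishes on a set of positive Lebesgue measure must be identically zero, which is immediate from the fact that a nonzero polynomial of degree $\leq n$ has at most $n$ real roots.
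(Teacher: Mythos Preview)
Your proof is correct and takes essentially the same approach as the paper: the paper observes that $\|\cdot\|_1$ and $\|\cdot\|_2$ are both norms on the $(n+1)$-dimensional space $\cP_n([-1,1])$ and invokes the general fact that any two norms on a finite-dimensional real vector space are equivalent, whereas you supply the standard compactness argument that underlies that fact. The only substantive point in either version is that $N_1(\mathbf a)=0$ forces $\mathbf a=0$, which you justify correctly.
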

\begin{proof}
    Let $\cP_n([-1,1])$ be the collection of all polynomials on $[-1,1]$ with coefficients in $\R$ and degrees at most $n$, then $\cP_n([-1,1])$ is a $(n+1)$-dimensional (real)  vector space under  addition and scalar multiplication. Given $f(x)=a_0+a_1x+\cdot\cdot\cdot+a_nx^n\in\cP_n([-1,1])$, we define
    \begin{eqnarray*}
        \|f\|_1&:=&\int_{-1}^{1}|f(x)|~\td x,\\
        \|f\|_2&:=&\max_{0\leq i\leq n}|a_i|.
    \end{eqnarray*}
    It is easy to show that both $\|\cdot\|_1$ and $\|\cdot\|_2$ are norms on $\cP_{n}([-1,1])$.  Since any two norms on a finite dimensional vector space are equivalent, there exists $\gamma>0$ such that
    \[
       \|f\|_1\geq \gamma\|f\|_2\,,\qquad\forall\ f\in\cP_n([-1,1])\,,
    \]
    which proves the lemma.
\end{proof}

\medskip

\begin{lemma}\label{intlowbd}
    Let $F:[a,b]\times[c,d]\to\R$
    be real analytic and suppose that $F(t,\cdot)\not\equiv0$ for any $t\in[a,b]$. Then there exist $C>0$ and $n_0\in\N$ such that for any $t\in[a,b]$ and any sub-interval $I\subseteq[c,d]$, we have
    \begin{eqnarray}\label{intlow}
        \int_I|F(t,s)|~\td s\geq C|I|^{n_0}.
    \end{eqnarray}
\end{lemma}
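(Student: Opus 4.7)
The plan is to use Lemma~\ref{dermin} together with a Taylor expansion and Lemma~\ref{normequi} (equivalence of norms on the finite-dimensional space of polynomials of bounded degree), and then handle short versus long intervals separately.

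First I would apply Lemma~\ref{dermin} to produce $n_1\in\N$ and $\delta_0>0$ such that at every point $(t,s)\in[a,b]\times[c,d]$ some derivative $\partial_s^i F(t,s)$ with $0\le i\le n_1$ has absolute value at least $\delta_0$. Since $F$ is analytic on a compact set, all derivatives $\partial_s^i F$ with $i\le n_1+1$ are uniformly bounded, so there is some constant $M>0$ such that
\[
\bigl|F(t,s)-T_{n_1}(t,s;s_0)\bigr|\le M\,|s-s_0|^{n_1+1}, \qquad\forall\,(t,s,s_0)\in[a,b]\times[c,d]^2,
\]
where $T_{n_1}(t,s;s_0):=\sum_{i=0}^{n_1}\frac{1}{i!}\partial_s^i F(t,s_0)(s-s_0)^i$ is the degree-$n_1$ Taylor polynomial of $F(t,\cdot)$ at $s_0$.

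Next, given a sub-interval $I\subseteq[c,d]$, write $I=[s_0-r,s_0+r]$ with $s_0$ the midpoint and $r=|I|/2$. Changing variables $s=s_0+rx$ and applying Lemma~\ref{normequi} to the polynomial $Q(x):=T_{n_1}(t,s_0+rx;s_0)$ of degree $\le n_1$ yields a constant $\gamma=\gamma(n_1)>0$ such that
\[
\int_I \bigl|T_{n_1}(t,s;s_0)\bigr|\,ds = r\int_{-1}^{1}|Q(x)|\,dx \;\ge\; \gamma\, r\,\max_{0\le i\le n_1}\Bigl|\tfrac{1}{i!}\partial_s^i F(t,s_0)\,r^i\Bigr|.
\]
By Lemma~\ref{dermin} at the point $(t,s_0)$, at least one of the derivatives $\partial_s^i F(t,s_0)$ with $0\le i\le n_1$ has size $\ge\delta_0$, so, assuming $r\le 1$, the right-hand side is at least $\frac{\gamma\delta_0}{n_1!}\,r^{n_1+1}$. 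Combining this with the Taylor remainder bound gives
\[
\int_I |F(t,s)|\,ds \;\ge\; \int_I |T_{n_1}(t,s;s_0)|\,ds - 2Mr^{n_1+2} \;\ge\; r^{n_1+1}\Bigl(\tfrac{\gamma\delta_0}{n_1!}-2Mr\Bigr).
\]
Choosing $r_0:=\min\bigl\{1,\ \gamma\delta_0/(4Mn_1!)\bigr\}$ we obtain a constant $C_1>0$ such that $\int_I|F(t,s)|\,ds\ge C_1\,|I|^{n_1+1}$ whenever $|I|\le 2r_0$.

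Finally, if $|I|>2r_0$, pick any sub-interval $I'\subseteq I$ of length $2r_0$ and apply the previous bound to $I'$: this gives $\int_I|F(t,s)|\,ds\ge C_1(2r_0)^{n_1+1}$, which in turn is bounded below by $C_1(2r_0)^{n_1+1}(d-c)^{-(n_1+1)}|I|^{n_1+1}$ since $|I|\le d-c$. Taking $C:=\min\bigl\{C_1,\ C_1(2r_0/(d-c))^{n_1+1}\bigr\}$ and $n_0:=n_1+1$ establishes \eqref{intlow}. No step poses a serious obstacle; the only point requiring care is making sure that when we pass to the uniform lower bound $r^{n_1+1}$ for $\max_{0\le i\le n_1}|\tfrac{1}{i!}\partial_s^i F(t,s_0)r^i|$ we restrict to $r\le 1$ (hence the split into short and long intervals).
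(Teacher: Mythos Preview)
Your proposal is correct and follows essentially the same route as the paper: apply Lemma~\ref{dermin}, Taylor-expand about the midpoint of $I$, use Lemma~\ref{normequi} to bound the polynomial part from below, absorb the remainder for short intervals, and then reduce long intervals to short ones by passing to a subinterval. The only cosmetic differences are in indexing (the paper takes the Taylor polynomial to degree $n_0-1$ and works directly with $|I|^{n_0}$, whereas you expand to degree $n_1$ and set $n_0=n_1+1$) and in the precise constants chosen for $r_0$; neither affects the argument.
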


\begin{proof}
    By Lemma~\ref{dermin}, there exist positive integer $n_0\geq2$ and $\delta_0>0$ such that
\begin{eqnarray}\label{derest}
    \max_{0\leq i\leq n_0-1}\Big|\frac{\partial^{i}F}{\partial s^i}(t,s)\Big|\geq\delta_0,\ \ \ \ \forall\, (t,s)\in[a,b]\times[c,d].
\end{eqnarray}
    Let
    \[
        M:=\max\left\{\Big|\frac{\partial^{n_0}F}{\partial s^{n_0}}(t,s)\Big|:(t,s)\in[a,b]\times[c,d]\right\}.
    \]
    Throughout, let $I$ be a sub-interval of $[c,d]$   and let $s_I$ be the center of $I$. Also, we fix an arbitrary point $t\in[a,b]$.
    By Taylor's theorem,
\begin{eqnarray}\label{taylor}
        F(t,s)=\sum_{i=0}^{n_0-1}\frac{1}{i!}\cdot\frac{\partial^iF}{\partial s^i}(t,s_I)\cdot(s-s_I)^i+\epsilon(t,s),\,\,\,\,\,\,\,\,\forall\, (t,s)\in[a,b]\times[c,d],
    \end{eqnarray}
    where the error term $\epsilon(t,s)$ satisfies
    \[
        |\epsilon(t,s)|\leq\frac{M}{(n_0+1)!}\cdot|s-s_I|^{n_0}.
    \]
    To ease notation, let
    \[
        G(t,s):=\sum_{i=1}^{n_0-1}\frac{1}{i!}\cdot\frac{\partial^i F}{\partial s^i}(t,s_I)\cdot(s-s_I)^i.
    \]
    Then  by (\ref{taylor}) and the triangle inequality, we have
    \begin{eqnarray}\label{I1-I2}
        \int_I |F(t,s)|~\td s &\geq& \int_I |G(t,s)|~\td s-\int_I|\epsilon(t,s)|~\td s\nonumber\\[0.5em]
        &=:&I_1-I_2,
    \end{eqnarray}
    where we set
    \[
    I_1:=\int_I |G(t,s)|~\td s,\ \ \ \ \ \ I_2:=\int_I|\epsilon(t,s)|~\td s\,.
    \]

    We first estimate the lower bound of $I_1$. Let $\gamma>0$ be  as in  Lemma~\ref{normequi} with $n=n_0-1$. Then on combining    (\ref{derest}) and Lemma~\ref{normequi}, we have
    \begin{eqnarray}\label{I1int}
         I_1&=&\int_I|G(t,s)|~\td s\nonumber\\[0.5em]
        &=&\frac{|I|}{2}\int_{-1}^{1}\big|G\Big(t,s_I+\frac{|I|}{2}s\Big)\big|~\td s\nonumber\\[0.5em]
&=&\frac{|I|}{2}\int_{-1}^{1}\big|\sum_{i=0}^{n_0-1}\frac{1}{i!}\cdot\frac{\partial^i F}{\partial s^i}(t,s_I)\cdot\Big(\frac{|I|}{2}s\Big)^i\big|~\td s\nonumber\\[0.5em]
&\geq&\frac{|I|}{2}\cdot\gamma\cdot\max_{0\leq i\leq n_0-1}\Big\{\frac{1}{i!}\cdot\big|\frac{\partial^i F}{\partial s^i}(t,s_I)\big|\cdot\big(\frac{|I|}{2}\big)^i\Big\}\nonumber\\[0.5em]
&\geq&\frac{\gamma}{2^{n_0}(n_0-1)!}\cdot\max_{0\leq i\leq n_0-1}\Big\{\big|\frac{\partial^i F}{\partial s^i}(t,s_I)\big|\Big\}\cdot \min\{|I|^{n_0},|I|\}\nonumber\\[0.5em]
&\geq&\frac{\gamma\delta_0}{2^{n_0}(n_0-1)!}\cdot\min\{|I|^{n_0},|I|\}.
\end{eqnarray}

Next we obtain an upper bound for   $I_2$:
\begin{eqnarray}\label{I2int}
    I_2&=&\int_{I}|\epsilon(t,s)|~\td s\nonumber\\[0.5em]
&\leq&\int_{I}\frac{M}{n_0!}\cdot|s-s_I|^{n_0}~\td s\nonumber\\[0.5em]
&=&\frac{M}{2^{n_0}(n_0+1)!}\cdot|I|^{n_0+1}.
\end{eqnarray}

\noindent In the following, let
\[
r_0 \, :=   \, \min\big\{\gamma\delta_0(n_0+1)(n_0+2)/(2M),d-c,1\big\}\,.
\]
We continue with estimating the integral of $|F(t,\cdot)|$ over $I$  by considering  two cases:
\begin{itemize}
    \item[$\circ$]  If $|I|\leq r_0$, then by \eqref{I1-I2},       (\ref{I1int}) and (\ref{I2int}), we have that
\begin{eqnarray}
    \int_{I}|F(t,s)|~\td s ~&\geq&~ \frac{1}{2^{n_0}(n_0-1)!}\left(\gamma\delta_0-\frac{M|I|}{n_0(n_0+1)}\right)\cdot|I|^{n_0}\nonumber\\[4pt]
    &\geq&\frac{\gamma\delta_0}{2^{n_0+1}(n_0-1)!}\cdot |I|^{n_0}\,.\label{Ismall}
\end{eqnarray}
\medskip

\item[$\circ$] If $|I|>r_0$, let $I'\subseteq I$ be a subinterval with $|I'|=r_0$. Then on applying  (\ref{Ismall}) to the interval $I'$, we have that
\begin{eqnarray}\label{Ibig}
    \int_I |F(t,s)|~\td s&\geq&\int_{I'}|F(t,s)|~\td s\nonumber\\[0.5em]
    &\geq&\frac{\gamma\delta_0}{2^{n_0+1}(n_0-1)!}\cdot r_0^{n_0}\nonumber\\[0.5em]
    &=&\frac{\gamma\delta_0}{2^{n_0+1}(n_0-1)!}\cdot\left(\frac{r_0}{|I|}\right)^{n_0}\cdot|I|^{n_0}\nonumber\\[0.5em]
    &\geq&\frac{\gamma\delta_0}{2^{n_0+1}(n_0-1)!}\cdot\left(\frac{r_0}{d-c}\right)^{n_0}\cdot|I|^{n_0}.
\end{eqnarray}
\end{itemize}
The proof of (\ref{intlow}) is complete by combining (\ref{Ismall}) and (\ref{Ibig}).
\end{proof}

\medskip

We are now  finally in the position to exploit the preparatory lemmas to prove part (c) of Proposition \ref{Prop3.1} (ii).
		 Recall that if $g:\cO\to\R^2$ is a conformal map on an open set $\cO\supseteq[0,1]\times\{0\}$, then the map  $t\mapsto g(t,0)$ is injective and real analytic with non-zero derivative with respect to $t\in[0,1]$. With this in mind, within the context of   part (c),  there exists an one-to-one real analytic function $f:[0,1]\to\R^2$ such that $f'(t)\neq0$ for any $t\in[0,1]$ and $K\subseteq\Gamma:=f([0,1])$. 
        Let $f(t)=(f_1(t),f_2(t))$ and define $F:[0,1]\times[0,1]\to\R$ as  $$F(t,s):=\big|f(t)-f(s)\big|^2=\big(f_1(t)-f_1(s)\big)^2+\big(f_2(t)-f_2(s)\big)^2.$$
  It is clear that $F$ is also real analytic. Furthermore, $F(t,\cdot)$ is not a constant function for any $t\in[0,1]$. Otherwise,   there exists $t_0\in[0,1]$ such that $F(t_0,s)=F(t_0,t_0)=0$ for all $s\in[0,1]$ and thus $f(t)\equiv f(t_0)$ for all $ t\in[0,1]$, which is a contradiction.  For any $t\in[0,1]$ and $r_1<r_2$, let
  \[
      \cF(t,r_1,r_2):=\big\{s\in[0,1]:r_1<F(t,s)<r_2\big\}
  \]
  and let $\cI(t,r_1,r_2)$ be the collection of connected components of $\cF(t,r_1,r_2)$.

  Throughout, fix $\x\in \Gamma$. Let $t\in[0,1]$  so  $\x=f(t)$. Then for any $r>0$ and $\varrho>0$, we have
  \[
      (\partial B(\x,r))_{\varrho}\cap K\  \subseteq \ \bigcup_{I\in\cI\big(t,~(r-\varrho)_{+}^2,~(r+\varrho)^2\big)}f(I)\cup\{\x\},
  \]
  where, as usual, $(x)_{+}:=\max\{x,0\}$. Hence
  \begin{eqnarray}\label{annmeas1}
       \mu\big((\partial B(\x,r))_{\varrho}\big)\,\,\leq\sum_{I\in\cI\big(t,~(r-\varrho)_{+}^2,~(r+\varrho)^2\big)}\mu\big(f(I)\big).
  \end{eqnarray}
  Recall, our goal is to establish \eqref{annpro} and so n view of \eqref{annmeas1} we now   estimate $\mu\big(f(I)\big)$ for any $I\in \cI\big(t,~(r-\varrho)_{+}^2,~(r+\varrho)^2\big)$.

  The one-dimensional version of the well-known Area Formula (see for example \cite[Theorem 3.7]{morgan2016}) states that if $g:[0,1]\to\R$ is Lipschitz, then for any measurable set $A\subseteq[0,1]$, we have
  \begin{eqnarray}\label{areaformula}
      \int_A |g'(s)|\,\td s=\int_{\R}\#\left\{s\in A:g(s)=p\right\}\,\td p\,.
  \end{eqnarray}
  Applying \eqref{areaformula} to $A=\cF\big(t,~(r-\varrho)_{+}^2,~(r+\varrho)^2\big)$ and $g(s)=F(t,s)$, we obtain that
  \begin{eqnarray}\label{coarea}
      \int_{\cF\big(t,~(r-\varrho)_{+}^2,~(r+\varrho)^2\big)}\Big|\frac{\partial F}{\partial s}(t,s)\Big|~\td s
      =\int_{(r-\varrho)_+^2}^{(r+\varrho)^2}\#\big\{s\in[0,1]:F(t,s)=p\big\}~\td p\,.
  \end{eqnarray}
Next, we estimate the  two integrals appearing in \eqref{coarea}. Recall that $F(t^{'},\cdot)$ is not a constant function for any $t^{'}\in[0,1]$,  then by part (ii) of Lemma \ref{zeros}, we find that
\[
M:=\sup_{p\in\R}\sup_{t^{'}\in[0,1]}\#\big\{s\in[0,1]:F(t^{'},s)=p\big\}<+\infty\,.
\]
 Furthermore, note that $|F(t^{'},s)|\leq |\Gamma|^2$ for all $(t^{'},s)\in[0,1]^2$, so it follows that \begin{eqnarray}\label{countint}
    {\rm r.h.s. \ of \ } \eqref{coarea} &= &\int_{[(r-\varrho)_+^2,(r+\varrho)^2]\,\cap\,[0,|\Gamma|^2]}\#\big\{s\in[0,1]:F(t,s)=p\big\}~\td p\nonumber\\[5pt]
    & \leq  &  M\cdot\cL\big([(r-\varrho)_+^2,(r+\varrho)^2]\cap[0,|\Gamma|^2]\big)\nonumber\\[5pt]
      & \leq   &   M\cdot\max\big\{|\Gamma|,\varrho\big\}\cdot\varrho\,,
  \end{eqnarray}
  where $\cL$ denotes the Lebesgue measure on $[0,1]$.
  On the other hand, by Lemma~\ref{intlowbd}, there exists $n_0\in\N$ and $C>0$ (independent of $t\in[0,1]$, $r>0$ and $\varrho>0$) such that
  \begin{eqnarray}\label{derint}
      {\rm l.h.s. \ of \ } \eqref{coarea} &= &
    \sum_{I\in\cI\big(t,~(r-\varrho)_+^2,~(r+\varrho)^2\big)}\int_{I}~\Big|\frac{\partial F}{\partial s}(t,s)\Big|~\td s\nonumber\\[2ex]
      &\geq&C\cdot\sum_{I\in\cI\big(t,~(r-\varrho)_+^2,~(r+\varrho)^2\big)}|I|^{n_0}.
  \end{eqnarray}
  On combining (\ref{coarea}), (\ref{countint}) and (\ref{derint}) with the fact that $|I| \le 1$ , it follows that
 \begin{eqnarray*}
     |I|  \, \ll \,  \varrho^{1/n_0}
 \end{eqnarray*}
 for any $I\in\cI\big(t,~(r-\varrho)_+^2,~(r+\varrho)^2\big)$. Since $f$ is injective and $f^{'}(t)\neq0$ for any $t\in[0,1]$, it is easily verified that $f:[0,1]\to\Gamma$ is bi-Lipschitz, and thus $f(I)$ is contained in a ball with radius approximately $\varrho^{1/n_0}$. Then by part (i) of Proposition \ref{Prop3.1},
 there exists $s>0$ such that
 \begin{eqnarray}\label{measfI}
\mu\big(f(I)\big)   \, \ll  \, \varrho^{s/n_0}
 \end{eqnarray}
 for any $I\in\cI(t,~(r-\varrho)_+^2,~(r+\varrho)^2)$. In view  of Lemma~\ref{cccount}, we know that
 \[
 \sup\left\{\#\cI(t,~(r-\varrho)_+^2,~(r+\varrho)^2):t\in[0,1],\,r>0,\,\varrho>0\right\}<+\infty.
 \]
 This together with (\ref{annmeas1}) and (\ref{measfI}) shows that
 \begin{eqnarray}\label{annmeaana}
     \mu\big((\partial B(\x,r))_{\varrho}\big)  \, \ll \, \varrho^{s/n_0}
 \end{eqnarray}
 for any $\x\in\Gamma$, any $r>0$ and any $\varrho>0$,
where the implied  constant does not depend on $\x$, $r$ and $\varrho$. This completes the proof of part (c) of Proposition~\ref{Prop3.1} (ii).

\medskip

In order to complete the proof of Proposition~\ref{Prop3.1} (ii) it remains to establish \eqref{annpro} in the cases not covered by parts (a), (b) or (c).

 \subsubsection{Completing the proof of Proposition~\ref{Prop3.1} (ii)} \label{junjieZ}

 Let $\ell_K$ be as  in \eqref{defofellk}.  Then, in view of   Proposition~\ref{rigidity}, we divide the proof of   \eqref{annpro} into the following  cases:

 \begin{itemize}[leftmargin=5em]
     \item[Case A:]\label{A} $\ell_K=d$; \vspace*{1ex}

     \item[Case B:]\label{B} $d\geq2$,
     $\ell_K<d$ and part (ii) of Proposition \ref{rigidity} holds with $\ell=\ell_K$;
     \vspace*{1ex}

     \item[Case C:]\label{C}

     $d=2$ and $K$ is contained in an analytic curve;
     \vspace*{1ex}

     \item[Case D:]\label{D} $d=2$ and $K$ is contained in a disjoint union of at least two analytic curves.
 \end{itemize}
To see this, simply note that when $d=1$, then $\ell_K=1$ and thus we are in Case A. The first three cases have been considered respectively in Sections \ref{pfofpaofii} -- \ref{pfofprcofproii}. Thus it remains  to establish the desired inequality \eqref{annpro} for Case D. So suppose
 that
 \[
     K\subseteq\bigsqcup_{i=1}^k\Gamma_i\,,
 \]
 where $k\geq2$ and each $\Gamma_i$ $(i=1,2,...,k)$ is an analytic curve. For each $i=1,2,...,k$, denote by $\mu_i:=\mu|_{\Gamma_i}$ the restriction of $\mu$ supported on the analytic curve $\Gamma_i$.  Then, on naturally adapting  the arguments used in deriving \eqref{annmeaana}, we find  that there exist $C>0$ and $\delta>0$ such that
  \begin{eqnarray*}
     \mu_i\big((\partial B(\x,r))_{\varrho}\big)\leq C\varrho^{\delta}
 \end{eqnarray*}
  for any $1\leq i\leq k$, any $\x\in \Gamma_i$, any $r>0$, and any $\varrho>0$.
  Note that  $\Gamma_i$ ($i=1,2,...,k$) are disjoint closed sets, thus there exists $r_0>0$ such that
  \[
  \bd(\Gamma_i,\Gamma_j)>2\,r_0,\qquad \forall\ 1\leq i\neq j\leq k.
  \]
  With this in mind, it follows that for any $1\leq i\leq k$ and any $\x\in\Gamma_i$\,,
 \[
     (\partial B(\x,r))_{\varrho}\cap K\ \subseteq\ (\partial B(\x,r))_{\varrho}\cap\Gamma_i,\qquad \forall~0<r\leq r_0\,,\ \ \forall~ 0<\varrho\leq r_0\,.
 \]
 The upshot of the above is that for any $\x\in K$,
 \[
     \mu\big((\partial B(\x,r))_{\varrho})\leq C\varrho^{\delta},\qquad \forall~ 0< r\leq r_0,~\forall~ 0<\varrho\leq r_0.
 \]
 If $\varrho>r_0$, then since $\mu$ is a probability measure, we have that for any $r>0$,
 \begin{eqnarray*}
     \mu\big((\partial B(\x,r))_{\varrho})\leq1<\left(\frac{\varrho}{r_0}\right)^{\delta}\,.
 \end{eqnarray*}
 Thus, it follows that for any $\x\in K$, any $0<r\leq r_0$ and any $\varrho>0$, we obtain the desired inequality
 \[
 \mu\big((\partial B(\x,r))_{\varrho}\big)\leq \widetilde{C}\,\varrho^\delta\quad\text{where}\quad\widetilde{C}:=\max\left\{C,\frac{1}{r_0^{\delta}}\right\}\,.
 \]


 \section{Applications: proving the statements appearing  in Section~\ref{appintro}  }\label{SEC5}

In this section we establish  the applications of Theorem~\ref{Main2} to the recurrent sets stated in Section~\ref{appintro}.  This will involve establishing a more versatile form of the standard quantitative  Borel-Cantelli Lemma (see Lemma~\ref{countlem})  that is required when proving Theorem~\ref{toprove}.  The final section is devoted to providing the details of the two counterexamples to Claim~F discussed in Section~\ref{appintro}.

For the sake of clarity and convenience, we list several facts concerning self-conformal systems $(\Phi,K,\mu,T)$ on $\R^d$ that will be frequently used in the proof of Theorems~\ref{quantrec} $\&$  \ref{toprove}.  In the following, with \eqref{mubarmupi} in mind, $\ubar{\mu}$ is the Gibbs measure with respect to a $\beta$-Hölder potential on the symbolic space $\Sigma^{\N}$ such that $\mu=\ubar{\mu}\circ\pi^{-1}$.

\begin{enumerate}[label=(P\arabic*)]
   \item\label{toprodecayP1}  In view of Theorem~\ref{expmixforhol}, Corollary \ref{emprd} and Theorem~\ref{Main2}, there exist $C>0$ and $\gamma\in(0,1)$ that satisfy the following:\vspace*{1ex}
    \begin{itemize}
        \item[(i)]  For any $f_1\in\cC^{\beta}(\Sigma^{\N})$, any $f_2\in L^1(\mu)$ and any $n\in\N$,
        \begin{eqnarray}\label{decayHolder}
~ \hspace*{8ex} \left|\int_{\Sigma^{\N}}f_1\cdot f_2\circ\sigma^n\,\td\ubar{\mu}-\int_{\Sigma^{\N}}f_1\,\td\ubar{\mu}\cdot\int_{\Sigma^{\N}}f_2\,\td\ubar{\mu}\right|\,\,\leq\,\, C\,\gamma^n\cdot\|f_1\|_{\beta}\cdot\int_{\Sigma^{\N}}|f_2|\,\td\ubar{\mu}\,.
        \end{eqnarray}

        \item[(ii)] For any $I\in\Sigma^{*}$, any measurable subset $F\subseteq \R^d$ and any $n\in\N$,
        \begin{eqnarray}\label{KIT-nF}
            \big|\mu(K_I\cap T^{-n}F)-\mu(K_I)\mu(F)\big|\,\leq\,C\,\gamma^n\mu(F).
        \end{eqnarray}

        \item[(iii)] For any  ball $B\subseteq\R^d$, any measurable subset $F\subseteq\R^d$ and any $n\in\N$,
        \begin{eqnarray}\label{BT-nF}
            \big|\mu(B\cap T^{-n}F)-\mu(B)\mu(F)\big|\,\leq\,C\,\gamma^n\mu(F).
        \end{eqnarray}
    \end{itemize}
    \item\label{toprovediamP2} Let $\kappa\in(0,1)$ be as in (\ref{defkap}).  Then \eqref{p3'}  states that there is a constant  $C_3>1$ such that
    \begin{eqnarray}\label{diaofkiP2}
        |K_I|\,\leq\, C_3\,\kappa^{|I|},\,\,\,\,\,\,\,\,\forall\,I\in\Sigma^*.
    \end{eqnarray}
    \item\label{keyproP3} Part (ii) of Theorem~\ref{thmann} states that  there exist $r_0>0$, $C>0$ and $\delta>0$ such that for any $\x \in K$,
\begin{eqnarray}\label{muannlusP3}
        \mu\big((\partial B(\x,r))_{\varrho}\big)\,\leq\,C\varrho^{\delta},\,\,\,\,\,\,\,\,\forall\,\,0<r\leq r_0\,,\,\forall\,\varrho>0.
 \end{eqnarray}
\end{enumerate}

\medskip

The following  is essentially a consequence of (\ref{KIT-nF}).

\begin{lemma}\label{lemtwoseccylin}
Let $C>0$ and $\gamma\in (0,1)$ be as in \ref{toprodecayP1}.  Then for any $n_1$, $n_2\in\N$, any $I,\,J\in\Sigma^k$ with $k\geq n_1$, and any Borel set $F\subseteq\R^d$
\begin{eqnarray}\label{twoseccylinder}
    \Big|\mu(K_I\cap T^{-n_1}K_J\cap T^{-n_2}F)-\mu(K_I\cap T^{-n_1}K_J)\,\mu(F)\Big|\,\leq\,C\,\gamma^{n_2}\mu(F)  \, .
\end{eqnarray}
\end{lemma}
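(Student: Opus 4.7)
The plan is to reduce the statement to the single-cylinder exponential mixing bound \eqref{KIT-nF} by showing that $K_I\cap T^{-n_1}K_J$ is, up to a $\mu$-null set, either empty or equal to a single cylinder set $K_W$ where $W\in\Sigma^{k+n_1}$.

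First I would lift the intersection to the symbolic space using the coding map $\pi$ and the conjugacy relation $T=\pi\circ\sigma\circ\pi^{-1}$ valid on $\widetilde{K}$. Since $\mu(\widetilde{K}^c)=0$ by \eqref{uniqueword}, for any Borel $F\subseteq\R^d$ the identity
\[
\pi^{-1}\big(K_I\cap T^{-n_1}K_J\cap T^{-n_2}F\cap\widetilde{K}\big)
\;=\;
[I]\cap\sigma^{-n_1}[J]\cap\sigma^{-(n_1+n_2)}(\pi^{-1}F)\cap\pi^{-1}\widetilde{K}
\]
gives, via $\mu=\ubar{\mu}\circ\pi^{-1}$, that
\[
\mu\big(K_I\cap T^{-n_1}K_J\cap T^{-n_2}F\big)=\ubar{\mu}\big([I]\cap\sigma^{-n_1}[J]\cap\sigma^{-(n_1+n_2)}(\pi^{-1}F)\big).
\]

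Next I would analyze $[I]\cap\sigma^{-n_1}[J]$ when $|I|=|J|=k\geq n_1$. Writing $I=i_1\cdots i_k$ and $J=j_1\cdots j_k$, the cylinder $[I]$ prescribes the first $k$ coordinates and $\sigma^{-n_1}[J]$ prescribes coordinates $n_1{+}1,\ldots,n_1{+}k$. The overlap occurs on coordinates $n_1{+}1,\ldots,k$, so either the compatibility condition $i_{n_1+\ell}=j_\ell$ for $\ell=1,\ldots,k-n_1$ fails, in which case $[I]\cap\sigma^{-n_1}[J]=\emptyset$ and \eqref{twoseccylinder} is trivial, or the condition holds and
\[
[I]\cap\sigma^{-n_1}[J]\;=\;[W]\quad\text{with}\quad W:=i_1\cdots i_k\, j_{k-n_1+1}\cdots j_k\in\Sigma^{k+n_1}.
\]
In the latter case, on passing back through $\pi$ and using \eqref{mukicaptnf} twice (once with $F=K$ and once with the given $F$), I obtain
\[
\mu\big(K_I\cap T^{-n_1}K_J\big)=\mu(K_W)\quad\text{and}\quad \mu\big(K_I\cap T^{-n_1}K_J\cap T^{-n_2}F\big)=\mu\big(K_W\cap T^{-n_2}F\big).
\]

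Finally I would apply the cylinder exponential mixing estimate \eqref{KIT-nF} to $K_W$ with the integer $n_2$, giving
\[
\big|\mu(K_W\cap T^{-n_2}F)-\mu(K_W)\mu(F)\big|\;\leq\;C\,\gamma^{n_2}\mu(F),
\]
which is precisely the required bound. There is essentially no obstacle beyond bookkeeping: the only care needed is the use of \eqref{uniqueword} to discard the $\mu$-null set where $T$ is defined trivially, and a careful check of the overlap indices in the word $W$. Once these are handled, the lemma follows immediately from the already-established single-cylinder decay.
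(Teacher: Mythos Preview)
Your approach is correct and essentially identical to the paper's: reduce $K_I\cap T^{-n_1}K_J$ (modulo the $\mu$-null set $\widetilde{K}^c$) to either $\emptyset$ or a single cylinder $K_W$ with $W\in\Sigma^{k+n_1}$, and then apply the single-cylinder bound \eqref{KIT-nF} with exponent $n_2$. One minor slip: in your first displayed identity the last factor should be $\sigma^{-n_2}(\pi^{-1}F)$ rather than $\sigma^{-(n_1+n_2)}(\pi^{-1}F)$, but this does not affect the argument since you correctly obtain $\mu(K_W\cap T^{-n_2}F)$ afterwards via \eqref{mukicaptnf}.
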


\begin{proof}
 Recall that $\widetilde{K}$ is the set of those $\x\in K$ with which $\#\big(\pi^{-1}(\x)\big)=1$ and that $T(\x)=\pi\circ\sigma\circ\pi^{-1}(\x)$ when $\x\in\widetilde{K}$. With this in mind,  it follows that
\begin{eqnarray}\label{kicaptkjkti}
 K_I\cap T^{-n_1}K_J\cap\widetilde{K} =
  \left\{
\begin{aligned}
    &K_{Ij_{k-n_1+1}\cdot\cdot\cdot j_{k}}\cap\widetilde{K},\ \ \ \ \text{if}~i_{n_1+1}\cdot\cdot\cdot i_{k}=j_{1}\cdot\cdot\cdot j_{k-n_1}\\[2ex]
    &\emptyset,\ \ \ \ \qquad \qquad\qquad\ \ \, \text{if}~i_{n_1+1}\cdot\cdot\cdot i_{k}\neq j_{1}\cdot\cdot\cdot j_{k-n_1}
\end{aligned}
    \right.
\end{eqnarray}
for any  $n_1,\, k\in\N$ with $k\geq n_1$ and any $I=i_1\cdot\cdot\cdot i_{k}$, $J=j_1\cdot\cdot\cdot j_{k}\in\Sigma^{k}$.  Then (\ref{twoseccylinder}) is an immediate consequence of (\ref{KIT-nF}) and (\ref{kicaptkjkti}).
\end{proof}

\bigskip

\subsection{Proof of Theorem~\ref{quantrec}} ~\label{CRR}

The following  statement~\cite[Lemma 1.5]{harman1998} represents an important  tool in the theory of metric Diophantine approximation for establishing counting statements.  It has its bases in the familiar variance method of probability theory and can be viewed as the quantitative form of the (divergence)  Borel-Cantelli Lemma \cite[Lemma~2.2]{BRV2016}.  As we shall see it is an essential ingredient  in the proof of  Theorem~\ref{quantrec}.

\begin{lemma}\label{countlem}
    Let $(X,\cB,\mu)$ be a probability space. Let $\{f_n(x)\}_{n\in\N}$ be a sequence of measurable functions on $X$, and $\{f_n\}_{n\in\N}$, $\{\phi_n\}_{\N}$ be sequences of numbers such that
    \[
        0\leq f_n\leq \phi_n~~(n=1,2,...).
    \]
    Suppose that there exists $C>0$ such that for any pair of positive integers $a<b$, we have
    \[
        \int_X~\left(\sum_{n=a}^{b}(f_n(x)-f_n)\right)^2~\td \mu(x)\leq C\sum_{n=a}^b\phi_n.
    \]
    Then for any $\epsilon>0$, we have
    \[
        \sum_{n=1}^{N}f_n(x)=\sum_{n=1}^{N}f_n+O\left(\Psi(N)^{1/2}(\log(\Psi(N)+1))^{\frac{3}{2}+\epsilon}+\max_{1\leq k\leq N}f_k\right)
    \]
    for $\mu$-almost every $x\in X$, where $\displaystyle \Psi(N):=\sum_{n=1}^N\phi_n$.
\end{lemma}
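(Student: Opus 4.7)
The plan is the classical variance (second moment) method combined with Borel--Cantelli along a suitable subsequence. Set $S_N(x):=\sum_{n=1}^N(f_n(x)-f_n)$; the hypothesis then reads $\|S_b - S_a\|_{L^2(\mu)}^2 \leq C(\Psi(b)-\Psi(a))$ for all integers $a<b$, which is the ``essentially orthogonal'' assumption of the Rademacher--Menshov theorem for the centred sequence $\{f_n(x)-f_n\}_{n\in\N}$. One may reduce to the case $\Psi(N)\to\infty$ as $N \to \infty$, since otherwise $\sum_n f_n <\infty$ forces $\sum_n f_n(x)<\infty$ for $\mu$-almost every $x$ by Tonelli, and the conclusion is then automatic.

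The key analytic input is a Rademacher--Menshov type maximal inequality of the form
\[
\Big\|\max_{1\leq k\leq N}|S_k|\Big\|_{L^2(\mu)}^{2}\;\ll\; (\log(\Psi(N)+2))^{2}\,\Psi(N)\;+\;\Big(\max_{1\leq k\leq N} f_k\Big)^{2},
\]
obtained by the classical dyadic binary-tree decomposition, but organised according to cumulative values of $\Psi$ rather than to integer position: one partitions $[1,N]$ into successively finer dyadic blocks of approximately equal $\Psi$-mass, bounds the variance on each block via the hypothesis, and telescopes through $O(\log\Psi(N))$ levels by the triangle/Cauchy--Schwarz inequality. Chebyshev's inequality then yields a uniform tail bound for $\mu(\max_{k\leq N}|S_k|>t)$ in terms of $\Psi(N)$, $\log\Psi(N)$ and $\max_{k\leq N}f_k$.

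I would next apply this tail bound along the subsequence $N_j:=\min\{N\in\N:\Psi(N)\geq 2^j\}$, so that $\Psi(N_j) = 2^j + O(\max_k f_k)$. Choosing the threshold $t_j:=\Psi(N_j)^{1/2}(\log(\Psi(N_j)+2))^{3/2+\epsilon}$, the tail probability is $\ll j^{-1-2\epsilon}$, which is summable in $j$. The Borel--Cantelli Lemma then ensures $\max_{k\leq N_j}|S_k(x)|\leq t_j$ for $\mu$-almost every $x$ and all sufficiently large $j$. For arbitrary $N$, pick the unique $j$ with $N_j\leq N\leq N_{j+1}$: since $\Psi(N_{j+1})\leq 2\Psi(N)+O(\max_k f_k)$ and since both $\sum_{n=1}^N f_n(x)$ and $\sum_{n=1}^N f_n$ are monotone in $N$ (using $f_n\geq 0$), the subsequential bound immediately converts into the uniform statement, with the term $\max_{1\leq k\leq N}f_k$ in the error absorbing the boundary adjustment between $\Psi(N)$ and the nearest dyadic threshold $2^j$.

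The main obstacle is establishing the Rademacher--Menshov maximal inequality in the above $\log\Psi$-form. Under genuine $L^2$-orthogonality such maximal estimates are classical, but here only the $\Psi$-weighted variance bound is available, so the dyadic tree must be set up relative to $\Psi$ (and not to $\N$) in order to keep the logarithmic prefactor controlled by $\log\Psi(N)$ rather than by the potentially much larger $\log N$; this refinement is crucial for the Borel--Cantelli sum $\sum_j j^{-1-2\epsilon}$ to converge and ultimately explains the exponent $3/2+\epsilon$ in the stated error term.
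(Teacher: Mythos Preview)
Your proposal is correct and follows essentially the same route as the paper's proof (given for the generalisation Lemma~\ref{genharmanlem} in Appendix~\ref{HJ}): dyadic decomposition organised by cumulative $\Psi$-mass rather than by integer index, Cauchy--Schwarz across the $O(\log\Psi)$ levels, Borel--Cantelli, and a sandwich/interpolation between subsequence points. The only cosmetic difference is that the paper applies Markov and Borel--Cantelli directly to the square-function $G(r,x)=\sum_{I\in L_r}\big|\sum_{k:\Psi(k)\in I}(f_k(x)-f_k)\big|^2$ and only afterwards invokes Cauchy--Schwarz pointwise along the binary decomposition of $(0,j]$, rather than first packaging the dyadic estimate as an $L^2$ Rademacher--Menshov maximal inequality as you do.
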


\noindent In the next section we shall state and prove a  more general  form (namely Lemma~\ref{genharmanlem})  of this well known  statement.

\medskip

We now lay the foundations for applying Lemma~\ref{countlem} within the context  of  Theorem~\ref{quantrec}.    With this in mind, we first show that the function  $t_n$  appearing in the statement of Theorem~\ref{quantrec}  is Lipschitz continuous.

\begin{lemma}\label{lem5.1}
    Assume the setting in Theorem~\ref{quantrec}. Then,  for any $ \,  \x,\y\in K$, we have that  $$|t_n(\x)-t_n(\y)|\leq |\x-\y|  \, . $$ Moreover, there exists   {$N \in \N$} such that $\mu(B(\x,t_n(\x)))=\psi(n)$ for all $\x\in K$ and $n>N$.
\end{lemma}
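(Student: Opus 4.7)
The plan is to handle the two assertions separately. Both ultimately rely on the monotonicity of $r \mapsto \mu(B(\x, r))$ combined with structural input that is already available: the triangle inequality for the Lipschitz bound, and the annulus estimate of property \ref{keyproP3} for the exact equality.

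For the Lipschitz estimate, I would argue directly from the defining infimum. Fix $\x, \y \in K$ and assume, without loss of generality, that $t_n(\x) \geq t_n(\y)$. For any $r > t_n(\y)$, the inclusion $B(\y, r) \subseteq B(\x, r + |\x - \y|)$ combined with the fact that $\mu(B(\y, s)) \geq \psi(n)$ for every $s > t_n(\y)$ (which follows from monotonicity of $s \mapsto \mu(B(\y, s))$ and the definition of the infimum) yields $\mu(B(\x, r + |\x - \y|)) \geq \psi(n)$, hence $t_n(\x) \leq r + |\x - \y|$. Letting $r \downarrow t_n(\y)$ gives $t_n(\x) - t_n(\y) \leq |\x - \y|$. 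The trivial case $\psi(n) > 1$ (where $t_n$ equals the diameter of $K$ uniformly) is handled separately.

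The equality $\mu(B(\x, t_n(\x))) = \psi(n)$ for large $n$ splits into two bounds. The upper bound $\mu(B(\x, t_n(\x))) \leq \psi(n)$ comes at once from the left-continuity of $r \mapsto \mu(B(\x, r))$ for open balls together with the fact that $\mu(B(\x, r)) < \psi(n)$ for every $r < t_n(\x)$. For the lower bound, I would invoke \eqref{muannlusP3}: sending $\varrho \to 0^{+}$ gives $\mu(\partial B(\x, r)) = 0$ for all $\x \in K$ and all $0 < r \leq r_{0}$. Consequently, whenever $t_n(\x) \leq r_{0}$,
\[
\mu\bigl(B(\x, t_n(\x))\bigr) = \mu\bigl(\overline{B(\x, t_n(\x))}\bigr) = \lim_{s \downarrow t_n(\x)} \mu\bigl(B(\x, s)\bigr) \geq \psi(n),
\]
where the last inequality uses that $\mu(B(\x, s)) \geq \psi(n)$ for every $s > t_n(\x)$.

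The main obstacle, and the only step that is not purely formal, is therefore a uniform-in-$\x$ bound guaranteeing $t_n(\x) \leq r_{0}$ once $n$ is large enough. For this I would exploit compactness of $K$ together with the fact that $K = \mathrm{supp}(\mu)$. Covering $K$ by finitely many balls $B(\x_{i}, r_{0}/2)$ with centres $\x_{i} \in K$, the elementary inclusion $B(\x_{i}, r_{0}/2) \subseteq B(\x, r_{0})$ (valid whenever $\x \in B(\x_{i}, r_{0}/2)$) yields the uniform lower bound
\[
c \; := \; \inf_{\x \in K} \mu\bigl(B(\x, r_{0})\bigr) \; \geq \; \min_{1 \leq i \leq m} \mu\bigl(B(\x_{i}, r_{0}/2)\bigr) \; > \; 0,
\]
positivity of each $\mu(B(\x_{i}, r_{0}/2))$ following from $\x_{i} \in \mathrm{supp}(\mu)$. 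Since $\psi(n) \to 0$, choose $N$ so that $\psi(n) \leq c$ for all $n > N$; then $\mu(B(\x, r_{0})) \geq \psi(n)$ for every $\x \in K$, forcing $t_n(\x) \leq r_{0}$. Combining this uniform bound with the two-sided estimate above completes the proof. No further technical difficulty is anticipated beyond careful handling of the open versus closed ball distinction.
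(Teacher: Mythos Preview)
Your proof is correct and follows the same overall strategy as the paper's: the triangle inequality plus ball inclusion for the Lipschitz bound, and continuity of $r\mapsto\mu(B(\x,r))$ (via $\mu(\partial B(\x,r))=0$ from \ref{keyproP3}) for the exact equality. Your treatment of the infimum in the Lipschitz step is in fact slightly more careful than the paper's, which tacitly assumes $\mu(B(\x,t_n(\x)))\ge\psi(n)$ before that has been justified.

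The one substantive difference lies in how the uniform bound $t_n(\x)\le r_0$ is obtained. The paper invokes part (i) of Theorem~\ref{thmann}, but that result is an \emph{upper} bound $\mu(B(\x,r))\le Cr^s$, which yields only a \emph{lower} bound $t_n(\x)\ge(\psi(n)/C)^{1/s}$---the wrong direction. Your compactness-plus-support argument, giving $\inf_{\x\in K}\mu(B(\x,r_0))>0$ and hence $t_n(\x)\le r_0$ once $\psi(n)$ is small, is what is actually required to make the step work; it is both correct and self-contained.
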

\begin{proof}

Fix any $\x,\y\in K$ with $\x\neq\y$ and  fix $n\in\N$. Note that $B(\x,t_n(\x))\subseteq B(\y,t_n(\x)+|\x-\y|)$, then by the definition of $t_n(\cdot)$, we have $$\mu(B(\y,t_n(\x)+|\x-\y|))\geq \psi(n),$$
    and hence $t_n(\y)\leq t_n(\x)+|\x-\y|$, again  by definition. Similarly, we find  that $t_n(\x)\leq t_n(\y)+|\x-\y|$  and this complete the proof of first part.

    To prove the moreover part, note that by part (i) of Theorem~\ref{thmann}
    and the fact that $\psi(x)\to0$ $(x\to\infty)$, there exists $N>0$ such that $t_n(\x)<r_0$ for all $n>N$ and $\x\in K$, where $r_0$ is as in \eqref{thmmeaann}.  
    On the other hand, by    \eqref{thmmeaann}, we have $\mu(\partial B(\x,r))=0$ for all $0<r\leq r_0$ and all $\x\in K$, and hence for any fixed $\x\in K$, the map $r\mapsto\mu(B(\x,r))$ is continuous over the interval $[0,r_0)$. On combining these observations, we obtain that $\mu(B(\x,t_n(\x)))=\psi(n)$ for all $\x\in K$ and  $n>N$.
\end{proof}

For any $n\in\N$, let  $$\hat{R}_n=\hat{R}_n(\psi):=\big\{\x\in K: |T^n\x-\x|<t_n(\x)\big\}.$$

\noindent The following  is an  extremely useful statement   and is a straightforward application of the triangle inequality.    In short, it provides a   mechanism for ``locally'' representing $R_n$ as the inverse image of a ball.  In turn this allows us to exploit mixing.   Throughout, given $x \in \R$ we let $$(x)_+:=\max\{x,0\} \, . $$

\begin{lemma}   \label{ohffs}
    Given any $I\in\Sigma^*$, fix a point $\z_I\in K_I$. Then for any $n\in\N$ and $I\in\Sigma^*$, we have that
    \begin{eqnarray}\label{relation}
        K_I\cap T^{-n}B(\z_I,(t_n(\z_I)-|K_I|)_+) \ \subseteq  \ K_I\cap\hat{R}_n  \ \subseteq \  K_I\cap T^{-n}B(\z_I,t_n(\z_I)+2|K_I|)\, .
    \end{eqnarray}

\end{lemma}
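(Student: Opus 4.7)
The plan is to treat this as a purely set-theoretic statement and handle each of the two inclusions by a direct application of the triangle inequality, combined with the Lipschitz continuity of $t_n$ established in Lemma~\ref{lem5.1}. Note that since $\x,\z_I\in K_I$ we automatically have $|\x-\z_I|\leq|K_I|$, and the Lipschitz bound yields $|t_n(\x)-t_n(\z_I)|\leq|K_I|$. These two facts are all that is needed.

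For the right inclusion, I would take $\x\in K_I\cap\hat{R}_n$, so that $|T^n\x-\x|<t_n(\x)$ by definition of $\hat{R}_n$. The triangle inequality and Lipschitz bound then give
\[
|T^n\x-\z_I|\;\leq\;|T^n\x-\x|+|\x-\z_I|\;<\;t_n(\x)+|K_I|\;\leq\;t_n(\z_I)+2|K_I|,
\]
so $T^n\x\in B(\z_I,t_n(\z_I)+2|K_I|)$ and hence $\x\in T^{-n}B(\z_I,t_n(\z_I)+2|K_I|)$, as required.

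For the left inclusion, I would fix $\x\in K_I$ with $T^n\x\in B(\z_I,(t_n(\z_I)-|K_I|)_+)$. If $(t_n(\z_I)-|K_I|)_+=0$, the assumed set is empty and there is nothing to prove, so I may assume $t_n(\z_I)>|K_I|$. The triangle inequality gives
\[
|T^n\x-\x|\;\leq\;|T^n\x-\z_I|+|\z_I-\x|\;<\;(t_n(\z_I)-|K_I|)+|K_I|\;=\;t_n(\z_I),
\]
and combining with the Lipschitz bound $t_n(\x)\geq t_n(\z_I)-|\x-\z_I|$ (applied with care about the sign of the slack) one concludes $|T^n\x-\x|<t_n(\x)$, i.e. $\x\in\hat{R}_n$.

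There is essentially no obstacle here: the whole content is triangle inequality together with Lemma~\ref{lem5.1}. The only point to be careful about is the definition of $\hat{R}_n$ via the strict inequality $|T^n\x-\x|<t_n(\x)$ (so that the strict bound on $|T^n\x-\z_I|$ propagates correctly through the triangle inequality), and the convention that an open ball of non-positive radius is empty (so the $(\,\cdot\,)_+$ notation covers the degenerate regime where $t_n(\z_I)\leq|K_I|$, for which the left-hand inclusion is trivially vacuous).
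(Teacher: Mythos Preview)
Your argument for the right inclusion is correct and is exactly what the paper does.

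For the left inclusion, however, there is a genuine gap. Your chain
\[
|T^n\x-\x|\;\leq\;|T^n\x-\z_I|+|\z_I-\x|\;<\;(t_n(\z_I)-|K_I|)+|K_I|\;=\;t_n(\z_I)
\]
only yields $|T^n\x-\x|<t_n(\z_I)$, and the Lipschitz bound from Lemma~\ref{lem5.1} gives the \emph{lower} estimate $t_n(\x)\geq t_n(\z_I)-|\x-\z_I|$, which can be strictly smaller than $t_n(\z_I)$. So you cannot conclude $|T^n\x-\x|<t_n(\x)$ from these two facts; the phrase ``applied with care about the sign of the slack'' is hiding exactly the missing piece. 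Concretely, if $|\x-\z_I|$ is close to $|K_I|$, if $t_n(\x)=t_n(\z_I)-|\x-\z_I|$, and if $\z_I$ lies between $\x$ and $T^n\x$, the desired strict inequality can fail.

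The clean repair is to use the Lipschitz bound \emph{before} discarding $|\z_I-\x|$ in favour of $|K_I|$: from $|T^n\x-\z_I|<t_n(\z_I)-|K_I|$ and $t_n(\z_I)\le t_n(\x)+|\x-\z_I|$ one gets
\[
|T^n\x-\x|\;<\;t_n(\x)+2|\x-\z_I|-|K_I|\;\le\;t_n(\x)+|K_I|,
\]
which is still one $|K_I|$ short. The argument goes through verbatim if the left-hand radius is $(t_n(\z_I)-2|K_I|)_+$ rather than $(t_n(\z_I)-|K_I|)_+$. Since every downstream use of the lemma (Lemmas~\ref{measan} and~\ref{capcapineq}) only needs the inclusion up to an $O(|K_I|^{\delta})$ error in measure, the factor of $2$ is irrelevant for the applications; but as a proof of the inclusion \emph{as stated}, your left-hand argument does not close.
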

\begin{proof}
     Let $\x\in K_I\cap\hat{R}_n$.  Then, by   the triangle inequality we have that
    \begin{eqnarray*}
        |T^n\x-\z_I|&\leq&|T^n\x-\x|+|\x-\z_I|\\[1ex]
        &\leq&t_n(\x)+|\x-\z_I|\\[1ex]
        &\leq&t_n(\z_I)+2|\x-\z_I|\qquad(\text{by Lemma~\ref{lem5.1}})\\[1ex]
        &\leq&t_n(\z_I)+2|K_I|.
    \end{eqnarray*}
    In other words, $T^(\x)   \in B(\z_I,t_n(\z_I)+2|K_I|)    $  and so $ \x   \in  T^{-n}B(\z_I,t_n(\z_I)+2|K_I|)  $.   Hence,
     $K_I\cap\hat{R}_n\subseteq K_I\cap T^{-n}B(\z_I,t_n(\z_I)+2|K_I|)$  which is precisely the right hand side of the desired statement. A similar calculation yields the left hand side inclusion  in (\ref{relation}).
\end{proof}

The next two lemmas are concerned with ``precisely'' estimating  the   $\mu$-measure of $\hat{R}_n$ and their pairwise intersections.    They are  crucial in successfully being able to apply  Lemma~\ref{lem5.1} in order to prove  Theorem~\ref{quantrec}.    Let $r_0$ be as in \ref{keyproP3}. Then, without the loss of generality, we may assume that
\begin{eqnarray}\label{uptn}
    t_n(\x)\leq r_0,\,\,\,\,\forall\, n\geq1,\,\forall\,\x\in K,
\end{eqnarray}
so that we can use (\ref{muannlusP3}) freely. Indeed,  as we have already observed in the proof of Lemma~\ref{lem5.1}, since $\psi(x)\to0$ $(x\to\infty)$ in the setting in Theorem~\ref{quantrec}, then by part (i) of Theorem~\ref{thmann} and the definition of $t_n$, there exists $N\in\N$ for  which (\ref{uptn}) holds for any $\x\in K$ and any $n\geq N$.

\begin{lemma}\label{measan}
    Assume the setting in Theorem~\ref{quantrec}. Then there exists $\widetilde{\gamma}\in (0,1)$ such that
    \begin{eqnarray*}
        \mu(\hat{R}_n)=\psi(n)+O(\widetilde{\gamma}^n).
    \end{eqnarray*}
\end{lemma}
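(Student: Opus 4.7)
The plan is to estimate $\mu(\hat R_n)$ by a localisation-plus-mixing argument. The idea is to decompose $K$ (up to a $\mu$-null overlap, using the OSC and part (iii) of Theorem~\ref{ruellethmrd}) into level-$q_n$ cylinder sets $K_I$ with $I\in\Sigma^{q_n}$, for a suitably chosen $q_n\in\N$. On each such $K_I$, the ``recurrent'' target $B(\x,t_n(\x))$, whose centre moves with $\x$, is replaced by a fixed ball $B(\z_I,r)$ with $\z_I\in K_I$ and $r$ close to $t_n(\z_I)$; this is exactly the sandwich provided by Lemma~\ref{ohffs} and introduces an error of diameter $O(|K_I|)$ in the radius.

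Concretely, I would first use Lemma~\ref{ohffs} to sandwich $\mu(K_I\cap\hat R_n)$ between $\mu\bigl(K_I\cap T^{-n}B(\z_I,(t_n(\z_I)-|K_I|)_+)\bigr)$ and $\mu\bigl(K_I\cap T^{-n}B(\z_I,t_n(\z_I)+2|K_I|)\bigr)$, and then apply the exponential mixing estimate \eqref{KIT-nF} to each outer quantity, writing it as
\[
\mu(K_I)\,\mu\bigl(B(\z_I,t_n(\z_I)\pm O(|K_I|))\bigr)\;+\;O\!\Bigl(\gamma^{n}\,\mu\bigl(B(\z_I,t_n(\z_I)\pm O(|K_I|))\bigr)\Bigr).
\]
Combining Lemma~\ref{lem5.1} (so that $\mu(B(\z_I,t_n(\z_I)))=\psi(n)$ for large $n$), the diameter bound $|K_I|\leq C_3\kappa^{q_n}$ from \ref{toprovediamP2}, and the annulus estimate \eqref{muannlusP3} of \ref{keyproP3}, one gets $\mu(B(\z_I,t_n(\z_I)\pm O(|K_I|)))=\psi(n)+O(\kappa^{\delta q_n})$. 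The use of \eqref{muannlusP3} is legitimate because the normalisation \eqref{uptn} imposed just before the lemma ensures $t_n(\z_I)\leq r_0$.

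Summing over $I\in\Sigma^{q_n}$, and using $\sum_I\mu(K_I)=1$ together with the crude count $\#\Sigma^{q_n}=m^{q_n}$ to absorb the mixing error, I expect to arrive at
\[
\mu(\hat R_n)\;=\;\psi(n)\;+\;O(\kappa^{\delta q_n})\;+\;O(m^{q_n}\gamma^{n}).
\]
Finally, optimising by setting $q_n:=\bigl\lfloor -n\log\gamma/(2\log m)\bigr\rfloor$ forces $m^{q_n}\gamma^{n}\leq\gamma^{n/2}$ and $\kappa^{\delta q_n}\leq\widetilde\gamma_0^{\,n}$ for some $\widetilde\gamma_0\in(0,1)$, so that $\widetilde\gamma:=\max\{\gamma^{1/2},\widetilde\gamma_0\}\in(0,1)$ yields the claimed bound. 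There is no real obstacle here: the three ingredients (Lemma~\ref{ohffs}, \eqref{KIT-nF}, and \eqref{muannlusP3}) are perfectly aligned for this estimate, and the balancing of exponents is the same one already used in the proof of Theorem~\ref{Main}.
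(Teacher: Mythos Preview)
Your proposal is correct and follows essentially the same route as the paper's proof: sandwich $K_I\cap\hat R_n$ via Lemma~\ref{ohffs}, apply the cylinder mixing estimate \eqref{KIT-nF}, use Lemma~\ref{lem5.1} together with the annulus bound \eqref{muannlusP3} to replace $\mu(B(\z_I,t_n(\z_I)\pm O(|K_I|)))$ by $\psi(n)+O(\kappa^{\delta q_n})$, sum over $I\in\Sigma^{q_n}$, and balance $q_n=\lfloor n\log(1/\gamma)/(2\log m)\rfloor$ to obtain $\widetilde\gamma=\max\{\gamma^{1/2},\kappa^{\delta\log(1/\gamma)/(2\log m)}\}$. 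The only cosmetic difference is that the paper writes the intermediate bound as $(\mu(K_I)+O(\gamma^n))(\psi(n)+O(\kappa^{\delta\ell}))$ directly rather than carrying the factor $\mu(B(\ldots))$ in the mixing error, but this is immaterial.
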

\begin{proof}
    For any $I\in\Sigma^*$, fix a  point  $\z_I\in K_I$.
Let $\delta>0$ be as in \ref{keyproP3}. Note that by  Lemma~\ref{lem5.1} and (\ref{muannlusP3}), we have that for any $n\in\N$, any $\x\in K$ and any $\varrho>0$,
 \begin{eqnarray}\label{mubzitn}
\mu\big(B(\x,t_n(\x)+\varrho)\big)&=&\mu\big(B(\x,t_n(\x))\big)+O\Big(\mu\big((\partial B(\x,t_n(\x))_{\varrho}\big)\Big)\nonumber\\
&=&\psi(n)+O\big(\varrho^{\delta}\big)
 \end{eqnarray}
 This together with  (\ref{KIT-nF}), \eqref{diaofkiP2} and the right hand side  of (\ref{relation}) implies that for any $n\in\N$, any $\ell\in\N$ and any $I\in\Sigma^{\ell}$,
 \begin{eqnarray*}
     \mu(K_I\cap\hat{R}_n)&\leq&\mu\left(K_I\cap T^{-n}B(\z_I,t_n(\z_I)+2|K_I|)\right)\nonumber\\[4pt]
     &=&\left(\mu(K_I)+O(\gamma^n)\right)\mu\big(B(\z_I,t_n(\z_I)+2|K_I|)\big)\nonumber\\[4pt]
     &=&\big(\mu(K_I)+O(\gamma^n)\big)\,\big(\psi(n)+O(|K_I|^{\delta})\big)\\[4pt]
&=&\left(\mu(K_I)+O(\gamma^n)\right)\big(\psi(n)+O(\kappa^{\delta\ell})\big)\nonumber\\[4pt]
     &=&\mu(K_I)\big(\psi(n)+O(\kappa^{\delta\ell})\big)+O\big(\gamma^n\big),
 \end{eqnarray*}
 where the big-$O$ constants are independent of $n\in\N$, $\ell\in\N$ and $I\in\Sigma^{\ell}$. On exploiting the left hand side  of (\ref{relation}),  a similar calculation shows that
 \begin{eqnarray*}
\mu(K_I\cap\hat{R}_n)\geq\mu(K_I)\big(\psi(n)+O(\kappa^{\delta\ell})\big)+O\big(\gamma^n\big)
 \end{eqnarray*}
 for any $I\in\Sigma^{\ell}$ and $n\in\N$. Therefore, we conclude that
 \begin{eqnarray*}
\mu(K_I\cap\hat{R}_n)=\mu(K_I)\big(\psi(n)+O(\kappa^{\delta\ell})\big)+O\big(\gamma^n\big),\qquad\forall\, I\in\Sigma^{\ell}.
 \end{eqnarray*}
 On  summing over all $I\in\Sigma^{\ell}$, it follows that
  \begin{eqnarray}\label{muan7.9}
\mu(\hat{R}_n)=\psi(n)+O(\kappa^{\delta\ell}+m^{\ell}\gamma^n),
  \end{eqnarray}
  where $m\in\N$ is the number of elements in the  $C^{1+\alpha}$ conformal IFS $\Phi$ under consideration.
  Now let  $$\ell=\ell(n):=\linte{\frac{n\log(1/\gamma)}{2\log m}}  \,  .   $$  Then $m^{\ell}\gamma^n\asymp\gamma^{n/2}$  and it follows  via  \eqref{muan7.9} that
 $\mu(\hat{R}_n)=\psi(n)+O(\widetilde{\gamma}^n)$, where
 $$\widetilde{\gamma}=\max\big\{\gamma^{1/2},\kappa^{\frac{\delta\log(1/\gamma)}{2\log m}}\big\} \, $$
and thereby completes the proof.
\end{proof}

\begin{lemma}\label{capcapineq}
    Assume the setting in Theorem~\ref{quantrec}. Then there exist $C>0$ and $\eta\in(0,1)$ such that for any pair of positive integers $a<b$, we have
    \begin{eqnarray}\label{keyine}
        \sum_{a\leq n_1<n_2\leq b}\mu(\hat{R}_{n_1}\cap\hat{R}_{n_2})~\leq\sum_{a\leq n_1<n_2\leq b}\mu(\hat{R}_{n_1})\mu(\hat{R}_{n_2})+C\cdot\sum_{n=a}^{b}\big(\mu(\hat{R}_n)+\eta^n\big).
    \end{eqnarray}
\end{lemma}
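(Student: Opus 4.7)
The strategy is to decompose $\mu(\hat R_{n_1}\cap\hat R_{n_2})$ through a cylinder partition at a level $\ell=\ell(n_1,n_2)$ and then perform a two-stage decoupling that combines the cylinder mixing \eqref{KIT-nF} with the ball mixing \eqref{BT-nF}. For each pair $(n_1,n_2)$ with $a\le n_1<n_2\le b$, I first write $\mu(\hat R_{n_1}\cap\hat R_{n_2})=\sum_{I\in\Sigma^\ell}\mu(K_I\cap\hat R_{n_1}\cap\hat R_{n_2})$, choose a base point $\z_I\in K_I$ for each cylinder, and apply Lemma~\ref{ohffs} to obtain the localization $K_I\cap\hat R_{n_j}\subseteq K_I\cap T^{-n_j}B_{j,I}$ with $B_{j,I}:=B(\z_I,t_{n_j}(\z_I)+2|K_I|)$.

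Next, rewriting $T^{-n_1}B_{1,I}\cap T^{-n_2}B_{2,I}=T^{-n_1}(B_{1,I}\cap T^{-(n_2-n_1)}B_{2,I})$ and applying \eqref{KIT-nF} at time $n_1$ followed by \eqref{BT-nF} at time $n_2-n_1$, I obtain per cylinder:
\begin{equation*}
\mu(K_I\cap T^{-n_1}B_{1,I}\cap T^{-n_2}B_{2,I})\le \mu(K_I)\mu(B_{1,I})\mu(B_{2,I})+C\bigl[\mu(K_I)\gamma^{n_2-n_1}\mu(B_{2,I})+\gamma^{n_1}\mu(B_{1,I})\mu(B_{2,I})+\gamma^{n_2}\mu(B_{2,I})\bigr].
\end{equation*}
The annulus estimate \eqref{muannlusP3} from \ref{keyproP3}, together with the Lipschitz property of $t_n$ (Lemma~\ref{lem5.1}), gives $\mu(B_{j,I})=\psi(n_j)+O(\kappa^{\delta\ell})$ uniformly in $I$. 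Summing over $I\in\Sigma^\ell$ (using $\sum_I\mu(K_I)=1$ and $|\Sigma^\ell|=m^\ell$) and then using Lemma~\ref{measan} to exchange $\psi(n_j)\leftrightarrow\mu(\hat R_{n_j})+O(\tilde\gamma^{n_j})$ delivers a per-pair bound $\mu(\hat R_{n_1}\cap\hat R_{n_2})\le\mu(\hat R_{n_1})\mu(\hat R_{n_2})+E(n_1,n_2)$ in which $E(n_1,n_2)$ is a finite sum of terms of the shape $\kappa^{\delta\ell}\psi(n_j)$, $\kappa^{2\delta\ell}$, $\gamma^{n_2-n_1}\psi(n_2)$, $\gamma^{n_1}m^\ell\psi(n_1)\psi(n_2)$, $\gamma^{n_2}m^\ell\psi(n_2)$, and $\tilde\gamma^{n_j}\mu(\hat R_{n_{3-j}})$.

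For the choice of level I will take a small $c>0$ with $m^c\gamma<1$ and set $\ell(n_1,n_2):=\lfloor c\min(n_1,n_2-n_1)\rfloor$. Then $\gamma^{n_1}m^\ell\le(m^c\gamma)^{n_1}=:\eta_0^{n_1}$ and $\kappa^{\delta\ell}\le\eta_1^{\min(n_1,n_2-n_1)}$ with $\eta_1:=\kappa^{\delta c}\in(0,1)$, both exponentially small. Each summand of $E(n_1,n_2)$ then falls into one of two templates: either $\eta^{n_2-n_1}$ times $\mu(\hat R_{n_j})$ or $\psi(n_j)$, which sums to $C\sum_n\mu(\hat R_n)+C$ via the identity $\sum_{n_1<n_2}\eta^{n_2-n_1}\mu(\hat R_{n_2})\le\frac{\eta}{1-\eta}\sum_{n_2}\mu(\hat R_{n_2})$; or $\eta^{n_1}\psi(n_1)\psi(n_2)$, which sums to $\bigl(\sum_{n_1}\eta^{n_1}\bigr)\bigl(\sum_{n_2}\psi(n_2)\bigr)\le C\sum_n(\mu(\hat R_n)+\tilde\gamma^n)$. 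Setting $\eta:=\max(\eta_0,\eta_1,\tilde\gamma)$ packages the conclusion.

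The principal obstacle is handling the cross-product remainder $O(\kappa^{2\delta\ell})$, which carries no $\psi$-factor and which, if summed bluntly over pairs, would produce $O(b-a)$ rather than $O(\sum_n\mu(\hat R_n)+1)$. I will dispose of it by a dichotomy: if $\psi(n_j)\ge\kappa^{\delta\ell}$ for both $j=1,2$, then $\kappa^{2\delta\ell}\le\kappa^{\delta\ell}\psi(n_j)$ and the term folds into the already-treated $\kappa^{\delta\ell}\psi(n_j)$ family; if instead $\psi(n_{j_0})<\kappa^{\delta\ell}$ for some $j_0$, then $\mu(\hat R_{n_{j_0}})\le\psi(n_{j_0})+O(\tilde\gamma^{n_{j_0}})\le\kappa^{\delta\ell}+O(\tilde\gamma^{n_{j_0}})$ is itself exponentially small in $\min(n_1,n_2-n_1)$, so for such pairs the trivial bound $\mu(\hat R_{n_1}\cap\hat R_{n_2})\le\mu(\hat R_{n_{j_0}})$ contributes only to the $\sum_n(\mu(\hat R_n)+\eta^n)$ budget after the geometric-series argument applied in the index $\min(n_1,n_2-n_1)$.
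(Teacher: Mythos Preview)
Your per-pair decoupling is sound, but the choice $\ell=\lfloor c\min(n_1,n_2-n_1)\rfloor$ is too small, and the resulting annulus error $\kappa^{\delta\ell}\asymp\eta_1^{\min(n_1,k)}$ (with $k=n_2-n_1$) produces terms that do not fit either of your two templates. Concretely, expanding $(\psi(n_1)+O(\kappa^{\delta\ell}))(\psi(n_2)+O(\kappa^{\delta\ell}))$ gives a cross term $\kappa^{\delta\ell}\psi(n_1)$; when $k>n_1$ this equals $\eta_1^{n_1}\psi(n_1)$, which carries no decay in $k$, so summing over $a\le n_1<n_2\le b$ contributes at least $\sum_{n_1}(b-2n_1)_+\,\eta_1^{n_1}\psi(n_1)$. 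With $\psi(n)=1/n$ and $a=1$, the $n_1=1$ term alone is of order $b$, while the right-hand side of \eqref{keyine} is of order $\log b$. Your dichotomy for $\kappa^{2\delta\ell}$ has the same defect: the ``good'' branch reduces $\kappa^{2\delta\ell}$ to $\kappa^{\delta\ell}\psi(n_j)$, i.e.\ to the uncontrolled term just identified; and in the ``bad'' branch the trivial bound only gives $\mu(\hat R_{n_{j_0}})\le\eta_1^{\min(n_1,k)}+C\widetilde\gamma^{\,n_{j_0}}$, whose double sum is again of order $b-a$.

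The paper's proof repairs this by arranging that the annulus error is always $\eta^{n_2}$-small rather than $\eta^{\min(n_1,k)}$-small, so that every error factor pairs with something summable over the complementary index. It splits into two regimes. When $k$ is small compared to $n_1$ (so $n_2\asymp n_1$), a single level $\ell_2\asymp n_1\asymp n_2$ already gives $\kappa^{\delta\ell_2}=O(\eta^{n_2})$. When $k$ is large compared to $n_1$, a single level cannot work (one cannot have both $m^{\ell}\gamma^{n_1}$ small and $\kappa^{\delta\ell}=O(\eta^{n_2})$); instead the paper first covers the inner ball $B(\z_I,t_{n_1}(\z_I)+2|K_I|)$ by cylinders $K_J$ at a fine level $\ell_1\asymp n_2\ge n_1$ and decouples at time $n_2$ via Lemma~\ref{lemtwoseccylin} (error $m^{2\ell_1}\gamma^{n_2}$), and only then re-covers the residual set $\bigcup_{I,J}K_I\cap T^{-n_1}K_J$ at a coarser level $\ell_2\asymp n_1$ to decouple at time $n_1$. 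This two-level manoeuvre is precisely what is missing from your argument.
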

\begin{proof}
     Let $\gamma\in(0,1)$ be as in \ref{toprodecayP1} and $m\in\N$ be the number of elements in  conformal IFS $\Phi$. Let $$M:=\frac{\log(1/\gamma)}{4\log m} \, .$$ Without the loss of generality, we may assume that $M<1$ since  $\gamma$ can be taken to be as close to $1$ as we wish. For any $I\in\Sigma^*$, fix a point $\z_I\in K_I$. To ease notation, we write \[B_p(I)\equiv B(\z_I,t_p(\z_I)+2|K_I|)\] for any $I\in\Sigma^*$ and $p\in\N$. Let $\delta>0$ be as in \ref{keyproP3}. Then it follows from  \eqref{diaofkiP2} and \eqref{mubzitn} that
\begin{eqnarray}\label{mubpipsip}
\mu(B_p(I))=\psi(p)+O(\kappa^{\delta|I|})\,.
     \end{eqnarray}

     Given positive integers $n$, $k\in\N$, the overarching goal is to obtain a sharp  upper bound for  $\mu(\hat{R}_n\cap\hat{R}_{n+k})$. With this in mind,  we start by observing the right hand side of (\ref{relation}) implies that for any $I\in\Sigma^*$ and any $n,k\in\N$,
    \begin{eqnarray}\label{twocap}
K_I\cap\hat{R}_n\cap\hat{R}_{n+k}\subseteq K_I\cap T^{-n}B_n(I)\cap T^{-(n+k)}B_{n+k}(I).
    \end{eqnarray}
    We proceed by  considering two cases depending on the size of $k$.

\noindent$\bullet$ \emph{Estimating  $\mu(\hat{R}_n\cap\hat{R}_{n+k})$ when $k>\frac{n+1}{M}$. } Let $$\ell_1=\ell_1(n,k):=\linte{(n+k)M}  \, . $$ It is easily verified that $$n<\ell_1   \quad  {\rm and}  \quad   m^{2\ell_1}\asymp \gamma^{-\frac{n+k}{2}}   \, .$$
For any $p\in\N$ and $I\in\Sigma^{*}$, let
\begin{eqnarray*}
    \cJ(p,I)&:=&\left\{J\in\Sigma^{|I|}:K_J\cap B_p(I) \neq\emptyset\right\}.
\end{eqnarray*}
Then by (\ref{twocap}),  for any $I\in\Sigma^{\ell_1}$ we have that
\begin{eqnarray}
\mu\left(K_I\cap\hat{R}_n\cap\hat{R}_{n+k}\right)&\leq&\mu\left(K_I\cap T^{-n}B_n(I)\cap T^{-(n+k)}B_{n+k}(I)\right)\nonumber\\[4pt]
&\leq&\sum_{J\in\cJ(n,I)}\mu\left(K_I\cap T^{-n}K_J\cap T^{-(n+k)}B_{n+k}(I)\right)\label{eas2cap}.
\end{eqnarray}
  Then  Lemma~ \ref{lemtwoseccylin} together with \eqref{mubpipsip},  (\ref{eas2cap})  and the fact that $n<\ell_1$,  implies that
\begin{eqnarray*}
\mu\left(K_I\cap\hat{R}_n\cap\hat{R}_{n+k}\right)&\leq&\sum_{J\in\cJ(n,I)}\left(\mu(K_I\cap T^{-n}K_J)+O(\gamma^{n+k})\right)\mu(B_{n+k}(I))\nonumber\\
&=&\sum_{J\in\cJ(n,I)}\left(\mu(K_I\cap T^{-n}K_J)+O(\gamma^{n+k})\right)\left(\psi(n+k)+O\big(\kappa^{\delta\ell_1}\big)\right).
\end{eqnarray*}
Then on summing over $I\in\Sigma^{\ell_1}$ and using the fact that $m^{2\ell_1}\asymp\gamma^{-\frac{n+k}{2}}$, it follows  that
\medskip
\begin{eqnarray}
~ \hspace*{-5ex} \mu(\hat{R}_n\cap\hat{R}_{n+k})\nonumber\\[4pt]
&~&   \hspace*{-10ex}  \le \left(\sum_{I\in\Sigma^{\ell_1}}\sum_{J\in\cJ(n,I)}\mu(K_I\cap T^{-n}K_J)+O(m^{2\ell_1}\gamma^{n+k})\right)\Big(\psi(n+k)+O(\kappa^{\delta\ell_1})\Big)\nonumber\\[4pt]
&~&   \hspace*{-10ex}  = \left(\mu\Big(\bigcup_{I\in\Sigma^{\ell_1}}\bigcup_{J\in\cJ(n,I)}(K_I\cap T^{-n}K_J)\Big)+O(\gamma^{\frac{n+k}{2}})\right)\Big(\psi(n+k)+O(\kappa^{\delta\ell_1})\Big).
\label{meaofnk}
\end{eqnarray}
For the moment, we focus on estimating the  $\mu$-measure term appearing within the first bracket in (\ref{meaofnk}). Let $$\ell_2=\ell_2(n):=\linte{2nM}  \, . $$ It can be verified that $$\ell_1>\ell_2  \quad {\rm and }  \quad m^{\ell_2}\gamma^n\asymp\gamma^{-n/2}$$
Let $C_3>0$ be the constant in \ref{toprovediamP2}. Then, for any $I\in\Sigma^{\ell_1}$, by the definition of $\cJ(n, I)$ and   the inequality (\ref{diaofkiP2}), we have
\begin{eqnarray}\label{inclukij}
    \bigcup_{J\in\cJ(n,I)}(K_I\cap T^{-n}K_J)~\subseteq~ K_I\cap T^{-n}B(\z_I,t_n(\z_I)+3C_3\kappa^{\ell_1}).
\end{eqnarray}
Given any positive integers $p\leq q$ and any $I=i_1\cdot\cdot\cdot i_{q}\in\Sigma^{q}$, let
    $I_{p}:=i_1\cdot\cdot\cdot i_{p}$. With this notation in minf, if $\x$ is in the set on the right-hand-side of (\ref{inclukij}), then $\x\in K_{I_{\ell_2}}$ (since $\ell_1>\ell_2$) and
\begin{eqnarray*}
    |T^n\x-\z_{I_{\ell_2}}|&\leq&|T^n\x-\z_{I}|+|\z_{I}-\z_{I_{\ell_2}}|\\[1ex]
    &\leq&t_n(\z_I)+3C_3\kappa^{\ell_1}+|K_{I_{\ell_2}}|\\[1ex]
&\leq&t_n(\z_I)+3C_3\kappa^{\ell_1}+C_3\kappa^{\ell_2}\\[1ex]
&\leq&t_n(\z_{I_{\ell_2}})+|\z_{I_{\ell_2}}-\z_I|+4C_3\kappa^{\ell_2} \quad (\text{by Lemma~\ref{lem5.1} })\\[1ex]
&\leq&t_n(\z_{I_{\ell_2}})+5C_3\kappa^{\ell_2}.
\end{eqnarray*}
The upshot of above is that
\medskip
\begin{eqnarray}\label{cupcupkitnkj}
\bigcup_{I\in\Sigma^{\ell_1}}\bigcup_{J\in\cJ(n,I)}(K_I\cap T^{-n}K_J)~\subseteq~\bigcup_{I\in\Sigma^{\ell_2}}\big(K_I\cap T^{-n}B(\z_I,t_n(\z_I)+5C_3\kappa^{\ell_2})\big),
\end{eqnarray}
\medskip
 Then by (\ref{KIT-nF}), \eqref{mubzitn},  \eqref{cupcupkitnkj}, and the fact that $m^{\ell_2}\gamma^n\asymp\gamma^{-n/2}$,it follows that
\begin{eqnarray}
\mu\left(\bigcup_{I\in\Sigma^{\ell_1}}\bigcup_{J\in\cJ(n,I)}(K_I\cap T^{-n}K_J)\right)&\leq&\sum_{I\in\Sigma^{\ell_2}}\mu\big(K_I\cap T^{-n}B(\z_I,t_n(\z_I)+5C_3\kappa^{\ell_2})\big)\nonumber\\[1ex]
&=&\sum_{I\in\Sigma^{\ell_2}}\left(\mu(K_I)+O(\gamma^n)\right)\,\mu\big(B(\z_I,t_n(\z_I)+5C_3\kappa^{\ell_2})\big)\nonumber\\[1ex]
&=&\sum_{I\in\Sigma^{\ell_2}}\left(\mu(K_I)+O(\gamma^n)\right)\left(\psi(n)+O(\kappa^{\delta\ell_2})\right)\nonumber\\[1ex]
&=&\left(1+O(m^{\ell_2}\gamma^n)\right)\left(\psi(n)+O(\kappa^{\delta\ell_2})\right)\nonumber\\[1ex]
&=&\left(1+O(\gamma^{n/2})\right)\left(\psi(n)+O(\kappa^{\delta\ell_2})\right).\label{cupcup}
\end{eqnarray}
Let $\widetilde{\gamma}\in(0,1)$ be as in Lemma~\ref{measan}. Now feeding  (\ref{cupcup}) into (\ref{meaofnk}) and then using Lemma~\ref{measan}, we find that
\begin{eqnarray}
    \mu(\hat{R}_n\cap\hat{R}_{n+k})&\leq&\left(\psi(n)+O(\gamma^{n/2}+\kappa^{\delta\ell_2})\right)\cdot\left(\psi(n+k)+O(\kappa^{\delta\ell_1})\right)\nonumber\\[2ex]
    &=&\left(\mu(\hat{R}_{n})+O(\gamma_1^n)\right)\cdot\big(\mu(\hat{R}_{n+k})+O(\gamma_1^{n+k})\big),\label{capcapkln}
\end{eqnarray}
where $\gamma_1:=\max\{\widetilde{\gamma},\gamma^{1/2},\kappa^{\delta M}\}\in(0,1)$.

\medskip

\noindent$\bullet$ \emph{Estimating  $\mu(\hat{R}_n\cap\hat{R}_{n+k})$ when $1 \le k \le \frac{n+1}{M}$. }   Recall that $\ell_2=\ell_2(n):=\linte{2nM}$ and   $m^{\ell_2}\gamma^n\asymp\gamma^{-n/2}$.  So given the range of $k$ under consideration, it follows   that
 \begin{eqnarray}\label{ell2compnk}
     \ell_2=\frac{2M^2}{M+1}(n+k)+O(1)\,.
 \end{eqnarray}
  For any $n,k\in\N$ with $1 \le k\le\frac{n+1}{M}$ and any $I\in\Sigma^{\ell_2}$, by (\ref{KIT-nF}), (\ref{BT-nF}) \eqref{mubpipsip} and (\ref{twocap}),  we have that
\begin{eqnarray*}
    \mu\left(K_I\cap\hat{R}_n\cap\hat{R}_{n+k}\right)&\leq&\mu\left(K_I\cap T^{-n}B_n(I)\cap T^{-(n+k)}B_{n+k}(I)\right)\\[1ex]
    &=&\left(\mu(K_I)+O(\gamma^n)\right)\cdot\mu\left(B_n(I)\cap T^{-k}B_{n+k}(I))\right)\\[1ex]
    &=&\left(\mu(K_I)+O(\gamma^n)\right)\cdot\left(\mu(B_n(I))+O(\gamma^k)\right)\cdot\mu(B_{n+k}(I))\\[1ex]
    &=&\left(\mu(K_I)+O(\gamma^n)\right)\cdot\left(\psi(n)+O(\kappa^{\delta\ell_2}+\gamma^k)\right)\cdot\left(\psi(n+k)+O(\kappa^{\delta\ell_2})\right).
\end{eqnarray*}
On summing over $I\in\Sigma^{\ell_2}$,  we obtain that
\medskip
\begin{eqnarray}    \mu(\hat{R}_{n}\cap\hat{R}_{n+k})
&\leq&\left(1+O(m^{\ell_2}\gamma^n)\right)\cdot\left(\psi(n)+O(\kappa^{\delta\ell_2}+\gamma^k)\right)\cdot\left(\psi(n+k)+O(\kappa^{\delta\ell_2})\right)\nonumber\\[1ex]
&=&\left(1+O(\gamma^{n/2})\right)\cdot\left(\psi(n)+O(\kappa^{\delta\ell_2}+\gamma^k)\right)\cdot\left(\psi(n+k)+O(\kappa^{\delta\ell_2})\right)\nonumber\\[1ex]
&=&\left(\psi(n)+O(\gamma^{n/2}+\kappa^{\delta\ell_2}+\gamma^k)\right)\cdot\left(\psi(n+k)+O(\kappa^{\delta\ell_2})\right)\nonumber\\[1ex]
&=&\left(\mu(\hat{R}_n)+O(\widetilde{\gamma}^n+\gamma^{n/2}+\kappa^{\delta\ell_2}+\gamma^k)\right)\cdot\left(\mu(\hat{R}_{n+k})+O(\widetilde{\gamma}^{n+k}+\kappa^{\delta\ell_2})\right)\nonumber\\[1ex]
&=&\left(\mu(\hat{R}_n)+O(\widetilde{\gamma}^{kM}+\gamma^{kM/2}+\kappa^{\delta\cdot\frac{2M^2}{M+1}(n+k)}+\gamma^k)\right)\nonumber\\[1ex]
&~&\times\left(\mu(\hat{R}_{n+k})+O(\widetilde{\gamma}^{n+k}+\kappa^{\delta\cdot\frac{2M^2}{M+1}(n+k)})\right)\nonumber\\[1ex]
&=&\left(\mu(\hat{R}_n)+O\big(\gamma_2^k\big)\right)\cdot\left(\mu(\hat{R}_{n+k})+O\big(\gamma_2^{n+k}\big)\right),\label{capcapkgn}
\end{eqnarray}
where $$\gamma_2:=\max\{\widetilde{\gamma}^M,\gamma^{\min\{1,M/2\}},\kappa^{2\delta M^2/(M+1)}\}\in(0,1)  \,. $$

\noindent In the above we  use the fact that $m^{\ell_2}\asymp\gamma^{-n/2}$ to go from the first to second line.  Then we  use Lemma \ref{measan} to from the third to the fourth line and  finally we  use  \eqref{ell2compnk} and the fact that $n\geq kM-1$  to go from the fourth to the fifth line.
\bigskip

Everything is now in place to prove  the desired pairwise independent on average inequality (\ref{keyine}).     For any $n\in\N$, let
\[
    \cF_1(n):=\left[1,\frac{n+1}{M}\right]\cap\N    \quad  {\rm and }  \quad   \cF_2(n):=\left(\frac{n+1}{M},+\infty\right)\cap\N.
\]
Then for any pair of positive integers $a,b\in\N$ with $a<b$, by (\ref{capcapkln}) and (\ref{capcapkgn}), it follows that
\begin{eqnarray*}
    \sum_{a\leq n_1<n_2\leq b}\mu(\hat{R}_{n_1} \hspace*{-5ex} &~& \cap  \  \hat{R}_{n_2})  \ =  \  \sum_{n=a}^{b-1}\sum_{k=1}^{b-n}\mu(\hat{R}_{n}\cap\hat{R}_{n+k})\\[1ex]
    &=&\sum_{n=a}^{b-1}~\sum_{k\in[1,b-n]\cap\cF_1(n)}\mu(\hat{R}_{n}\cap\hat{R}_{n+k})+\sum_{n=a}^{b-1}~\sum_{k\in[1,b-n]\cap\cF_2(n)}\mu(\hat{R}_{n}\cap\hat{R}_{n+k})
    \\[2ex]
    &\leq&\sum_{n=a}^{b-1}~\sum_{k\in[1,b-n]\cap\cF_1(n)}\left(\mu(\hat{R}_n)+O\big(\gamma_2^k\big)\right)\cdot\left(\mu(\hat{R}_{n+k})+O\big(\gamma_2^{n+k}\big)\right)\\[1ex]
    &~& \qquad +  \ \sum_{n=a}^{b-1}~\sum_{k\in[1,b-n]\cap\cF_2(n)}\left(\mu(\hat{R}_{n})+O(\gamma_1^n)\right)\cdot\big(\mu(\hat{R}_{n+k})+O(\gamma_1^{n+k})\big)\\[2ex]
    &=&\sum_{a\leq n_1<n_2\leq b}\mu(\hat{R}_{n_1})\mu(\hat{R}_{n_2})+O\left(\sum_{n=a}^{b}\big(\mu(\hat{R}_n)+\eta^n\big)\right),
\end{eqnarray*}
where $\eta=\max\{\gamma_1,\gamma_2\}\in(0,1)$. The proof is complete.
\end{proof}

We are now in a position to prove Theorem~\ref{quantrec} utilizing Lemma~\ref{countlem}.
   Let $\eta\in(0,1)$ be as in Lemma~\ref{capcapineq}. We shall use Lemma~\ref{countlem} with $X=K$ and
    \[
        f_n(\x)=\one_{\hat{R}_n}(\x), \quad f_n=\mu(\hat{R}_n),  \quad \phi_n=\mu(\hat{R}_n)+\eta^n   \qquad (n=1,2,...).
    \]
    By 
    Lemma~\ref{capcapineq}, there exist a  constant $C>0$  such that for any positive integers $a<b$, we have
    \begin{eqnarray*}
        \int_K~\left(\sum_{n=1}^b\big(\one_{\hat{R}_n}(\x)   -  \mu(\hat{R}_n)\big)\right)^2\td\mu(\x)  & = & \sum_{a\leq n_1\leq n_2\leq b}\mu(\hat{R}_{n_1}\cap\hat{R}_{n_2})-\left(\sum_{n=a}^b\mu(\hat{R}_n)\right)^2\\[2ex]
         &~\hspace{-35ex}=&  ~\hspace{-17ex}   \sum_{n=a}^b\mu(\hat{R}_n)+2\sum_{a\leq n_1<n_2\leq b}\mu(\hat{R}_{n_1}\cap\hat{R}_{n_2})\\[1ex]
        &~&-\sum_{n=1}^b\mu(\hat{R}_n)^2-2\sum_{a\leq n_1<n_2\leq b}\mu(\hat{R}_{n_1})\mu(\hat{R}_{n_2})\\[2ex]
         &~\hspace{-35ex}  \le &  ~\hspace{-17ex}  \sum_{n=a}^b\mu(\hat{R}_n)+2\sum_{a\leq n_1<n_2\leq b}\mu(\hat{R}_{n_1})\mu(\hat{R}_{n_2})\\[1ex]
        &~&+~C\sum_{n=a}^b\big(\mu(\hat{R}_n)+\eta^n\big)-\sum_{n=1}^b\mu(\hat{R}_n)^2\\[1ex]
        &~&-~2\sum_{a\leq n_1<n_2\leq b}\mu(\hat{R}_{n_1})\mu(\hat{R}_{n_2})\\[2ex]
       &~\hspace{-35ex}=&  ~\hspace{-17ex} \sum_{n=a}^b\mu(\hat{R}_n)-\sum_{n=1}^b\mu(\hat{R}_n)^2+C\sum_{n=a}^b\big(\mu(\hat{R}_n)+\eta^n\big)\\
       &~\hspace{-35ex}\le&  ~\hspace{-17ex} (1+C)\sum_{n=a}^{b}\big(\mu(\hat{R}_n)+\eta^n\big).
    \end{eqnarray*}
    By Lemma~\ref{countlem}, for any given $\epsilon>0$, we have
    \begin{eqnarray}\label{resquant}
        \sum_{n=1}^{N}\one_{B(\x,t_n(\x))}(T^n\x)=\sum_{n=1}^N\one_{\hat{R}_n}(\x)=\Psi(N)+O\left(\Psi(N)^{1/2}\log^{\frac{3}{2}+\epsilon}(\Psi(N))\right)
    \end{eqnarray}
    for $\mu$-almost every $\x\in K$, where $\displaystyle\Psi(N):=\sum_{n=1}^N\big(\mu(\hat{R}_n)+\eta^n\big)$. However, by Lemma~\ref{measan}, we have $$\Psi(N)=\sum_{n=1}^N\psi(n)+O(1).$$ So  the term $\Psi(N)$ in (\ref{resquant}) can be replaced by the summation $\sum_{n=1}^N\psi(N)$. This completes the proof of Theorem~\ref{quantrec}.

\bigskip

\subsection{Proof of Theorem~\ref{toprove} }\label{apptoprove} ~

Although the proof of Theorem~\ref{toprove} follows the same line of attack as that used in establishing Theorem~\ref{quantrec}, it is more involved chiefly due to the fact that the asymptotic behaviour of the counting function is dependant on $\x \in K$.   Indeed,   the quantitative form of the Borel-Cantelli Lemma (i.e. Lemma~\ref{countlem}),  which  is a key ingredient in the proof of  Theorem~\ref{quantrec},  is not applicable as it stands.    In short  we need to work with  more versatile form  in which the  sequence $\{f_n\}_{n \in \N}$ in Lemma~\ref{countlem} is allowed to  depend on $\x \in K$.

\begin{lemma}\label{genharmanlem}
    Let $(X,\cB,\mu)$ be a probability space. let $\{f_n(x)\}_{n\in\N}$ and $\{g_n(x)\}_{n\in\N}$ be  sequences of measurable functions on $X$, and  let $\{\phi_n\}_{n\in\N}\subseteq\R$ be a sequence of real numbers. Suppose that
    \begin{eqnarray}\label{nonneg}
        0\leq g_n(x)\leq \phi_n,\,\,\,\,\,\,\,\,\forall\,n\in\N,\,\,\,\,\forall\,x\in X
    \end{eqnarray}
    and that there exists $C>0$ with which
    \begin{eqnarray}\label{keycont}
        \int_{X}\left(\sum_{n=a}^b(f_n(x)-g_n(x))\right)^2\,\td\mu(x)\leq C\sum_{n=a}^b\phi_n
    \end{eqnarray}
    for any pair of integers $0<a<b$. Then for any $\epsilon>0$, we have
    \begin{eqnarray}\label{quantestim}
        \sum_{n=1}^N f_n(x)=\sum_{n=1}^N g_n(x)+O\left(\Psi(N)^{\frac{1}{2}}(\log(\Psi(N)))^{\frac{3}{2}+\epsilon}+\max_{1\leq k\leq N}g_k(x)\right)
    \end{eqnarray}
    for $\mu$-almost every $x\in X$, where $\displaystyle\Psi(N):=\sum_{n=1}^N\phi_n$.
\end{lemma}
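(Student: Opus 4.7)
The proof plan is to mimic, essentially verbatim, the Gal--Koksma / Rademacher--Menshov style variance argument that underlies the standard quantitative Borel--Cantelli lemma (Lemma~\ref{countlem}). The key conceptual observation is that the proof of Lemma~\ref{countlem} exploits the deterministic centering $\{f_n\}$ only through an $L^2$-bound on the partial sums of $\{f_n(x)-f_n\}$; here the analogous role is played by the hypothesis \eqref{keycont}, and the fact that the centering $g_n(x)$ depends on $x$ is immaterial for the argument.

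Concretely, I set $S_N(x) := \sum_{n=1}^N\bigl(f_n(x)-g_n(x)\bigr)$, so that the target conclusion \eqref{quantestim} is equivalent to the pointwise statement
\[
|S_N(x)| \;=\; O\!\left(\Psi(N)^{1/2}(\log\Psi(N))^{3/2+\epsilon}+\max_{1\le k\le N}g_k(x)\right)\qquad(\mu\text{-a.e.}~x).
\]
Hypothesis \eqref{keycont}, applied to the range $[a,b]$, says precisely that $\|S_b-S_{a-1}\|_{L^2(\mu)}^2 \le C\,(\Psi(b)-\Psi(a-1))$; this is the only property of the process $\{S_N\}$ that the argument will use. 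I would then promote this $L^2$-bound to a Rademacher--Menshov type maximal inequality of the form
\[
\int_X\max_{a\le N\le b}\bigl(S_N(x)-S_{a-1}(x)\bigr)^2\td\mu(x)\;\ll\;\bigl(\log_2(b-a+2)\bigr)^2\,\bigl(\Psi(b)-\Psi(a-1)\bigr),
\]
which is obtained in the standard way by splitting $[a,b]$ dyadically and iterating the $L^2$ triangle inequality on sub-blocks.

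Next I would select the subsequence $n_0:=0$ and $n_k$ the least integer with $\Psi(n_k)\ge 2^k$, and apply Chebyshev's inequality to the events $\bigl\{\max_{n_k\le N\le n_{k+1}}|S_N-S_{n_k}|>2^{k/2}k^{1+\epsilon/2}\bigr\}$. The maximal inequality bounds their $\mu$-measures by a summable quantity $\ll k^{-(2+\epsilon)}$, so that the Borel--Cantelli lemma yields, $\mu$-a.e., a block-wise estimate of order $2^{k/2}k^{3/2+\epsilon}$ for all sufficiently large $k$. Telescoping across the blocks produces the main term $\Psi(N)^{1/2}(\log\Psi(N))^{3/2+\epsilon}$ in \eqref{quantestim}.

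The one genuinely delicate point -- and the origin of the additive term $\max_{1\le k\le N}g_k(x)$ -- is the possibility that some single $\phi_{n_{k+1}}$ is disproportionately large relative to $2^k$, forcing the dyadic block $(n_k,n_{k+1}]$ to degenerate to the singleton $\{n_{k+1}\}$. In that regime the maximal estimate offers no useful bound and one must handle the isolated increment $f_{n_{k+1}}(x)-g_{n_{k+1}}(x)$ by hand, using only $0\le g_{n_{k+1}}(x)\le\phi_{n_{k+1}}$ and the trivial bound $f_{n_{k+1}}(x)\le \sum_{n=1}^N f_n(x)$, together with a Borel--Cantelli argument on the (sparse) set of $k$ for which this occurs. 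Carefully book-keeping these exceptional indices is the main obstacle to a clean write-up; once it is dispatched -- exactly as in the proof of Lemma~\ref{countlem} -- the argument concludes.
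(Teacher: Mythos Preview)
Your broad strategy---an $L^2$/variance argument combined with Borel--Cantelli---is correct and is what the paper does. However, your implementation has a genuine gap, and it is not the ``delicate point'' you flag.

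The issue is the logarithmic factor in your Rademacher--Menshov maximal inequality: it is $\bigl(\log_2(b-a+2)\bigr)^2$, the log of the \emph{number of indices}, not of the $\Psi$-range. When you apply Chebyshev on the block $(n_k,n_{k+1}]$, the bound you obtain is
\[
\mu\Bigl(\max_{n_k<N\le n_{k+1}}|S_N-S_{n_k}|>2^{k/2}k^{1+\epsilon/2}\Bigr)\;\ll\;\frac{\bigl(\log_2(n_{k+1}-n_k+2)\bigr)^{2}\,\bigl(\Psi(n_{k+1})-\Psi(n_k)\bigr)}{2^{k}\,k^{2+\epsilon}}.
\]
The $\Psi$-increment is indeed $\asymp 2^k$ (modulo the large-jump issue you note), but nothing controls $n_{k+1}-n_k$: the hypotheses impose no lower bound on $\phi_n$, so this gap can be enormous (e.g.\ $n_{k+1}-n_k=2^{2^k}$), and the displayed quantity is then not summable in $k$. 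The regime that breaks your argument is thus the \emph{opposite} of the one you single out---many tiny $\phi_n$'s, not one huge one.

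The paper's remedy is to dyadically decompose the $\Psi$-range rather than the index range. One sets $n_j:=\max\{k:\Psi(k)\le j\}$ and, for each dyadic interval $I\subseteq(0,2^r]$, defines $F(I,x):=\sum_{k:\Psi(k)\in I}(f_k(x)-g_k(x))$. Then $G(r,x):=\sum_{I\in L_r}|F(I,x)|^2$ satisfies $\int G(r,x)\,\td\mu\le C(r+1)2^r$, and Borel--Cantelli gives $G(r,x)\le 2^r r^{2+2\epsilon}$ a.e.\ for large $r$. For any $j$, the binary expansion of $j$ yields a partition of $(0,j]$ into at most $\log_2 j$ dyadic intervals, whence Cauchy--Schwarz delivers $|S_{n_j}(x)|\ll \Psi(n_j)^{1/2}(\log\Psi(n_j))^{3/2+\epsilon}$---with the correct logarithm automatically. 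No maximal inequality is needed. The extension from $N=n_j$ to general $N\in(n_j,n_{j+1})$ is then a one-line interpolation using $0\le g_n\le\phi_n$ together with $\Psi(n_{j+1})-\Psi(n_j)\le 1+\phi_{n_j+1}$; this is where the additive $\max_k \phi_k$ (equivalently $\max_k g_k(x)$) term arises, and it is a short bookkeeping step rather than the main obstacle.
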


In the case $\{g_n(x)\}_{n\in\N}$ is a sequence independent of $x$, the above lemma coincides with Lemma~\ref{countlem}. The proof of Lemma~\ref{genharmanlem} essentially follows that of Lemma~\ref{countlem} with natural modification.  For completeness, the proof is given in Appendix~\ref{HJ}.

Let $\psi:\R\to   \R_{\ge 0} $ be a real positive function such that $\psi(x)\to0$  as $x\to\infty$.    For any $n\in\N$, as in the introduction (see \eqref{anrpsidef}), let
\[
R_n(\psi)\,:=\,\left\{\x\in K:T^n\x\in B(\x,\psi(n))\right\}.
\]
Let $r_0$ be as in \ref{keyproP3}. Then, without the loss of generality, we may assume that
\begin{eqnarray*}\label{uptnB}
   \psi(n) \leq \min\{r_0,1\},\,\,\,\,\forall\, n\geq1  ,
\end{eqnarray*}
so that we can use (\ref{muannlusP3}) freely.  The following is the analogue of Lemma~\ref{ohffs} and allows us to ``locally'' represent $R_n$ as the inverse image of a ball.

\begin{lemma}\label{relalemtoprove}
     For each $I\in\Sigma^*$, fix $\z_I\in K_I$. Then for any $n\in\N$ and any $I\in\Sigma^*$, we have that
    \begin{equation*}
K_I\cap T^{-n}\big(B(\z_I,(\psi(n)-|K_I|)_+  )\big)    \ \subseteq \  K_I\cap R_n(\psi) \,    \subseteq \ K_I\cap T^{-n}\big(B(\z_I,\psi(n)+|K_I|)\big) \, .
    \end{equation*}

\end{lemma}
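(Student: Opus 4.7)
The plan is to mirror the proof of Lemma~\ref{ohffs}, but the argument is in fact simpler here because $\psi(n)$ is independent of the base point, so no Lipschitz estimate (no analogue of Lemma~\ref{lem5.1}) is required; only the triangle inequality together with the fact that both $\x$ and $\z_I$ lie in $K_I$, hence $|\x-\z_I|\le |K_I|$.

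For the right-hand inclusion, I would take $\x\in K_I\cap R_n(\psi)$. Then $|T^n\x-\x|<\psi(n)$ by definition of $R_n(\psi)$, and $|\x-\z_I|\le|K_I|$ since $\x,\z_I\in K_I$. The triangle inequality then yields
\[
|T^n\x-\z_I|\;\le\;|T^n\x-\x|+|\x-\z_I|\;<\;\psi(n)+|K_I|,
\]
so $T^n\x\in B(\z_I,\psi(n)+|K_I|)$, and hence $\x\in K_I\cap T^{-n}B(\z_I,\psi(n)+|K_I|)$, as required.

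For the left-hand inclusion, if $\psi(n)\le|K_I|$ then $(\psi(n)-|K_I|)_+=0$ and the open ball is empty, so there is nothing to prove. Otherwise, take $\x\in K_I\cap T^{-n}B(\z_I,\psi(n)-|K_I|)$. Then $|T^n\x-\z_I|<\psi(n)-|K_I|$ and, again using $|\z_I-\x|\le|K_I|$, the triangle inequality gives
\[
|T^n\x-\x|\;\le\;|T^n\x-\z_I|+|\z_I-\x|\;<\;\bigl(\psi(n)-|K_I|\bigr)+|K_I|\;=\;\psi(n),
\]
so $T^n\x\in B(\x,\psi(n))$, i.e.\ $\x\in R_n(\psi)$. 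This establishes both inclusions. There is no real obstacle: the only point worth flagging is the degenerate case $\psi(n)\le|K_I|$ in the lower inclusion, which is handled trivially by the $(\cdot)_+$ convention making the relevant ball empty.
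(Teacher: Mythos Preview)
Your proof is correct and is precisely the approach the paper has in mind: the paper omits the details, noting only that the argument uses the triangle inequality as in the proof of Lemma~\ref{ohffs}. Your observation that the argument is simpler here (no Lipschitz estimate needed because $\psi(n)$ is independent of $\x$) and your handling of the degenerate case via the $(\cdot)_+$ convention are both appropriate.
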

\begin{proof}
    The proof  makes use of the triangle inequality and is similar to   that used    to prove   \eqref{relation}. So we omit the details. 
\end{proof}

The overarching goal is to obtain precise enough estimates on the
  $\mu$-measures of $R_n(\psi)$ and their pairwise intersections, and the integrals of the functions
  \medskip
\begin{equation}
    \label{junjieF}\x\mapsto\one_{R_{n_1}(\psi)}(\x)\cdot\mu(B(\x,\psi(n_ 2)))  \qquad (n_1, n_2  \in \N)
\end{equation}

\noindent   in order to apply Lemma~\ref{genharmanlem} to
prove  Theorem~\ref{toprove}.  We start with dealing with the  $\mu$-measure of $R_n(\psi)$.

\begin{lemma}\label{topromuofrnpsi}
    There exist $C>0$ and $\widetilde{\gamma}\in(0,1)$ such that for any $n\in\N$, we have
    \begin{eqnarray*}
        \left|\mu\big(R_n(\psi)\big)-\int_K\mu\big(B(\x,\psi(n))\big)\,\td\mu(\x)\right|\,\leq\,C\,\widetilde{\gamma}^n.
    \end{eqnarray*}
\end{lemma}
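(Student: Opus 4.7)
My plan is to mimic the proof of Lemma~\ref{measan}: partition $K$ into $\ell$-th generation cylinders $\{K_I:I\in\Sigma^\ell\}$ for a suitably chosen $\ell=\ell(n)$, sandwich $K_I\cap R_n(\psi)$ between preimages of balls centred at representative points $\z_I\in K_I$ via Lemma~\ref{relalemtoprove}, apply the exponential mixing estimate \eqref{KIT-nF} on each cylinder, and then replace the resulting ball measures by the desired quantities using the annulus bound \eqref{muannlusP3}. A linear choice of $\ell$ in $n$ will balance the two sources of error and yield the claimed exponential decay.

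In more detail, summing the sandwich inequality from Lemma~\ref{relalemtoprove} over $I\in\Sigma^\ell$ and then applying \eqref{KIT-nF} in each cylinder gives
\begin{equation*}
\mu(R_n(\psi))\,=\,\sum_{I\in\Sigma^\ell}\mu(K_I)\,\mu\big(B(\z_I,\psi(n)\pm|K_I|)\big)+O(m^\ell\gamma^n),
\end{equation*}
the $O(m^\ell\gamma^n)$ coming from summing the $O(\gamma^n)$ mixing error over the $m^\ell$ cylinders. Since $|K_I|\leq C_3\kappa^\ell$ by \eqref{diaofkiP2} and (without loss of generality) $\psi(n)\leq r_0$ for all $n$, the annulus estimate \eqref{muannlusP3} yields
\begin{equation*}
\mu\big(B(\z_I,\psi(n)\pm|K_I|)\big)\,=\,\mu(B(\z_I,\psi(n)))+O(\kappa^{\delta\ell})
\end{equation*}
uniformly in $I$, so after combining these two identities I obtain
\begin{equation*}
\mu(R_n(\psi))\,=\,\sum_{I\in\Sigma^\ell}\mu(K_I)\,\mu(B(\z_I,\psi(n)))+O(m^\ell\gamma^n+\kappa^{\delta\ell}).
\end{equation*}

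The next step is to compare the leading sum with the target integral. For $\x\in K_I$ one has $|\x-\z_I|\leq|K_I|\leq C_3\kappa^\ell$, and hence a second application of \eqref{muannlusP3} delivers $\mu(B(\x,\psi(n)))=\mu(B(\z_I,\psi(n)))+O(\kappa^{\delta\ell})$ uniformly in $\x\in K_I$. Integrating in $\x\in K_I$ and summing in $I$ will produce
\begin{equation*}
\int_K\mu(B(\x,\psi(n)))\,\td\mu(\x)\,=\,\sum_{I\in\Sigma^\ell}\mu(K_I)\,\mu(B(\z_I,\psi(n)))+O(\kappa^{\delta\ell}),
\end{equation*}
and subtracting the two displays will leave an overall error of $O(m^\ell\gamma^n+\kappa^{\delta\ell})$. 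Choosing $\ell(n):=\lfloor n\log(1/\gamma)/(2\log m)\rfloor$, exactly as in the proof of Lemma~\ref{measan}, makes $m^\ell\gamma^n\asymp\gamma^{n/2}$ and $\kappa^{\delta\ell}\asymp\kappa^{\delta n\log(1/\gamma)/(2\log m)}$ both exponentially small, so setting $\widetilde{\gamma}:=\max\{\gamma^{1/2},\kappa^{\delta\log(1/\gamma)/(2\log m)}\}\in(0,1)$ closes the proof. I do not anticipate a genuine obstacle: the argument is a direct parallel of Lemma~\ref{measan}, the only genuinely new ingredient being the second, $\x$-dependent use of the annulus estimate needed to handle the integrand on the right-hand side.
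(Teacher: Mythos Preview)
Your proposal is correct and follows essentially the same approach as the paper: the paper also partitions into cylinders of generation $q_n:=\lfloor n\log(1/\gamma)/(2\log m)\rfloor$, applies the sandwich from Lemma~\ref{relalemtoprove} followed by \eqref{KIT-nF} on each cylinder, then uses \eqref{muannlusP3} twice (once to remove the $\pm|K_I|$ perturbation of the radius, once to pass from $\z_I$ to a generic $\x\in K_I$), arriving at the same $\widetilde{\gamma}=\max\{\gamma^{1/2},\kappa^{\delta\log(1/\gamma)/(2\log m)}\}$.
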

\begin{proof}
    Let $\gamma\in(0,1)$ be as in \ref{toprodecayP1}, let $\delta>0$ be as in (\ref{muannlusP3}) and let $m\in\N_{\geq2}$ be the number of elements in the conformal IFS $\Phi$. For each $n\in\N$, denote
    \[
    q_n:=\linte{\frac{n\log(1/\gamma)}{2\log m}}.
    \]
    By Lemma~\ref{relalemtoprove}, we obtain that
    \[
    R_n(\psi)\,\subseteq\,\bigcup_{I\in\Sigma^{q_n}}K_I\cap T^{-n}\big(B(\z_I,\psi(n)+|K_I|)\big)
    \]
    and
    \[
    R_n(\psi)\,\supseteq\,\bigcup_{I\in\Sigma^{q_n}}K_I\cap T^{-n}\big(B(\z_I,(\psi(n)-|K_I|)_+  )\big)  \, .
    \]
    These inclusions together with 
    \ref{toprodecayP1} --  \ref{keyproP3}
    imply that
    \begin{eqnarray}
        \mu(R_n(\psi))&=&\sum_{I\in\Sigma^{q_n}}\mu\Big(K_I\cap T^{-n}\big(B(\z_I,\psi(n)  )\big)\Big)\nonumber\\
        &\,&\hspace{10ex}+\,\,O\left(\sum_{I\in\Sigma^{q_n}}\mu\Big(K_I\cap T^{-n}\big(\partial B(\z_I,\psi(n))\big)_{|K_I|}\Big)\right)\nonumber\\[2ex]
        &=&\sum_{I\in\Sigma^{q_n}}\big(\mu(K_I)+O(\gamma^n)\big)\cdot\mu\big(B(\z_I,\psi(n))\big)\nonumber\\
        &~&\hspace{10ex}+\,\,O\left(\sum_{I\in\Sigma^{q_n}}\big(\mu(K_I)+\gamma^n\big)\cdot\mu\Big((\partial B(\z_I,\psi(n)))_{|K_I|}\Big)\right)\nonumber\\[2ex]
        &=&\sum_{I\in\Sigma^{q_n}}\mu(K_I)\cdot\mu\big(B(\z_I,\psi(n))\big)+O\Big(m^{q_n}\gamma^n+\kappa^{\delta q_n}+m^{q_n}\gamma^n\kappa^{\delta q_n}\Big)\nonumber\\[2ex]
        &=&\sum_{I\in\Sigma^{q_n}}\int_{K_I}\mu\big(B(\z_I,\psi(n))\big)\,\td\mu+O\Big(m^{q_n}\gamma^n+\kappa^{\delta q_n}+m^{q_n}\gamma^n\kappa^{\delta q_n}\Big)\nonumber\\[2ex]
        &=&\sum_{I\in\Sigma^{q_n}}\int_{K_I}\Big(\mu(B(\x,\psi(n)))+O\big(\kappa^{\delta q_n}\big)\Big)\,\td\mu(\x)\nonumber\\
        &~&\hspace{15ex}+\,\,O\Big(m^{q_n}\gamma^n+\kappa^{\delta q_n}+m^{q_n}\gamma^n\kappa^{\delta q_n}\Big)\nonumber\\[2ex]
        &=&\int_K\mu\big(B(\x,\psi(n))\big)\,\td\mu(\x)+O\Big(m^{q_n}\gamma^n+\kappa^{\delta q_n}+m^{q_n}\gamma^n\kappa^{\delta q_n}\Big)\label{murnpsiintk}
    \end{eqnarray}
    Note that by the definition of $q_n$, we have $m^{q_n}\gamma^n\asymp\gamma^{n/2}$. Let $$\widetilde{\gamma}:=\max\left\{\kappa^{\frac{\delta\log(1/\gamma)}{2\log m}},\gamma^{\frac{1}{2}}\right\}.$$ Then by (\ref{murnpsiintk}), we obtain that
    \[
    \mu\big(R_n(\psi)\big)=\int_{K_I}\mu\big(B(\x,\psi(n))\big)\,\td\mu(\x)+O\big(\widetilde{\gamma}^n\big)
    \]
    as desired.
\end{proof}

As usual, let  $\tau:=\dimH K$. For any $n\in\N$, define the density function
\begin{eqnarray*}
l_n(\x):=\frac{\mu\big(B(\x,\psi(n))\big)}{\psi(n)^{\tau}}\,\,\,\,\,\,\,\,(n\in\N,\,\x\in K)\,.
\end{eqnarray*}The next  result is a technical  lemma concerning the H\"{o}lder norm of the map $l_n\circ\pi$ on the symbolic space.  It will  be subsequently  required  in  calculating the integral  of the functions appearing in \eqref{junjieF} (namely in proving Lemma~\ref{toprolemonern1mebn2}) and in calculating the
  $\mu$-measure of the  pairwise intersections of the sets
$R_n(\psi)$ (namely in proving  Lemma~\ref{topromuofrnpsirnkpsi}).

\begin{lemma}\label{bddalholder}
    Assume the setting in Theorem~\ref{toprove}. Let $\kappa\in(0,1)$ be as in \ref{toprovediamP2}, $\delta>0$ be as in \ref{keyproP3} and $m$ be the number of elements in $\Phi$. Then for any $\eta\in(0,1)$ and any
$0<\widetilde{\beta}\leq\frac{-\eta\delta\log\kappa}{2\log m}$, we have
\begin{eqnarray}\label{desibetano}  \|l_n\circ\pi\cdot\one_{[I]}\|_{\widetilde{\beta}
    }\leq C\cdot(m^{\widetilde{\beta}|I|}+\psi(n)^{-\eta\tau}),\qquad\forall\ n\in\N,\ \forall \ I\in\Sigma^*,
\end{eqnarray}
where $C>0$ is a constant that does not depend on $\eta$ and  $\widetilde{\beta}$.
\end{lemma}


\begin{proof}
     Under the setting of
     Theorem~\ref{toprove}, the notion of  $\mu$ being equivalent to $\cH^{\tau}|_K$ and $\mu$ being $\tau$-Ahlfors regular coincide (see  Remark~\ref{newrem}\,(i)).  Thus, by definition there exists $M>1$ such that
    \[
        M^{-1}r^{\tau}\,\leq\, \mu(B(\x,r))\,\leq\, M\,r^{\tau},\,\,\,\,\,\,\,\,\forall\,\, \x\in K,~\forall\,\, 0\leq r\leq |K|\,.
    \]
    From this, we conclude that
    \begin{eqnarray}\label{bddoflnx}
         M^{-1}\leq l_n(\x)\leq M,\qquad \forall\ n\in\N,\ \forall\ \x\in K.
    \end{eqnarray}

    Throughout, fix $\eta>0$, $n\in\N$ and
    \begin{equation} \label{bingyork}
        0<\widetilde{\beta}\leq\frac{-\eta\delta\log\kappa}{2\log m}  \, .
    \end{equation} Now we start to estimate the $\widetilde{\beta}$-H\"{o}lder norm of $l_n\circ\pi$. For any given  $J_1\neq J_2\in\Sigma^{\N}$ and $I\in\Sigma^*$, observe that
    \begin{itemize}
        \item[(i)] if $J_1,J_2\notin[I]$, then \[
        |l_n\circ\pi(J_1)\cdot\one_{[I]}(J_1)-l_n\circ\pi(J_2)\cdot\one_{[I]}(J_2)|=0.\vspace*{1ex}
        \]

        \item[(ii)] if $J_1\in[I]$ and $J_2\notin[I]$, then $\dist(J_1,J_2)>m^{-|I|}$. Combining this with \eqref{bddoflnx}, we have
        \[
        \frac{|l_n\circ\pi(J_1)\cdot\one_{[I]}(J_1)-l_n\circ\pi(J_2)\cdot\one_{I}(J_2)|}{\dist(J_1,J_2)^{\widetilde{\beta}}} < \frac{l_n\circ\pi(J_1)}{m^{-|I|}}\leq M\cdot m^{\widetilde{\beta}|I|}.\vspace*{1ex}
        \]

        \item[(iii)] if $J_1,J_2\in[I]$, then
        \begin{equation}\label{diffequali}
            \hspace{5ex}|l_n\circ\pi(J_1)\cdot\one_{[I]}(J_1)-l_n\circ\pi(J_2)\cdot\one_{I}(J_2)|=|l_n\circ\pi(J_1)-l_n\circ\pi(J_2)|.
        \end{equation}
        For any  $ J   \neq J' \in\Sigma^{\N}$, define $J\wedge J':=j_1j_2\cdot\cdot\cdot j_k$ where $k$ is the largest number such that $j_1j_2\cdot\cdot\cdot j_k=j'_1j'_2\cdot\cdot\cdot j'_k$, and we adopt the convention that $J\wedge J'=\emptyset$ if $j_1\neq j_1'$. With this standard notation in mind, note that
        \vspace{2ex}
        \begin{itemize}
            \item[$\circ$] if $\dist(J_1,J_2)\leq\psi(n)^{\tau\cdot\frac{2\log m}{-\delta\log\kappa}}$, then by \ref{toprovediamP2} and \ref{keyproP3}, we obtain that
            \medskip
            \begin{equation*}
                \begin{aligned}
                    \text{r.h.s. of ~\eqref{diffequali}}&= \frac{|\mu(B(\pi(J_1),\psi(n)))-\mu(B(\pi(J_1),\psi(n)))|}{\psi(n)^{\tau}}\\[2ex]
                    &\leq\frac{\mu\left(\big(\partial B(\pi(J_1),\psi(n))\big)_{|\pi(J_1)-\pi(J_2)|}\right)}{\psi(n)^{\tau}}\\[2ex]
                    &\ll \frac{|\pi(J_1)-\pi(J_2)|^{\delta}}{\psi(n)^{\tau}}\leq\frac{|K_{J_1\wedge J_2}|^{\delta}}{\psi(n)^{\tau}}\ll \frac{\kappa^{\delta|J_1\wedge J_2|}}{\psi(n)^{\tau}}\\[2ex]
                    &=\frac{\kappa^{\delta\cdot\frac{-\log\dist(J_1,J_2)}{\log m}}}{\psi(n)^{\tau}}=\frac{\dist(J_1,J_2)^{\frac{-\delta\log\kappa}{\log m}}}{\psi(n)^{\tau}}\\[2ex]
                    &\leq\dist(J_1,J_2)^{\frac{-\delta\log\kappa}{2\log m}},
                \end{aligned}
            \end{equation*}
            where the implied constants within `$\ll$' are all independent of $J_1,J_2$ and $n$. In view of \eqref{bingyork}, it follows that
            \vspace*{1ex}
            \begin{eqnarray*}
                \hspace{15ex}\frac{|l_n\circ\pi(J_1)\cdot\one_{[I]}(J_1)-l_n\circ\pi(J_2)\cdot\one_{[I]}(J_2)|}{\dist(J_1,J_2)^{\widetilde{\beta}}}\ll\dist(J_1,J_2)^{\frac{-\delta\log\kappa}{2\log m}-\widetilde{\beta}}\leq 1,
            \end{eqnarray*}
            where the implied constant within `$\ll$' is independent of $J_1,J_2$, $n$ and $\widetilde{\beta}$.
            \vspace*{2ex}

            \item[$\circ$] if $\dist(J_1,J_2)>\psi(n)^{\tau\cdot\frac{2\log m}{-\delta\log\kappa}}$, then by \eqref{bddoflnx}, we have that
            \[
            \text{r.h.s. of \eqref{diffequali}}\leq 2M.
            \]
            This together with \eqref{bingyork}  implies that
            \vspace*{1ex}
            \begin{equation*}
                \begin{aligned}
                    \hspace{13ex}\frac{|l_n\circ\pi(J_1)\cdot\one_{[I]}(J_1)-l_n\circ\pi(J_2)\cdot\one_{[I]}(J_2)|}{\dist(J_1,J_2)^{\widetilde{\beta}}}<\frac{2M}{\psi(n)^{\tau\widetilde{\beta}\cdot\frac{2\log m}{-\delta\log\kappa}}}\leq 2M\psi(n)^{-\eta\tau}.
                \end{aligned}
            \end{equation*}
        \end{itemize}
    \end{itemize}
    \smallskip
    \noindent The above cases  (i), (ii) and (iii) imply that there exists $C>0$ such that for any $\eta>0$ and $0<\widetilde{\beta}\leq\frac{-\eta\delta\log\kappa}{2\log m}$, we have that
    \[
    \frac{|l_n\circ\pi(J_1)\cdot\one_{[I]}(J_1)-l_n\circ\pi(J_2)\cdot\one_{I}(J_2)|}{\dist(J_1,J_2)^{\widetilde{\beta}}}\leq C(m^{\widetilde{\beta}|I|}+\psi(n)^{-\eta\tau})
    \]
    for any $n\in\N$, $I\in\Sigma^*$ and $J_1\neq J_2\in\Sigma^{\N}$. Then, on combining this and the definition of $\widetilde{\beta}$-H\"{o}lder norm, the desired inequality \eqref{desibetano} is proved.
\end{proof}

\begin{lemma}\label{toprolemonern1mebn2}
    Assume the setting in Theorem~\ref{toprove}. Let $\tau:=\dimH K$.  Then, for any $\eta\in(0,1)$, there exists  $\widetilde{\gamma}=\widetilde{\gamma}(\eta)\in(0,1)$  such that for any $n_1,\,n_2\in\N$
    \begin{eqnarray*}
    \int_{K}\one_{R_{n_1}(\psi)}(\x) & \hspace*{-4ex} \cdot  & \hspace*{-4ex}   \mu\big(B(\x,\psi(n_2))\big)\,\td\mu(\x)\\
        &=&\int_K\mu\big(B(\x,\psi(n_1))\big)\cdot\mu\big(B(\x,\psi(n_2))\big)\,\td\mu(\x)
        + O\big(\widetilde{\gamma}^{n_1}\big)\,\psi(n_2)^{(1-\eta)\tau}\, ,
    \end{eqnarray*}
    where the  implied  big $O$ constant  does not depend on the choices of $n_1,n_2\in\N$.
\end{lemma}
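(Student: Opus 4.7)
The plan is to carry the weight $w(\x):=\mu(B(\x,\psi(n_2)))$ through the cylinder-decomposition argument used in the proof of Lemma~\ref{topromuofrnpsi}. Since $w$ is not $\beta$-H\"older on $K$, the crucial device is Lemma~\ref{bddalholder}: restricting any bounded function on $\Sigma^{\N}$ to a cylinder $[I]$ produces a $\beta$-H\"older function whose norm is at most $\|\cdot\|_{\infty}(1+m^{\beta|I|})$, which will allow us to invoke the H\"older decay of correlations \eqref{decayHolder} even though the weight itself is rough.

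First I would fix $q=\lfloor c\,n_1\rfloor$ for a small constant $c>0$ to be chosen at the end, select $\z_I\in K_I$ for each $I\in\Sigma^q$, and decompose
\[
\int_K\one_{R_{n_1}(\psi)}(\x)w(\x)\,\td\mu(\x)=\sum_{I\in\Sigma^q}\int_{K_I\cap R_{n_1}(\psi)}w(\x)\,\td\mu(\x).
\]
By Lemma~\ref{relalemtoprove}, on each $K_I$ the set $R_{n_1}(\psi)\cap K_I$ differs from $K_I\cap T^{-n_1}B(\z_I,\psi(n_1))$ only on $K_I\cap T^{-n_1}\bigl[(\partial B(\z_I,\psi(n_1)))_{|K_I|}\bigr]$. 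Combining the pointwise Ahlfors bound $w(\x)\leq C\psi(n_2)^{\tau}$ with \eqref{KIT-nF} and the annulus estimate \eqref{muannlusP3}, this first approximation contributes at most $O(\psi(n_2)^{\tau}\kappa^{\delta q})$ in total.

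Next I would lift to the symbolic space. Setting $h(J):=w(\pi(J))$ and $f_I:=\one_{\pi^{-1}B(\z_I,\psi(n_1))}$, each cylinder piece becomes $\int\one_{[I]}(J)\,h(J)\,f_I(\sigma^{n_1}J)\,\td\ubar{\mu}(J)$, and Lemma~\ref{bddalholder} gives $\|\one_{[I]}h\|_{\beta}\leq C\psi(n_2)^{\tau}(1+m^{\beta q})$. Applying \eqref{decayHolder} yields
\[
\int\!\one_{[I]}h\,(f_I\circ\sigma^{n_1})\,\td\ubar{\mu}=\Bigl(\!\int_{K_I}\!w\,\td\mu\Bigr)\mu(B(\z_I,\psi(n_1)))+O\bigl(\gamma^{n_1}\psi(n_2)^{\tau}m^{\beta q}\mu(B(\z_I,\psi(n_1)))\bigr),
\]
and summing with the trivial bound $\sum_I\mu(B(\z_I,\psi(n_1)))\leq m^q$ gives a total mixing error of $O(\gamma^{n_1}m^{(1+\beta)q}\psi(n_2)^{\tau})$. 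Finally, for $\x\in K_I$ one has $|\mu(B(\z_I,\psi(n_1)))-\mu(B(\x,\psi(n_1)))|\leq C\kappa^{\delta q}$ by \eqref{muannlusP3}, so replacing $\z_I$ by $\x$ pointwise and integrating turns the main sum into $\int_K\mu(B(\x,\psi(n_1)))w(\x)\,\td\mu(\x)$ at the cost of another $O(\kappa^{\delta q}\psi(n_2)^{\tau})$ error. Choosing $c:=\log(1/\gamma)/(2(1+\beta)\log m)$ forces $\gamma^{n_1}m^{(1+\beta)q}\leq\gamma^{n_1/2}$, so all three error contributions collapse to $O(\widetilde{\gamma}^{n_1}\psi(n_2)^{\tau})$ with $\widetilde{\gamma}:=\max\{\gamma^{1/2},\kappa^{\delta c}\}\in(0,1)$; since $c$, $\gamma$, $\kappa$, $\beta$ and $\delta$ depend only on the self-conformal system, $\widetilde{\gamma}$ is independent of $\psi$.

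The main obstacle is the balancing of $q$: the cylinder-approximation errors all scale as $\kappa^{\delta q}$ and pull $q$ upward, whereas the cylinder H\"older norm $1+m^{\beta q}$ together with the combinatorial factor $m^q$ from $|\Sigma^q|$ produces a growth of $m^{(1+\beta)q}$ in the mixing-error term and pushes $q$ downward. The argument closes because these two constraints can be simultaneously satisfied by a linear-in-$n_1$ choice $q=\lfloor c\,n_1\rfloor$ with slope small relative to $\log(1/\gamma)$, and because Ahlfors regularity of $\mu$ ensures that the common prefactor $\psi(n_2)^{\tau}$ can be extracted uniformly from every error term.
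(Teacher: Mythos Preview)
Your proposal is correct and follows essentially the same route as the paper's proof: decompose into cylinders of depth $q=\lfloor c\,n_1\rfloor$, replace $R_{n_1}(\psi)\cap K_I$ by $K_I\cap T^{-n_1}B(\z_I,\psi(n_1))$ via Lemma~\ref{relalemtoprove}, lift to $\Sigma^{\N}$, apply Lemma~\ref{bddalholder} together with the H\"older decay \eqref{decayHolder}, and then swap $\z_I$ for $\x$ using the annulus estimate \eqref{muannlusP3}. The only cosmetic differences are that the paper normalises the weight to $l_{n_2}(\x)=\mu(B(\x,\psi(n_2)))/\psi(n_2)^{\tau}$ (so the bounded function is $l_{n_2}$ rather than $w$), argues the two inequalities separately rather than via the symmetric difference, and picks $c=\log(1/\gamma)/(4\beta\log m)$; your choice $c=\log(1/\gamma)/(2(1+\beta)\log m)$ works equally well.
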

\begin{proof}
    Recall that $\mu=\ubar{\mu}\circ\pi^{-1}$ where $\ubar{\mu}$ is the Gibbs measure with respect to a $\beta$-Hölder potential on symbolic space $\Sigma^{\N}$.  Note that for any $\widetilde{\beta}\in(0,\beta]$,  any  $\beta$-H\"{o}lder continuous function  is also a $\widetilde{\beta}$-H\"{o}lder continuous function. Therefore,  $\ubar{\mu}$ is also a Gibbs measure with respect to a $\widetilde{\beta}$-H\"{o}lder potential. It follows that \eqref{decayHolder}  holds for any $0<\widetilde{\beta}\leq\beta$. More precisely, for any $0<\widetilde{\beta}\leq\beta$, there exist $C_{\widetilde{\beta}}>0$ and $\gamma_{\widetilde{\beta}}\in(0,1)$ such that
    \begin{equation}\label{decaywiltubeta}
        \left|\int_{\Sigma^{\N}}f_1\cdot f_2\circ\sigma^n\,\td\ubar{\mu}-\int_{\Sigma^{\N}}f_1\,\td\ubar{\mu}\cdot\int_{\Sigma^{\N}}f_2\,\td\ubar{\mu}\right|\,\,\leq\,\, C_{\widetilde{\beta}}\cdot(\gamma_{\widetilde{\beta}})^n\cdot\|f_1\|_{\widetilde{\beta}}\cdot\int_{\Sigma^{\N}}|f_2|\,\td\ubar{\mu}
    \end{equation}
    for any $f_1,f_2\in\cC^{\widetilde{\beta}}(\Sigma^{\N})$ and $n\in\N$.

Throughout, fix $\eta\in(0,1)$ and let $\widetilde{\beta}=\widetilde{\beta}(\eta):=\min\left\{\beta,\frac{-\eta\delta\log\kappa}{2\log m}\right\}$.  Let $\delta>0$ be as in (\ref{muannlusP3}).   In turn, for any $n\in\N$, let
\[
q_n=q_n(\eta):=\linte{\frac{n\log(1/\gamma_{\widetilde{\beta}})}{2(\widetilde{\beta}+1)\log m}}.
\]

On using  the left-hand side inclusion in Lemma~\ref{relalemtoprove}, we obtain that for any $n_1,n_2\in\N$,
\begin{eqnarray}
    &~&\hspace{-10ex}\int_K\one_{R_{n_1}(\psi)}\,(\x)\cdot\mu\big(B(\x,\psi(n_2))\big)\,\td\mu(\x) \nonumber\\[2ex]
    &~&\hspace{3ex}=\sum_{I\in\Sigma^{q_{n_{1}}}}\int_K\one_{K_I\,\cap\, R_{n_1}(\psi)}\,(\x)\cdot\mu\big(B(\x,\psi(n_2))\big)\,\td\mu(\x)  \nonumber \\[2ex]
&~&\hspace{3ex}\geq\sum_{I\in\Sigma^{q_{n_{1}}}}\int_K\one_{K_I\,\cap\, T^{-n_1}B(\z_I,\psi(n_1)-|K_I|)}\,(\x)\cdot\mu\big(B(\x,\psi(n_2))\big)\,\td\mu(\x) \, .  \label{junjieA}
\end{eqnarray}
where for $I\in\Sigma^*$ we fix some  point $\z_I\in K_I$ and $B(\x,r) $ is the empty set if $r \le 0$.
On combining  \eqref{junjieA} with Lemma~\ref{bddalholder},  the equality $\mu=\ubar{\mu}\circ\pi^{-1}$, the properties  \ref{toprovediamP2} --  \ref{keyproP3} and \eqref{decaywiltubeta}, we obtain that for any $n_1,n_2\in\N$,
\begin{eqnarray*}
 &~&\hspace{-3ex}\int_K\one_{R_{n_1}(\psi)}\,(\x)\cdot\mu\big(B(\x,\psi(n_2))\big)\,\td\mu(\x) \nonumber\\[2ex]
    &~&\hspace{3ex}\geq\sum_{I\in\Sigma^{q_{n_1}}}\left(\int_K\big(l_{n_2}(\x)\cdot\one_{K_I}(\x)\big)\cdot\big(\one_{B(\z_I,\psi(n_1)-|K_I|)}\circ T^{n_1}\big)\,(\x)\,\,\td\mu(\x)\right)\cdot\psi(n_2)^{\tau}\\[2ex]
&~&\hspace{3ex} =\sum_{I\in\Sigma^{q_{n_1}}}\left(\int_{\Sigma^{\N}}\big(l_{n_2}\circ\pi\cdot\one_{[I]}\big)\cdot\big(\one_{\pi^{-1}B(\z_I,\psi(n_1)-|K_I|)}\circ\sigma^{n_1}\big)\,\,\td\ubar{\mu}\right)\cdot\psi(n_2)^{\tau}\\[2ex]
    &~&\hspace{3ex}=  \sum_{I\in\Sigma^{q_{n_1}}}\left(\int_{\Sigma^{\N}} l_{n_2}\circ\pi\,(J)\cdot\one_{[I]}\,(J)\,\,\td\ubar{\mu}(J)+O\big(\|l_{n_2}\circ\pi\cdot\one_{[I]}\|_{\widetilde{\beta}}\cdot(\gamma_{\widetilde{\beta}})^{n_1}\big)\right)\\[1pt]
    &~&\hspace{30ex}\times \ \left(\int_{\Sigma^{\N}}\one_{\pi^{-1}B(\z_I,\psi(n_1)-|K_I|)}\circ\sigma^{n_1}\,\td\ubar{\mu}\right)
    \cdot\psi(n_2)^{\tau}\\[2ex]
    &~&\hspace{3ex}=\sum_{I\in\Sigma^{q_{n_1}}}\left(\int_{K} l_{n_2}(\x)\cdot\one_{K_I}(\x)\,\td\mu(\x)+O\big(m^{\widetilde{\beta}\cdot q_{n_1}}\cdot\psi(n_2)^{-\eta\tau}\cdot(\gamma_{\widetilde{\beta}})^{n_1}\big)\right)\\[2pt]
    &~&\hspace{30ex} \times  \  \mu\big(B(\z_I,\psi(n_1)-|K_I|)\big)\cdot\psi(n_2)^{\tau}\\[2ex]
    &~&\hspace{3ex}=\left(\sum_{I\in\Sigma^{q_{n_1}}}\int_{k_I}\mu\big(B(\z_I,\psi(n_1)-|K_I|)\big)\cdot\mu\big(B(\x,\psi(n_2))\big)\,\td\mu(\x)\right)\\[1pt]
    &~&\hspace{30ex} \   +\,\,O\big(m^{(\widetilde{\beta}+1) q_{n_1}}\cdot(\gamma_{\widetilde{\beta}})^{n_1}\big)\cdot\psi(n_2)^{(1-\eta)\tau}\\[2ex]
&~&\hspace{3ex}=\left(\sum_{I\in\Sigma^{q_{n_1}}}\int_{k_I}\Big(\mu\big(B(\x,\psi(n_1))\big)
+O\big(\kappa^{\delta \cdot q_{n_1}}\big)\Big)
\cdot\mu\big(B(\x,\psi(n_2))\big)\,\td\mu(\x)\right)\\[1pt]
    &~&\hspace{30ex}  +\,\,O\big(m^{(\widetilde{\beta}+1)q_{n_1}}\cdot(\gamma_{\widetilde{\beta}})^{n_1}\big)\cdot\psi(n_2)^{(1-\eta)\tau}\\[2ex]
    &~&\hspace{3ex}=\left(\sum_{I\in\Sigma^{q_{n_1}}}\int_{k_I}\mu\big(B(\x,\psi(n_1))\big)\cdot\mu\big(B(\x,\psi(n_2))\big)\,\td\mu(\x)\right)\\[1pt]
    &~&\hspace{20ex}+\ O\big(m^{(\widetilde{\beta}+1)\cdot q_{n_1}}\cdot(\gamma_{\widetilde{\beta}})^{n_1}+\kappa^{\delta\cdot q_{n_1}}\big)\cdot\psi(n_2)^{(1-\eta)\tau}\,.
\end{eqnarray*}

\noindent Now by the definition of $q_n$, we have $m^{(\widetilde{\beta}+1)\cdot q_n}(\gamma_{\widetilde{\beta}})^n\asymp(\gamma_{\widetilde{\beta}})^{n/2}$. Let $$\widetilde{\gamma}=\widetilde{\gamma}(\eta):=\max\left\{\kappa^{\frac{\delta\log(1/\gamma_{\widetilde{\beta}})}{2(\widetilde{\beta}+1)\log m}},(\gamma_{\widetilde{\beta}})^{\frac{1}{2}}\right\}.$$
Then the above calculation ensures the existence of $C=C(\eta)>0$ such that
\begin{eqnarray*}
        &~&\hspace{-5ex}\int_{K}\one_{R_{n_1}(\psi)}(\x)\cdot\mu\big(B(\x,\psi(n_2))\big)\,\td\mu(\x)\\
        &~&\hspace{7ex}\geq\int_K\mu\big(B(\x,\psi(n_1))\big)\cdot\mu\big(B(\x,\psi(n_2))\big)\,\td\mu(\x)
        -C\, \widetilde{\gamma}^{n_1}\,\psi(n_2)^{(1-\eta)\tau}.
    \end{eqnarray*}
The  complementary upper bound
follows on using the same argument but with the left  hand side inclusion in Lemma~\ref{relalemtoprove}   replaced by  the right hand side inclusion.
\end{proof}

We are now in the position to  provide a good upper bound the measure of the pairwise intersection of the sets $R_n(\psi)$.

\begin{lemma}\label{topromuofrnpsirnkpsi}
    Assume the setting in Theorem~\ref{toprove}. Let $\tau:=\dimH K$. Then  for any $\eta\in(0,1)$,   there exist $C=C(\eta)>0$ and $\widetilde{\gamma}=\widetilde{\gamma}(\eta)\in(0,1)$ such that for any $n,\,k\in\N$, we have that
    \begin{eqnarray*}
        \mu\big(R_n(\psi)\cap R_{n+k}(\psi)\big)
&\leq&\int_K\mu\big(B(\x,\psi(n))\big)\,\mu\big(B(\x,\psi(n+k))\big)\,\td\mu(\x)\\[4pt]
&~&\hspace{10ex}+\,C\,\big((\widetilde{\gamma}^n+\widetilde{\gamma}^k)\,\psi(n+k)^{(1-\eta)\tau}+\widetilde{\gamma}^{n+k}\big).
    \end{eqnarray*}
\end{lemma}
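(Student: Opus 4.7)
The plan is to follow the two-case strategy developed for Lemma~\ref{capcapineq} but adapted, as in Lemma~\ref{toprolemonern1mebn2}, to the integral formulation required by the position-dependent targets (so that the ``target ball measure'' $\mu(B(\x,\psi(p)))$ is non-constant in $\x$). Starting from Lemma~\ref{relalemtoprove}: for each $I\in\Sigma^*$, fix $\z_I\in K_I$ and set $B_I^{(p)}:=B(\z_I,\psi(p)+|K_I|)$, so that
\[
K_I\cap R_n(\psi)\cap R_{n+k}(\psi)\ \subseteq\ K_I\cap T^{-n}B_I^{(n)}\cap T^{-(n+k)}B_I^{(n+k)},
\]
and summing over $I\in\Sigma^\ell$ yields the basic upper bound for $\mu(R_n(\psi)\cap R_{n+k}(\psi))$.

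Following the case split in Lemma~\ref{capcapineq}, I would set $M:=\log(1/\gamma)/(4\log m)$ and work separately in the regimes $1\leq k\leq(n+1)/M$ and $k>(n+1)/M$, choosing the decomposition length $\ell$ in each case so that $\ell\asymp n+k$: namely $\ell_2:=\lfloor 2nM\rfloor$ in the small-$k$ case (where $n+k\asymp n$) and $\ell_1:=\lfloor(n+k)M\rfloor$ in the large-$k$ case. In the small-$k$ regime, apply the cylinder mixing \ref{toprodecayP1}(ii) to detach $K_I$ from $T^{-n}(B_I^{(n)}\cap T^{-k}B_I^{(n+k)})$, followed by the ball mixing \ref{toprodecayP1}(iii) to detach $B_I^{(n)}$ from $T^{-k}B_I^{(n+k)}$. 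In the large-$k$ regime, approximate $T^{-n}B_I^{(n)}$ from outside by $\bigcup_{J\in\cJ(n,I)}T^{-n}K_J$, where $\cJ(n,I):=\{J\in\Sigma^{\ell_1}:K_J\cap B_I^{(n)}\neq\emptyset\}$, and apply Lemma~\ref{lemtwoseccylin} to each term $\mu(K_I\cap T^{-n}K_J\cap T^{-(n+k)}B_I^{(n+k)})$, which factors out $\mu(B_I^{(n+k)})$ with mixing decay rate $\gamma^{n+k}$ instead of $\gamma^n$ and $\gamma^k$ separately. In both regimes the main term produced is $\sum_I\mu(K_I)\mu(B_I^{(n)})\mu(B_I^{(n+k)})$.

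Rewrite the main term as $\sum_I\int_{K_I}\mu(B_I^{(n)})\mu(B_I^{(n+k)})\,\td\mu(\x)$; since $\x\in K_I$ implies $|\x-\z_I|\leq|K_I|$, the annulus estimate \ref{keyproP3} gives $\mu(B_I^{(p)})=\mu(B(\x,\psi(p)))+O(|K_I|^\delta)$, and hence the main term equals
\[
\int_K\mu(B(\x,\psi(n)))\mu(B(\x,\psi(n+k)))\,\td\mu(\x)+O(\kappa^{\delta\ell}).
\]
The mixing errors, after applying Ahlfors regularity $\mu(B(\x,r))\asymp r^\tau$ and summing over $I\in\Sigma^\ell$, are of magnitude $\gamma^n m^\ell(\psi(n+k)^\tau+\kappa^{\tau\ell})$ (cylinder mixing, small-$k$ case; replaced by $\gamma^{n+k}$ in the large-$k$ case via Lemma~\ref{lemtwoseccylin}) and $\gamma^k(\psi(n+k)^\tau+\kappa^{\tau\ell})$ (ball mixing). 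The main technical obstacle is to show that every pure discretization error of the form $\kappa^{c\ell}$ -- arising both from the annulus approximation above and from the Ahlfors upper bounds $\mu(B_I^{(p)})\lesssim\psi(p)^\tau+\kappa^{\tau\ell}$ -- can be packaged into the stated bound $C((\widetilde{\gamma}^n+\widetilde{\gamma}^k)\psi(n+k)^\tau+\widetilde{\gamma}^{n+k})$. The crucial point, exactly as in the proof of Lemma~\ref{capcapineq}, is that the choice $\ell\asymp n+k$ in each case forces $\kappa^{c\ell}\asymp\widetilde{\gamma}_1^{n+k}$ for some $\widetilde{\gamma}_1\in(0,1)$, so that all pure-$\ell$ discretization errors automatically become $\widetilde{\gamma}^{n+k}$-errors once we take $\widetilde{\gamma}:=\max\{\widetilde{\gamma}_1,\gamma^{1-\alpha},\kappa^{c\alpha}\}$ for an appropriate $\alpha\in(0,1)$; the remaining error contributions then combine to give the two $\psi(n+k)^\tau$ terms in the stated bound.
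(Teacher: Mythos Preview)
Your overall strategy --- the two-regime case split, the use of Lemma~\ref{relalemtoprove}/Lemma~\ref{lemtwoseccylin} in the large-$k$ regime, double mixing in the small-$k$ regime, and conversion of the main term to the integral via annulus estimates --- matches the paper's proof closely. The small-$k$ case is essentially correct as written.

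There is, however, a genuine gap in your large-$k$ treatment. After applying Lemma~\ref{lemtwoseccylin} at level $\ell_1\asymp n+k$, the main term you actually obtain is
\[
\sum_{I\in\Sigma^{\ell_1}}\Big(\sum_{J\in\cJ(n,I)}\mu(K_I\cap T^{-n}K_J)\Big)\,\mu\big(B_I^{(n+k)}\big),
\]
not $\sum_I\mu(K_I)\mu(B_I^{(n)})\mu(B_I^{(n+k)})$. Passing from the former to the latter requires decoupling $K_I$ from $T^{-n}(\bigcup_J K_J)$ via \ref{toprodecayP1}(ii), which contributes an error of order $\gamma^n$ per $I$; summing over $I\in\Sigma^{\ell_1}$ yields $m^{\ell_1}\gamma^n\asymp\gamma^{\,n-(n+k)/4}$, and in the large-$k$ regime (where $k$ can be arbitrarily large compared to $n$) this exponent is negative and the error blows up. Your error bookkeeping (``$\gamma^n m^\ell$ \dots\ replaced by $\gamma^{n+k}$ in the large-$k$ case'') overlooks this second decoupling entirely.

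The paper avoids this by \emph{not} decoupling $K_I$ from $T^{-n}K_J$ at level $\ell_1$. Instead it rewrites $S_{n+k}$ as an integral of $\mu(B(\x,\psi(n+k)))$ over $\bigcup_{I,J}K_I\cap T^{-n}K_J$, observes that this set is contained in $R_n(\widetilde\psi)$ for a slightly enlarged $\widetilde\psi$, and invokes Lemma~\ref{toprolemonern1mebn2} --- which internally works at a coarser level $q_n\asymp n$ --- to produce the desired integral with error $O(\gamma_1^n\psi(n+k)^\tau)$. Equivalently, one can mimic the two-scale argument from Lemma~\ref{capcapineq}: after detaching $T^{-(n+k)}B_I^{(n+k)}$ at level $\ell_1$, pass to the coarser level $\ell_2\asymp n$ (via an inclusion like \eqref{cupcupkitnkj}) before applying the second mixing step, so that the cylinder-count factor is $m^{\ell_2}\asymp\gamma^{-n/2}$ rather than $m^{\ell_1}$.
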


\begin{proof}
    Under the setting of Theorem~\ref{toprove}, the Gibbs measure $\mu$ is $\tau$-Ahlfors regular. Throughout, fix $\eta\in(0,1)$.
 Let $\gamma\in(0,1)$ be as in \ref{toprodecayP1}. For any $n\in\N$, let
    \[
q_n:=\linte{\frac{n\log(1/\gamma)}{4\log m}}\,.
    \]
    For each $I\in\Sigma^*$, fix a point $\z_I\in K_I$. Then by Lemma~\ref{relalemtoprove},  for any  $n_1,n_2,\ell\in\N$  it follows that
    \begin{eqnarray}
        &~&\hspace{-15ex}K_I\cap R_{n_1}(\psi)\cap R_{n_2}(\psi)\nonumber\\[2ex]
        &\subseteq& K_I\cap T^{-n_1}B(\z_I,\psi(n_1)+|K_I|)\cap T^{-n_2}B(\z_I,\psi(n_2)+|K_I|)\label{twocaprel1}\\[2ex]
        &\subseteq&\bigcup_{J\in\cJ(\ell,n_1,I)}K_I\cap T^{-n_1}K_J\cap T^{-n_2}B(\z_I,\psi(n_2)+|K_I|)\label{twocaprel2}
    \end{eqnarray}
 where
 \begin{eqnarray*}
     \cJ(\ell,n,I):=\left\{J\in\Sigma^{\ell}:K_I\cap B(\z_I,\psi(n)+|K_I|)\neq\emptyset\right\}   \qquad (\ell,n\in\N,\,I\in\Sigma^*)\,.\medskip
 \end{eqnarray*}

 We now estimate the  measure of  $R_n(\psi)\cap R_{n+k}(\psi)$   by considering two cases.

\noindent$\bullet$ \emph{Estimating  $\mu({R}_n\cap{R}_{n+k})$ when $q_{n+k}\geq n$. }  In this case, by means of the inclusion (\ref{twocaprel2}), Lemma~\ref{lemtwoseccylin} (with $k$ ``equal'' to $q_{n+k}$ and $n_1 =n $) and the inequalities (\ref{diaofkiP2}) and (\ref{muannlusP3}), it follows that
\medskip
    \begin{eqnarray}
        &~&\hspace{-5ex}\mu(R_n(\psi)\cap R_{n+k}(\psi))\nonumber\\[2ex]
&~&\hspace{3ex}\leq\sum_{I\in\Sigma^{q_{n+k}}}\sum_{J\in\cJ(q_{n+k},n,I)}\mu\Big(K_I\cap T^{-n}K_J\cap T^{-(n+k))}B(\z_I,\psi(n+k)+|K_I|)\Big)\nonumber\\[4pt]
&~&\hspace{3ex}=\sum_{I\in\Sigma^{q_{n+k}}}\sum_{J\in\cJ(q_{n+k},n,I)}\Big(\mu(K_I\cap T^{-n}K_J)+O\big(\gamma^{n+k}\big)\Big)\,\mu\big(B(\z_I,\psi(n+k)+|K_I|)\big)\nonumber\\[4pt]
&&\hspace{3ex}=\sum_{I\in\Sigma^{q_{n+k}}}\sum_{J\in\cJ(q_{n+k},n,I)}\Big(\mu(K_I\cap T^{-n}K_J)+O\big(\gamma^{n+k}\big)\Big)\nonumber\\[4pt]
&~& \hspace{15ex} \times\,\,\Big(\mu\big(B(\z_I,\psi(n+k))\big)+O\big(\kappa^{\delta\,q_{n+k}}\big)\Big)\nonumber\\[4pt]
&~&\hspace{3ex}=\sum_{I\in\Sigma^{q_{n+k}}}\sum_{J\in\cJ(q_{n+k},n,I)}\mu\left(K_I\cap T^{-n}K_J\right)\, \mu\big(B(\z_I,\psi(n+k))\big)\nonumber\\[4pt]
&~& \hspace{15ex} +\,\,\,\,O\big(\kappa^{\delta\,q_{n+k}}\big)\,\mu\left(\bigcup_{I\in\Sigma^{q_{n+k}}}\bigcup_{J\in\cJ(q_{n+k},n,J)}K_I\cap T^{-n}K_J\right)\nonumber\\[4pt]
&~& \hspace{15ex} +\,\,\,\,O\left(m^{2q_{n+k}}\gamma^{n+k}\,\big(\psi(n+k)^{\tau}+\kappa^{\delta\,q_{n+k}}\big)\right)\nonumber\\[2ex]
&~&\hspace{3ex}=\sum_{I\in\Sigma^{q_{n+k}}}\sum_{J\in\cJ(q_{n+k},n,I)}\mu\left(K_I\cap T^{-n}K_J\right)\, \mu\big(B(\z_I,\psi(n+k))\big)\nonumber\\[4pt]
&~& \hspace{15ex} +\,\,\,\, O\left(m^{2q_{n+k}}\gamma^{n+k}\,\big(\psi(n+k)^{\tau}+\kappa^{\delta\,q_{n+k}}\big)+\kappa^{\delta\, q_{n+k}}\right)\label{qnkgeqnrnrnk}
    \end{eqnarray}

    \noindent Next we estimate from above the  main term in \eqref{qnkgeqnrnrnk};   that is
    \[
S_{n+k}:=\sum_{I\in\Sigma^{q_{n+k}}}\sum_{J\in\cJ(q_{n+k},n,I)}\mu\left(K_I\cap T^{-n}K_J\right)\, \mu\big(B(\z_I,\psi(n+k))\big)\,.
    \]
     By (\ref{diaofkiP2}) and (\ref{muannlusP3}), we have that
   \begin{eqnarray*}
           S_{n+k}&=& \sum_{I\in\Sigma^{q_{n+k}}}\sum_{J\in\cJ(q_{n+k},n,I)}\int_{K_I \, \cap\, T^{-n}K_J}\mu\big(B(\z_I,\psi(n+k))\big)\,\,\td\mu(\x)\\[4pt]
           &=&\sum_{I\in\Sigma^{q_{n+k}}}\sum_{J\in\cJ(q_{n+k},n,I)}\int_{K_I\, \cap \, T^{-n}K_J}\Big(\mu\big(B(\x,\psi(n+k))\big)+O(\kappa^{\delta\,q_{n+k}})\Big)\,\,\td\mu(\x)\\[4pt]
           &=&\int_{\bigcup_{I\in\Sigma^{q_{n+k}}}\bigcup_{J\in\cJ(q_{n+k},n,I)}K_I\, \cap \, T^{-n}K_J} \!\!\!\!\!\!\hspace*{-16ex} \mu\big(B(\x,\psi(n+k))\big)\,\,\td\mu(\x)+O(\kappa^{\delta\,q_{n+k}})
   \end{eqnarray*}
   For any
   \[
   \x\in K_I\, \cap \, T^{-n}K_J\qquad{} (I\in\Sigma^{q_{n+k}},\,J\in\cJ(q_{n+k},n,I))\,,
   \]
    by (\ref{diaofkiP2}) and triangle inequality we have
   \begin{eqnarray*}
       |T^n\x-\x|&\leq&|\x-\z_I|+|\z_I-T^n\x|\\[4pt]
       &<&|K_I|+\psi(n)+|K_J|\\[4pt]
       &\leq&\psi(n)+2\,C_3\,\kappa^{q_{n+k}}.
   \end{eqnarray*}
  Therefore, on letting $\widetilde{\psi}(n):=\psi(n)+2\,C_3\,\kappa^{q_n}$ $(n\in\N)$, we obtain that
  \[
  \bigcup_{I\in\Sigma^{q_{n+k}}}\bigcup_{J\in\cJ(q_{n+k},n,I)}K_I\, \cap \, T^{-n}K_J\ \subseteq\ R_{n}(\widetilde{\psi})\,.
  \]
  Then by Lemma~\ref{toprolemonern1mebn2} there exists $\gamma_1=\gamma_1(\eta)\in(0,1)$ such that
  \begin{eqnarray}\label{upbddsnk}
      S_{n+k}&\leq&\int_{R_{n}(\widetilde{\psi})}\mu\big(B(\x,\widetilde{\psi}(n+k))\big)\,\,\td\mu(\x)+O(\kappa^{\delta\,q_{n+k}})\nonumber\\[4pt]
      &=&\int_K \,\mu\big(B(\x,\widetilde{\psi}(n))\big)\,\mu\big(B(\x,\widetilde{\psi}(n+k))\big)\,\,\td\mu(\x)\nonumber\\[4pt]
      &~&\hspace{8ex}+\,\,O\big(\gamma_1^n\,\widetilde{\psi}(n+k)^{(1-\eta)\tau}+\kappa^{\delta\,q_{n+k}}\big)\nonumber\\[4pt]
    &=&\int_K\,\Big(\mu\big(B(\x,\psi(n))\big)+O\big(\kappa^{\delta\,q_n}\big)\Big)\nonumber\\[4pt]
    &~&\hspace{8ex}\times\,\,\Big(\mu\big(B(\x,\psi(n+k))\big)+O\big(\kappa^{\delta\,q_{n+k}}\big)\Big)\,\,\td\mu(\x)\nonumber\\[4pt]
    &~&\hspace{25ex}+\,\,O\big(\gamma_1^n\,\widetilde{\psi}(n+k)^{(1-\eta)\tau}+\kappa^{\delta\,q_{n+k}}\big)\nonumber\\[4pt]
    &=&\int_K\,\mu\big(B(\x,\psi(n))\big)\,\mu\big(B(\x,\psi(n+k))\,\,\td\mu(\x)\nonumber\\[4pt]
    &~&\hspace{8ex}+\,\,O\big((\gamma_1^n+\kappa^{\delta\,q_n})\,\psi(n+k)^{(1-\eta)\tau}+\kappa^{\delta\,q_{n+k}}\big)\,.
  \end{eqnarray}
  Recall that by the definition of $q_{n}$, we have $m^{2q_n}\gamma^{n}\asymp\gamma^{n/2}$. Let
  \[
  \gamma_2=\gamma_2(\eta):=\max\left\{\gamma^{1/2},\kappa^{\frac{\delta\cdot\log(1/\gamma)}{4\cdot\log m}},\gamma_1(\eta)\right\}\in(0,1)\,.
  \]
  Then on combining (\ref{qnkgeqnrnrnk}) and (\ref{upbddsnk}) we find that there exists a constant $C>0$ so that
  \begin{eqnarray*}
      \mu\big(R_{n}(\psi)\cap R_{n+k}(\psi)\big)&\leq&\int_K\,\mu\big(B(\x,\psi(n))\big)\,\mu\big(B(\x,\psi(n+k))\,\,\td\mu(\x)  \\[4pt]
      &~&\hspace{15ex}+ \ \ C\  \big(\gamma_2^{n}\,\psi(n+k)^{(1-\eta)\tau}+\gamma_2^{n+k}\big)
  \end{eqnarray*}
  for all $n,k\in\N$ with $q_{n+k}\geq n$.

 \noindent$\bullet$ \emph{Estimating  $\mu({R}_n\cap{R}_{n+k})$ when $q_{n+k} <  n$. }  By the inclusion  (\ref{twocaprel2}) and
the properties \ref{toprodecayP1} -- \ref{keyproP3}, namely (\ref{KIT-nF}),  (\ref{BT-nF}),  \eqref{diaofkiP2}, \eqref{muannlusP3},   we have that
 \medskip
  \begin{eqnarray}
      &~&\hspace{-4ex}\mu\big(R_n(\psi)\cap R_{n+k}(\psi)\big)  \nonumber \\[2ex]
      &~&\hspace{4ex}\leq\sum_{I\in\Sigma^{q_n}}\mu\big(K_I\cap T^{-n}B(\z_I,\psi(n)+|K_I|)\cap T^{-(n+k)}B(\z_I,\psi(n+k)+|K_I|)\big)   \nonumber \\[4pt]
      &~&\hspace{4ex}=\sum_{I\in\Sigma^{q_n}}\big(\mu(K_I)+O(\gamma^n)\big)\,\mu\big(B(\z_I,\psi(n)+|K_I|)\cap T^{-k}B(\z_I,\psi(n+k)+|K_I|)\big) \nonumber  \\[4pt]
      &~&\hspace{4ex}=\sum_{I\in\Sigma^{q_n}}\big(\mu(K_I)+O(\gamma^n)\big)\,\big(\mu(B(\z_I,\psi(n)+|K_I|))+O(\gamma^{k})\big)  \nonumber \\
      &~&\hspace{18ex}\times\,\,\mu\big(B(\z_i,\psi(n+k)+|K_I|)\big)  \nonumber \\[6pt]
&~&\hspace{4ex}=\sum_{I\in\Sigma^{q_n}}\big(\mu(K_I)+O(\gamma^n)\big)\,\big(\mu(B(\z_I,\psi(n)))+O(\gamma^{k}+\kappa^{\delta\,q_n})\big) \nonumber \\
      &~&\hspace{18ex}\times\,\,\big(\mu(B(\z_I,\psi(n+k)))+O(\kappa^{\delta\,q_n})\big)  \nonumber \\[6pt]
&~&\hspace{4ex}=\sum_{I\in\Sigma^{q_n}}\int_{K_I}\,\mu\big(B(\z_I,\psi(n))\big)\,\mu\big(B(\z_I,\psi(n+k))\big)\,\,\td\mu(\x) \nonumber
\\[4pt]
      &~&\hspace{18ex} +\,\,O\big(\gamma^k\,\psi(n+k)^{\tau}+\kappa^{\delta\,q_n}+m^{q_n}\,\gamma^n\big)  \nonumber \\[4pt]
      &~&\hspace{4ex}=\sum_{I\in\Sigma^{q_n}}\int_{K_I}\big(\mu(B(\x,\psi(n)))+O(\kappa^{\delta\,q_n})\big)  \nonumber \\
      &~&\hspace{18ex} \times\,\,\big(\mu(B(\x,\psi(n+k)))+O(\kappa^{\delta\,q_{n+k}})\big)\,\,\td\mu(\x) \nonumber   \\[6pt]
      &~&\hspace{26ex}+\,\,O\big(\gamma^k\,\psi(n+k)^{\tau}+\kappa^{\delta\,q_n}+m^{q_{n}}\,\gamma^n\big)  \nonumber  \\[4pt]
      &~&\hspace{4ex}=\int_{K}\mu\big(B(\x,\psi(n))\big)\,\mu\big(B(\x,\psi(n+k))\big)\,\,\td\mu(\x)  \nonumber  \\[4pt]
      &~&\hspace{18ex}+\,\,O\big(\gamma^k\,\psi(n+k)^{\tau}+\kappa^{\delta\,q_n}+m^{q_n}\,\gamma^n\big)  \nonumber \\[4pt]
      &~&\hspace{4ex}=\int_{K}\mu\big(B(\x,\psi(n))\big)\,\mu\big(B(\x,\psi(n+k))\big)\,\,\td\mu(\x)   \nonumber \\[4pt]
      &~&\hspace{18ex}                                             +\,\,O\big(\gamma^k\,\psi(n+k)^{\tau}+\kappa^{\delta\,q_n}+\gamma^{n/2}\big)  \label{asdfg}
  \end{eqnarray}

  \noindent In the case $q_{n+k}<n$, we have that
  \[
  n>\frac{(n+k)\log(1/\gamma)}{4\log m}-1\,
  \]

 \noindent  and so it follows  that
  \[
  \kappa^{\delta\,q_n}=O\left(\kappa^{\delta\cdot\left(\frac{\log(1/\gamma)}{4\log m}\right)\cdot(n+k)}\right)  \ \quad {\rm and } \ \quad  \gamma^{n/2}=O\left(\gamma^{\frac{\log(1/\gamma)}{8\log m}\cdot(n+k)}\right)   \, .
  \]
 The upshot of this and  \eqref{asdfg} is that there exists a constant $C>0$ so that
  \begin{eqnarray*}
      \mu\big(R_n(\psi)\cap R_{n+k}(\psi)\big)&\leq&\int_{K}\mu\big(B(\x,\psi(n))\big)\,\mu\big(B(\x,\psi(n+k))\big)\,\,\td\mu(\x)\\[4pt]
      &~&\hspace{5ex}+\,\,C\,\big(\gamma_3^k\,\psi(n+k)^{\tau}+\gamma_3^{n+k}\big)
  \end{eqnarray*}
  for all $n,k\in\N$ with $q_{n+k}<n$, where
  \[
  \gamma_3:=\max\left\{\gamma,\,\kappa^{\delta\cdot\left(\frac{\log(1/\gamma)}{4\log m}\right)},\gamma^{\frac{\log(1/\gamma)}{8\log m}}\right\}.
  \]

  \medskip

 The above two cases imply the desired upper bound estimate of the measure of the set $R_n(\psi)\cap R_{n+k}(\psi)$ for any $n,k\in\N$.
\end{proof}

Now we are in a position to prove Theorem~\ref{toprove} utilizing Lemma~\ref{genharmanlem}.
    Fix $\eta\in(0,1)$. With Lemma~\ref{topromuofrnpsi}, Lemma~\ref{toprolemonern1mebn2} and Lemma~\ref{topromuofrnpsirnkpsi} at hand, it follows that there exist $C=C(\eta )>0     $ and $\widetilde{\gamma}=\widetilde{\gamma}(\eta)\in(0,1)$ such that for any $a<b\in\N$
    \medskip
    \begin{eqnarray*}
        &~&\hspace{-5ex}\int_K\left(\sum_{a\leq n\leq b}\big(\one_{R_n(\psi)}(\x)-\mu(B(\x,\psi(n)))\big)\right)^2\,\td\mu(\x)\\[4pt]
        &~&\hspace{3ex}=\  \  \  \sum_{n=a}^b\mu(R_n(\psi))+2\sum_{n=a}^{b-1}\sum_{k=1}^{b-n}\mu(R_{n}(\psi)\cap R_{n+k}(\psi))\\[4pt]
        &~&\hspace{12ex}-\,\,2\sum_{a\leq n_1,n_2\leq b}\int_K\one_{R_{n_1}(\psi)}(\x)\cdot\mu(B(\x,\psi(n_2)))\,\td\mu(\x)\\[4pt]
        &~&\hspace{12ex}+\,\,\sum_{a\leq n_1,n_2\leq b}\int_K\mu(B(\x,\psi(n_1)))\,\mu(B(\x,\psi(n_2)))\,\td\mu(\x)\\[4pt]
        &~&\hspace{3ex}\leq\ \ \   \sum_{n=a}^b\Big(\int_K\mu(B(\x,\psi(n)))\,\td\mu(\x)+C\,\widetilde{\gamma}^n\Big)\\[4pt]
        &~&\hspace{12ex}+\,\,2\sum_{n=a}^{b-1}\sum_{k=1}^{b-n}\Big(\int_K\mu(B(\x,\psi(n)))\,\mu(B(\x,\psi(n+k)))\,\td\mu(\x)\\[4pt]
        &~&\hspace{18ex}\qquad \qquad\qquad+\,\,C\,((\widetilde{\gamma}^n+\widetilde{\gamma}^k)\,\psi(n+k)^{(1-\eta)\tau}+\widetilde{\gamma}^{n+k})\Big)\\[4pt]
        &~&\hspace{12ex}-\,\,2\sum_{a\leq n_1,n_2\leq b}\Big(\int_K\mu(B(\x,\psi(n_1)))\,\mu(B(\x,\psi(n_2)))\,\td\mu(\x)\\[4pt]
        &~&\hspace{18ex}\qquad\qquad\qquad+C\,\widetilde{\gamma}^{n_1}\,\psi(n_2)^{(1-\eta)\tau}\Big)\\[4pt]
        &~&\hspace{12ex}+\,\,\sum_{a\leq n_1,n_2\leq b}\int_K\mu(B(\x,\psi(n_1)))\,\mu(B(\x,\psi(n_2)))\,\td\mu(\x)\\[4pt]
        &~&\hspace{3ex} \ll  \  \ \sum_{n=a}^b\big(\psi(n)^{(1-\eta)\tau}+\widetilde{\gamma}^{n}\big)\\[2ex]
        &~&\hspace{3ex}  \asymp \  \ \sum_{n=a}^b\phi_n    \qquad {\rm where }  \qquad  \phi_n=\phi_n(\eta):= \max_{\x\in K}\mu(B(\x,\psi(n)))^{1-\eta}  \ + \ \widetilde{\gamma}^n.
    \end{eqnarray*}

   \noindent Applying Lemma~\ref{genharmanlem} with  $X=K$ and
   $$
   f_n(\x)=\one_{R_n(\psi)}(\x),  \quad g_{n}(\x)=\mu(B(\x,\psi(n))),    \quad   \phi_n =  \max_{\x\in K}\mu(B(\x,\psi(n)))^{1-\eta}  \, + \,  \widetilde{\gamma}^n,$$
    we obtain that for any $\epsilon>0$,
   \begin{equation}  \label{junjieD}
       \sum_{n=1}^N\one_{R_n(\psi)}(\x)=\sum_{n=1}^{N}\mu(B(\x,\psi(n)))+O\left(\Psi_{\eta}(N)^{1/2}\big(\log(\Psi_{\eta}(N))\big)^{\frac{3}{2}+\epsilon}\right)
   \end{equation}
   for $\mu$-almost every $\x\in K$, where $\Psi_{\eta}(N):=\sum_{n=1}^N\phi_n$.  However, since $\mu $ is  $\tau$-Ahlfors regular,  we have $$\Psi_{\eta}(N)  \asymp \sum_{n=1}^N\psi(n)^{(1-\eta)\tau} \, , $$   so the term $\Psi_{\eta}(N)$ appearing in the error term of  (\ref{junjieD}) can be replaced by the summation $\sum_{n=1}^N\psi(n)^{(1-\eta)\tau}$.  This completes the proof of Theorem~\ref{toprove}.

\bigskip

\subsection{Proof of Theorem~\ref{quantcount} } \label{weakquantcount}
Under the setting of Theorem~\ref{quantcount}, the measure $\mu$ is $\tau$-Ahlfors regular (see Remark~\ref{newrem}\,(i)).
    Hence,  for any $\x\in K$, $$0<\Theta^{\tau}_*(\mu,\psi,\x)\leq \Theta^{*\tau}(\mu,\psi,\x)<+\infty\,.$$   Let $\Q_+$ be the set of all positive rational numbers. For each $q\in\Q_+$ and $n\in\N$, define $\psi_q(n):=q\cdot\psi(n)^{\tau}$. With this notation in mind, suppose that the sum of the sequence $\psi(n)^{\tau}$ is divergent.  Then by Theorem~\ref{quantrec} and the fact that $\Q_+$ is countable, there exists $K^{'}\subseteq K$ such that $\mu(K^{'})=1$ and
    \begin{eqnarray}\label{counttnxpsiq}
        \lim_{N\to\infty}\frac{\sum_{n=1}^N\one_{B(\x,t_n(\x,\psi_q))}(T^n\x)}{\sum_{n=1}^Nq\psi(n)^{\tau}}=1,\qquad\forall\,\, \x\in K^{'},~\forall\,\, q\in\Q_+.
    \end{eqnarray}

    Fix an arbitrary point $\x\in K^{'}$ and any $\epsilon>0$. Since $\Q_+$ is dense in $(0,+\infty)$, there exist $p=p(\x,\epsilon),\,q=q(\x,\epsilon)\in\Q_+$ such that
    \[
        \Theta^{*\tau}(\mu,\psi,\x)<p<(1+\epsilon)\cdot\Theta^{*\tau}(\mu,\psi,\x),  \qquad (1-\epsilon)\cdot\Theta^{\tau}_*(\mu,\psi,\x)<q<\Theta^{\tau}_*(\mu,\psi,\x).
    \]
    With this in mind, it is easy to verify that there exists $n_0=n_0(\x)\in\N$ such that for all $n>n_0$, we have
    \[
    \mu\big(B(\x,t_n(\x,\psi_q))\big)\leq\mu\big(B(\x,\psi(n))\big)\leq \mu\big(B(\x,t_n(\x,\psi_q))\big)
    \]
    and thus
    \[
        B(\x,t_n(\x,\psi_q))\subseteq B(\x,\psi(n))\subseteq B(\x,t_n(\x,\psi_p))\,.
    \]
   This together with \eqref{counttnxpsiq} implies that

    \begin{eqnarray*}
    ~ \hspace*{4ex} \limsup_{N\to\infty}\frac{\sum_{n=1}^N\one_{B(\x,\psi(n))}(T^n\x)}{\sum_{n=1}^N\mu(B(\x,\psi(n)))}\\[2ex]
    & ~ &   \hspace*{-14ex}  \leq \ \  \limsup_{N\to\infty}\left(\frac{\sum_{n=1}^N\one_{B(\x,t_n(\x,\psi_p))}(T^n\x)}{\sum_{n=1}^N\mu(B(\x,t_n(\x,\psi_p)))}\cdot\frac{\sum_{n=1}^N\mu(B(\x,t_n(\x,\psi_p)))}{\sum_{n=1}^N\mu(B(\x,t_n(\x,\psi_q)))}\right)\\[2ex]
    & ~ &   \hspace*{-14ex}  =  \ \ \limsup_{N\to\infty}\left(\frac{\sum_{n=1}^N\one_{B(\x,t_n(\x,\psi_p))}(T^n\x)}{\sum_{n=1}^Np\psi(n)^{\tau}}\cdot\frac{\sum_{n=1}^Np\psi(n)^{\tau}}{\sum_{n=1}^Nq\psi(n)^{\tau}}\right)\\[2ex]
   & ~ &   \hspace*{-14ex}  =  \ \  \limsup_{N\to\infty}\frac{\sum_{n=1}^Np\psi(n)^{\tau}}{\sum_{n=1}^Nq\psi(n)^{\tau}}\\[2ex]
    & ~ &   \hspace*{-14ex}  \leq  \ \  \frac{(1+\epsilon)\cdot\Theta^{*\tau}(\mu,\psi,\x)}{(1-\epsilon)\cdot\Theta^{\tau}_*(\mu,\psi,\x)}\,.
    \end{eqnarray*}
   Since  $\epsilon>0$ is arbitrary, it follows that
    \[
        \limsup_{N\to\infty}\frac{\sum_{n=1}^N\one_{B(\x,\psi(n))}(T^n\x)}{\sum_{n=1}^N\mu(B(\x,\psi(n)))}\leq\frac{\Theta^{*\tau}(\mu,\psi,\x)}{\Theta^{\tau}_*(\mu,\psi,\x)}.
    \]
    A similar argument, with obvious modifications, shows that
    \[
        \liminf_{N\to\infty}\frac{\sum_{n=1}^N\one_{B(\x,\psi(n))}(T^n\x)}{\sum_{n=1}^N\mu(B(\x,\psi(n)))}\geq\frac{\Theta^{\tau}_*(\mu,\psi,\x)}{\Theta^{*\tau}(\mu,\psi,\x)}  \, ,
    \]
    and thereby completes the proof.

\bigskip

\subsection{Providing counterexamples to Claim~F } \label{examplesec}

We provide the details of the two counterexamples to Claim~F discussed in Section~\ref{appintro}.

\begin{example}  \label{egcantor}
Consider the functions   $\varphi_1:[0,1]\to[0,1/3]$ and $\varphi_2:[0,1]\to[2/3,1]$  given by
    \[
    \varphi_1(x)=\frac{x}{3},  \qquad \varphi_2(x)=\frac{x+2}{3}  \qquad \forall \  x\in[0,1].
    \]
 Then  $\Phi=\{\varphi_1,\varphi_2\}$ is the well-known  conformal IFS with the attractor $K$ being the standard middle-third Cantor set.    Let $\mu:=\cH^{\tau}|_K$  ($\tau=\log2/\log 3$) be the standard Cantor measure, and let $T:[0,1]\to[0,1]$ be the~$\times 3$ map: $$ Tx=3x~\tmod~1   \, . $$
  Consider the constant function $\psi:\R\to\R_{\geq0}$ given by $$\psi(x) \, := \,  \frac{1}{3}+\frac{2}{3^2}.$$
    Then, for $\mu$--almost all  $x\in K$, we have
   \begin{eqnarray} \label{klj}
      \lim_{N\to\infty}\frac{\sum_{n=1}^{N}
 \mathbbm{1}_{B(x,\psi(n))}(T^nx)}{\sum_{n=1}^{N}\mu(R_n(\psi))}
      =\left\{
\begin{aligned}
    &\textstyle{\frac{4}{5}  \quad \text{if}  \quad x\in\left(\big[0,\frac{1}{9}\big]\cup\big[\frac{8}{9},1\big]\right)\cap K} ,\\[2ex]
    &\textstyle{\frac{6}{5}   \quad \text{if} \quad x\in\left(\big[\frac{2}{9},\frac{1}{3}\big]\cup\big[\frac{2}{3},\frac{7}{9}\big]\right)\cap K}.
\end{aligned}
      \right.
   \end{eqnarray}
\end{example}

\begin{proof}[Proof of \eqref{klj}] We start with an observation.  The condition that $\psi(x)\to0$ as $ x \to \infty$  imposed in the statement of Theorem~\ref{toprove}
is used in its proof only to ensure that inequality \eqref{muannlusP3} is valid; that is to guarantee  that $\psi(n)  \leq r_0$  for $n$ large enough. However, for the particular example under consideration, this inequality is valid for all $r>0$ and thus  the conclusion of the theorem  and its corollary  are valid for any constant function $\psi$.  With this in mind, it follows via Theorem~\ref{toprove}  that:  for $\mu-$almost all $x \in K$
    \[
        \lim_{N\to\infty}\frac{\sum_{n=1}^N\one_{B(x,\psi(n))}(T^nx)}{\sum_{n=1}^N\mu(B(x,\psi(n)))}=1  \, .
    \]
    Thus, for $\mu-$almost all $\x \in K$
   \begin{equation}  \label{hty}
      \lim_{N\to\infty}  \frac{\sum_{n=1}^N\one_{B(x,\psi(n))}(T^nx)}{\sum_{n=1}^N\mu(R_n(\psi))}   \ =   \ \lim_{N\to\infty} \frac{\sum_{n=1}^N\mu(B(x,\psi(n)))}{\sum_{n=1}^N\mu(R_n(\psi))}  \, .
    \end{equation}

        We now
estimate $\mu(B(x,\psi(n)))$ and $\mu(R_n(\psi))$.
    It follows from the definition of $\psi$,  that
    \begin{eqnarray*}
        B(x,\psi(n)) \cap K=\left\{
\begin{aligned}
    & \ [0,1/3]\cap K,  \hspace*{15ex}  \;  \text{if} \ x\in[0,1/9]\cap K\\[0.3ex]
    & \ \big([0,1/3]\cup[2/3,7/9]\big)\cap K,\,\,\,\,\text{if}\ x\in[2/9,1/3]\cap K\\[0.3ex]
    & \ \big([2/9,1/3]\cup[2/3,1]\big)\cap K,\,\,\,\,\text{if}\ x\in[2/3,7/9]\cap K\\[0.3ex]
    & \ [2/3,1]\cap K,\hspace*{15ex}  \; \text{if}\ x\in[8/9,1]\cap K
\end{aligned}
        \right.
    \end{eqnarray*}
    for all $n\in\N$.  Thus,
    \begin{eqnarray*}\label{mball}
    \mu(B(x,\psi(n)))=\left\{
\begin{aligned}
    &\frac{1}{2},\,\,\,\,\text{if}\ x\in\big([0,1/9]\cup [8/9,1]\big)\cap K,\\[1ex]
    &\frac{3}{4},\,\,\,\,\text{if}\ x\in\big([2/9,1/3]\cup[2/3,7/9]\big)\cap K
\end{aligned}
    \right.
    \end{eqnarray*}

\noindent  for all $n\in\N$.  This together with Lemma~\ref{topromuofrnpsi} implies the existence of $\gamma\in(0,1)$ such that
\begin{eqnarray*}
    \mu(R_n(\psi))&=&\int_{0}^1\mu\Big(B\big(x,\psi(n)\big)\Big)~\td\mu(x)+O(\gamma^n)\\
    &=&\int_{\big([0,1/9]\cup[8/9,1]\big)\cap K}\mu\Big(B\big(x,\psi(n)\big)\Big)~\td\mu(x)\\&~&\hspace{8ex}+\int_{\big([2/9,1/3]\cup[2/3,7/9]\big)\cap K}\mu\Big(B\big(x,\psi(n)\big)\Big)~\td\mu(x)+O(\gamma^n)\\
    &=&\frac{5}{8}+O(\gamma^n).
\end{eqnarray*}
It thus follows via  \eqref{hty} that: for $\mu-$almost all $x \in K$
\begin{eqnarray*}
\lim_{N\to\infty}  \frac{\sum_{n=1}^N\one_{B(x,\psi(n))}(T^nx)}{\sum_{n=1}^N\mu(R_n(\psi))}
     &=&\lim_{N\to\infty}\frac{N\cdot\mu\big(B(x,1/3+2/3^2)\big)}{N\cdot5/8+O(1)}\\[0.7em]
     &=&\frac{\mu\big(B(x,1/3+2/3^2)\big)}{5/8}\\[0.7em]
     &=& \left\{
\begin{aligned}
    &\textstyle{\frac{4}{5}  \quad \text{if}  \quad x\in\left(\big[0,\frac{1}{9}\big]\cup\big[\frac{8}{9},1\big]\right)\cap K} ,\\[2ex]
    &\textstyle{\frac{6}{5}   \quad \text{if} \quad x\in\left(\big[\frac{2}{9},\frac{1}{3}\big]\cup\big[\frac{2}{3},\frac{7}{9}\big]\right)\cap K}.
\end{aligned}
      \right.
   \end{eqnarray*}
\end{proof}

\medskip

As mentioned in the introduction (Section~\ref{appintro}), the  second example is slightly more sophisticated but it  removes the need for the  function $\psi$  to be constant.

 \begin{example}  \label{egfull}
 Let $\Phi=\{\varphi_1,\varphi_2,\varphi_3,\varphi_4\}$ be a conformal IFS on $[0,1]$ given by
 \begin{eqnarray*}
     \varphi_1(x)=\frac{1}{4}x,\ \varphi_2(x)=\frac{1}{2(1+x)},\ \varphi_3(x)=\frac{1+x}{2+x},\ \varphi_4(x)=\frac{2}{2+x}\,.
 \end{eqnarray*}
    Then  it can be easily   verified that the self-conformal set generated by $\Phi$ is exactly the unit interval $[0,1]$; that is
    \[
[0,1]=\bigcup_{i=1}^4\varphi_i([0,1]).
    \]
    Moreover, it is easily verified that
    \[
    \varphi_i((0,1))\cap\varphi_j((0,1))=\emptyset,\qquad\forall\,\,1\leq i\neq j\leq 4
    \]
    and hence $\Phi$ satisfies open set condition.
     Define the Ruelle operator $\cL:C([0,1])\to C([0,1])$ by setting
    \[
    \left(\cL f\right)(x):=\sum_{i=1}^4|\varphi_i'(x)|f(\varphi_i(x)) \qquad \forall\, f\in C([0,1])\   \ \text{and}\  \  x\in[0,1].    \]
    By Example~\ref{hausgibbs}, the spectral radius of $\cL$ is $1$. Let
    \begin{equation}  \label{plmk}
    h(x):=\frac{1}{\log 2}\cdot\frac{1}{1+x}
    \end{equation} and let $\lambda$ denote the Lebesgue measure on $[0,1]$. Then a straightforward calculation shows that $$\cL h=h,\quad \cL^*\lambda=\lambda,\quad\int_{[0,1]}h(x)\,\td x=1; $$  that is to say that  $h$ and $\lambda$ are the unique eigenfunction and eigenmeasure of $\cL$ guaranteed by part (i) of Theorem~\ref{ruellethmrd}.    Hence by  definition,  the measure $\td\mu:=h~\td\lambda$ is the Gibbs measure with respect to $\cL$. The IFS $\Phi$ induces a transformation $T:[0,1]\to[0,1]$ given by
    \begin{eqnarray*}
        Tx=\left\{
        \begin{aligned}
            &4x,  \qquad \  \text{if}\ \ 0\leq x<\frac{1}{4},\\
            &\frac{1}{2x}-1,~~~~\text{if}\ \ \frac{1}{4}\leq x< \frac{1}{2},\\[4pt]
            &\frac{2x-1}{1-x},~~~~\text{if}\ \ \frac{1}{2}\leq x<\frac{2}{3},\\[4pt]
            &\frac{2}{x}-2, \quad \text{if}\ \ \frac{2}{3}\leq x\leq 1.
        \end{aligned}\right.
    \end{eqnarray*}

   \noindent The upshot of the above discussion is that the IFS $\Phi$ gives rise  to a self-conformal system $(\Phi,[0,1],\mu,T)$ in which $\mu$ is absolutely continuous with respect to Lebesgue measure with density $h$ given by \eqref{plmk}.

   Let $\psi:\R\to\R_{\geq0}$ be a real positive function such that  $\psi(x)\to0$ as $x\to\infty$ and  $\sum_{n=1}^{\infty}\psi(n)=+\infty$.
    For each  $n\in\N$, let $R_n(\psi)$ be the set  defined by  (\ref{anrpsidef}) with $X=[0,1]$.  Now regrading the measure of   $R_n(\psi)$, by Lemma~\ref{topromuofrnpsi}, we obtain that there exists $\widetilde{\gamma}\in(0,1)$ such that
\begin{eqnarray}
    \mu(R_n(\psi))&=&\int_{0}^1\mu\Big(B\big(x,\psi(n)\big)\Big)~\td\mu(x)+O(\widetilde{\gamma}^n)\nonumber\\[1ex]
    &=&\int_{0}^{\psi(n)}h(x)\left(\int_{0}^{x+\psi(n)}h(y)~\td y\right)\td x\nonumber\\[1ex]
    &~& \qquad\qquad + \ \int_{\psi(n)}^{1-\psi(n)}h(x)\left(\int_{x-\psi(n)}^{x+\psi(n)}h(y)~\td y\right)\td x\nonumber\\[1ex]
    &~&  \qquad\qquad + \ \int_{1-\psi(n)}^{1}h(x)\left(\int_{x-\psi(n)}^{1}h(y)~\td y\right)\td x+O(\widetilde{\gamma}^n)\nonumber\\[2ex]
    &=&\frac{1}{(\log 2)^2}\int_{0}^1\frac{1}{1+x}\left(\int_{x-\psi(n)}^{x+\psi(n)}\frac{1}{1+y}~\td y\right)\td x+O(\widetilde{\gamma}^n+\psi(n)^2)\nonumber\\[2ex]
    &=&\frac{\psi(n)}{(\log 2)^2}+O\left(\widetilde{\gamma}^n+(\psi(n))^2\right).\label{muan}
\end{eqnarray}

We are now in the position to put everything together and show that the self-conformal system $(\Phi,[0,1],\mu,T)$ provides a counterexample to Claim~F.    Indeed, by Corollary \ref{quantcountabcont}, we have that for $\mu-$almost all $x \in [0,1]$
\begin{eqnarray}\label{countrecur}
    \lim_{N\to\infty}
    \frac{\sum_{n=1}^N\one_{B(x,\psi(n))}(T^nx)}{\sum_{n=1}^N2h(x)\psi(n)}=1  \, .
\end{eqnarray}
Recall that if $\{a_n\}_{n\in\N}$ is a positive sequence of real numbers such that $a_n\to0$ and $\sum_{n=1}^{\infty}a_n=+\infty$, then
\begin{eqnarray*}
    \lim_{N\to\infty}\frac{\sum_{n=1}^Na_n^2}{\sum_{n=1}^Na_n}=0.
\end{eqnarray*}
With this in mind, on combining \eqref{plmk}, (\ref{muan}) and (\ref{countrecur}),  we find that for $\mu-$almost all $x \in K:=[0,1]$
\begin{eqnarray*}
     \lim_{N\to\infty}
      \frac{\sum_{n=1}^N\one_{B(x,\psi(n))}(T^nx)}{\sum_{n=1}^N\mu(R_n(\psi))}&=&\lim_{N\to\infty} \frac{\sum_{n=1}^N\one_{B(x,\psi(n))}(T^nx)}{\sum_{n=1}^N2h(x)\psi(n)}\cdot\lim_{N\to\infty}\frac{\sum_{n=1}^N2h(x)\psi(n)}{\sum_{n=1}^N\mu(R_n(\psi))}\\[4pt]
     &=&\lim_{N\to\infty}\frac{\sum_{n=1}^N2h(x)\psi(n)}{\sum_{n=1}^N\left(\frac{\psi(n)}{(\log2)^2}+O\left(\gamma^n+(\psi(n))^2\right)\right)}\\[4pt]
     &=&2(\log 2)^2\,h(x)\\[4pt]
     &=&\frac{2\log 2}{1+x}   \,  .
\end{eqnarray*}
The upshot is that Claim~F is false.
\end{example}

\bigskip

\begin{remark} \label{rembigger}
  In view of Remark~\ref{weakrk},  we could replace the IFS  $\Phi$ in Example~\ref{egfull} by the simpler IFS   $\Phi^{'}=\{\phi_1,\phi_2\}$ where
   $\phi_1:[0,1]\to[0,1/2]$ and $\phi_2:[0,1]\to[1/2,1]$ are  given by
    $$
    \phi_1(x)=\textstyle{ \frac{x}{2}},  \qquad \textstyle{  \phi_2(x)=\frac{1}{1+x} } \qquad \forall \  x\in[0,1]  .
    $$
     It is easily seen that $\Phi$ corresponds to  a single iteration of $\Phi^{'}$ and that the latter gives rise to a self-conformal system $(\Phi^{'},[0,1],\mu,T)$ in which $\mu$ is as in Example~\ref{egfull} and  the transformation  $T:[0,1]\to[0,1]$ is given by
    \begin{eqnarray*}
        Tx=\left\{
        \begin{aligned}
            &2x,\qquad\,\, \ \ \text{if  \ $0\leq x<\frac{1}{2}$},\\
            &\frac{1}{x}-1,\,\,\,\, \ \ \,\text{if  \ $\frac{1}{2}\leq x\leq 1$}.
        \end{aligned}
        \right.
    \end{eqnarray*}

\noindent Note that  $\Phi^{'}$ fails to meet the second condition of \eqref{varphi'contr} since  $|\phi_2^{'}(0)|=1$  but it does satisfy the weaker condition \eqref{instead} with $n_0=2$.  The point  being made in  Remark~\ref{weakrk} is that the results in this paper (such as  Corollary~\ref{quantcountabcont}) are in fact  applicable to systems such as  $(\Phi^{'},[0,1],\mu,T)$  that do not necessarily satisfy   the second condition of \eqref{varphi'contr} but do satisfy the weaker condition \eqref{instead}.
\end{remark}

\appendix
\section{Exponentially mixing in product  systems}\label{AppA}

This appendix is motivated by the  discussion centred around Remark~\ref{proddy} in the introduction, namely Section~\ref{bandi}.

Throughout, let $k\geq2$ be an integer. For each $1\leq i\leq k$, let $(X_i,d_i)$ be a metric space  and let $(X_i,\cB_i,\mu_i,T_i)$ be a measure-preserving system with $\mu$ being exponentially mixing with respect to $(T_i,\cC_i)$, where $\cC_i$ is the collection of balls in $X_i$. Now construct the   metric space $(X,d)$ and the measure-preserving system $(X,\cB,\mu,T)$ via $\{(X_i,d_i)\}_{1\leq i\leq k}$ and $\{(X_i,\cB_i,\mu_i,T_i)\}_{1\leq i\leq k}$ respectively as follows:
\begin{itemize}
    \item $\displaystyle X:=\prod_{i=1}^k X_i$.
    \medskip

    \item For any $\x=(x_1,...,x_k)$, $\y=(y_1,...,y_k)\in X$
    \[
    d(\x,\y):=\max\{d_i(x_i,y_i):i=1,2,...,k\}\,.
    \]
    \item $\displaystyle \cB:=\bigotimes_{i=1}^k\cB_i$ is the $\sigma$-algebra on $X$ generated by the sets of the form
    \[
    \left\{\prod_{i=1}^kF_i:F_i\in\cB_i,\,i=1,2,...,k\right\}\,.
    \]
\item $\displaystyle\mu:=\bigotimes_{i=1}^k\mu_i$ is the product measure on $X$.
\medskip

\item The map $T:X\to X$ is given  by
\[
T\x:=(T_1x_1,\,T_2x_2,\,...\,,\,T_kx_k),\ \ \ \ \forall\,\,\x=(x_1,x_2,...,x_k)\in X\,.
\]
\end{itemize}

\medskip

\begin{theorem}
    Let $(X,d)$  and $(X,\cB,\mu,T)$ be mentioned above. Then $\mu$ is exponentially mixing with respect to $(T,\cC)$, where $\cC$ is the collection of balls in $(X,d)$.
\end{theorem}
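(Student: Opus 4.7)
The plan rests on the elementary observation that, under the sup-metric $d(\x,\y)=\max_i d_i(x_i,y_i)$, every ball in $(X,d)$ splits as a product of balls: $B(\x,r)=\prod_{i=1}^k B^{(i)}$ where $B^{(i)}:=\{y\in X_i:d_i(x_i,y)<r\}\in\cC_i$. Fixing such a ball $B=\prod_{i=1}^k B^{(i)}$, and letting $C_i>0$, $\gamma_i\in(0,1)$ be the exponentially-mixing constants for the individual systems $(T_i,\cC_i)$, the goal is to establish
\[
\bigl|\mu(B\cap T^{-n}F)-\mu(B)\mu(F)\bigr| \;\le\; \widetilde{C}\,\widetilde{\gamma}^{\,n}\,\mu(F) \qquad (n\in\N,\ F\in\cB),
\]
with $\widetilde{C}>0$ and $\widetilde{\gamma}\in(0,1)$ depending only on $k$ and on the constants $C_i$, $\gamma_i$.

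I would first handle the case of product sets $F=\prod_{i=1}^k F_i$ with $F_i\in\cB_i$. Since $\mu$ and $T$ respect the product decomposition, one has $\mu(B\cap T^{-n}F)=\prod_i a_i$ and $\mu(B)\mu(F)=\prod_i b_i$ where $a_i:=\mu_i(B^{(i)}\cap T_i^{-n}F_i)$ and $b_i:=\mu_i(B^{(i)})\mu_i(F_i)$. The standard telescoping identity
\[
\prod_{i=1}^k a_i-\prod_{i=1}^k b_i \;=\; \sum_{j=1}^k\bigg(\prod_{i<j}a_i\bigg)(a_j-b_j)\bigg(\prod_{i>j}b_i\bigg),
\]
combined with the obvious bounds $a_i,b_i\le\mu_i(F_i)$ and the single-coordinate estimate $|a_j-b_j|\le C_j\gamma_j^{\,n}\mu_j(F_j)$, will deliver the inequality with, say, $\widetilde{C}:=k\max_i C_i$ and $\widetilde{\gamma}:=\max_i\gamma_i$.

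The remaining task is to upgrade the bound from rectangles to arbitrary $F\in\cB$. The class $\cA$ of finite disjoint unions of measurable rectangles is an algebra, and the bound transfers from rectangles to $\cA$ verbatim, by finite additivity of both the left-hand and right-hand sides and the triangle inequality. For a general $F\in\cB$, since $\mu$ is a probability measure and $\cA$ generates $\cB$, for any $\epsilon>0$ one may choose $A\in\cA$ with $\mu(F\triangle A)<\epsilon$; the measure-preserving property of $T$ then gives $|\mu(B\cap T^{-n}F)-\mu(B\cap T^{-n}A)|\le\mu(F\triangle A)<\epsilon$ and $|\mu(F)-\mu(A)|<\epsilon$, whence
\[
\bigl|\mu(B\cap T^{-n}F)-\mu(B)\mu(F)\bigr| \;\le\; \widetilde{C}\widetilde{\gamma}^{\,n}\mu(A)+2\epsilon \;\le\; \widetilde{C}\widetilde{\gamma}^{\,n}\bigl(\mu(F)+\epsilon\bigr)+2\epsilon.
\]
Letting $\epsilon\to 0$ completes the proof. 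There is no serious obstacle here: the substantive step is the telescoping expansion on rectangles, while the passage to arbitrary $F\in\cB$ is a routine measure-theoretic approximation.
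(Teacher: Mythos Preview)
Your proof is correct and follows essentially the same three-step strategy as the paper: decompose balls in $(X,d)$ as products of coordinate balls, establish the mixing estimate on measurable rectangles via the coordinate-wise hypotheses, and then extend to all of $\cB$. The only differences are cosmetic: on rectangles you use the telescoping identity $\prod a_i-\prod b_i=\sum_j(\prod_{i<j}a_i)(a_j-b_j)(\prod_{i>j}b_i)$ (yielding the tidier constant $k\max_i C_i$) whereas the paper bounds the product above and below separately (producing $2^kC^k$); and for the extension you use a density/approximation argument in place of the paper's appeal to the Monotone Class Lemma.
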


\begin{proof}
    Since each $\mu_i$ is exponentially mixing with respect to $(T_i,\cC_i)$, then there exist $C\geq1$ and $\gamma\in(0,1)$ such that for any $i=1,2,...,k$,
    \begin{eqnarray}\label{muiexpmixing}
        \big|\mu_i(B_i\cap T^{-n}F_i)-\mu_i(B_i)\mu_i(F_i)\big|\leq C\,\gamma^n\,\mu_i(F_i)
    \end{eqnarray}
    holds for any ball $B_i\subseteq X_i$, any $F_i\in\cB_i$ and any $n\in\N$. Throughout, fix an arbitrary ball $B\subseteq X$. In view of the definition of metric in $X$, it is clear that $B$ can be written as $$B=\prod_{i=1}^kB_i\,,$$ where $B_i\subseteq X_i$ is a ball in $X_i$ for each $i=1,2,...,k$. Next,  consider the set
    \begin{eqnarray*}
        F=\prod_{i=1}^k F_i\,,
    \end{eqnarray*}
    where $F_i\in\cB_i$ $(i=1,2,...,k)$.
    Then, for any $n\in\N$,
    \begin{eqnarray*}
        \mu\big(B\cap T^{-n}F\big)&=&\mu\left(\prod_{i=1}^k B_i\cap T_i^{-n}F_i\right)\\
        &=&\prod_{i=1}^k\mu_i\big(B_i\cap T_i^{-n}F_i\big)\\
        &\leq&\prod_{i=1}^k\left(\mu_i(B_i)+C\gamma^n\right)\,\mu_i(F_i)\qquad\text{(by \eqref{muiexpmixing})}\\
&=&\left(\prod_{i=1}^k\left(\mu_i(B_i)+C\gamma^n\right)\right)\cdot\mu(F)\\
&\leq&\left(\prod_{i=1}^k\mu_i(B_i)+2^kC^k\gamma^n\right)\cdot\mu(F)\\[4pt]
&=&\big(\mu(B)+2^k\,C^k\,\gamma^n\big)\,\mu(F)\,.
    \end{eqnarray*}
A similar argument with obvious modifications shows that for any $n\in\N$,
\begin{eqnarray*}
    \mu\big(B\cap T^{-n}F\big)~\geq~\big(\mu(B)-2^k\,C^k\,\gamma^n\big)\,\mu(F)\,.
\end{eqnarray*}
The upshot of the above discussions is that for any $F\in\cB$ of the form
\[
F=\prod_{i=1}^kF_i\,\qquad(F_i\in\cB_i\,,\,i=1,2,...,k)\,,
\]
we have
\begin{eqnarray}\label{muexpmixprod}
    \big|\mu(B\cap T^{-n}F)-\mu(B)\mu(F)\big|\,\leq\, 2^kC^k\gamma^n\,\mu(F)\,,\qquad\forall\,n\in\N\,.
\end{eqnarray}

We are going to prove \eqref{muexpmixprod} for any $F\in\cB$. To do this, let
\[
    \cA_B:=\left\{F\in\cB:\text{the inequality \eqref{muexpmixprod} holds for $F$}\right\}\,
\]
and let $\cG$ be the collection of finite disjoint unions of the sets in
\begin{eqnarray*}
    \left\{F=\prod_{i=1}^kF_i:F_i\in\cB_i,\,i=1,2,...,k\right\}.
\end{eqnarray*}
Various properties of $\cA_B$ and $\cG$ are listed below:
\begin{itemize}
    \item[$\circ$] As was discussed above, we have $\cA_{B}\supseteq\cG$.
    \medskip

    \item[$\circ$] $\cG$ is an algebra which generates the $\sigma$-algebra $\cB$.
    \medskip

    \item[$\circ$] $\cA_B$ is a monotone class, that is to say that $\cA_B$ is closed under countable increasing unions and countable decreasing intersections.
\end{itemize}
On combining the above facts with the Monotone Class Lemma \cite[Lemma 2.35]{folland1999}, we have $\cA_B=\cB$, which is equivalent to the statement that any $F\in\cB$ satisfies the desired
inequality \eqref{muexpmixprod}. Since $B\subseteq X$ is an arbitrary ball, then we have proved that $\mu$ is exponentially mixing with respect to $(T,\cC)$.
\end{proof}


\section{Absolutely friendly measures and exponential mixing}   \label{AppH}

In this appendix we explore the connection between exponential mixing statements and  the class of so-called absolutely friendly measures \cite{PVabs}, which are supported on fractal subsets of $\mathbb{R}^d$.   This class forms a subclass of the friendly measures introduced in \cite{kleinbock2005}, and includes, for example, measures supported on self-similar sets satisfying the open set condition (see Theorem~\ref{thmklw} below for the precise statement).
Friendly measures provide a robust framework for extending classical results in the theory of metric Diophantine approximation -- such as Khintchine-type theorems -- from Lebesgue measure to more singular or fractal settings. In order to describe the framework,  let  $\mu$ be a probability measure supported on a bounded subset  $X$ of $\R^d$.  In turn, let $\spt(\mu)$ denote the support of $\mu$.  Recall that $\mu$ is said to be \emph{doubling}
 if there exist
strictly positive constants $C$ and $r_0$ such that $$\mu(B(\x,2r))
\ \leq \ C \, \mu(B(\x,r)) \hspace{9mm} \forall \  \x \in \spt(\mu)
\qquad \forall \ r < r_0 \ ,   $$  where  $B(\x,r)$ is a ball in $\R^d$ with centre $\x$  and radius $r$.  Next, let $H$ denote a generic $(d-1)$--dimensional hyperplane of $\R^d$.   Then, we say that
$\mu$ is \emph{absolutely decaying} if there exist strictly positive constants $C$, $r_0$  and $\delta>0$ such that for any  hyperplane $H$ and $\varrho>0$, we have that
\begin{equation}\label{absoludecay}
\mu\big((H)_{\varrho}\cap B(\x,r)\big)\leq C\left(\frac{\varrho}{r}\right)^{\delta}\mu(B(\x,r)) \hspace{7mm} \forall \ \x \in \spt(\mu)
\qquad  \forall \ r < r_0 \ .
\end{equation}
The measure  $\mu$ is said to be  \emph{absolutely friendly} if it is doubling and absolutely decaying.

Now let  $ E\subseteq\R^d$ be any hypercube. It is clear that the  boundary $\partial E$ of the hypercube is contained in a union of $2d$  hyperplanes. It is  easily verified that this trivial observation  together with the definition of absolutely decaying implies
the existence of  constants  $C>0$ and $\delta>0$ such that
$$
\mu\big((\partial E)_{\varrho}\big)  \ \leq  \ C  \, \varrho^{\delta}   \qquad \forall \ \varrho>0  . $$
    In turn, this together with   Theorem~\ref{Main}  implies the following exponentially mixing statement.

\begin{proposition}
    \label{expmixforcube}
    Let $d\geq1$ be an integer.  Let $(\Phi,K,\mu,T)$ be a self-conformal system on $\R^d$. If $\mu$ is absolutely decaying, then $\mu$ is exponentially mixing with respect to $(T,\cC)$, where $\cC$ is a collection of hypercubes in $\R^d$.
\end{proposition}

The upshot is that in the setting of self-conformal systems,  if we additionally assume that the measure $\mu$ is absolutely decaying, then proving exponential mixing with respect to hypercubes becomes essentially trivial.
This simplification, however, does not seem to carry over to the case of balls, where the argument remains substantially more intricate. Nevertheless, as we now demonstrate, the absolute decay condition still enables a significant simplification of the analysis required to establish the full general version of the result, namely Theorem~\ref{Main2}.   With this in mind,  recall that Theorem~\ref{thmann} plays the key role in the proof of Theorem~\ref{Main2}.
The first part of Theorem~\ref{thmann} is relatively straightforward to establish (see Section~\ref{onedim}), whereas the second part contains the main technical difficulty. What follows shows that, assuming the “easy” part (i) of Theorem~\ref{thmann}, the absolute decay assumption enables a comparatively direct proof of the more challenging part (ii).
Moreover, with reference to \eqref{thmmeaann} we are able to establish the desired inequality for all $\x \in \R^d$ (not just $K$) and all $r > 0$ (not just $r  \le r_0$).  As a result,  the analogue of Proposition~\ref{expmixforcube} for balls follows directly from  the following statement, together with  part (i) of Theorem~\ref{thmann} and  Theorem~\ref{Main}.

\begin{proposition}\label{ADimplyMAB}
Let  $\mu$ be a probability measure supported on a bounded subset  $X$ of $\R^d$.   Suppose that $\mu$ is absolutely decaying and there exist $C'>0$, $\delta'>0$ such that
     \begin{equation}\label{mubxrlecrd}
         \mu\big(B(\x,r)\big)\leq C'r^{\delta'},\qquad \forall \ \x\in\R^d,\ \ \forall \ r>0.
     \end{equation}
     Then there exist constants $C>0$ and $\delta>0$ so that for any ball $B\subseteq\R^d$, we have
     \begin{equation}\label{meaofannballnei}
         \mu\big((\partial B)_{\varrho}\big)\leq C\varrho^{\delta},\qquad\forall\ \varrho>0.
     \end{equation}
\end{proposition}

\begin{remark}
Observe that the inequality corresponding to \eqref{mubxrlecrd} is necessary, as it follows from \eqref{meaofannballnei}.  To see this, let $\x\in\R^d$, $r>0$ and $\varrho=r$, then $(\partial B(\x,r))_{\varrho}=B(\x,2r)$. By \eqref{meaofannballnei}, we have
\[
\mu(B(\x,r))\leq\mu(B(\x,2r))=\mu((\partial B(\x,r))_{r})\leq C r^{\delta}.
\]
\end{remark}

\medskip

\begin{proof}[Proof of Proposition~\ref{ADimplyMAB}]
     Throughout, write $K=\spt(\mu)$. Since $\mu$ is absolutely decaying, there exist constants  $C>0$, $r_0>0$ and  $\delta_1>0$ such that \eqref{absoludecay} holds for any hyperplane $H$ with $\delta$ `equal' to $\delta_1$.  For any $\x\in\R^d$ and $r>0$, let $\overline{B}(\x,r)$ denote the associated  closed ball. Since $\overline{B}(\x,r)$ is the intersection of the open balls $\{B(\x,r+1/n)\}_{n\in\N}$, it follows via  \eqref{absoludecay}  that
     \begin{equation}\label{meaofhycab}
         \mu\big((H)_{\varrho}\cap \overline{B}(\x,r)\big)\leq C\left(\frac{\varrho}{r}\right)^{\delta_1}\mu\big(\overline{B}(\x,r)\big)
     \end{equation}
     for any hyperplane $H$, $\x\in K$, $0<r<r_0$ and $\varrho>0$.

     Now for any fixed $\y\in\R^d$, $R>0$ and $\varrho>0$, we  estimate the $\mu$-measure of annulus  $(\partial B(\y,R))_{\varrho}$ by considering two cases.
\medskip

       \noindent \emph{$\bullet$ Case 1: Estimating $\mu\big((\partial B(\y,R))_{\varrho}\big)$ when $\varrho\geq\min\big\{\frac{R^4}{16},r_0^2\big\}$}. In this case,  by \eqref{mubxrlecrd} we have that
         \[
             \mu\big((\partial B(\y,R))_{\varrho}\big)\leq\mu\big(B(\y,R+\varrho)\big)\leq \min\{1, C'(R+\varrho)^{\delta'}\}\leq 3^{\delta'}\cdot\max\{C',1\}\cdot\varrho^{\delta'/4}.\medskip
         \]

\noindent\emph{$\bullet$   Case 2: Estimating $\mu\big((\partial B(\y,R))_{\varrho}\big)$ when $\varrho<\min\big\{\frac{R^4}{16},r_0^2\big\}$}. In this case,  by  Lemma~\ref{lem3.1}  (a generic geometric statement concerning the tangent plane of a geometric sphere), we obtain that for any $\x\in(\partial B(\y,R))_{\varrho}$
         \begin{equation}\label{parbcapclbsuhy}
             (\partial B(\y,R))_{\varrho}\cap\overline{B}(\x,\sqrt{\varrho})\subseteq\big(\x+T_{\x}(\partial B(\y,|\x-\y|))\big)_{3\varrho^{3/4}}.
         \end{equation}
         Note that the  collection $$\left\{\overline{B}(\x,\sqrt{\varrho}):\x\in K\cap(\partial B(\y,R))_{\varrho}\right\}$$ of closed balls  is a  cover of the set $K\cap(\partial B(\y,R))_{\varrho}$. On applying the Besicovitch covering theorem \cite[Theorem 2.7]{mattila1999}   to this cover, there exists an integer $N\geq1$ that only depends on $d$ and $\cX_1,\cX_2,...,\cX_N\subseteq K\cap(\partial B(\y,R))_{\varrho}$ such that
         \begin{itemize}
             \item[(i)] for any $1\leq i\leq N$ and any $\x\neq\x'\in\cX_i$,
             \[
             \overline{B}(\x,\sqrt{\varrho})\cap\overline{B}(\x',\sqrt{\varrho})=\emptyset.
             \]
             \item[(ii)] the  collection
             \[
                 \left\{\overline{B}(\x,\sqrt{\varrho}):1\leq i\leq N,\,\x\in\cX_i\right\}
             \]
             of closed balls is a cover of the set $K\cap(\partial B(\y,R))_{\varrho}$.
             \end{itemize}
            On combining   properties (i) and (ii) with \eqref{meaofhycab} and \eqref{parbcapclbsuhy}, it follows that
             \begin{eqnarray*}
                 \mu\big((\partial B(\y,R))_{\varrho} \big)&\leq&\sum_{i=1}^N\sum_{\x\in\cX_i}\mu\big(\partial B(\y,R))_{\varrho}\cap\overline{B}(\x,\sqrt{\varrho})\big)\\
                 &\leq&\sum_{i=1}^N\sum_{\x\in\cX_i}\mu\Big(\big(\x+T_{\x}(\partial B(\y,|\x-\y|))\big)_{3\varrho^{3/4}}\cap\overline{B}(\x,\sqrt{\varrho})\Big)\\
                 &\leq&C\sum_{i=1}^N\sum_{\x\in\cX_i}\left(\frac{3\varrho^{3/4}}{\sqrt{\varrho}}\right)^{\delta_1}\mu\big(\overline{B}(\x,\sqrt{\varrho})\big)\\
                 &=&3^{\delta_1}C\varrho^{\delta_1/4}\sum_{i=1}^{N}\mu\left(\bigsqcup_{\x\in\cX_i}\overline{B}(\x,\sqrt{\varrho})\right)\\
                 &\leq&3^{\delta_1}CN\varrho^{\delta_1/4}.\medskip
             \end{eqnarray*}

             \noindent The upshot of the above is  that the inequality \eqref{meaofannballnei} holds for any ball $B\subseteq\R^d$ and $\varrho>0$ with $\delta=\min\{\delta_1,\delta'\}/4$.
\end{proof}

\noindent For the sake of completeness we formally state what has already been mentioned informally above: namely, that Proposition~\ref{ADimplyMAB}, in conjunction  with part (i) of Theorem~\ref{Main} and Theorem~\ref{thmann}, yields  the following analogue of Proposition~\ref{expmixforcube} for balls.

\begin{proposition}\label{absoluexp}
Let $d\geq1$ be an integer.  Let $(\Phi,K,\mu,T)$ be a self-conformal system on $\R^d$. If $\mu$ is absolutely decaying, then $\mu$ is exponentially mixing with respect to $(T,\cC)$, where $\cC$ is a collection of balls in $\R^d$.
    \end{proposition}

We emphasize  that Theorem~\ref{Main2} shows that the proposition is in fact true for any measure $\mu$ associated with a self-conformal system on $\R^d$, without requiring  the assumption that it is absolutely decaying.  In the remainder of this section, we investigate sufficient conditions under which $\mu$ is absolutely decaying, and we see that not all such measures satisfy this property. With this in mind,   the following statement for self-similar systems satisfying the open set condition  was established in  \cite[Theorem 2.3]{kleinbock2005}.

\begin{theorem}\label{thmklw}
 Let $(\Phi,K,\mu,T)$ be a self-conformal system on $\R^d$ where $\Phi$ is an irreducible self-similar IFS   and  $\mu$ is the restricted Hausdorff measure  $\frac{\cH^s|_K}{\cH^s(K)}$ ($s=\dimH K$).  Then $\mu$ is absolutely decaying. In particular, $\mu$ is absolutely friendly.
\end{theorem}

\noindent Here, and hereafter, we say  that a conformal IFS $\Phi=\{\varphi_j\}_{1\leq j\leq m}$ on $\R^d$ is \emph{irreducible} if the associated self-conformal set $K\subseteq\R^d$ is not contained in any $(d-1)$-dimensional $C^1$ submanifold.    This  notion  of irreducibility is a simpler form of that appearing as Definition 1.4 in \cite{urbanski2005}.  The fact that they  are equivalent is a consequence of Theorem~\ref{thmrigidity}.  We also note that the theorem implies that  $\Phi$ is irreducible if and only if $\ell_K=d$, where $\ell_K$ is given by  \eqref{defofellk}.   In the case the IFS is self similar,  the notion of irreducible is equivalent  to the familiar notion that the associated self-similar set $K\subseteq\R^d$ is not contained in a $(d-1)$--dimensional hyperplane of $\R^d$.  As is mentioned in \cite{das2021}  -- just before the statement of their Theorem 1.5, the statement that the self-similar set $K$ is not contained in a finite collection of $(d-1)$--dimensional hyperplanes of
$\R^d$ is  equivalent to the more natural and formally weaker assumption
that $K$ is not contained in any $(d-1)$--dimensional hyperplane.  Regardless of which notion of irreducible one works with, it is reasonably straightforward to see that it is a necessary assumption in the above statement (see  Example~B.1 below for an obvious example).

For general self-conformal systems, we have the following result, which extends Theorem~\ref{thmklw}. It originally appeared in \cite[Theorem 1.5]{urbanski2005}, though with a minor oversight that was later corrected in \cite[Theorem 1.10]{das2021}.

\begin{theorem} \label{selfcondou}
    Let $(\Phi,K,\mu,T)$ be a self-conformal system. If $\Phi$ is irreducible and $\mu$ is doubling, then $\mu$ is absolutely decaying.  In particular, $\mu$ is absolutely friendly.
\end{theorem}

\medskip

\begin{remark}\label{rkofdoubling}
    The doubling property plays an important role in the proof of Theorem~\ref{selfcondou}, and Example B.2  below highlights that it is a necessary assumption.  For a self-conformal system $(\Phi, K, \mu, T)$, we outline two situations in which the measure $\mu$ is guaranteed to be doubling:
\begin{itemize}
    \item[(i)]  If $\Phi$ satisfies the strong separation property, then all Gibbs measures on $K$ are doubling (see \cite[Proposition 4.8]{anttila2023}).

    \medskip

    \item[(ii)] If $\mu$ is equivalent to $\cH^{s}|_K$   ($s=\dimH K$), then $\mu$ is $s$-Ahlfors regular  (see
    Remark~\ref{newrem} in Section~\ref{appintro}), and thus doubling.
\end{itemize}
\end{remark}


\noindent With Theorem~\ref{selfcondou} and Remark~\ref{rkofdoubling} in hand, we have well-established criteria for identifying self-conformal systems $(\Phi, K, \mu, T)$ for which the measure $\mu$ is absolutely decaying. However, as we demonstrate below, it is not difficult to construct simple examples where the conditions of Theorem~\ref{selfcondou} fail to hold, and moreover, the measure $\mu$ is not absolutely decaying. In such cases, we cannot directly apply Proposition~\ref{expmixforcube} or Proposition~\ref{absoluexp} to deduce that $\mu$ exhibits exponential mixing with respect to hypercubes or balls. Nonetheless, since Theorem~\ref{Main2} does not rely on any additional assumptions, it ensures that the desired dynamical properties still hold even when the measure is not friendly.

We now present the examples referenced above. 

\noindent  \noindent $\bullet$ \textbf{Example B.1.}  \  Here is  a concrete two-dimensional example of a self-conformal IFS that is clearly  not irreducible. Let $\Phi=\{\varphi_1,\varphi_2\}$  be a self-similar IFS on $[0,1]^2$ consisting of the maps
 \[
 \textstyle{\varphi_1(\x)=\frac{1}{3}\x,\quad\varphi_2(\x)=\frac{1}{3}\x+\Big(\frac{2}{3},0\Big),\quad\forall\ \x\in  [0,1]^2.}
  \]
 Then the associated self-similar set $K$ is the middle-third Cantor set embedded in the line $\R\times\{0\}$, and hence $\Phi$ is not irreducible.  It is easily checked that  if we take $H  = \R\times\{0\}$, then the inequality appearing in \eqref{absoludecay} reads $ 1 \le C (\rho/r)^{\delta}$  for any measure $\mu$ supported on $K$.  In turn, it follows that given any $r<1$ we can choose $ \rho$ sufficiently small so that the inequality fails.  Hence,  $\mu$ cannot be absolutely decaying.  For a more sophisticated example  see Example \ref{counterexam}. More  generally, for any self-conformal system $(\Phi, K, \mu, T)$,  if $\Phi$ is not irreducible, then by definition, the associated self-conformal set $K$ is contained in some $(d-1)$-dimensional $C^1$ submanifold $M\subseteq\R^d$.  Consequently, the measure $\mu$ is supported on $M$. Since $M$ can be locally approximated by a hyperplane, one can adapt the argument  used in the proof of Proposition~\ref{ADimplyMAB} to show that $\mu(M) = 0$ under the assumption  that $\mu$ is absolutely decaying.  This  contradicts  the fact that $\mu$ is supported on $M$.  Therefore,  $\mu$ cannot be absolutely decaying if $\Phi$ is not irreducible.

\medskip

As mentioned  in Remark~\ref{rkofdoubling} , if $\Phi$ satisfies  the  strong separation condition, then $\mu$ is doubling.  However, if $\Phi$ does not satisfy the strong separation condition, the measure  $\mu$ may fail to be doubling, as we now demonstrate.

\noindent  \noindent $\bullet$ \textbf{Example B.2.}  \ Let $K\subseteq\R^2$ be the Sierpi\'{n}ski triangle generated by the self-similar IFS $\Phi=\{\varphi_1,\varphi_2,\varphi_3\}$ where $\varphi_i(\x)=(\x+\mathbf{q}_i)/2$ ($i=1,2,3$) and $\mathbf{q}_1,\mathbf{q}_2,\mathbf{q}_3\subseteq\R^2$ are the  vertices of an equilateral triangle, and let $\mu$ be the self-similar measure defined by
     \[
         \mu:=\sum_{i=1}^3 p_i\,\mu\circ\varphi_i^{-1},\qquad\text{where  \ \ \ $p_i\geq0$ $(i=1,2,3)$ \  and \  $\sum_{i=1}^3 p_i=1$}.
     \]
     It is well-known that $\Phi$ does not satisfies the strong separation condition. Regarding doubling,  it is shown in \cite[Proposition~1.3 (b)]{yung2007} that $\mu$ is not doubling if $(p_1,p_2,p_3)\neq(1/3,1/3,1/3)$.
     The upshot is that, in this case, Theorem~\ref{selfcondou} cannot be used to coclude that that $\mu$ is absolutely decaying.   To reiterate, this means that Proposition~\ref{absoluexp} cannot be applied to deduce that $\mu$ is exponential mixing with respect to balls. 
     We now go on to show that $\mu$ is in fact  not absolutely decaying,   which, in particular, confirms that the doubling assumption in Theorem~\ref{selfcondou} is necessary.
     With this in mind, without loss of generality, we assume that the diameter of $K$ equals one. If $\#\{i=1,2,3:p_i\neq0\}=1$, then $\mu$ is supported at a single point and thus not absolutely decaying. For this reason, we  assume that $\#\{i=1,2,3:p_i\neq0\}\geq2$ in the following. Let $p_{\max}=\max\{p_1,p_2,p_3\}$ and $p_{\min}=\min\{p_1,p_2,p_3\}$. Without loss of generality, we assume that $p_1=p_{\min}$ and $p_2=p_{\max}$. Since $(p_1,p_2,p_3)\neq(1/3,1/3,1/3)$, we have $p_{\min}< p_{\max}$.  Given   $n\in\N$,
         let $$m=m(n):=\linte{\frac{n\log1/p_{\min}}{\log1/p_{\max}}}     \qquad {\rm and } \qquad  M=M(n):=\frac{\log1/p_{\min}}{\log1/p_{\max}}-\frac{1}{n}   \, .$$ Then, by definition, it follows that
         \begin{equation}   \label{SVB}
         m>M\cdot n \qquad {\rm and } \qquad p_{\min}^n\asymp p_{\max}^m \, .
         \end{equation}
          Also observe that since  $p_{\min}<p_{\max}$, we have   $M(n)>1$ for  sufficiently large $n$. Now let $\pi:\{1,2,3\}^{\N}\to K$ be the coding map  associated  to $\Phi$. In turn,  let $\x=\x(n):=\pi(21^n2^{\infty})$ and $\y:=\pi(12^{\infty})$, where $21^n2^{\infty}$ denotes the word
         \[2\underbrace{1\cdot\cdot\cdot 1}_{n}222 \ldots \]
         and $12^{\infty}:= 1222 \ldots $. Observe that for any large $n\in\N$, we have that
        \begin{eqnarray}\label{Bcenxcapk}
          \textstyle{   B\Big(\x,\frac{1}{2^{n+1}}+\frac{1}{2^m}\Big)    \cap  K \ \subseteq  \ K_{21^{n-1}}\cup K_{12^{m-3}}\cup K_{21^{n-2}21^{m-2-n}}  }
        \end{eqnarray}
        and
\begin{eqnarray}\label{Bcenycapk}
           \textstyle{ B\Big(\x,\frac{1}{2^{n+1}}+\frac{1}{2^m}\Big)\cap B\Big(\y,\frac{1}{2^m}\Big)  \ \supseteq \  K_{12^{m-1}} \, ; }
        \end{eqnarray}
        see Figure \ref{figureST} for a geometric insight of
        \eqref{Bcenxcapk} and \eqref{Bcenycapk}.
        \begin{figure}[h]
    \centering
\includegraphics[scale=0.4]{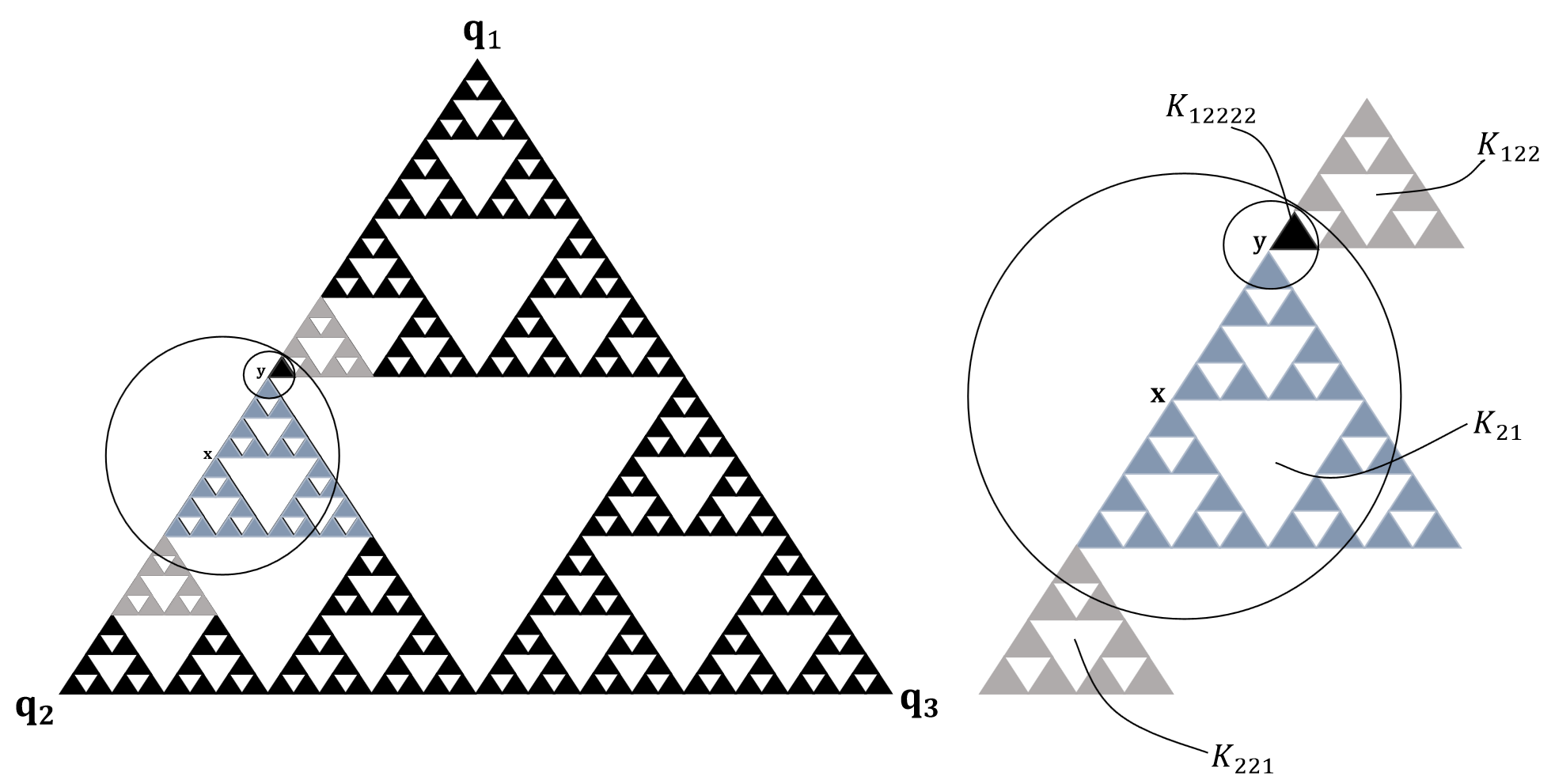}
\caption{Let $n=2$ and $m=5$.  This figure illustrates that $B(\x,1/2^3+1/2^5)\cap K\subseteq K_{21}\cup K_{122}\cup K_{221}$ and $B(\x,1/2^3+1/2^5)\cap B(\y,1/2^5)\supseteq K_{12222}$, where $\x=\pi(2112^{\infty})$ and $\y:=\pi(12^{\infty})$.   This coincides with \eqref{Bcenxcapk} and \eqref{Bcenycapk} in the case $n=2$ and $m=5$.}
\label{figureST}
\end{figure}
Let $H$ be any line through the point $\y$ and note  that $(H)_{1/2^m}\supseteq B(\y,1/2^m)$. We are now in a position to show that $\mu$ is not absolutely decaying. Let $r=r(n):=1/2^{n+1}+1/2^m$ and $\rho=\rho(n):=1/2^m$.  On combining \eqref{SVB}, \eqref{Bcenxcapk} and  \eqref{Bcenycapk}, we find that for any $\gamma>0$,
\begin{eqnarray*}
\frac{\mu\left(B(\x,r\big)\cap (H)_{\rho}\right)}{\left(\frac{\rho}{r}\right)^{\gamma}\mu\left(B(\x,r)\right)}&\geq&\frac{\mu\big(K_{12^{m-1}}\big)}{\left(\frac{\rho}{r}\right)^{\gamma}\big(\mu(K_{21^{n-1}})+\mu\big( K_{12^{m-2}}\big)+\mu\big( K_{21^{n-1}21^{m-2-n}})\big)}\\[2ex]
&\geq&\frac{(1+2^{m-n-1})^{\gamma}\cdot p_{\min}p_{\max}^{m-1}}{p_{\max}p_{\min}^{n-1}+2p_{\max}^{m-2}}\\[2ex]
&\geq& 2^{((M-1)n-1)\cdot\gamma}\cdot\frac{1}{\frac{p_{\min}^{n-2}}{p_{\max}^{m-1}}+\frac{2}{p_{\min}p_{\max}}}\\[2ex]
&\asymp&2^{(M-1)n\gamma}  \ \to \ \infty\quad\text{as  \ \ $n  \to \infty$  \, ,
}
\end{eqnarray*}
 since  $M-1>0$ when $n$ is sufficiently large. Thus,  $\mu$ is not absolutely decaying as claimed.

\medskip

With Example~B.2 in mind, we note that it is easier to construct an example of a self-conformal system on $\R$ that is irreducible but not absolutely decaying. However, we have chosen to present an example in $\R^2$ to remain consistent with the higher-dimensional focus of this paper.

\section{Direct proof of Theorem~\ref{Main}} \label{appendix:directproof}

 \begin{proof}[Direct proof of Theorem~\ref{Main}]
Assume the hypothesis in Theorem~\ref{Main}.     Let $\mu$ be a  Gibbs measure on $K$,  and $\cC$ be a collection of  $\mu$-measurable sets in $\R^d$ that satisfy the inequality (\ref{collection}).
Working on the associated symbolic space (see Section~\ref{cifs}), with $\kappa$ defined  as in (\ref{defkap}) we have
via  (\ref{p3'}) that $$|K_I|\leq C_3\,\kappa^{|I|}  \, , \qquad \forall\, I\in\Sigma^*.$$  Throughout, fix $n\in\N$ and $E\in\cC$. Let $q=q(n)\in\N$ that will be determined later and let $\cI,\cJ\subseteq\Sigma^q$ be given by
     \begin{eqnarray*}
         \cI&:=&\left\{I\in\Sigma^q:K_I\subseteq E\right\},\\[4pt]
         \cJ&:=&\left\{J\in\Sigma^q:K_J\cap E\neq\emptyset,~K_J\cap E^c\neq\emptyset\right\}.
     \end{eqnarray*}
Then it is easily verified that
     \begin{eqnarray*}
         K_J  \ \subseteq  \ (\partial E)_{2\,|K_J|} \ \subseteq  \ (\partial E)_{2\,C_3\,\kappa^q}  \, ,   \qquad  \forall\, J\in\cJ
     \end{eqnarray*}
     and hence
     \begin{eqnarray}\label{relation1}
         \bigcup_{I\in\cI}K_I~~\subseteq~~ E\cap K~~\subseteq ~~\left(\bigcup_{I\in\cI}K_I\right)\cup(\partial E)_{2\,C_3\,\kappa^q}  \, .
     \end{eqnarray}

Now let $\gamma\in(0,1)$ be  as  in Corollary~\ref{emprd}, and let $\delta>0$ be as in inequality (\ref{collection}). In the following, we do not distinguish between the constants $C>0$ appearing in the inequality (\ref{collection})  and the Corollary~\ref{emprd}. Let $F\subseteq\R^d$ be a $\mu$-measurable set. On combining  (\ref{collection}),  part (iii) of Theorem~\ref{ruellethmrd}, Corollary \ref{emprd} and (\ref{relation1}), we obtain that
     \begin{eqnarray}
         \mu(E\cap T^{-n}F)&\leq&\sum_{I\in\cI\cup\cJ}\mu(K_I\cap T^{-n}F)\nonumber\\[4pt]
&\leq&\Big(\sum_{I\in\cI\cup\cJ}\mu(K_I)+Cm^q\gamma^n\Big)\,\mu(F)\label{uppbd}\\[4pt]
&\leq&\left(\mu(E)+\mu\big((\partial E)_{2\,C_3\,\kappa^q}\big)+Cm^q\gamma^n\right)\mu(F)\label{uppbd2}\\[4pt]
&\leq&\left(\mu(E)+ C\cdot(2C_3)^{\delta}\kappa^{\delta\, q}+Cm^q\gamma^n\right)\mu(F)\nonumber
     \end{eqnarray}
and that
\begin{eqnarray}
    \mu(E\cap T^{-n}F)&\geq&\sum_{I\in\cI}\mu(K_I\cap T^{-n}F)\nonumber\\[4pt]
    &\geq&\Big(\sum_{I\in\cI}\mu(K_I)-Cm^q\gamma^n\Big)\mu(F)\label{lowbd}\\[4pt]
    &\geq&\left(\mu(E)-\mu\big((\partial E)_{2\,C_3\,\kappa^q}\big)-Cm^q\gamma^n\right)\mu(F)\label{lowbd2}\\[4pt]
    &\geq&\left(\mu(E)-C\cdot(2C_3)^{\delta}\kappa^{\delta\, q}-Cm^q\gamma^n\right)\mu(F).\nonumber
\end{eqnarray}
The upshot is  that
\begin{eqnarray}\label{empes}
    \left|\mu(E\cap T^{-n}F)-\mu(E)\mu(F)\right|\leq C\left((2C_3)^{\delta}\kappa^{\delta\, q}+m^q\gamma^n\right)\mu(F).
\end{eqnarray}
We now set  $$q=q(n):=\linte{\frac{-n\log \gamma}{2\log m}},$$ where $\linte{x}$ denote the largest integer that is not greater than $x$.  Then, it follows that   $$m^q\gamma^n\leq \gamma^{n/2}\ \ \ \ {\rm and  \ } \ \ \ \ \kappa^{\delta\,q}\leq\kappa^{\delta\cdot(\frac{-n\log\gamma}{2\log m}-1)}  \, , $$ 
and this together with (\ref{empes}) implies that
\begin{eqnarray*}
    \left|\mu(E\cap T^{-n}F)-\mu(E)\mu(F)\right|\leq \widetilde{C}\,\widetilde{\gamma}^n\mu(F),
\end{eqnarray*}
where we set
\[
\widetilde{C}:=2\,C\cdot\max\{(2C_3\,\kappa^{-1})^{\delta},1\} > 0 \, , \qquad \widetilde{\gamma}:=\max\big\{\kappa^{\frac{-\delta\log \gamma}{2\log m}},\gamma^{1/2}\big\}\in(0,1).
\]
This completes the direct proof of Theorem~\ref{Main}.
 \end{proof}

\section{Proof of Lemma~\ref{genharmanlem}}   \label{HJ}

For convenience we restate the lemma under consideration.

\begin{lemma}\label{genharmanlemA}
    Let $(X,\cB,\mu)$ be a probability space. let $\{f_n(x)\}_{n\in\N}$ and $\{g_n(x)\}_{n\in\N}$ be  sequences of measurable functions on $X$, and  let $\{\phi_n\}_{n\in\N}\subseteq\R$ be a sequence of real numbers. Suppose that
    \begin{eqnarray}\label{nonnegA}
        0\leq g_n(x)\leq \phi_n,\,\,\,\,\,\,\,\,\forall\,n\in\N,\,\,\,\,\forall\,x\in X
    \end{eqnarray}
    and that there exists $C>0$ with which
    \begin{eqnarray}\label{keycontA}
        \int_{X}\left(\sum_{n=a}^b(f_n(x)-g_n(x))\right)^2\,\td\mu(x)\leq C\sum_{n=a}^b\phi_n
    \end{eqnarray}
    for any pair of integers $0<a<b$. Then for any $\epsilon>0$, we have
    \begin{eqnarray}\label{quantestimA}
        \sum_{n=1}^N f_n(x)=\sum_{n=1}^N g_n(x)+O\left(\Psi(N)^{\frac{1}{2}}(\log(\Psi(N)))^{\frac{3}{2}+\epsilon}+\max_{1\leq k\leq N}g_k(x)\right)
    \end{eqnarray}
    for $\mu$-almost every $x\in X$, where $\displaystyle\Psi(N):=\sum_{n=1}^N\phi_n$.
\end{lemma}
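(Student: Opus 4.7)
The plan is to adapt the standard variance-plus-sparse-subsequence proof of the quantitative Borel-Cantelli lemma (see e.g.\ \cite[Lemma~1.5]{harman1998}); the only novelty is that the deterministic sequence $\{f_n\}$ in the original statement is replaced by the measurable functions $\{g_n(x)\}$, and the hypothesis $0\le g_n(x)\le\phi_n$ is precisely what is needed to push the classical argument through verbatim. Setting $Y_n(x):=f_n(x)-g_n(x)$ and $S_N(x):=\sum_{n=1}^N Y_n(x)$, the variance estimate \eqref{keycontA} reads $\int_X(S_b-S_a)^2\,d\mu\le C\,(\Psi(b)-\Psi(a))$ for all $0\le a<b$. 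Since the conclusion is trivial once $\Psi(N)$ is bounded, I may assume $\Psi(N)\to\infty$.

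The argument splits into two standard steps. First, I would establish the bound along the sparse subsequence $N_k:=\min\{N:\Psi(N)\ge 2^k\}$, so that $2^k\le\Psi(N_k)<2^k+\phi_{N_k}$. Chebyshev's inequality applied to $S_{N_k}$ with threshold $\lambda_k:=\Psi(N_k)^{1/2}(\log\Psi(N_k))^{3/2+\epsilon}$ gives $\mu\{|S_{N_k}|>\lambda_k\}\ll k^{-(3+2\epsilon)}$, which is summable; the Borel-Cantelli lemma then produces the almost-sure bound $|S_{N_k}(x)|=O(\lambda_k)$. Second, I would fill in between $N_{k-1}$ and $N_k$ using the standard Rademacher-Menshov (dyadic) maximal inequality, obtained by repeatedly bisecting the block $(N_{k-1},N_k]$ and applying \eqref{keycontA} on each dyadic piece, to obtain
\[
\int_X\max_{N_{k-1}<N\le N_k}(S_N-S_{N_{k-1}})^2\,d\mu\ \ll\ \bigl(\log(N_k-N_{k-1})\bigr)^2\bigl(\Psi(N_k)-\Psi(N_{k-1})\bigr).
\]
A second Chebyshev-plus-Borel-Cantelli round along $\{N_k\}$ transfers this into the pointwise-a.e.\ estimate $\max_{N_{k-1}<N\le N_k}|S_N-S_{N_{k-1}}|=O\bigl((\Psi(N_k)-\Psi(N_{k-1}))^{1/2}(\log\Psi(N_k))^{3/2+\epsilon}\bigr)$; combined with Step~1, this yields the desired error $O(\Psi(N)^{1/2}(\log\Psi(N))^{3/2+\epsilon})$ provided that $\Psi(N_k)\asymp\Psi(N)$.

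The main technical obstacle, and the reason for the extra $\max_{1\le k\le N}g_k(x)$ term in \eqref{quantestimA}, arises when a single $\phi_{N_k}$ is much larger than $2^{k-1}\asymp\Psi(N_{k-1})$: in that case the block increment $\Psi(N_k)-\Psi(N_{k-1})$ can exceed $\Psi(N)$ by this single-term jump. The remedy is to isolate the exceptional index: splitting $\sum_{n=N_{k-1}+1}^{N_k}Y_n=\sum_{n=N_{k-1}+1}^{N_k-1}Y_n+Y_{N_k}$ and applying the dyadic maximal inequality only to the first piece (whose variance is bounded by $\Psi(N_k-1)-\Psi(N_{k-1})<2^{k-1}\asymp\Psi(N)$), the leftover single term $Y_{N_k}(x)=f_{N_k}(x)-g_{N_k}(x)$ is then absorbed into the error term $\max_{1\le k\le N}g_k(x)$ of \eqref{quantestimA}, exactly as the analogous term $\max_{1\le k\le N}f_k$ is absorbed in the original Lemma~\ref{countlem}. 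Verifying this absorption cleanly, via the monotonicity $\sum_{n=1}^{N_{k-1}}f_n(x)\le\sum_{n=1}^{N}f_n(x)\le\sum_{n=1}^{N_{k}}f_n(x)$ in the applications where $f_n\ge 0$, is the last remaining and essentially bookkeeping step.
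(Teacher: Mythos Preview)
Your overall plan is sound, and the observation that the hypothesis $0\le g_n(x)\le\phi_n$ is all that is needed to carry Harman's argument through is correct. There is, however, a genuine gap in your Step~2. Bisecting the block $(N_{k-1},N_k]$ in \emph{index} space produces the factor $(\log(N_k-N_{k-1}))^2$ in the Rademacher--Menshov bound, and this quantity is \emph{not} controlled by $(\log\Psi(N_k))^2$. For instance, if $\phi_n\asymp 1/n$ then $\Psi(N)\asymp\log N$, so $N_k\asymp e^{2^k}$ and hence $\log(N_k-N_{k-1})\asymp 2^k\asymp\Psi(N_k)$; your $L^2$ bound for the block maximum is then of order $\Psi(N_k)^2\cdot 2^{k-1}$, which after Chebyshev and Borel--Cantelli forces an error of size $\Psi(N)^{3/2}$ rather than $\Psi(N)^{1/2}$.

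The remedy, which is exactly what the paper (following Harman) does, is to carry out the dyadic decomposition in $\Psi$-space rather than in index space. One groups indices $k$ according to which dyadic subinterval $I\subset(0,2^r]$ contains $\Psi(k)$, sets $F(I,x)=\sum_{k:\Psi(k)\in I}(f_k(x)-g_k(x))$ and $G(r,x)=\sum_{I\in L_r}|F(I,x)|^2$ over all dyadic intervals at all levels $0,\dots,r$; then \eqref{keycontA} gives $\int G(r,x)\,d\mu\le C(r+1)2^r$, and a single Chebyshev--Borel--Cantelli yields $G(r,x)\le 2^r r^{2+2\epsilon}$ a.e.\ for large $r$. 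Cauchy--Schwarz over the $O(\log j)$ dyadic pieces in the binary expansion of any integer $j\le 2^r$ then delivers $|S_{n_j}|=O(j^{1/2}(\log j)^{3/2+\epsilon})$ simultaneously for every $j$, where $n_j=\max\{k:\Psi(k)\le j\}$. The fill-in is then between consecutive $n_j$ and $n_{j+1}$ (a unit step in $\Psi$-space, not a geometric block), and uses only $\Psi(n_{j+1})-\Psi(n_j+1)<1$ plus the single overshoot $\phi_{n_j+1}\le\max_k\phi_k$; this is the true source of the extra max-term, and the monotonicity sandwich you mention at the end (which indeed tacitly requires $f_n\ge 0$, used also in the paper's proof) is exactly how it is executed.
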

\begin{proof}
    The proof is divided into two cases.

    \emph{Case (i). $\sup\{\Psi(N):N\in\N\}<+\infty$}. By the inequalities (\ref{nonnegA}) and (\ref{keycontA}) and Fatou's lemma, we have
    \begin{eqnarray*}
        \Big\|\sum_{n=1}^{\infty}f_n\Big\|_{L^2(\mu)}&\leq&\Big\|\sum_{n=1}^{\infty}(f_n-g_n)\Big\|_{L^2(\mu)}+\,\,\Big\|\sum_{n=1}^{\infty}g_n\Big\|_{L^2(\mu)}\\
        &\leq&\liminf_{N\to\infty}\Big\|\sum_{n=1}^{N}(f_n-g_n)\Big\|_{L^2(\mu)}+\,\,\sum_{n=1}^{\infty}\phi_n\\
        &\leq& \Big(C\cdot\sup_{N\in\N}\Psi(N)\Big)^{1/2}+\,\,\sup_{N\in\N}\Psi(N)\\[5pt]
        &<&+\infty.
    \end{eqnarray*}
    It implies that the sum of the sequence $\{f_n(x)\}_{n\in\N}$  converges for $\mu$-almost every $x\in X$. Then the asymptotic formula (\ref{quantestimA}) holds trivially.

    \emph{Case (ii). $\sup\{\Psi(N):N\in\N\}=+\infty$.}  We start by introducing some useful notation. For each $r\in\N$, define a collection of dyadic intervals with integer endpoints as
    \[
    L_r:=\left\{(t\cdot 2^s,(t+1)\cdot 2^s]:s=0,1,...,r\,\,\text{and}\,\,t=0,1,...,2^{r-s}-1\right\}.
    \]
    Given an interval $I\subseteq[0,+\infty)$ and $x\in X$, denote
    \[
    F(I,x):=\sum_{k:\Psi(k)\in I}(f_k(x)-g_k(x))\,.
    \]
    For any $r\in\N$ and $x\in X$, define
    \[
    G(r,x):=\sum_{I\in L_r}|F(I,x)|^2\,.
    \]
    Given any $j\in\N$, let \[     n_j:=\max\left\{k\in\N:\Psi(k)\leq j\right\} \]
    and consider the binary expansion
    \begin{eqnarray}\label{binary}
        j=\sum_{s=0}^{\linte{\log_2j}}b(j,s)\cdot2^s,
    \end{eqnarray}
    where $b(j,s)=0$ or $1$ for each $s=0,1,...,\linte{\log_2j}$. Denote by
    \[
    B(j):=\left\{s\in\Z_{\geq0}\cap[0,\linte{\log_2j}]:b(j,s)=1\right\}=\left\{s_{1,j}<s_{2,j}<\cdot\cdot\cdot<s_{k_j,j}\right\}.
    \]
    By the equality (\ref{binary}), we obtain the following partition of the interval $(0,j]$:
    \begin{eqnarray}\label{intpartition}
        (0,j]=(0,2^{s_{k_j,j}}]\sqcup\left(\bigsqcup_{\ell=1}^{k_j-1}\Big(\sum_{i=\ell+1}^{k_j}2^{s_{i,j}},\sum_{i=\ell}^{k_j}2^{s_{i,j}}\Big]\right).
    \end{eqnarray}
    If we let
    \[
    \cI(j):=\left\{(t_{i,j}\cdot 2^{s_{i,j}},(t_{i,j}+1)\cdot 2^{s_{i,j}}]:i=1,2,...,k_j-1\right\}\cup\big\{(0,2^{k_j,j}]\big\},
    \]
    where $t_{i,j}$ $(i=1,2,...,k_j-1)$ is defined by
    \[
    t_{i,j}:=\sum_{\ell=i+1}^{k_j}2^{s_{\ell,j}-s_{i,j}},
    \]
    then the partition (\ref{intpartition}) can be rewritten as
    \begin{eqnarray}\label{partitionof0j}
        (0,j]=\bigsqcup_{I\in \cI(j)}I.
    \end{eqnarray}
    For convenience, let $\Psi(0):=0$. Throughout, we fix  $\epsilon>0$.

     In view of  inequality (\ref{keycontA}), it follows that
     \begin{eqnarray*}
         \int_XG(r,x)\,\td\mu(x)&=&\sum_{I\in L_r}\int_X|F(I,x)|^2\,\td\mu(x)\\[4pt]
         &\leq&C\cdot\sum_{I\in L_r}\sum_{k:\Psi(k)\in I}\phi_k\\[4pt]
         &=&C\cdot\sum_{s=0}^r\sum_{t=0}^{2^{r-s}-1}\Big(\Psi\big(n_{(t+1)\cdot 2^s}\big)-\Psi\big(n_{t\cdot 2^s}\big)\Big)\\[4pt]
         &=&C\cdot(r+1)\cdot\Psi\big(n_{2^r}\big)\\[4pt]
         &\leq&C\cdot(r+1)\cdot2^r
     \end{eqnarray*}
     for any $r\in\N$. This together with Markov's inequality, implies that for any $r\in\N$,
     \[
     \mu\Big(\{x\in X:G(r,x)>2^r\cdot r^{2+\epsilon}\}\Big)\, \ll \,  \frac{1}{r^{1+2\epsilon}}  ,
     \]
      where the implied constant is independent of $r\in\N$.  Now  $
     \sum_{r=1}^{\infty}\frac{1}{r^{1+2\epsilon}}<+\infty $,
      so by the (convergent)  Borel-Cantelli Lemma, it follows that  for $\mu$-almost every $x\in X$, there exists $N=N(x)\in\N$ such that for all $r>N(x)$, we have
     \begin{eqnarray}\label{uppofGrx}
         G(r,x)\,\leq\,2^r\cdot r^{2+2\epsilon}.
     \end{eqnarray}

     Given any $j\in\N$ and $x\in X$, we have
     \begin{eqnarray}
         \left|\sum_{n=1}^{n_j}(f_n(x)-g_n(x))\right|&\leq&\sum_{I\in\cI(j)}\left|\sum_{k:\Psi(k)\in I}(f_k(x)-g_k(x))\right|\label{7.24}\\[4pt]
         &=&\sum_{I\in\cI(j)}|F(I,x)|\nonumber\\
&\leq&\Big(\#\cI(j)\Big)^{1/2}\cdot\left(\sum_{I\in\cI(j)}|F(I,x)|^2\right)^{1/2}\label{7.25}\\[4pt]
         &\leq&(\log_2j)^{1/2}\cdot\left(\sum_{I\in\cI(j)}|F(I,x)|^2\right)^{1/2}\label{7.26},
     \end{eqnarray}
     where the inequality (\ref{7.24}) is a consequence of the definition of $n_j$ and the partition (\ref{partitionof0j}),  the inequality (\ref{7.25}) is a consequence of the Cauchy-Schwarz inequality, and (\ref{7.26}) holds since  $\#\cI(j)\leq\linte{\log_2j}$. Note that
     \[
     \cI(j)\subseteq L_{\linte{\log_2j}+1}
     \]
     and so
     \begin{eqnarray}
         \left(\sum_{I\in\cI(j)}|F(I,x)|^2\right)^{1/2}&\leq&\left(\sum_{I\in L_{\linte{\log_2j}+1}}|F(I,x)|^2\right)^{1/2}\nonumber\\[4pt]
         &=&G\big(\linte{\log_2j}+1,x\big)^{1/2}\label{uppofF2}.
     \end{eqnarray}
     Combining (\ref{uppofGrx}), (\ref{7.26}) and (\ref{uppofF2}), we obtain that
     \begin{eqnarray}\label{orderoffn-gn}
         \left|\sum_{n=1}^{n_j}(f_n(x)-g_n(x))\right|=O\left(j^{\frac{1}{2}}(\log j)^{\frac{3}{2}+\epsilon}\right)
     \end{eqnarray}
     for $\mu$-almost every $x\in X$. Since $\sup\{\Psi(N):N\in\N\}=+\infty$, we have that $\Phi(n_j)>0$ when $j\in\N$ is sufficiently large. For such $j\in\N$, there exists a positive integer $r=r(j)\in\N$ such that
     \[
     r-1<\Psi(n_j)\leq r.
     \]
     Then by the definitions of $n_j$ and $n_r$, we have $n_j=n_r$.   Hence together with  (\ref{orderoffn-gn}), it follows that
     \begin{eqnarray}
         \left|\sum_{n=1}^{n_j}(f_n(x)-g_n(x))\right|&=&\left|\sum_{n=1}^{n_r}(f_n(x)-g_n(x))\right|\nonumber\\
         &=&O\left(r^{\frac{1}{2}}(\log r)^{\frac{3}{2}+\epsilon}\right)\nonumber\\
         &=&O\left(\Psi(n_j)^{\frac{1}{2}}(\log (\Psi(n_j)+1))^{\frac{3}{2}+\epsilon}\right)\label{orderfornj}
     \end{eqnarray}
     for $\mu$-almost every $x\in X$. This proves (\ref{quantestim}) when $N=n_j$ $(j\in\N)$.

     It remains to prove (\ref{quantestim}) for all $N\in\N\setminus\{n_j:j\in\N\}$. Fix $x\in X$  that satisfies (\ref{orderfornj}) and let $N\in\N\setminus\{n_j:j\in\N\}$. Then there exists $j\in\N$ for  which
     \[
     n_j<N<n_{j+1}.
     \]
     It follows by the definitions of $n_j$ and $n_{j+1}$ that
     \begin{eqnarray}\label{psiNnj}
         \Psi(n_{j+1})\leq\Psi(n_j+1)+1.
     \end{eqnarray}
     In view of (\ref{orderfornj}) and (\ref{psiNnj}),  there exists  $C=C(x)>0$ such that
     \begin{eqnarray*}
         \sum_{n=1}^N(f_n(x)-g_n(x))&\leq&\sum_{n=1}^{n_{j+1}}f_n(x)-\sum_{n=1}^Ng_n(x)\\[4pt]
         &=&\sum_{n=1}^{n_{j+1}} \big(f_n(x)-g_n(x) \big)\,\,+\sum_{n=N+1}^{n_{j+1}}g_n(x)\\[4pt]
         &\leq& C\cdot\left(\Psi(n_{j+1})^{\frac{1}{2}}\big(\log(\Psi(n_{j+1})+1)\big)^{\frac{3}{2}+\epsilon}+\Psi(n_{j+1})-\Psi(n_j+1)\right)\\[4pt]
         &\leq&C\cdot\left((\Psi(N)+1)^{\frac{1}{2}}\big(\log(\Psi(N)+2)\big)^{\frac{3}{2}+\epsilon}+1\right)
     \end{eqnarray*}
     and that
     \begin{eqnarray*}
         \sum_{n=1}^N(f_n(x)-g_n(x))&\geq&\sum_{n=1}^{n_j}f_n(x)-\sum_{n=1}^Ng_n(x)\\[4pt]
         &=&\sum_{n=1}^{n_j} \big(f_n(x)-g_n(x)\big)-\sum_{n=n_j+1}^Ng_n(x)\\[4pt]
         &\geq&-C\cdot\left(\Psi(n_j)^{\frac{1}{2}}\big(\log(\Psi(n_j)+1)\big)^{\frac{3}{2}+\epsilon}+\big(\Psi(n_{j+1})-\Psi(n_j)\big)\right)\\[4pt]
         &\geq&-C\cdot\left(\Psi(N)^{\frac{1}{2}}\big(\log(\Psi(N)+1)\big)^{\frac{3}{2}+\epsilon}+\phi_{n_{j}+1}+1\right)\\[4pt]
         &\geq&-C\cdot\left(\Psi(N)^{\frac{1}{2}}\big(\log(\Psi(N)+1)\big)^{\frac{3}{2}+\epsilon}+\max_{1\leq k\leq N}\phi_{k}+1\right).
     \end{eqnarray*}
     The proof is complete.
\end{proof}

\section{Example~ABB is not a counterexample}   \label{appendixABB}

In this appendix we show that Example~ABB is not a counterexample to  Claim~0-1 without the  Ahlfors regular assumption  -- see Remark~\ref{nonono} in Section~\ref{appintro} for the context.

\medskip

\begin{proposition}   \label{propnocounter}
      Let $\Phi=\{\varphi_1(x)=\frac{1}{3}x,\,\varphi_2(x)=\frac{1}{3}x+\frac{2}{3}\}$ and let $K$ be the self-similar set generated by $\Phi$. Recall, $K$ is the
standard middle-third Cantor.  Let $\mu:=\ubar{\mu}\circ\pi^{-1}$ where $\ubar{\mu}$ is the Bernoulli measure on $\Sigma^{\N}:=\{1,2\}^{\N}$associated with the probability vector  $(p_1,p_2)$ with $p_1\neq p_2$. Let $\alpha>0$ and let $\psi_{\alpha}(n)=3^{-\linte{\alpha\log n}}$. If $$\alpha>\frac{1}{-(p_1\log p_1+p_2\log p_2)}\,,$$ then for $\mu$--almost all $x \in X$
    \[
    \sum_{n=1}^{\infty}\mu(B(x,\psi_{\alpha}(n)))<+\infty
    \]

\end{proposition}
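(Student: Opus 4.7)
The plan is to combine a deterministic geometric observation about the middle-third Cantor set with the strong law of large numbers on the coding space. Write $h := -(p_1 \log p_1 + p_2 \log p_2)$, so that the hypothesis reads $\alpha h > 1$, and set $k_n := \lfloor \alpha \log n \rfloor$, so that $\psi_\alpha(n) = 3^{-k_n}$. For each $x \in K$, let $i_1(x) i_2(x) \cdots \in \Sigma^{\mathbb N}$ be the (unique) $\Phi$-coding of $x$ and write $W_k(x) := i_1(x) \cdots i_k(x) \in \Sigma^k$ for the length-$k$ prefix.

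The first ingredient is a geometric comparison special to the middle-third Cantor set:
\begin{equation*}
\mu\bigl(B(x, 3^{-k})\bigr) \,\le\, \mu\bigl(K_{W_k(x)}\bigr) \qquad \text{for every } x \in K \text{ and } k \ge 1.
\end{equation*}
Indeed, $K_{W_k(x)}$ is an interval of length $3^{-k}$ containing $x$, while every other level-$k$ cylinder of $K$ lies at distance at least $3^{-k}$ from $K_{W_k(x)}$. Hence the open ball of radius $3^{-k}$ about $x$ cannot meet any other level-$k$ cylinder, and the disjoint decomposition $K = \bigsqcup_{J \in \Sigma^k} K_J$ gives $B(x,3^{-k}) \cap K \subseteq K_{W_k(x)}$; since $\mu$ is supported on $K$, the displayed inequality follows. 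No annulus estimate in the spirit of Theorem~\ref{thmann} is needed.

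The second ingredient is the decay of $\mu(K_{W_k(x)})$ as $k \to \infty$. Since $\ubar{\mu}([i_1 \cdots i_k]) = p_{i_1} \cdots p_{i_k}$ is Bernoulli and $\mu(K_I) = \ubar{\mu}([I])$,
\begin{equation*}
-\frac{1}{k} \log \mu(K_{W_k(x)}) \,=\, \frac{1}{k} \sum_{j=1}^k \bigl(-\log p_{i_j(x)}\bigr),
\end{equation*}
which, by the classical strong law of large numbers applied to the i.i.d.\ sequence $-\log p_{i_j}$ of mean $h$, converges to $h$ for $\ubar{\mu}$-a.e.\ coding, equivalently for $\mu$-a.e.\ $x \in K$. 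Hence for every $\epsilon > 0$ and $\mu$-a.e.\ $x \in K$, there is some $k_0 = k_0(x, \epsilon)$ with
\begin{equation*}
\mu(K_{W_k(x)}) \,\le\, e^{-(h - \epsilon) k} \qquad \text{for all } k \ge k_0.
\end{equation*}

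Combining the two ingredients, for $\mu$-a.e.\ $x$ and all $n$ large enough that $k_n \ge k_0(x, \epsilon)$, together with the bound $k_n \ge \alpha \log n - 1$,
\begin{equation*}
\mu\bigl(B(x, \psi_\alpha(n))\bigr) \,\le\, \mu(K_{W_{k_n}(x)}) \,\le\, e^{-(h - \epsilon) k_n} \,\le\, e^{h - \epsilon} \cdot n^{-\alpha(h - \epsilon)}.
\end{equation*}
Since $\alpha h > 1$, I choose $\epsilon > 0$ small enough that $\alpha(h - \epsilon) > 1$; then $\sum_n n^{-\alpha(h - \epsilon)} < \infty$ and the tail of $\sum_n \mu(B(x, \psi_\alpha(n)))$ is summable, giving the claimed $\mu$-almost sure convergence. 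No step here is a genuine obstacle: all of the deeper machinery developed earlier in the paper is avoided, the geometry of the Cantor set makes the ball-to-cylinder comparison exact, and the exponential decay of cylinder measures is supplied by the SLLN rather than any refined thermodynamic formalism.
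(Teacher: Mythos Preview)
Your proof is correct and follows essentially the same route as the paper's: both reduce $\mu(B(x,3^{-k}))$ to the cylinder measure $p_{i_1}\cdots p_{i_k}$ via the exact geometry of the middle-third Cantor set, then invoke an almost-sure law of large numbers (you phrase it as the SLLN for the i.i.d.\ digits, the paper as Birkhoff's theorem for the function $I\mapsto\log p_{i_1}$) to obtain the decay rate $e^{-k h(1+o(1))}$ and conclude summability. The only cosmetic difference is that the paper asserts the equality $\mu(B(x,3^{-k}))=\mu(K_{W_k(x)})$ outright, whereas you prove only the inequality $\le$, which is all that is needed.
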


\medskip

\begin{proof}
     In the setup of third-middle Cantor set, note that for any $I=i_1i_2\cdot\cdot\cdot\in\Sigma^{\N}=\{1,2\}^{\N}$  and $n\in\N$, we have that
    $$\mu\big(B(\pi(I),3^{-n})\big)=\mu\big(K_{i_1i_2\cdot\cdot\cdot i_n}\big)=p_{i_1}p_{i_2}\cdot\cdot\cdot p_{i_n}\,.$$
    With this in mind, for any $n\in\N$, we obtain that
    \begin{eqnarray}\label{mubpii3n}
        \mu\big(B(\pi(I),\psi_{\alpha}(n))\big)=p_{i_1}p_{i_2}\cdot\cdot\cdot p_{\linte{\alpha\log n}}\,.
    \end{eqnarray}
     Consider the map $f:\Sigma^{\N}\to\R$ given by
    \[
    f(I)=\log p_{i_1}\qquad(I=i_1i_2\cdot\cdot\cdot\in\Sigma^{\N})
    \]
    and let $\sigma$ be the shift map on $\{0,1\}^{\N}$. Then it follows from \eqref{mubpii3n} that
    \begin{eqnarray}\label{logmubpii}
        \log\mu\big(B(\pi(I),\psi_{\alpha}(n))\big)=\sum_{i=1}^{\linte{\alpha\log n}}f(\sigma^{i-1}I)\,.
    \end{eqnarray}
    By the Birkoff Ergodic Theorem, we have that for $\ubar{\mu}$-almost every $I\in\Sigma^{\N}$,
    \begin{eqnarray}\label{birkhofferg}
        \lim_{n\to\infty}\frac{1}{\linte{\alpha\log n}}\sum_{i=1}^{\linte{\alpha\log n}}f(\sigma^{i-1}I)=\int_{\Sigma^{\N}}f(I)\,\td\ubar{\mu}(I)=p_1\log p_1+p_2\log p_2\,.
    \end{eqnarray}
    In view of the range of $\alpha$ under consideration, we can find $\epsilon\in(0,1)$ such that
    \begin{eqnarray}\label{epsilonrang}
        (1-\epsilon)\cdot\alpha\cdot(p_1\log p_1+p_2\log p_2)<-1-\epsilon\,.
    \end{eqnarray}
    Now we fix $I\in\Sigma^{\N}$ that satisfies \eqref{birkhofferg} and fix $\epsilon\in(0,1)$ that satisfies \eqref{epsilonrang}.  Then  on combining \eqref{logmubpii}, \eqref{birkhofferg} and \eqref{epsilonrang},  there exists $N\in\N$ such that for all $n>N$, we have
    \begin{eqnarray*}
        \mu\big(B(\pi(I),\psi_{\alpha}(n))\big)&=&e^{\log\mu(B(\pi(I),\psi_{\alpha}(n)))}\\[4pt]
        &\leq&e^{(1-\epsilon)\cdot\linte{\alpha\log n}\cdot(p_1\log p_1+p_2\log p_2)}\\[4pt]
        &\leq&e^{(1-\epsilon)\cdot(\alpha\log n-1)\cdot(p_1\log p_1+p_2\log p_2)}\\[4pt]
        &\leq& n^{-1-\epsilon}\cdot e^{-(1-\epsilon)\cdot(p_1\log p_1+p_2\log p_2)}
    \end{eqnarray*}
    and thus
    \[
    \sum_{n=1}^{\infty}\mu\big(B(\pi(I),\psi_{\alpha}(n))\big)\ <\ +\infty\,.
    \]
    The upshot of the above is that for $\mu$-almost every $x\in K$, the sum of   $\mu(B(x,\psi_{\alpha}(n)))$ is convergent. This thereby completes the proof of Proposition~\ref{propnocounter}.
\end{proof}

\bigskip

\noindent{\it Acknowledgments}. Huang and Li were supported by National Key R\&D Program of China (No. 2024YFA1013700), NSFC 12271176 and Guangdong Natural Science Foundation 2024A1515010946.    Velani would like to thank Bing Li for the kind invitation to Guangzhou as part of the excellent South China University of Technology (SCUT) Overseas Lecturer Programme. He particularly appreciated the thoughtful engagement of the audience during the lectures — indeed, he found himself learning new things while responding to their insightful questions. The final stages of this paper were completed during this visit.

\bibliographystyle{abbrv}
\bibliography{MainNew.bib}

\begin{thebibliography}{10}

\bibitem{ABH}
C.~Aistleitner, B.~Borda, and M.~Hauke.
\newblock On the metric theory of approximations by reduced fractions: a quantitative {K}oukoulopoulos-{M}aynard theorem.
\newblock {\em Compos. Math.}, 159(2):207--231, 2023.

\bibitem{allen2025}
D.~Allen, S.~Baker, and B.~B\'ar\'any.
\newblock Recurrence rates for shifts of finite type.
\newblock {\em Adv. Math.}, 460:Paper No. 110039, 36, 2025.

\bibitem{allen2021}
D.~Allen and B.~B\'ar\'any.
\newblock On the {H}ausdorff measure of shrinking target sets on self-conformal sets.
\newblock {\em Mathematika}, 67(4):807--839, 2021.

\bibitem{anosov1967}
D.~V. Anosov and J.~G. Sina\u{i}.
\newblock Certain smooth ergodic systems.
\newblock {\em Uspehi Mat. Nauk}, 22(5(137)):107--172, 1967.

\bibitem{anttila2023}
R.~Anttila.
\newblock Pointwise {A}ssouad dimension for measures.
\newblock {\em Proc. Roy. Soc. Edinburgh Sect. A}, 153(6):2053--2078, 2023.

\bibitem{araujo2016}
V.~Ara\'ujo, O.~Butterley, and P.~Varandas.
\newblock Open sets of axiom {A} flows with exponentially mixing attractors.
\newblock {\em Proc. Amer. Math. Soc.}, 144(7):2971--2984, 2016.

\bibitem{aspenberg2019}
M.~Aspenberg and T.~Persson.
\newblock Shrinking targets in parametrised families.
\newblock {\em Math. Proc. Cambridge Philos. Soc.}, 166(2):265--295, 2019.

\bibitem{avila2006}
A.~Avila, S.~Gou\"ezel, and J.-C. Yoccoz.
\newblock Exponential mixing for the {T}eichm\"uller flow.
\newblock {\em Publ. Math. Inst. Hautes \'Etudes Sci.}, (104):143--211, 2006.

\bibitem{baker2021}
S.~Baker and M.~Farmer.
\newblock Quantitative recurrence properties for self-conformal sets.
\newblock {\em Proc. Amer. Math. Soc.}, 149(3):1127--1138, 2021.

\bibitem{baker2024}
S.~Baker and H.~Koivusalo.
\newblock Quantitative recurrence and the shrinking target problem for overlapping iterated function systems.
\newblock {\em Adv. Math.}, 442:Paper No. 109538, 65, 2024.

\bibitem{baladi2000}
V.~Baladi.
\newblock {\em Positive transfer operators and decay of correlations}, volume~16 of {\em Advanced Series in Nonlinear Dynamics}.
\newblock World Scientific Publishing Co., Inc., River Edge, NJ, 2000.

\bibitem{barany2018}
B.~B\'ar\'any and M.~Rams.
\newblock Shrinking targets on {B}edford-{M}c{M}ullen carpets.
\newblock {\em Proc. Lond. Math. Soc. (3)}, 117(5):951--995, 2018.

\bibitem{BRV2016}
V.~Beresnevich, F.~Ram\'irez, and S.~Velani.
\newblock Metric {D}iophantine approximation: aspects of recent work.
\newblock In {\em Dynamics and analytic number theory}, volume 437 of {\em London Math. Soc. Lecture Note Ser.}, pages 1--95. Cambridge Univ. Press, Cambridge, 2016.

\bibitem{bvbc}
V.~Beresnevich and S.~Velani.
\newblock The divergence {B}orel-{C}antelli lemma revisited.
\newblock {\em J. Math. Anal. Appl.}, 519(1):Paper No. 126750, 21, 2023.

\bibitem{bill}
P.~Billingsley.
\newblock {\em Probability and measure}.
\newblock Wiley Series in Probability and Mathematical Statistics. John Wiley \& Sons, Inc., New York, third edition, 1995.
\newblock A Wiley-Interscience Publication.

\bibitem{boshernitzan1993}
M.~D. Boshernitzan.
\newblock Quantitative recurrence results.
\newblock {\em Invent. Math.}, 113(3):617--631, 1993.

\bibitem{bowen2008}
R.~Bowen.
\newblock {\em Equilibrium states and the ergodic theory of {A}nosov diffeomorphisms}, volume 470 of {\em Lecture Notes in Mathematics}.
\newblock Springer-Verlag, Berlin, revised edition, 2008.
\newblock With a preface by David Ruelle.

\bibitem{bowen1975}
R.~Bowen and D.~Ruelle.
\newblock The ergodic theory of {A}xiom {A} flows.
\newblock {\em Invent. Math.}, 29(3):181--202, 1975.

\bibitem{bugeaud2014}
Y.~Bugeaud and B.-W. Wang.
\newblock Distribution of full cylinders and the {D}iophantine properties of the orbits in {$\beta$}-expansions.
\newblock {\em J. Fractal Geom.}, 1(2):221--241, 2014.

\bibitem{butterley2020}
O.~Butterley and K.~War.
\newblock Open sets of exponentially mixing {A}nosov flows.
\newblock {\em J. Eur. Math. Soc. (JEMS)}, 22(7):2253--2285, 2020.

\bibitem{chang2019}
Y.~Chang, M.~Wu, and W.~Wu.
\newblock Quantitative recurrence properties and homogeneous self-similar sets.
\newblock {\em Proc. Amer. Math. Soc.}, 147(4):1453--1465, 2019.

\bibitem{chernov1998}
N.~I. Chernov.
\newblock Markov approximations and decay of correlations for {A}nosov flows.
\newblock {\em Ann. of Math. (2)}, 147(2):269--324, 1998.

\bibitem{das2021}
T.~Das, L.~Fishman, D.~Simmons, and M.~Urba\'nski.
\newblock Extremality and dynamically defined measures, part {II}: measures from conformal dynamical systems.
\newblock {\em Ergodic Theory Dynam. Systems}, 41(8):2311--2348, 2021.

\bibitem{dolgopyat1998}
D.~Dolgopyat.
\newblock On decay of correlations in {A}nosov flows.
\newblock {\em Ann. of Math. (2)}, 147(2):357--390, 1998.

\bibitem{dolgopyat2024}
D.~Dolgopyat, A.~Kanigowski, and F.~Rodriguez~Hertz.
\newblock Exponential mixing implies {B}ernoulli.
\newblock {\em Ann. of Math. (2)}, 199(3):1225--1292, 2024.

\bibitem{falconer1997}
K.~Falconer.
\newblock {\em Techniques in fractal geometry}.
\newblock John Wiley \& Sons, Ltd., Chichester, 1997.

\bibitem{fan2010}
A.~H. Fan, T.~Langlet, and B.~Li.
\newblock Quantitative uniform hitting in exponentially mixing systems.
\newblock In {\em Recent developments in fractals and related fields}, Appl. Numer. Harmon. Anal., pages 251--266. Birkh\"auser Boston, Boston, MA, 2010.

\bibitem{fan1999}
A.~H. Fan and K.-S. Lau.
\newblock Iterated function system and {R}uelle operator.
\newblock {\em J. Math. Anal. Appl.}, 231(2):319--344, 1999.

\bibitem{fang2020}
L.~Fang, M.~Wu, and B.~Li.
\newblock Approximation orders of real numbers by {$\beta$}-expansions.
\newblock {\em Math. Z.}, 296(1-2):13--40, 2020.

\bibitem{fernandez2012}
J.~L. Fern\'andez, M.~V. Meli\'an, and D.~Pestana.
\newblock Expanding maps, shrinking targets and hitting times.
\newblock {\em Nonlinearity}, 25(9):2443--2471, 2012.

\bibitem{field2007}
M.~Field, I.~Melbourne, and A.~T\"or\"ok.
\newblock Stability of mixing and rapid mixing for hyperbolic flows.
\newblock {\em Ann. of Math. (2)}, 166(1):269--291, 2007.

\bibitem{folland1999}
G.~B. Folland.
\newblock {\em Real analysis}.
\newblock Pure and Applied Mathematics (New York). John Wiley \& Sons, Inc., New York, 1984.
\newblock Modern techniques and their applications, A Wiley-Interscience Publication.

\bibitem{busam2009}
E.~Freitag and R.~Busam.
\newblock {\em Complex analysis}.
\newblock Universitext. Springer-Verlag, Berlin, second edition, 2009.

\bibitem{galatolo2015}
S.~Galatolo, J.~Rousseau, and B.~Saussol.
\newblock Skew products, quantitative recurrence, shrinking targets and decay of correlations.
\newblock {\em Ergodic Theory Dynam. Systems}, 35(6):1814--1845, 2015.

\bibitem{gehring2017}
F.~W. Gehring, G.~J. Martin, and B.~P. Palka.
\newblock {\em An introduction to the theory of higher-dimensional quasiconformal mappings}, volume 216 of {\em Mathematical Surveys and Monographs}.
\newblock American Mathematical Society, Providence, RI, 2017.

\bibitem{ghosh2024}
A.~Ghosh and D.~Nandi.
\newblock Diophantine approximation, large intersections and geodesics in negative curvature.
\newblock {\em Proc. Lond. Math. Soc. (3)}, 128(2):Paper No. e12581, 47, 2024.

\bibitem{harman1998}
G.~Harman.
\newblock {\em Metric number theory}, volume~18 of {\em London Mathematical Society Monographs. New Series}.
\newblock The Clarendon Press, Oxford University Press, New York, 1998.

\bibitem{he2024}
Y.~He.
\newblock Quantitative recurrence properties and strong dynamical $\mathrm{B}$orel-$\mathrm{C}$antelli lemma for dynamical systems with exponential decay of correlations.
\newblock {\em arXiv:2410.10211}, 2024.

\bibitem{he2022}
Y.~He and L.~Liao.
\newblock Quantitative recurrence properties for piecewise expanding maps on $[0, 1]^{d}$.
\newblock {\em Ann. Scuola Norm. Sup. Pisa Cl. Sci.}, 2024.

\bibitem{heinonen2001}
J.~Heinonen.
\newblock {\em Lectures on analysis on metric spaces}.
\newblock Universitext. Springer-Verlag, New York, 2001.

\bibitem{hill1995}
R.~Hill and S.~L. Velani.
\newblock The ergodic theory of shrinking targets.
\newblock {\em Invent. Math.}, 119(1):175--198, 1995.

\bibitem{hu2024}
Z.-n. Hu and T.~Persson.
\newblock Hausdorff dimension of recurrence sets.
\newblock {\em Nonlinearity}, 37(5):Paper No. 055010, 22, 2024.

\bibitem{hussain2022}
M.~Hussain, B.~Li, D.~Simmons, and B.~Wang.
\newblock Dynamical {B}orel-{C}antelli lemma for recurrence theory.
\newblock {\em Ergodic Theory Dynam. Systems}, 42(6):1994--2008, 2022.

\bibitem{hutchinson1981}
J.~E. Hutchinson.
\newblock Fractals and self-similarity.
\newblock {\em Indiana Univ. Math. J.}, 30(5):713--747, 1981.

\bibitem{kaenmaki2003}
A.~K\"aenm\"aki.
\newblock On the geometric structure of the limit set of conformal iterated function systems.
\newblock {\em Publ. Mat.}, 47(1):133--141, 2003.

\bibitem{khalil2023}
O.~Khalil.
\newblock Exponential mixing via additive combinatorics.
\newblock {\em arXiv:2305.00527}, 2023.

\bibitem{khintchine1924}
A.~Khintchine.
\newblock Einige {S}\"atze \"uber {K}ettenbr\"uche, mit {A}nwendungen auf die {T}heorie der {D}iophantischen {A}pproximationen.
\newblock {\em Math. Ann.}, 92(1-2):115--125, 1924.

\bibitem{kim2007}
D.~H. Kim.
\newblock The shrinking target property of irrational rotations.
\newblock {\em Nonlinearity}, 20(7):1637--1643, 2007.

\bibitem{kleinbock2005}
D.~Kleinbock, E.~Lindenstrauss, and B.~Weiss.
\newblock On fractal measures and {D}iophantine approximation.
\newblock {\em Selecta Math. (N.S.)}, 10(4):479--523, 2004.

\bibitem{kleinbock2023}
D.~Kleinbock and J.~Zheng.
\newblock Dynamical {B}orel-{C}antelli lemma for recurrence under {L}ipschitz twists.
\newblock {\em Nonlinearity}, 36(2):1434--1460, 2023.

\bibitem{koukoulopoulos2020}
D.~Koukoulopoulos and J.~Maynard.
\newblock On the {D}uffin-{S}chaeffer conjecture.
\newblock {\em Ann. of Math. (2)}, 192(1):251--307, 2020.

\bibitem{KMY}
D.~Koukoulopoulos, J.~Maynard, and D.~Yang.
\newblock An almost sharp quantitative version of the {D}uffin--{S}chaeffer conjecture.
\newblock {\em Duke Math. J.}, 174(10):2011--2065, 2025.

\bibitem{levesley2024}
J.~Levesley, B.~Li, D.~Simmons, and S.~Velani.
\newblock Shrinking targets versus recurrence: the quantitative theory.
\newblock {\em arXiv:2410.22993}, 2024.

\bibitem{li2023}
B.~Li, L.~Liao, S.~Velani, and E.~Zorin.
\newblock The shrinking target problem for matrix transformations of tori: revisiting the standard problem.
\newblock {\em Adv. Math.}, 421:Paper No. 108994, 74, 2023.

\bibitem{li2014}
B.~Li, B.-W. Wang, J.~Wu, and J.~Xu.
\newblock The shrinking target problem in the dynamical system of continued fractions.
\newblock {\em Proc. Lond. Math. Soc. (3)}, 108(1):159--186, 2014.

\bibitem{li2023exponential}
J.~Li and W.~Pan.
\newblock Exponential mixing of geodesic flows for geometrically finite hyperbolic manifolds with cusps.
\newblock {\em Invent. Math.}, 231(3):931--1021, 2023.

\bibitem{liverani1998}
C.~Liverani, B.~Saussol, and S.~Vaienti.
\newblock Conformal measure and decay of correlation for covering weighted systems.
\newblock {\em Ergodic Theory Dynam. Systems}, 18(6):1399--1420, 1998.

\bibitem{mattila1982}
P.~Mattila.
\newblock On the structure of self-similar fractals.
\newblock {\em Ann. Acad. Sci. Fenn. Ser. A I Math.}, 7(2):189--195, 1982.

\bibitem{mattila1999}
P.~Mattila.
\newblock {\em Geometry of sets and measures in {E}uclidean spaces}, volume~44 of {\em Cambridge Studies in Advanced Mathematics}.
\newblock Cambridge University Press, Cambridge, 1995.
\newblock Fractals and rectifiability.

\bibitem{mauldin2003}
R.~D. Mauldin and M.~Urba\'nski.
\newblock {\em Graph directed {M}arkov systems}, volume 148 of {\em Cambridge Tracts in Mathematics}.
\newblock Cambridge University Press, Cambridge, 2003.
\newblock Geometry and dynamics of limit sets.

\bibitem{morgan2016}
F.~Morgan.
\newblock {\em Geometric measure theory}.
\newblock Elsevier/Academic Press, Amsterdam, fourth edition, 2009.
\newblock A beginner's guide.

\bibitem{nonnenmacher2015}
S.~Nonnenmacher and M.~Zworski.
\newblock Decay of correlations for normally hyperbolic trapping.
\newblock {\em Invent. Math.}, 200(2):345--438, 2015.

\bibitem{parry1990}
W.~Parry and M.~Pollicott.
\newblock Zeta functions and the periodic orbit structure of hyperbolic dynamics.
\newblock {\em Ast\'erisque}, (187-188):268, 1990.

\bibitem{patzschke1997}
N.~Patzschke.
\newblock Self-conformal multifractal measures.
\newblock {\em Adv. in Appl. Math.}, 19(4):486--513, 1997.

\bibitem{persson2019}
T.~Persson.
\newblock Inhomogeneous potentials, {H}ausdorff dimension and shrinking targets.
\newblock {\em Ann. H. Lebesgue}, 2:1--37, 2019.

\bibitem{persson2023}
T.~Persson.
\newblock A strong {B}orel-{C}antelli lemma for recurrence.
\newblock {\em Studia Math.}, 268(1):75--89, 2023.

\bibitem{persson2025}
T.~Persson and A.~R. Sponheimer.
\newblock Strong {B}orel--{C}antelli lemmas for recurrence, 2025.

\bibitem{philipp1967}
W.~Philipp.
\newblock Some metrical theorems in number theory.
\newblock {\em Pacific J. Math.}, 20:109--127, 1967.

\bibitem{PVabs}
A.~Pollington and S.~L. Velani.
\newblock Metric {D}iophantine approximation and ``absolutely friendly'' measures.
\newblock {\em Selecta Math. (N.S.)}, 11(2):297--307, 2005.

\bibitem{Pommerenke1981}
C.~Pommerenke.
\newblock On ergodic properties of inner functions.
\newblock {\em Math. Ann.}, 256(1):43--50, 1981.

\bibitem{Sponheimer}
A.~Rodriguez~Sponheimer.
\newblock A recurrence-type strong {B}orel-{C}antelli lemma for {A}xiom {A} diffeomorphisms.
\newblock {\em Ergodic Theory Dynam. Systems}, 45(3):936--955, 2025.

\bibitem{ruelle1983}
D.~Ruelle.
\newblock Flots qui ne m\'elangent pas exponentiellement.
\newblock {\em C. R. Acad. Sci. Paris S\'er. I Math.}, 296(4):191--193, 1983.

\bibitem{saussol2000}
B.~Saussol.
\newblock Absolutely continuous invariant measures for multidimensional expanding maps.
\newblock {\em Israel J. Math.}, 116:223--248, 2000.

\bibitem{seuret2015}
S.~Seuret and B.-W. Wang.
\newblock Quantitative recurrence properties in conformal iterated function systems.
\newblock {\em Adv. Math.}, 280:472--505, 2015.

\bibitem{tan2011}
B.~Tan and B.-W. Wang.
\newblock Quantitative recurrence properties for beta-dynamical system.
\newblock {\em Adv. Math.}, 228(4):2071--2097, 2011.

\bibitem{tseng2008}
J.~Tseng.
\newblock On circle rotations and the shrinking target properties.
\newblock {\em Discrete Contin. Dyn. Syst.}, 20(4):1111--1122, 2008.

\bibitem{tsujii2023}
M.~Tsujii and Z.~Zhang.
\newblock Smooth mixing {A}nosov flows in dimension three are exponentially mixing.
\newblock {\em Ann. of Math. (2)}, 197(1):65--158, 2023.

\bibitem{urbanski2005}
M.~Urba\'nski.
\newblock Diophantine approximation and self-conformal measures.
\newblock {\em J. Number Theory}, 110(2):219--235, 2005.

\bibitem{walters2000}
P.~Walters.
\newblock {\em An introduction to ergodic theory}, volume~79 of {\em Graduate Texts in Mathematics}.
\newblock Springer-Verlag, New York-Berlin, 1982.

\bibitem{wang2017}
B.-W. Wang, J.~Wu, and J.~Xu.
\newblock Dynamical covering problems on the triadic {C}antor set.
\newblock {\em C. R. Math. Acad. Sci. Paris}, 355(7):738--743, 2017.

\bibitem{wu2022}
Y.-L. Wu and N.~Yuan.
\newblock Quantitative recurrence problem on some {B}edford-{M}c{M}ullen carpets.
\newblock {\em J. Math. Anal. Appl.}, 543(2):Paper No. 128938, 15, 2025.

\bibitem{yung2007}
P.-L. Yung.
\newblock Doubling properties of self-similar measures.
\newblock {\em Indiana Univ. Math. J.}, 56(2):965--990, 2007.

\end{thebibliography}

\end{document}